\newtheorem{theorem}{Theorem}[section]
\newtheorem{lemma}[theorem]{Lemma}
\newtheorem{proposition}[theorem]{Proposition}
\newtheorem{prop}[theorem]{Proposition}
\theoremstyle{definition}
\newtheorem{definition}[theorem]{Definition}
\newtheorem{remark}[theorem]{Remark}
\newtheorem{corollary}[theorem]{Corollary}
\newtheorem{example}[theorem]{Example}
\newtheorem{ex}[theorem]{Example}
\newtheorem{convention}[theorem]{Convention}
\newcommand{\re}{\color{red}} 
\newcommand{\mat}[4]{\begin{bmatrix} #1  & #2 \\ #3 & #4 \end{bmatrix}} 
\newcommand{\matp}[4]{\begin{pmatrix} #1  & #2 \\ #3 & #4 \end{pmatrix}} 
\newcommand{\sm}[4]{\left[\begin{smallmatrix} #1 & #2 \\ #3 & #4 \end{smallmatrix}\right]} 
\newcommand{\bm}{Bouw-M\"oller } 
\newcommand{\mnbms}{\M mn }  
\newcommand{\G}[2]{\ensuremath{\graphg_{#1,#2}}} 
\newcommand{\graphg}{\ensuremath{\mathcal H}} 
\newcommand{\D}[3]{\ensuremath{\mathcal D^{#1}_{#2,#3}}}
\newcommand{\T}[3]{\ensuremath{\mathcal T^{#1}_{#2,#3}}}
\newcommand{\GD}[3]{\ensuremath{\mathcal G}^{#1}_{#2,#3}}
\newcommand{\UD}[2]{\ensuremath{\mathcal U_{#1,#2}}}
\newcommand{\M}[2]{\ensuremath{\textrm S_{#1,#2}}}
\newcommand{\Sec}[3]{\ensuremath{\Sigma^{#1}_{#3}}}
\newcommand{\Tree}[2]{\ensuremath{{\mathcal{T}}_{#1,#2}}}
\newcommand{\Subsec}[3]{\ensuremath{\Sigma^{0}_{#2,#3}(#1)}}
\newcommand{\Subsubsec}[4]{\ensuremath{\Sigma^{0}_{#3,#4}(#1,#2)}}
\newcommand{\Der}[2]{\ensuremath{{\bf D}_{#1}^{#2}}}
\newcommand{\Norm}[2]{\ensuremath{{\bf N}}_{#1}^{#2}}
\newcommand{\F}[2]{\ensuremath{\mathcal{F}_{#1}^{#2}}} 
\newcommand{\FF}[2]{\ensuremath{{\mathcal{F}}_{#1,#2}}}
\newcommand{\FFF}[4]{\ensuremath{{\mathcal{F}}_{#1,#2}(#3,#4)}}
\newcommand{\Lalphabet}{\ensuremath{\mathscr{L}}} 
\newcommand{\LL}[2]{\ensuremath{\mathscr{L}_{#1,#2}}}
\newcommand{\Ar}[2]{\ensuremath{\mathscr{A}_{#1,#2}}}
\renewcommand{\AA}[2]{\ensuremath{\mathscr{A}_{#1,#2}}}
\newcommand{\NA}[2]{3#1#2-2#1-4#2+2}
\newcommand{\I}{I}
\newcommand{\Tr}[3]{\ensuremath{{\bf T}_{#1}^{#2,#3}}} 
\newcommand{\Sub}[4]{\ensuremath{\sigma_{#1,#2}^{#3,#4}}} 
\newcommand{\PSub}[3]{\ensuremath{\sigma_{#1}^{#2,#3}}} 
\newcommand{\gen}[3]{\ensuremath{\mathfrak{g}_{#1}^{#2,#3}}} 
\newcommand{\AD}[2]{\ensuremath{\Psi_{#1}^{#2}}}
\newcommand{\shear}[2]{\ensuremath{s_{#1,#2}}}
\newcommand{\flip}{f}
\newcommand{\diag}[2]{\ensuremath{d_{#1}^{#2}}}
\newcommand{\derAD}[2]{\ensuremath{\gamma_{#1}^{#2}}}
\newcommand{\refl}[3]{\ensuremath{\phi^{#1}_{#3}}}
\newcommand{\perm}[3]{\ensuremath{\pi^{#1}_{#3}}}
\newcommand{\fl}[1]{\ensuremath{f_{#1}}} 
\newcommand{\R}{\mathbb{R}} 
\newcommand{\ZZ}{\mathbb{Z}} 
\newcommand{\RP}{\mathbb{R}\mathbb{P}^1}
\newcommand{\octcdot}{}
\newcommand{\Sing}{\mathcal{S}}
\newcommand{\Aff}{Af\!f}
\newcommand{\be}{\begin{equation}}
\newcommand{\ee}{\end{equation}}
\newcommand{\bes}{\begin{equation*}}
\newcommand{\ees}{\end{equation*}}
\newcommand{\Disk}{\mathbb{D}}
\theoremstyle{remark}
\newcommand{\N}{\mathbb{N}}
\definecolor{red}{RGB}{255,0,0} 
\definecolor{green}{RGB}{0,120,0}
\definecolor{purple}{RGB}{138,43,226}
\newcommand{\gr}{\color{green}}  
\newcommand{\rd}{\color{red}}  
\title[Cutting sequences on  Bouw-M\"oller surfaces]{Cutting sequences on Bouw-M\"oller surfaces:\\ an $\mathcal{S}$-adic characterization \vspace{3mm}\\ Suites de coupage sur les surfaces de Bouw-M\"oller:\\ une caract{\'e}risation $\mathcal{S}$-adique}
\author{Diana Davis, Irene Pasquinelli \\ and Corinna Ulcigrai}
\begin{document}

\begin{abstract}
We consider a symbolic coding for geodesics on the  family of \emph{Veech  surfaces} (translation surfaces rich with affine symmetries) recently discovered by \emph{Bouw} and \emph{M\"oller}. These surfaces, as noticed by \emph{Hooper}, can be realized by cutting and pasting a collection of \emph{semi-regular polygons}. We characterize the set of symbolic sequences (\emph{cutting sequences}) that arise by coding linear trajectories by the sequence of polygon sides crossed. We provide a full characterization for the closure of the set of cutting sequences, in the spirit of the classical characterization of Sturmian sequences and the recent characterization of Smillie-Ulcigrai of cutting sequences of linear trajectories on regular polygons. The characterization is in terms of a system of finitely many substitutions (also known as an \emph{$\mathcal{S}$-adic presentation}), governed by a one-dimensional continued fraction-like map. As in the Sturmian and regular polygon case, the characterization is based on \emph{renormalization} and the definition of a suitable combinatorial \emph{derivation} operator. One of the novelties is that derivation is done in two steps, without  directly using Veech group elements, but by exploiting an  affine diffeomorphism that maps a \bm surface to the \emph{dual} \bm surface in the same Teichm\"uller disk. As a technical tool,  we crucially exploit the presentation of \bm surfaces via \emph{Hooper diagrams}.
\end{abstract}

\begin{otherlanguage}{french}
\begin{abstract}

On considère un codage symbolique des géodésiques sur une famille de surfaces de Veech (surfaces de translation riches en symétries affines) récemment découverte par Bouw et Möller. Ces surfaces, comme l'a remarqué Hooper, peuvent être réalisées en coupant et collant une collection de polygones semi-réguliers. Dans cet article, on caractérise l'ensemble des suites symboliques (``suites de coupage'') qui correspondent au codage de trajectoires linéaires, à l'aide de la suite des côtés des polygones croisés. On donne une caractérisation complète de l'adhérence de l'ensemble des suites de coupage, dans l'esprit de la caractérisation classique des suites sturmiennes et de la récente caractérisation par Smillie-Ulcigrai des suites de coupage des trajectoires linéaires dans les polygones réguliers. La caractérisation est donnée en termes d'un système fini de substitutions (connu aussi sous le nom de présentation $\mathcal{S}$-adique), réglé par une transformation unidimensionnelle qui ressemble à l'algorithme de fraction continue. Comme dans le cas sturmien et dans celui des polygones réguliers, la caractérisation est basée sur la renormalisation et sur la définition d'un opérateur combinatoire de dérivation approprié. Une des nouveautés est que la dérivation se fait en deux étapes, sans utiliser directement les éléments du groupe de Veech, mais en utilisant un difféomorphisme affine qui envoie une surface de Bouw-Möller vers sa surface ``duale'', qui est dans le même disque de Teichmüller. Un outil technique utilisé est la présentation des surfaces de Bouw-Möller par les diagrammes de Hooper.

\phantom{i}\hspace{-1.3em}{\scriptsize A}{\tiny BSTRACT.} We consider a symbolic coding for geodesics on the  family of \emph{Veech  surfaces} (translation surfaces rich with affine symmetries) recently discovered by \emph{Bouw} and \emph{M\"oller}. These surfaces, as noticed by \emph{Hooper}, can be realized by cutting and pasting a collection of \emph{semi-regular polygons}. We characterize the set of symbolic sequences (\emph{cutting sequences}) that arise by coding linear trajectories by the sequence of polygon sides crossed. We provide a full characterization for the closure of the set of cutting sequences, in the spirit of the classical characterization of Sturmian sequences and the recent characterization of Smillie-Ulcigrai of cutting sequences of linear trajectories on regular polygons. The characterization is in terms of a system of finitely many substitutions (also known as an \emph{$\mathcal{S}$-adic presentation}), governed by a one-dimensional continued fraction-like map. As in the Sturmian and regular polygon case, the characterization is based on \emph{renormalization} and the definition of a suitable combinatorial \emph{derivation} operator. One of the novelties is that derivation is done in two steps, without  directly using Veech group elements, but by exploiting an  affine diffeomorphism that maps a \bm surface to the \emph{dual} \bm surface in the same Teichm\"uller disk. As a technical tool,  we crucially exploit the presentation of \bm surfaces via \emph{Hooper diagrams}.
\end{abstract}
\end{otherlanguage}

\maketitle

\paragraph*{\textbf{Key words}}
Cutting sequences, 
translation surfaces, 
Bouw-Möller surfaces, 
renormalization for Veech surfaces, 
S-adic systems, substitutions, linear complexity sequences.

\paragraph*{\textbf{Mots cl\'e}}
Suites de coupage,
surfaces de translation, 
surfaces de Bouw-Möller, 
renormalisation pour une surface de Veech, 
systèmes S-adiques, substitutions, 
suites de complexité linéaire.
\vspace{5mm}

\paragraph*{\textbf{Short form of the title}}
Cutting sequences on Bouw-Möller surfaces
(Suites de coupage sur les surfaces de Bouw-Möller)

\vspace{5mm}

\paragraph*{\textbf{Math subject classification}}
\emph{Primary}: 37B10 Symbolic dynamics, 37E35 Flows on surfaces
\emph{Secondary}:
11J70 Continued fractions and generaliztions,
37D40 Dynamical systems of geometric origin and hyperbolicity (geodesic and horocycle flows, etc).

\pagestyle{myheadings}


\tableofcontents

\section{Introduction} \label{intro}
{
In this paper we give  a complete characterization  of a class of symbolic sequences that generalizes the famous class of  \emph{Sturmian sequences}, that arises geometrically by coding bi-infinite linear trajectories on \bm surfaces. A gentle introduction for the non-familiar reader is given below, but first we will give a short version of our main result. The {\it \bm} family of translation surfaces is a family of Veech surfaces (see \S \ref{background} for definitions) indexed by two parameters, $(m,n)$, so that the \bm surface $\M mn$  is obtained by identifying parallel sides  of $m$ semi-regular polygons with symmetry of order $n$ (the definition is given in $\S$~\ref{bmdefsection}). Let $\mathscr{A}_{m,n}$ be an alphabet that labels (pairs of identified) sides of these polygons, and let $w \in \mathscr{A}_{m,n}^\mathbb{Z}$ be a sequence that codes a bi-infinite linear trajectory on  $\M mn$ (called a \emph{cutting sequence}). Our main result  characterizes the closure of the set of such sequences in  $\mathscr{A}_{m,n}^\mathbb{Z}$ in terms of a finite family of substitutions as follows (see \S~\ref{sec:substitutionscharacterization} for the definition of a substitution). 

\begin{theorem}\label{firstversion}
For any \bm surface  $\M mn$, there exist $(m-1)(n-1)$ substitutions $\sigma_i=\sigma_i^{m,n}$, $i=1, \dots, (m-1)(n-1)$ on an alphabet $\mathscr{A}'_{m,n}$, and operators ${\bf {T}}_{i}= \Tr i m n$, $i=0,\dots, 2n-1$ from sequences in  ${\mathscr{A}'_{m,n}}^\mathbb{Z}$ to sequences in $\mathscr{A}_{m,n}^\mathbb{Z}$, such that the following characterization holds:

A sequence $w \in \mathscr{A}_{m,n}^\mathbb{Z}$ is in the closure of the set of cutting sequences of bi-infinite linear trajectories on the \bm surface $\M mn$ if and only if there exists a sequence $(s_k)_{k \in \mathbb{N}}$ of indices $1\leq s_k \leq (m-1)(n-1)$, $0\leq s_0 \leq 2n-1$, and a sequence of  letters $a_k \in {\mathscr{A}'}_{m,n}$, such that $w$ can be written as\footnote{The limit is taken along a sequence of \emph{finite} words, for which convergence in $\mathscr{A}_{m,n}^\mathbb{Z}$ to the infinite word $w$ means that the finite words, as $k$ grows, share larger and larger central blocks of letters.}
\begin{equation}\label{limit}
w=  \lim_{k \to \infty} {\bf {T}}_{s_0}\circ \sigma_{s_1} \circ \sigma_{s_2} \circ \dots \circ \sigma_{s_{k}}( a_k).
\end{equation}
\end{theorem}
The above expression is called an  \emph{$\mathcal{S}-$adic expansion} of the word $w$, and this type of characterization, which is well known in the world of word combinatorics, is known as an \emph{$\mathcal{S}-$adic characterization} (see for example \cite{BD} or \cite{fogg}). We emphasize that the notion of $\mathcal{S}-$adic expansions  is used to describe words with \emph{low complexity} since, under some general assumptions, a word with an  $\mathcal{S}-$adic  presentation has \emph{zero entropy} (see Theorem 4.3 in \cite{BD} for a precise statement).  The substitutions $\sigma_i^{m,n}$ and the operators $\Tr i m n$ are explicitly constructed in the paper\footnote{A technical detail is that this $\mathcal{S}-$adic  presentation is possible only at the level of \emph{transitions}, namely  pairs of consecutive letters in $w$; indeed the alphabet $\mathscr{A}'_{m,n}$ on which the substitutions are defined is an alphabet labeling {transitions},  and the operators $\Tr i m n$ (defined in \S \ref{sec:substitutionscharacterization}) simply transform a sequence of transitions into a sequence of letters in $\mathscr{A}_{m,n}$.}.  

Furthermore, in this paper we show that the sequence  $(s_k)_{k \in \mathbb{N}}$ that appears in the $\mathcal{S}-$adic expansion of $w$ is the itinerary of a certain Farey-type map $\FF mn$ on the set of directions. In particular, it is completely determined by knowing the direction  of the trajectory coded by the cutting sequence $w$. This can be used in two  ways: on one hand, given a direction $\theta \in S^1$, one can hence algorithmically produce, by iterating our substitutions, all (finite length blocks of) cutting sequences of trajectories in direction $\theta$. On the other hand, given a sequence $w$ that is the cutting sequence of a trajectory in an unknown direction $\theta$, one can recover the sequence $(s_k)_{k \in \mathbb{N}}$ from $w$ (see the informal discussion after Theorem~\ref{thm:main_characterization}  in this introduction and Section \S~\ref{sec:sectors_sequences} for more details) and hence use the map $\FF mn$ to recover (uniquely, if $w$ is non-periodic) the direction $\theta$ (see Proposition \ref{directionsthm}). 
}

\medskip 

In order to introduce the problem of charactarization of cutting sequences and motivate the reader, we start this introduction by recalling in \S \ref{sec:Sturmian} the geometric construction of \emph{Sturmian sequences} in terms of coding linear trajectories in a square,  and then  both  their characterization using derivation, as described by Series,  and their  $\mathcal{S}-$adic presentation by a system of substitutions. We then recall in \S \ref{sec:polygons} how this type of description was recently generalized by several authors to the sequences coding linear trajectories in regular polygons. Finally, in \S \ref{sec:ourresults} we explain why \bm sequences are the next natural example to consider to extend these symbolic characterizations, and state a simple case of our main result. 

\subsection{Sturmian squences} \label{sec:Sturmian}
\emph{Sturmian sequences} are an important class of sequences in two symbols that often appear in mathematics, computer science and real life. They were considered by Christoffel \cite{C:obs} and Smith \cite{S:note} in the 1870's, by Morse and Hedlund \cite{MH:sym} in 1940 and by many authors since then (see \cite{fogg} for a contemporary account and \cite{Alg} for a historical survey). Sturmian sequences are interesting because of their geometric origin, and are also of interest because they give the simplest non-periodic infinite sequences (see \cite{CH:seq}), having the lowest possible complexity.\footnote{For each $n$ let $P(n)$ be the number of possible strings of length $n$. For Sturmian sequences, $P(n)=n+1$.} They admit the following geometric interpretation:

 Consider an \emph{irrational line}, i.e. a line in the plane in a direction $\theta$ such that $\tan \theta$ is irrational, in a \emph{square grid} (Figure \ref{square1}). As we move along the line, let us record with a $0$ each time we hit a horizontal side and with a $1$ each time we hit a vertical side. We get in this way a bi-infinite sequence of $0$s and $1$s which, up to choosing an origin arbitrarily, we can think of as an element in $\{0,1\}^{\mathbb{Z}}$. The sequences obtained in this way as the line vary among all possible irrational lines are exactly all \emph{Sturmian sequences}.  (For further reading, see the beautiful expository paper by Series  \cite{Series}, and also the introduction of \cite{SU2}.)

Equivalently, by looking at a fundamental domain of the periodic grid, we can  consider a square with opposite sides identified by translations. 
We define a \emph{linear trajectory} in direction $\theta$ to be a path that starts in the interior of the square and moves with constant velocity vector making an angle $\theta$ with the horizontal, until it hits the boundary, at which time it re-enters the square at the corresponding point on the opposite side and continues traveling with the same velocity.  
For an example of a trajectory see Figure \ref{square1}. 
We will restrict ourselves to trajectories that do not hit vertices of the square.  As in Figure \ref{square1}, let us  label by $0$ and $1$ respectively its horizontal and vertical sides.\footnote{Since squares (or, more generally, parallelograms) tile the plane by translation, the cutting sequence of a trajectory in a  square (parallelogram) is the same than the cutting sequence of a straight line in $\mathbb{R}^2$ with respect to a square (or affine) grid.}  The \emph{cutting sequence} $c(\tau)$ associated to the linear trajectory $\tau$ is the bi-infinite word  in the symbols (edge labels, here $0$ and $1$) of the alphabet $\Lalphabet$, which is obtained by reading off the labels of the pairs of identified sides crossed by the trajectory $\tau$ as time increases. 
 
\smallskip
Let us explain now how to characterize Sturmian sequences. One can assume without loss of generality (see \cite{SU2} for details) that $0\leq \theta \leq \pi/2$. If $0\leq \theta \leq \pi/4$, as in Figure \ref{square1}, the cutting sequence does not contain the subword $00$,  and if $\pi/4 \leq \theta \leq \pi/2$, it does not contain the subword $11$. 
Let us say that a word $w \in \{0,1\}^{\mathbb{Z}}$ is \emph{admissible} if either it does not contain any subword $00$, so that $0$s separate blocks of $1$s (in which case we say it it admissible of type $1$) or it does not contain any subword $11$ and $1$s separate blocks of $0$s (in which case we say it it admissible of type $0$).

\begin{figure}[!h] 
\centering
\includegraphics[width=300pt]{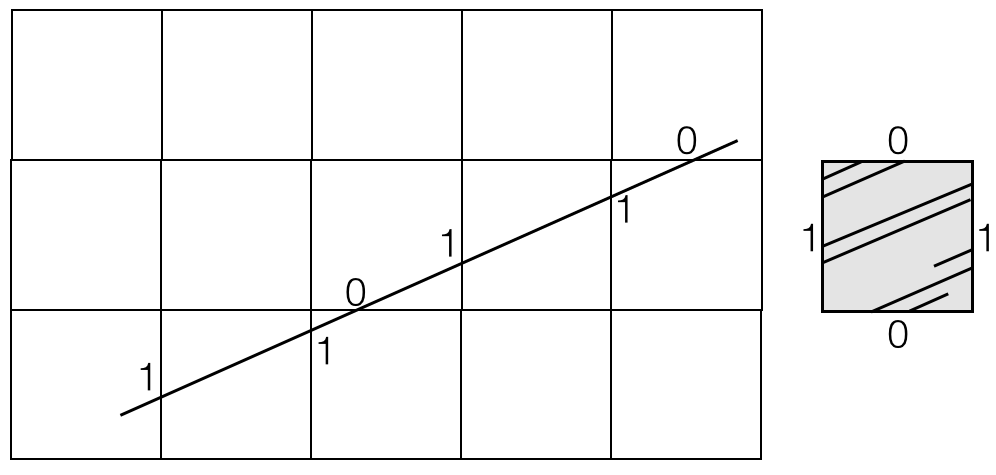}
\begin{quote}\caption{A trajectory with $\theta < \pi/4$ and irrational slope on the square torus \label{square1}} \end{quote}
\end{figure}

Given an admissible word $w$, denote by $w'$ the \emph{derived sequence}\footnote{In this section, we are using the terminology from Series \cite{Series}. } obtained by erasing one $1$ (respectively one $0$) from each block of consecutive $1$'s (respectively $0$'s) if  $w$ is admissible of type $1$ (respectively $0$).  


\begin{ex}\label{sigmaex} A $w$ and its derived sequence $w'$:
\begin{eqnarray}
w &=& \dots 011101111011101110 11110\dots  \nonumber \\ 
w'& = & \dots 0 11\phantom{1}0 111\phantom{1}011\phantom{1}011\phantom{1}0111\phantom{1}0 \dots \nonumber%
\end{eqnarray}
\end{ex}
 
We say (following Series \cite{Series}) that a word is \emph{infinitely derivable} if it is admissible and each of its derived sequences is admissible. It turns out that \emph{cutting sequences of linear trajectories  on the square are infinitely derivable} (see Series \cite{Series} or also the introduction of \cite{SU2}). 
Moreover, the converse 
is \emph{almost} true; the exceptions, i.e. words in $\{0, 1\} ^{\mathbb{Z}}$ which are infinitely derivable and are not cutting sequences such as $\overline{w} = \dots 111101111 \dots $,
 can be explicitly described. 
The space of words has a natural topology that makes it a compact space (we refer e.g.~to  \cite{LM:sym}).  The word $\overline{w}$ is not a cutting sequence, but it has the property that any finite subword can be realized by a finite trajectory. This is equivalent to saying that it is in the \emph{closure} of the space of cutting sequences. In fact, \emph{the closure of the space of cutting sequences is precisely the set of infinitely derivable sequences}.

\smallskip
An alternative related characterization of Sturmian sequences can also be given in terms of substitutions. The definition of substitution is recalled in \S \ref{sec:substitutionscharacterization} (see Definition \ref{def:substitution}). 
Let $\sigma_0$ be   the substitution given by $\sigma_0(0)=0$ and $\sigma_0(1)=10$ and let  $\sigma_1$ be   the substitution given by $\sigma_1(0)=01$ and $\sigma_1(1)=1$. Then, words in Sturmian sequences can be obtained by starting from a symbol (0 or 1) and applying all possible combinations of the substitutions $\sigma_0$ and $\sigma_1$. More precisely, given a Sturmian word $w$ corresponding to a cutting sequence in a direction $0<\theta<\pi/4$, there exists a sequence $(a_i)_{i \in \mathbb{N}} $ with integer entries $a_i \in \mathbb{N}$ such that 

\begin{equation}\label{Sturmian:substitutions_char}
w \in \bigcap_{k \in \mathbb{N}} \sigma_0^{a_0}\sigma_1^{a_1} \sigma_0^{a_2}\sigma_1^{a_3}  \cdots  \sigma_0^{a_{2k}} \sigma_1^{a_{2k+1}}\{0,1\}^{\mathbb{Z}}.
\end{equation}
If $\pi/4<\theta<\pi/2$, the same type of formula holds, but starting with $\sigma_1$ instead of $\sigma_0$. 
Furthermore, $w $ is in the closure of the set of cutting sequence in $\{ 0,1\}^\mathbb{Z}$ if and only if there exists  $(a_i)_{i \in \mathbb{N}} $ with integer entries $a_i \in \mathbb{N}$ such that \eqref{Sturmian:substitutions_char} holds, thus this gives an alternative characterization via substitutions and more precisely  $\mathcal{S}$-\emph{adic} expansions. 


We refer to \cite{BD} for a nice exposition on \emph{$\mathcal{S}$-\emph{adic} systems}, which are a generalization of \emph{substitutive systems} (see also \cite{ferenczi} and \cite{Chap12PF}). While in a substitutive system  one considers sequences obtained as a fixed point of a given substitution and the closure of its shifts,  the sequences studied  in an  $ \mathcal{S}$-\emph{adic system}, 
are obtained by applying products of permutations from a finite set, for example from the set $\mathcal{S} = \{ \sigma_0, \sigma_1\}$ in  \eqref{Sturmian:substitutions_char}. Equivalently, we can write  \eqref{Sturmian:substitutions_char} in the form of a  limit, which is known as $\mathcal{S}$-adic expansion (see \eqref{limit}  
or more in general \cite{BD}). The term $\mathcal{S}-$adic was introduced by Ferenczi in \cite{ferenczi}, and is meant to remind of Vershik \emph{adic} systems \cite{vershik} (which have the same inverse limit structure) where $\mathcal{S}$ stands for \emph{substitution}.

The sequence of substitutions in an $\mathcal{S}$-adic system 
is often 
 governed by a dynamical system, which in the Sturmian case is a one-dimensional map, i.e. the Farey (or Gauss) map (see  Arnoux's chapter \cite{fogg} and also the discussion in \S 12.1 in \cite{Chap12PF}).  

Indeed, the sequence $(a_i)_{i \in \mathbb{N}}$ in \eqref{Sturmian:substitutions_char}  is exactly the sequence of \emph{continued fraction entries} of the slope of the coded trajectory and hence can be obtained as \emph{symbolic coding of the Farey} (or \emph{Gauss}) \emph{map} (see for example the introduction of \cite{SU}, or \cite{fogg}). There is also a classical and beautiful connection with the geodesic flow on the modular surface (see for example the papers \cite{Series}, \cite{Se:mod}, \cite{Se:sym} by Series).  For more on Sturmian sequences, we  also refer the reader to the excellent survey paper  \cite{fogg} by Arnoux.

\subsection{Regular polygons} \label{sec:polygons}
A natural geometric generalization of the above Sturmian characterization  is the question of \emph{characterizing  cutting sequences of linear trajectories in  regular polygons} (and on the associated  surfaces).

Let ${O_n}$ be a regular $n$-gon.  When $n$ is even, edges come in pairs of opposite parallel sides, so  we can identify opposite parallel sides by translations. When $n$ is odd, no sides are parallel, but we can take two copies of $O_n$ and glue parallel sides in the two copies (this construction can also be done for $n$ even). \emph{Linear trajectories} in a regular polygon are defined as for the square. We will restrict our attention to \emph{bi-infinite trajectories} that never hit the vertices of the polygons. If one labels pairs of identified edges with \emph{edge labels} in the alphabet $\Lalphabet_n =\{0,1, \dots , n-1\}$, for example from the alphabet $\Lalphabet_4:= \{ 0,1,2,3 \}$ when $n=8$ (see Figure \ref{intro-oct}),  one can associate as above to each bi-infinite linear trajectory $\tau$ its \emph{cutting sequence}  $c(\tau)$, which is a sequence in $\Lalphabet_n^{\mathbb{Z}}$.  For example, a trajectory that contains the segment  in Figure \ref{intro-oct} will contain the word $10123$.

\begin{figure}[!h] 
\centering
\includegraphics[width=100pt]{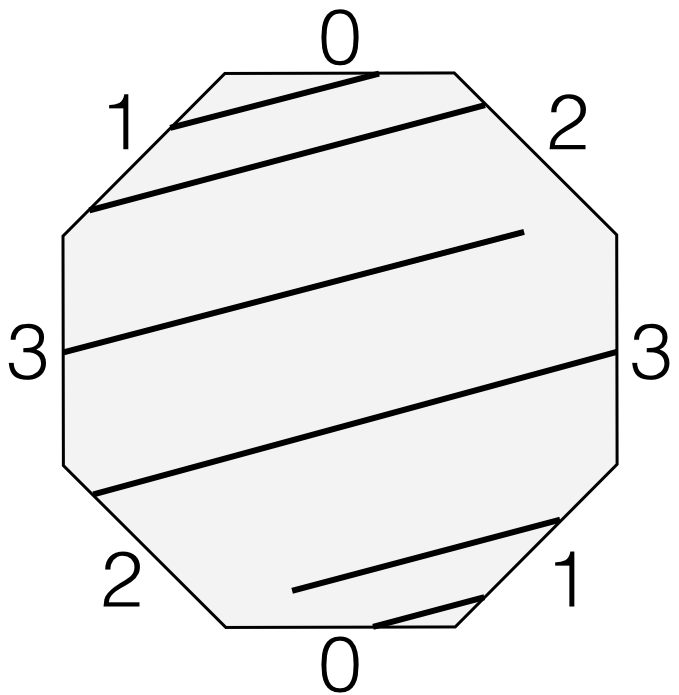} \hspace{0.5in}
\begin{tikzpicture}[node distance=2em]
\node (0) {0};
\node (1) [right=  of 0] {1};
\node (2) [right= of 1] {2};
\node (3) [right= of 2] {3};
\path[thick,-to] 
                 (0) edge [bend left=30] (1)
                 (1) edge [bend left=30] (0)
                 (1) edge [bend left=30] (2)
                 (2) edge [bend left=30] (1)
                 (2) edge [bend left=30] (3)
                 (3) edge [bend left=30] (2)
                 (3) edge [loop right=60] (3);
\end{tikzpicture}
\begin{quote}\caption{A trajectory on the regular octagon surface, and the corresponding transition diagram for $\theta \in [0,\pi/8)$ \label{intro-oct}} \end{quote}
\end{figure}

In the case of the square, identifying opposite sides by translations yields a torus or surface of genus $1$. When $n \geq 4$, one obtains in this way a surface of higher genus.  We call all the surfaces thus obtained (taking one or two copies of a regular polygon) \emph{regular polygonal surfaces}.
 Regular polygonal surfaces inherit from the plane an Euclidean metric (apart from finitely many points coming from vertices), with respect to  which linear trajectories are \emph{geodesics}.  

\smallskip
The full characterization of cutting sequences for the octagon, and more in general for regular polygon surfaces coming from the $2n$-gons, was recently obtained by Smillie and the third author in the paper \cite{SU2}; see also \cite{SU}. Shortly after the first author, Fuchs and Tabachnikov  described  in \cite{DFT} the set of periodic cutting sequences in the regular pentagon, the first author showed in \cite{davis} that the techniques in Smillie and Ulcigrai's work \cite{SU2} can be applied also to regular polygon surfaces with $n$ odd. We now recall the \emph{characterization of cutting sequences for the regular octagon surface} in \cite{SU2}, since it provides a model for our main result. 

One can first describe the set of pairs of consecutive edge labels, called \emph{transitions}, that can occur in a cutting sequence. By symmetry, one can consider only cutting sequences of trajectories in a direction $\theta \in [0,\pi)$ and up to permutations of the labels, one can further assume that $\theta \in [0,\pi/8)$. One can check that the transitions that are possible in this sector of directions are only the ones recorded in the graph in Figure \ref{intro-oct}. Graphs of the same form with permuted edge labels describe transitions in the other sectors of the form  $[\pi i /8 ,\pi (i+1)/8)$ for $i=1,\dots, 7$. We say that a sequence $w \in \Lalphabet_4^\mathbb{Z}$ is \emph{admissible} or more precisely \emph{admissible in sector $i $} if it contains only the transitions  allowed for the sector $[\pi i /8 ,\pi (i+1)/8)$. 

One can then define a \emph{derivation rule}, which turns out to be different than Series' rule for Sturmian sequences, but is particularly elegant. We say that an edge label is \emph{sandwiched} if it is preceded and followed by the same edge label. 
The \emph{derived sequence} of an admissible sequence is then obtained by \emph{keeping only sandwiched edge labels}. 

\begin{ex}\label{generationex} In the following sequence $w$ sandwiched edge labels are  written in bold fonts:
$$  w= \cdots  \mathbf{2}\, 1\, \mathbf{3}\, 122 \,\mathbf{1}\, 2213\, \mathbf{0}\,312213\,\mathbf{0}\,3122\,\mathbf{1}\,221\,\mathbf{3}\,122\,\mathbf{1}\,221\,\mathbf{3} \cdots $$ 
Thus, the derived sequence $w'$ of $w$ will contain  the string
$$w'=\cdots 231001313 \cdots .$$
\end{ex}

One can then prove that cutting sequences of linear trajectories on the regular octagon surface are \emph{infinitely derivable}. Contrary to the Sturmian case, though, this condition is only necessary and fails to be sufficient to characterize the closure of the space of cutting sequences. In \cite{SU2} an additional condition,  \emph{infinitely coherent} (that we do not want to recall here), is defined in order to characterize the closure. It is also shown on the other hand that one can give an $\mathcal{S}$-adic presentation of the closure of the octagon cutting sequences. In \cite{SU2} the language of substitutions was not used, but it is shown that one can define some combinatorial operators called \emph{generations} (which are essentially substitutions on pairs of labels)  and that each sequence in the closure can be obtained by a sequence of generations. One can rewrite this result in terms of substitutions; this is done for the example in the case of the regular hexagon  in \cite{report},  thus obtaining a characterization that generalizes \eqref{Sturmian:substitutions_char} and provides an $\mathcal{S}-$adic presentation, which for a regular $2n$-gon surface  consists of $2n-1$ substitutions.  The $1$-dimensional map that governs the substitution choice is a generalization of the Farey map (called the \emph{octagon Farey map} for $2n=8$ in \cite{SU2}). A symbolic coding of this generalized Farey map applied to the direction of a trajectory coincides with the sequence of sectors in which derived sequences of the trajectory's cutting sequence are admissible. 

\smallskip 
Both in the Sturmian case and for regular polygon surfaces the proofs of the characterizations are based on \emph{renormalization} in the following sense.  
Veech was the first to notice in the seminal paper \cite{Veech} that the square surface and the regular polygon surfaces share some special property that might make their analysis easier. He realized that all these surfaces are rich with \emph{affine symmetries} (or more precisely, of affine diffeomorphisms) and are examples of what are nowadays called \emph{Veech surfaces}  or \emph{lattice surfaces}, see \S \ref{sec:Veech} for definitions. It turns out that these affine symmetries can be used to \emph{renormalize} trajectories and hence produce a characterization of cutting sequences.  In the case of the square torus, they key idea behind a geometric proof of the above mentioned results on Sturmian sequences is the following: by applying an affine map of the plane, a linear trajectory is mapped to a linear trajectory whose cutting sequence is the derived sequence of the original trajectory. From this observation, one can easily show that cutting sequences are infinitely derivable.   In the case of the regular octagon, Arnoux and Hubert have used affine symmetries in \cite{AH:fra} to renormalize directions and define a continued fraction-like map for the octagon, but could not use their renormalization to describe cutting sequences and left this as an open question in  \cite{AH:fra}. 
An important point in Smillie and Ulcigrai's work \cite{SU2, SU} is  to also use  non-orientation-preserving affine diffeomorphisms, since this makes the continued fraction simpler and allows to use an element which acts as a \emph{flip and shear}, which accounts for the particularly simple sandwiched derivation rule. 


\subsection{Our results 
on \bm surfaces}\label{sec:ourresults}
In addition to  the  regular polygon surfaces, there are other known examples (see \S \ref{sec:Veech_history}) of surfaces which,  being rich with affine symmetries,  are   \emph{lattice} (or \emph{Veech}) surfaces (the definition is given in \S \ref{sec:Veech}). A full classification of Veech surfaces  is an ongoing big open question in Teichm\"uller dynamics (see again \S \ref{sec:Veech_history} for some references). 
Two new infinite families of Veech surfaces were discovered almost two decades after regular polygonal surfaces, respectively one by
Irene Bouw and Martin M\"oller \cite{BM} and the other by Kariane Calta \cite{calta} and Curt McMullen \cite{Mc} independently. 

The family found by Irene Bouw and Martin M\"oller was initially described algebraically (see \S \ref{sec:Veech_history}); later, Pat Hooper presented the construction of what we here call \emph{\bm surfaces} as created by identifying opposite parallel edges of a collection of \emph{semi-regular} polygons (see \S \ref{sec:Veech_history} for more detail). We give a precise description in \S \ref{bmdefsection}.  An example is the surface in Figure \ref{intro-bm43}, obtained from two semi-regular hexagons and two equilateral triangles by gluing by parallel translation the sides with the same edge labels. Surfaces in the \bm family  are parametrized by two indices $m, n$, so that the  $\M mn$ \bm surface is glued from 
$m$ polygons, the first and last of which are regular $n$-gons, and the rest of which are semi-regular $2n$-gons. The surface in the example is hence known as $\M 43$.  


\begin{figure}[!h] 
\centering
\includegraphics[width=.8\textwidth]{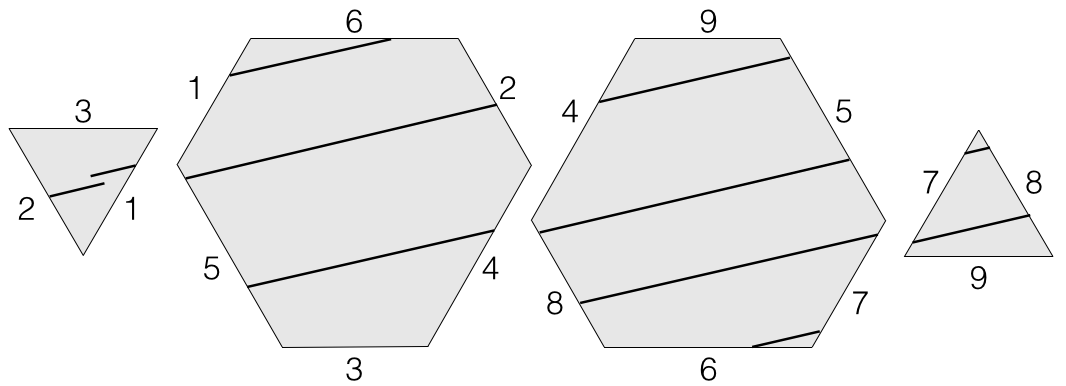}
\begin{quote}\caption{Part of a trajectory on the \bm surface $\M 43$ \label{intro-bm43}} \end{quote}
\end{figure}

\bm surfaces can be thought in some sense as the next simplest classes of (primitive) Veech surfaces after regular polygon surfaces, and the good \emph{next} candidate to generalize the question of  characterizing  cutting sequences. Indeed, the Veech group, i.e. the group generated by the linear parts of the affine symmetries (see \S\ref{sec:Veech} for the definition) of  both regular polygon surfaces and \bm surfaces are \emph{triangle groups}. More precisely, regular $n$-gon surfaces have $(n,\infty, \infty)$-triangle groups as Veech groups, while the Veech groups of  \bm surfaces  are $(m,n, \infty)$-triangle groups for $m$ and $n$ not both even (when $m$ and $n$ are both even, the Veech group has index $2$ inside the $(m,n, \infty)$-triangle group) \cite{Hooper}.  In \cite{D14}, Davis studied cutting sequences on \bm surfaces and analyzed the effect of a \emph{flip and shear} (as in Smillie-Ulcigrai's work \cite{SU2}) in order to define a derivation operator and renormalize trajectories. Unfortunately, with this approach it does not seem possible to cover all angles, apart from the surfaces with $m=2$ or $m=3$ in which all polygons are regular. Part of the reason behind this difficulty is that the Veech group contains two rotational elements, one of order $m$ and one of order $n$, but they do not act simultaneously on the same polygonal presentation.

In this paper, we give a \emph{complete characterization of the cutting sequences on \bm surfaces}, in particular providing an \emph{$\mathcal{S}$-adic presentation} for them. The key idea behind our approach is the following. It turns out that the $\M mn$ and the  $\M nm$ \bm surfaces are intertwined in the sense that they can be mapped to each other by an affine diffeomorphism.\footnote{In other words, they belong to the same Teichm\"uller disk.} While the $\M mn$ surface has a rotational symmetry of order $n$, the  $\M nm$ surface has a rotational symmetry of order $m$. We will call $\M mn$ and the  $\M nm$ \emph{dual \bm surfaces}.  Instead of normalizing using an affine automorphism as in the regular polygon case, we  renormalize trajectories and define associated derivation operators on cutting sequences in two steps, exploiting the affine diffeomorphism between  the $\M mn$ and the  $\M nm$ \bm surfaces. In particular, we map cutting sequences on the $\M mn$ surface to cutting sequences on the  $\M nm$ \bm surface. This allows us the freedom in between to apply the $n$ rotational symmetry and the $m$ rotational symmetry respectively, and this allows us to renormalize all cutting sequences. 

Note that since we frequently use the relationship between the surfaces {\rd \M mn} and {\gr \M nm}, we use the colors red and green to distinguish them throughout the paper, as here and as in Figure \ref{intro34auxtd} below.

We now give an outline of the statement of our main result, with an example in the special case of the $\M 43$ surface. The general results for $\M mn$ surfaces are stated precisely at the end of our paper, in \S \ref{howtolabel}. Let us label pairs of identified edges of  the $\M mn$ surface   with labels in the alphabet   $\LL mn=\{1,2,\dots, (m-1)n\}$. The surface $\M 43$ is for example labeled by $\LL 43=\{1,2,\dots, 9\}$ as in Figure \ref{intro-bm43}. The way to place edge labels for $\M mn$ is described in \S\ref{sec:labelingHooper} and is chosen in a special way that simplifies the later description. 
By applying a symmetry of the surface and exchanging edge labels by permutations accordingly, we can assume without loss of generality that the direction of trajectories we study belongs to the sector $[0, \pi/n]$.

As in the case of the regular octagon, we can first  describe the set of \emph{transitions} (i.e. pairs of consecutive edge labels) that can occur in a cutting sequence. 
For trajectories on  $\M 43$  whose direction belongs to sector $[0, \pi/3]$, the possible transitions are shown in the graph in Figure \ref{intro34auxtd}. The structure of \emph{transition diagrams} $\T i m n$  for trajectories on  $\M mn$  whose direction belong to sector $[\pi i/n, \pi (i+1)/n]$ are described in \S \ref{sec:other_sectors}. 
We say that a sequence $w \in \LL mn^\mathbb{Z}$ is \emph{admissible} (or more precisely \emph{admissible in sector $i $}) 
if it contains only the transitions represented by arrows in the diagram $\T i m n$. 


\begin{figure}[!h] 
\centering
\includegraphics[width=350pt]{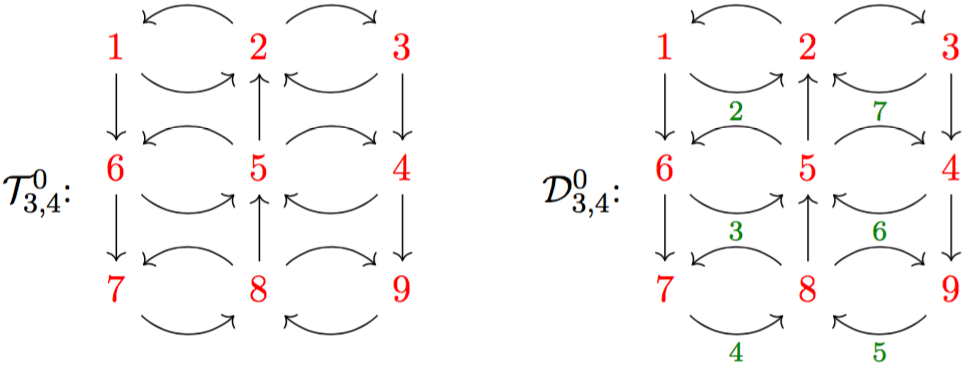}
\begin{quote}\caption{The transition diagram $\T 0 34$ for $\M 34$ and its derivation diagram $\D 0 34$, used to define $\Der 34$ \label{intro34auxtd}} \end{quote}
\end{figure}

We define a  \emph{derivation operator} $\Der mn$, which maps admissible sequences in $\LL mn^\mathbb{Z}$ to (admissible) sequences in $\LL nm^\mathbb{Z}$. The derivation rule for sequences admissible in sector $0$ is described by a labeled diagram as follows. We define \emph{derivation diagrams}  $\D 0 m n$  for the basic sector $[0,\pi/n]$ in which some of the arrows are labeled by edge labels of the dual surface $\M nm$. The derivation diagram for $\M43$ is shown in Figure \ref{intro34auxtd}.  The derived sequence $w'= \Der mn w $
 of a sequence $w$ admissible in diagram $0$ is obtained by reading off only the arrow labels of a bi-infinite path which goes through the vertices of $\D 0 mn$ described by $w$. 

\begin{example}\label{ex:der34}
Consider the trajectory on  $\M 43$ in Figure \ref{intro-bm43}. Its cutting sequence $w$ contains the word $\cdots {\rd 1678785452} \cdots$. This word corresponds to a path on the derivation diagram  $\D 0 43$ in Figure \ref{intro34auxtd}, which goes through the edge label vertices. By reading off the labels of the arrows crossed by this path, we find that $w'= \Der 4 3 w$ contains the word $\cdots {\gr 434761} \cdots$.  
\end{example}

This type of derivation rule is not as concise as for example the \emph{keep the sandwiched labels} rule for regular polygons, but we remark that the general shape of the labeled diagram that gives the derivation rule is quite simple, consisting of an $(m-1)\times n$ rectangular diagram with vertex labels and arrows labels snaking around as explained in detail in \S \ref{sec:labeled_def}. 

We say that a sequence  $w \in \LL mn^\mathbb{Z}$  is \emph{derivable} if it is admissible and its derived sequence  $\Der mn w \in \LL nm^\mathbb{Z}$ is admissible (in one of the diagrams of the dual surface $\M nm$). The derivation operator is defined in such a way that it admits the following geometric interpretation: if $w$ is a cutting sequence of a linear trajectory on $\M mn$, the derived sequence  $\Der mn  w$ is the cutting sequence of a linear trajectory on the dual surface $\M nm$ (see  \S \ref{sec:der_ex} for this geometric interpretation). In the special case $m=4,n=3$ this result was proved by the second author in \cite{report} (see also the Acknowledgments), where the derivation diagram in Figure \ref{intro34auxtd} was first computed.
 

In order to get a derivation from sequences $\LL mn^\mathbb{Z}$ back to itself, we compose this derivation operator with its dual operator $\Der nm$: we first \emph{normalize} the derived sequence, i.e. apply a permutation to  the labels to reduce to a sequence admissible in $\T 0 m n$. The choice of the permutations used to map sequences admissible in $\T i m n$ to sequences admissible in $\T 0 m n$ is explained in \S \ref{sec:normalization}. We can then apply $\Der nm$. This composition maps  
cutting sequences of trajectories on $\M mn$ first to cutting sequences of trajectories on $\M nm$ and then back to cutting sequences on $\M mn$.

We say that a sequence in $\LL mn^\mathbb{Z}$ is \emph{infinitely derivable} if by alternatively applying normalization and the two dual derivation operators $\Der mn$ and $\Der nm$ one always obtains sequences that are admissible (see formally Definition \ref{def:infinitely_derivable} in \S\ref{sec:infinitely_derivable}). With this definition, we then have our first result:

\begin{prop}\label{thm:main_infinite_diff}
Cutting sequences of linear trajectories on \bm surfaces are infinitely derivable.
\end{prop}
As in the case of regular polygon surfaces, this is only a necessary and not a sufficient condition to characterize the closure of cutting sequences. We then define in \S \ref{sec:generation}  \emph{generation} combinatorial operators that invert derivation (with the additional knowledge of starting and arrival admissibility diagram) as in the work by Smillie-Ulcigrai \cite{SU,SU2}. Using these operators, one can obtain a characterization, which we then also convert in \S \ref{sec:substitutionscharacterization} into a statement using substitutions. More precisely, we explain how to explicitly construct, for every \bm surface \M mn, $(m-1)(n-1)$ substitutions $\sigma_i$ for $1\leq i \leq (m-1)(n-1)$ on an alphabet of cardinality  $N=N_{m,n}:= 3mn-2m-4n+2$  and an operator $\Tr i mn$ that maps admissible sequences in the alphabet of cardinality $N$ (see details in \S \ref{sec:generation_characterization})  to admissible sequences on $\T i mn$ such that:
\begin{theorem}\label{thm:main_characterization}
A sequence $w$ is in the closure of the set of cutting sequences on the \bm surface \M mn if and only if there exists a sequence $(s_i)_{i\in \mathbb{N}}$ with $s_i \in \{1, \dots, (m-1)(n-1)\}$ and $0\leq s_0 \leq 2n -1$ such that
\begin{equation}\label{intersection}
w \in \bigcap_{k \in \mathbb{N}} \Tr {s_0} mn \sigma_{s_1} \sigma_{s_2} \cdots  \sigma_{s_{k}}  \{1, \dots, N \}^{\mathbb{Z}}.
\end{equation}
Furthermore, when $w$ is a non periodic cutting sequence the sequence $(s_i)_{i\in \mathbb{N}}$ can be uniquely recovered from the knowledge of $w$.
\end{theorem}
{We remark that \eqref{intersection} gives  the $\mathscr{S}-$adic presentation as a limit of finite sequences, which is equivalent to the result given in \eqref{limit} in the statement of Theorem \ref{firstversion} at the  beginning of the paper.  }   
Theorem \ref{thm:main_characterization}, which is proved as Theorem \ref{thm:substitutionscharacterization} in  \S \ref{sec:generation_characterization},\footnote{We remark that  Theorem \ref{thm:substitutionscharacterization}  the notation used is slightly different than the statement above, in particular the substitutions are labeled by two indices $i,j$ and similarly the entries $s_i$ are pairs of indices which code the two simpler Farey maps, see \S \ref{sec:generation_characterization} for details.} and the relation with itineraries mentioned above, which is proved by  Proposition \ref{prop:itineraries_vs_sectors}, provide the desired $\mathcal{S}-$adic characterization of \bm cutting sequences (recall the discussion on  $\mathcal{S}-$adic systems in the paragraph following equation \eqref{Sturmian:substitutions_char} previously in this introduction) and is indeed the main result of our work.

We remark also that Theorem \ref{thm:main_characterization} provides an algorithmic way to test (in infinitely many steps) if a sequence belongs to the closure of cutting sequences. The sequence  $(s_i)_{i\in \mathbb{N}}$ can be recovered algorithmicaly   when $w$ is a cutting sequence and hence infinitely derivable and is the sequence of indices of diagrams in which the successive derivatives of $w$ are admissible
(see Definition \ref{def:seq_sectors} in Section \ref{sec:sectors_sequences}).

Furthermore, the sequence  $(s_i)_{i\in \mathbb{N}}$ is governed by a $1$-dimensional dynamical system as follows. There exists a piecewise expanding map $\FF mn$, which we call the \emph{\bm Farey map}, which has \mbox{$(n-1)(m-1)$}
branches, such that if $w$ is the cutting sequence of a trajectory in direction $\theta$, the sequence $(s_i)_{i\in \mathbb{N}}$ is given by the symbolic coding of the orbit of $\theta$ under $\FF mn$. More precisely, it is the itinerary of $((\FF mn )^k(\theta))_{k \in \mathbb{N}}$  with respect to the natural partition of the domain of $\FF mn$ into monotonicity intervals. This is explained in \S \ref{farey}, where the map $\FF mn$ is defined as composition of two simpler maps, describing the projective action on directions of the affine diffeomorphisms from $\M mn$ to $\M nm$ and from $\M nm$ to $\M mn$ respectively. 

The \bm Farey map can be used to define a generalization of  the continued fraction expansion (see \S \ref{sec:direction_recognition}) which can be then in turn used to recover the direction of a 
 trajectory corresponding to a given cutting sequence. More precisely, the itinerary of visited sectors for the \bm Farey map described above gives us the indices for the \emph{\bm additive continued fraction expansion} of the direction $\theta$ (Proposition \ref{directionsthm}). 



\smallskip
{
Finally, let us conclude by commenting that, even though our characterization of cutting sequences is still only specific to the \bm   translation surfaces, we believe that this new family of Veech surfaces contains a substantial new layer of complexity and that our methods (briefly described in the next section) actually provide insight on how to potentially characterize cutting sequences on any Veech surface (see the very final subsection of the Appendix \ref{teich} for more insight in this direction). In particular, a key novelty that distinguishes our methods from the characterization of Sturmian sequences, or of  cutting sequences on regular polygons (e.g. from \cite{SU}) is that, though as in the other works we crucially exploit renormalization via affine diffeomorphisms, we do not directly describe  the action of (suitable generators of) the group of affine diffeomorphisms of $\M mn$, but we first describe the action on cutting sequences of intermediate and simpler maps, i.e. diffeomorphisms from the \bm surface $\M mn$ to an affine image of the \emph{dual} \bm surface $\M mn$. An interpretation of these elementary moves acting on the Teichueller disk of $\M mn$ is given in  Appendix \ref{teich}.}

\subsection{Structure and outline of the paper}
Let us now comment on some of the tools and ideas used in the proofs and describe the structure of the rest of the paper. As a general theme throughout the paper, we will first describe properties and results on an explicit example,  then give general results and proofs for the general case of $\M mn$. The example we work out in detail is the characterization of cutting sequences on the \bm surface $\M 43$ which already appeared in this introduction, exploiting also  its dual \bm surface $\M 34$. 
This is the first case that could not be fully dealt with by D. Davis in \cite{D14}.\footnote{On the other hand derivation on $\M 34$ can be fully described using Davis' flip and shear because whenever $m=3$, all the polygons are regular.}

In the next section, \S \ref{background}, we include some background material, in particular the definition of translation surface (\S \ref{sec:trans_surf}), affine diffeomorphisms (\S \ref{subsec:affine}) Veech group and Veech (or lattice) surfaces (\S \ref{sec:Veech}) and  a brief list of known classes of Veech surfaces (\S \ref{sec:Veech_history}). 
In \S \ref{bmdefsection} we then give the formal definition of \bm surfaces, describing the number and type of semi-regular polygons to form \M mn and giving formulas for their side lengths. We also describe their Veech group (see \S \ref{veechofbm}).

The main tool used in our proofs is the presentation of \bm surfaces through \emph{Hooper diagrams}, introduced by P. Hooper in his paper \cite{Hooper} and originally called \emph{grid graphs} by him.
{ These are decorated diagrams that encode combinatorial information on how to build \bm surfaces via the Thurston-Veech construction.}  
The surface $\M mn$ can be decomposed into \emph{cylinders} in the horizontal direction, and in the direction of angle $\pi/n$. The Hooper diagram encodes how these transversal cylinder decompositions intersect each other. In \S \ref{hooperdiagrams} we first explain how to construct a Hooper diagram starting from a \bm surface, while in \S \ref{hoopertobm} we formally define Hooper diagrams and then explain how to construct a \bm surface from a Hooper diagram.  

As we already mentioned in the introduction, the definition of the combinatorial derivation operator is 
motivated by the action on cutting sequences of  affine diffeomorphism (a \emph{flip and shear}) between $\M mn$ and its dual \bm surface $\M nm$. This affine diffeomorphism is described in \S \ref{sec:affine}. A particularly convenient presentation is given in what we call the \emph{orthogonal presentation}: this is an affine copy of $\M mn$, so that the two directions of cylinder decomposition forming an angle of $\pi/n$ are sheared to become orthogonal. In this presentation, both $\M mn$ and $\M nm$ can be seen simultaneously as diagonals of rectangles on the surface (that we call \emph{basic rectangles}, see Figure \ref{hexort1}). 

In \S \ref{stairsandhats} a useful tool for later proofs is introduced: we describe a local configuration in the Hooper diagram, that we call a \emph{hat} (see Figure \ref{hat} to understand choice of this name)
 and show that it translates into a \emph{stair} configuration of basic rectangles in the orthogonal presentation mentioned before.  Proofs of both the shape and labeling of transition diagrams and of derivation rules exploit the local structure of Hooper diagrams by switching between hat and stairs configurations. 

Section \S \ref{transitiondiagrams} is devoted to transition diagrams: we first explain our way of labeling edges of \bm surfaces. This labeling, as mentioned before, works especially well with Hooper diagrams. The structure of transition diagrams is then described in \S \ref{sec:labeled_def} (see Theorem \ref{tdtheorem}) and proved in the later sections using hats and stairs. In the same sections we prove also that derivation diagrams describe intersections with sides of the affine image of the dual \bm surface, which is a key step for derivation.

In Section \S \ref{derivation} we describe the \emph{derivation process} obtained in two steps, 
 by first deriving cutting sequences on \M mn to obtain cutting sequences on the dual surface \M nm (see \S \ref{sec:der_general}) and  then, after \emph{normalizing} them (see \S \ref{sec:nor}), deriving them another time but this time applying the \emph{dual} derivation operator.
This two-step process of derivation and then normalization is called \emph{renormalization}.
In \S \ref{farey} we define a one-dimensional map, called the \emph{Bouw-M\"oller Farey map}, that describes the effect of renormalization on the direction of a trajectory.

In \S \ref{sec:characterization} we \emph{invert} derivation through generation operators. This allows to prove the characterization in \S \ref{sec:generation}, where first the characterization of \bm cutting sequences through generation is proved in \S\ref{sec:generation_characterization}, then the version using substitutions is obtained in \S\ref{sec:substitutionscharacterization}, see Theorem \ref{thm:substitutionscharacterization}.



\subsection{Acknowledgements}
The initial idea of passing from $\M mn $ to $\M nm$ to define derivation in \bm surfaces came from conversations between the third author and John Smillie, whom we thank also for explaining to us Hooper diagrams. 
We also thank Samuel Leli\`evre, 
Pat Hooper, Rich Schwartz and Ronen Mukamel for useful discussions and Alex Wright and Curt McMullen for their comments on the first version of this paper. 

A special case  of the derivation operator defined in this paper (which provided the starting point for our work) was worked out by the second author for her Master's thesis \cite{report} during her research project under  the supervision of the third author. We thank Ecole Polytechnique and in particular Charle Favre for organizing and supporting this summer research project and the University of Bristol for  hosting her as a visiting student.   

The collaboration that led to the present paper was made possible by the support of ERC  grant ChaParDyn, which provided funds for a research visit  of the three authors at the University of Bristol, and by the hospitality during the ICERM's workshop \emph{Geometric Structures in Low-Dimensional Dynamics} in November 2013, and the conference \emph{Geometry and Dynamics in the Teichm\"uller space} at CIRM in July 2015, which  provided excellent conditions for continued collaboration.

I. Pasquinelli is currently supported by an EPSRC Grant.
C. Ulcigrai is currently supported by ERC Grant ChaParDyn.  


\section{Background} \label{background}
In this section 
 we present some general background on the theory of translation surfaces, in particular giving the definition of translation surfaces (\S \ref{sec:trans_surf}), of  affine deformations and of  Veech groups  (\S \ref{sec:Veech}) and we briefly list known examples of Veech surfaces (\S \ref{sec:Veech_history}).

\subsection{Translation surfaces and linear trajectories}\label{sec:trans_surf}

The surface  $T$  obtained by identifying opposite parallel sides of the square, and the surface $\mathcal O$ obtained by identifying opposite parallel sides of the regular octagon,   are examples of  translation surfaces. The surface $T$ has genus $1$, and the surface $\mathcal O$ has genus $2$. Whenever we refer to a translation surface $S$, we will have in mind a particular collection of polygons in $\mathbb{R}^2$ with identifications. We define translation surfaces as follows:

\begin{definition}\label{translationsurface}
A \emph{translation surface} is a collection of polygons $P_j$ in $\R^2$, with parallel edges of the same length identified, so that
\begin{itemize}
\item edges are identified by maps that are restrictions of translations,
\item every edge is identified to some other edge, and 
\item when two edges are identified, the outward-pointing normals point in opposite directions. 
\end{itemize}
If $\sim$ denotes the equivalence relation coming from identification of edges, then we define the surface $S=\bigcup P_j/\sim$. 
\end{definition}


Let $\Sing$ be the set of points corresponding to vertices of polygons, which we call \emph{singular points}.\footnote{Standard usage says that such a point is singular only if the angle around it is greater than $2\pi$, but since all of our vertices satisfy this, we call all such points singular points.}

We will consider geodesics on translation surfaces, which are straight lines: any non-singular point has a neighborhood that is locally isomorphic to the plane, so geodesics locally look like line segments, whose union is a straight line. We call geodesics \emph{linear trajectories}. We consider trajectories that do not hit singular points, which we call   \emph{bi-infinite} trajectories. 

A trajectory that begins and ends at a singular point is a \emph{saddle connection}. Every periodic trajectory  is parallel to a saddle connection, and is contained in a maximal family of parallel periodic trajectories of the same period. This family fills out a \emph{cylinder} bounded by saddle connections.

 A \emph{cylinder decomposition} is a partition of the surface into parallel cylinders. The surfaces that we consider, \bm surfaces, have many cylinder decompositions (see Figure \ref{octcyl}). For a given cylinder, we can calculate the \emph{modulus} of the cylinder, which is ratio of the width (parallel to the cylinder direction) to the height (perpendicular to the cylinder direction). For the cylinder directions we use on \bm surfaces, all of the cylinders have the same modulus (see Theorem \ref{modthm}, proven in \cite{D14}).

\subsection{Affine deformations and affine diffeomorphisms}\label{subsec:affine}

Given  $\nu \in GL(2,\mathbb{R})$, we denote by $\nu  P \subset \mathbb{R}^2$ the image of $P \subset \mathbb{R}^2$ under the linear map $\nu$. Note that parallel sides in $P$ are mapped to parallel sides in $\nu P$. If $S $ is obtained by gluing the polygons $P_1, \ldots, P_n$, we define a new translation surface that we will denote by $\nu \cdot S$,  by gluing the corresponding sides of $\nu \octcdot P_1, \dots, \nu \octcdot P_n $. 
The map from the surface $S$ to the surface $\nu \cdot S$, which is given by the restriction of the linear map $\nu$ to the polygons $P_1 , \dots ,  P_n$, will be called the \emph{affine deformation given by $\nu$}.



Let $S$ and $S'$ be translation surfaces.  Consider a homeomorphism $\Psi$ from $S$ to $S'$ that takes $\Sing$ to $\Sing'$ and is a diffeomorphism outside of $\Sing$. We can identify the derivative $D\Psi_p$ with an element of $GL(2,\mathbb{R})$. We say that $\Psi$ is an \emph{affine diffeomorphism} if the derivative $D\Psi_p$ does not depend on $p$. In this case we write $D\Psi$ for $D\Psi_p$.
The affine deformation $\Phi_\nu$ from $S$ to  $\nu \cdot S$ described above is an example of an affine diffeomorphism. In this case $D\Phi_\nu=\nu$.

We say that $S$ and $S'$ are \emph{affinely equivalent} if there is an affine diffeomorphism $\Psi$ between them.  
We say that $S$ and $S'$ are \emph{translation equivalent} if they are affinely equivalent with $D\Psi=Id$.  If $S$ is given by identifying sides of polygons $P_j$ and $S'$ is given by identifying sides of polygons $P'_k$  then a translation equivalence $\Upsilon$ from $S$ to $S'$ can be given by a ``\emph{cutting and pasting}'' map. That is to say we can subdivide the polygons $P_j$ into smaller polygons and define a map $\Upsilon$ so that  the restriction of $\Upsilon$ to each of these smaller polygons is a translation and the image of $\Upsilon$ is the collection of polygons $P'_k$. 


An affine diffeomorphism from $S$ to itself is an \emph{affine automorphism}. The collection of affine diffeomorphisms is a group which we denote by $\Aff(S)$. 
If $S$ is given as a collection of polygons with identifications then we can realize an affine automorphism of $S$ with derivative $\nu$ as a composition of a map $\Psi_\nu:S\to\nu\cdot S$ with a translation equivalence, or cutting and pasting map, $\Upsilon:\nu\cdot S \to S$. 

\subsection{The Veech group and Veech surfaces}\label{sec:Veech}
The Veech homomorphism is the homomorphism $\Psi\mapsto D\Psi$  from $\Aff(S)$ to $GL(2,\R)$. The image of this homomorphism lies in the subgroup of matrices with determinant $\pm1$ which we write as $SL_{\pm}(2,\mathbb{R})$. We call \emph{Veech group} and we denote by $V(S)$ the image of $\Aff(S)$ under the Veech homomorphism.
It is common to restrict to orientation-preserving affine diffeomorphisms in defining the Veech group, but 
since we will make essential use of orientation-reversing affine automorphisms, we will use the term \emph{Veech group} for the larger group $V(S)$. Note that the term \emph{Veech group} is used by some authors to refer to the image of the group of orientation-preserving affine automorphisms in the projective group $PSL(2,\R)$. 


A translation surface $S$ is called a \emph{Veech surface} if 
$V(S)$ is a lattice in $SL_{\pm}(2, \mathbb{R})$. The torus $T^2=\mathbb{R}^2 / \mathbb{Z}^2$ is an example of a Veech surface whose  Veech group is $GL(2, \mathbb{Z})$. Veech proved  more generally that all  translation surfaces obtained from regular polygons are Veech surfaces.
Veech surfaces satisfy the \emph{Veech dichotomy} (see \cite{Veech}, \cite{Vorobets}) which says that if we consider a direction $\theta$ then one of the following two possibilities holds: either there is a saddle connection in direction $\theta$ and the surface decomposes as a finite union of cylinders each of which is a union of a family of closed geodesics in direction $\theta$, or each trajectory in direction $\theta$ is dense and uniformly distributed.

We will use the word \emph{shear} to denote an affine automorphism of a surface whose derivative is $\sm 1s01$ for some real number $s$. If a translation surface admits a shear, we can decompose it into cylinders of commensurable moduli, so a power of the shear acts as a Dehn twist in each cylinder.


\subsection{Known examples of Veech surfaces}\label{sec:Veech_history}

Several families of Veech surfaces are known. A brief history of known Veech surfaces is as follows.

\begin{itemize}
\item The simplest example of a Veech surface is the square, with pairs of parallel sides identified to create the square torus.
\item Covers of the square torus, called \emph{square-tiled surfaces}, are created by identifying opposite parallel edges of a collection of congruent squares. Eugene Gutkin and Chris Judge \cite{GJ} showed that square-tiled surfaces are equivalent to those surfaces whose Veech group is \emph{arithmetic}, i.e. commensurable with $SL(2,\ZZ)$. Subsequently, Pascal Hubert and Samuel Leli\`evre showed that in genus $2$, all translation surfaces in $H(2)$ that are tiled by a prime number $n > 3$ of squares fall into exactly two Teichm\"uller discs.
\item William Veech was the first to define in \cite{Veech} Veech groups and lattice surfaces, and to prove that all regular polygon surfaces are Veech surfaces and satisfy the Veech dichotomy described above.

\item Clayton Ward  discovered a new family of Veech surfaces  about $10$ years after regular polygonal surfaces \cite{Ward}.  These surfaces are created by identifying opposite parallel edges of three polygons: two regular $n$ gons and a regular $2n$-gon (see Figure \ref{m34} for the case when $n=4$.)    Ward's surfaces turn out to be a special case of \bm surfaces, those made from exactly $3$ polygons, the $\M3n$ family.

\item Veech surfaces are related to billiards on triangles; we will not describe the correspondence here. Rick Kenyon and John Smille \cite{KS} showed that, other than the triangles corresponding to the examples above, only three other triangles correspond to Veech surfaces. Two of these were already known to Yaroslav Vorobets \cite{Vorobets}.

\item  Kariane Calta \cite{calta} and Curt McMullen \cite{Mc}  discovered independently infinitely many new Veech surfaces in genus $2$, each of which can be presented as an L-shaped polygon with certain integer measurements in a quadratic vector field. 

\item {In \cite{McP}, McMullen discovered a new infinite family of primitive Teichm\"uller curves in $\mathcal M_g$ for $g=2,3,4$, as well as new closed $SL(2,\R)$-invariant loci in the space of holomorphic 1-forms $\Omega \mathcal M_g$, $g \leq 5$ and new Teichm\"uller curves generated by strictly quadratic differentials.  
All these new examples are built using a construction that generalizes  to higher genus the work he did for genus $2$ using Jacobians with real multiplication (see \cite{McJ}).
}
\item  Irene Bouw and Martin  M\"oller discovered a new family of Veech curves (i.e. quotients of $SL(2, \mathbb{R})$ by a lattice Veech group) with triangular Veech groups in \cite{BM}.  Then Pat Hooper  in \cite{Hooper} showed that special points on these Veech curves  can be obtained by gluing  semi-regular polygons; see the definition given in the next section. In this paper, we will call \bm surfaces this family of Veech surfaces obtained by gluing semi-regular polygons (as it has been done often in previous literature).  We remark that Hooper showed that the Teichm\"uller curves associated to his semi-regular polygon surfaces were the same as Bouw and M\"oller's Veech curves in many cases, with a few exceptions. Later, Alex Wright \cite{Wright} showed this equality in all the remaining cases. We remark also that while Ward's surfaces are always glued from exactly $3$ polygons (they correspond as mentioned above to the $\M3n$ \bm family), \bm surfaces 
can be obtained by gluing  any number $m\geq 2$ of (semi-regular) polygons.

\item Hooper found that the $(\pi/12, \pi/3, 7\pi/12)$ triangle has the lattice property, and computed its Veech group. The surface unfolded from this triangle is one of the Veech surfaces of genus $4$ discovered by McMullen in \cite{McJ}), described above, and Hooper showed that it comes from a billiard in this triangle (see \cite{Hooper1}).

\end{itemize}
Providing a full classification of Veech surfaces is a big open question in Teichm\"uller dynamics, since Veech surfaces correspond indeed to closed $SL(2, \mathbb{R})$-orbits and hence are the smallest orbit closures of the $SL(2, \mathbb{R})$ action on the moduli space of Abelian differentials. Several very recent results are in the direction of proving that there exists only finitely many Veech surfaces 
 in several strata of translation surfaces, see for example  \cite{AN,ANW,BHM,LN, LNW, MaW,NW}. 

\subsection{Bouw-M\"oller surfaces: semi-regular polygonal presentation} \label{bmdefsection}

We will now describe the polygonal presentation of the \bm surfaces, given by Pat Hooper \cite{Hooper}. We create the surface $\M m n$ by identifying opposite parallel edges of a collection of $m$ \emph{semi-regular} polygons that each have $2n$ edges. 


A \emph{semi-regular polygon} is an equiangular polygon with an even number of edges. Its edges alternate between two different lengths. The two lengths may be equal (in which case it is a regular $2n$-gon), or one of the lengths may be $0$ (in which case it is a regular $n$-gon).

\begin{example} \label{m43ex}
The \bm surface \M 4 3 ($m=4$, $n=3$) is made of $4$ polygons, each of which have $2n=6$ edges (Figure \ref{m43}). From left to right, we call these polygons $P(0), P(1), P(2), P(3)$. Polygon $P(0)$ has edge lengths $0$ and $\sin\pi/4=1/\sqrt{2}$, polygon $P(1)$ has edge lengths $1/\sqrt{2}$ and $\sin(\pi/2)=1$, polygon $P(2)$ has edge lengths $1$ and $1/\sqrt{2}$, and polygon $P(3)$ has edge lengths $1/\sqrt{2}$ and $0$. 

\begin{figure}[!h] 
\centering
\includegraphics[width=350pt]{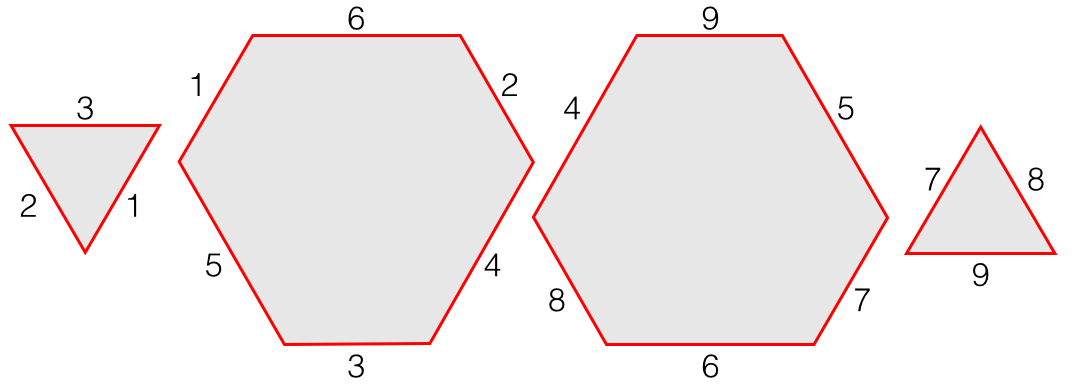}
\begin{quote}\caption{The \bm surface $\M 43$ with \mbox{$m=4$}, \mbox{$n=3$} is made from two equilateral triangles and two semi-regular hexagons. Edges with the same label are identified. \label{m43}} \end{quote}
\end{figure}

\end{example}

 
Definition \ref{semiregular} gives an explicit definition of an equiangular $2n$-gon whose edge lengths alternate between $a$ and $b$:

\begin{definition} \label{semiregular}
Let $P_n (a,b)$ be the polygon whose edge vectors are given by:
\[
\mathbf{v}_i = 
\begin{cases}
 a \ [\cos \frac{i\pi}{n},\sin  \frac{i\pi}{n}] & \text{if }i\text{ is even} \\
 b\ [\cos \frac{i\pi}{n}, \sin  \frac{i\pi}{n}] & \text{if }i\text{ is odd}
\end{cases}
\]
for $i=0,\ldots,2n-1$.  The edges whose edge vectors are $\mathbf{v_i}$ for $i$ even are called \emph{even edges}. The remaining edges are called \emph{odd edges}. We restrict to the case where at least one of $a$ or $b$ is nonzero. If $a$ or $b$ is zero, $P_n (a,b)$ degenerates to a regular $n$-gon.
\end{definition}

In creating polygons for a \bm surface, we carefully choose the edge lengths so that the resulting surface will be a Veech surface (see \S \ref{sec:Veech}). 

\begin{definition}\label{pk} Given integers $m$ and $n$ with at least one of $m$ and $n$ nonzero, we define the polygons $P(0), \ldots, P(m-1)$ as follows. 
\[
P(k) = 
\begin{cases}
 P_n \left(\sin \frac{(k+1)\pi}{m}, \sin \frac{k\pi}{m}\right) & \text{if }m\text{ is odd} \\
P_n \left(\sin \frac{k\pi}{m}, \sin \frac{(k+1)\pi}{m}\right) & \text{if }m \text{ is even and } k \text{ is even} \\
P_n \left(\sin \frac{(k+1)\pi}{m}, \sin \frac{k\pi}{m}\right) & \text{if }m \text{ is even and } k \text{ is odd.} \\
 \end{cases}
\]
\end{definition}

An example of computing these edge lengths was given in Example \ref{m43ex}.

\begin{remark}
$P(0)$ and $P(m-1)$ are always regular $n$-gons, because $\sin\frac{0\pi}m=0$ and $\sin\frac{(m-1+1)\pi}m=0$. If $m$ is odd, the central $2n$-gon is regular, because $\sin (k\pi /m) = \sin ((k+1)\pi/m)$ for $k=(m-1)/2$. Figure \ref{m34} shows both of these in $\M 34$.
\end{remark}

\begin{figure}[!h] 
\centering
\includegraphics[width=290pt]{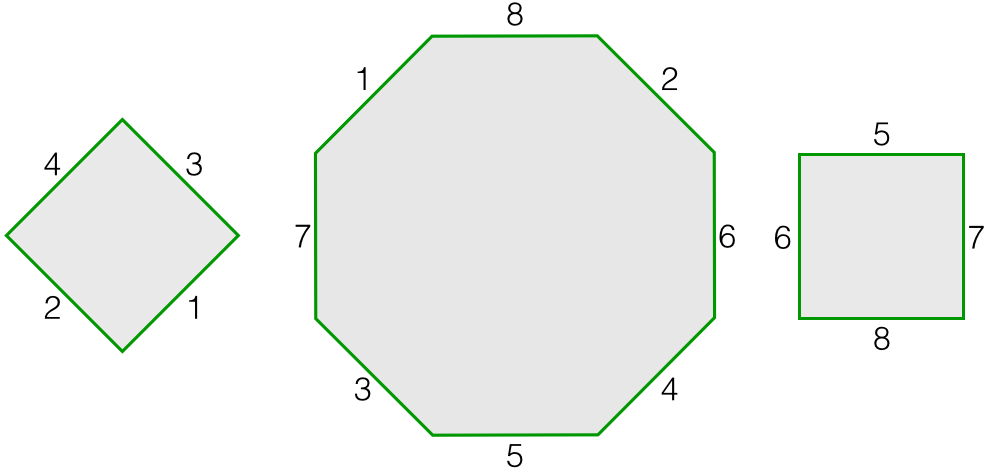}
\begin{quote}\caption{The \bm surface $\M 34$ with \mbox{$m=3$}, \mbox{$n=4$} is made from two squares and a regular octagon. Edges with the same label are identified. \label{m34}} \end{quote}
\end{figure}

Finally, we create a \bm surface by identifying opposite parallel edges of $m$ semi-regular polygons $P(0),\ldots,P(m-1)$. For each polygon in the surface, $n$ of its edges (either the even-numbered edges or the odd-numbered edges) are glued to the opposite parallel edges of the polygon on its left, and the remaining $n$ edges are glued to the opposite parallel edges of the polygon on its right. The only exceptions are the polygons on each end, which only have $n$ edges, and these edges are glued to the opposite parallel edges of the adjacent polygon. These edge identifications are shown in Figures \ref{m43} and \ref{m34}.

We now give the edge identifications explicitly:

\begin{definition}\label{bmdefinition}
The \bm surface $\M mn$ is made by identifying the edges of the $m$ semi-regular polygons $P(0),\ldots,P(m-1)$ from Definition \ref{pk}. 
We form a surface by identifying the edges of the polygon in pairs. For $k$ odd, we identify the even edges of $P(k)$ with the opposite edge of $P(k+1)$, and identify the odd edges of $P(k)$ with the opposite edge of $P(k-1)$. The cases in Definition \ref{pk} of $P(k)$ are chosen so that this gluing makes sense. 
\end{definition}

\begin{theorem}[\cite{D14}, Lemma 6.6]\label{modthm} Every cylinder of the \bm surface $\M mn$ in direction $k\pi/n$ has the same modulus. The modulus of each such cylinder  is \mbox{$2\cot\pi/n +2 \frac{\cos \pi/m}{\sin \pi/n}$}.
\end{theorem}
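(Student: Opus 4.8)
The plan is to compute the width and height of every cylinder in direction $k\pi/n$ directly from the edge vectors of Definitions \ref{semiregular} and \ref{pk}, and to organize the computation so that the dependence on the individual cylinder visibly cancels.

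First I would reduce to the horizontal direction $\theta=0$ using isometric automorphisms of \M mn, which preserve cylinder moduli exactly. Each polygon $P_n(a,b)$ of Definition \ref{pk} is invariant under the rotation by $2\pi/n$, which sends the edge vector $\mathbf{v}_i$ to $\mathbf{v}_{i+2}$ and so preserves the even/odd partition of the edges; this rotation is compatible with the edge identifications of Definition \ref{bmdefinition} and hence defines an automorphism of \M mn carrying direction $k\pi/n$ to direction $(k+2)\pi/n$. Thus all even $k$, and all odd $k$, are equivalent among themselves. To connect $k=0$ with $k=1$, note that rotation by $\pi/n$ sends $\mathbf{v}_i$ to $\mathbf{v}_{i+1}$ and hence $P_n(a,b)$ to $P_n(b,a)$; using $\sin(\pi-x)=\sin x$ one checks from Definition \ref{pk} that this carries $P(k)$ onto a translate of $P(m-1-k)$, so that, composed with the left--right reversal of the polygon chain, it yields an isometric automorphism sending direction $0$ to direction $\pi/n$. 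It therefore suffices to prove both assertions of the theorem for the horizontal cylinders.

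Next I would describe the horizontal cylinder decomposition explicitly. Writing the vertices of $P(k)$ as partial sums of the edge vectors $\mathbf{v}_i$, the only horizontal edges are $\mathbf{v}_0$ and $\mathbf{v}_n$, so each polygon is a ``barrel'' bounded below and above by horizontal edges, and a horizontal geodesic at height $y$ enters through one slanted edge and exits through another. The $y$-coordinates of the vertices cut each polygon into horizontal strips, each strip being the set of geodesics that exit through a fixed slanted edge $\mathbf{v}_j$; this strip has height equal to the vertical extent $\ell_j\sin(j\pi/n)$ of that edge, where $\ell_j\in\{\sin\frac{k\pi}m,\sin\frac{(k+1)\pi}m\}$ according to the parity of $j$. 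A cylinder is then a cyclic concatenation of such strips across several polygons, glued along identified slanted edges; since identified edges are parallel translates, adjacent strips automatically share the same height, so the cylinder has a well-defined height, while its width (circumference) is the sum of the horizontal extents of its constituent strips.

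Finally I would evaluate the common modulus. From the edge-vector description both the height and the width of a horizontal cylinder are trigonometric sums in the edge lengths $\sin\frac{k\pi}m$, $\sin\frac{(k\pm1)\pi}m$ and in the angle $\pi/n$; the strip through $\mathbf{v}_j$ has height proportional to $\sin(j\pi/n)$, and the horizontal extent of the core curve across the flanking polygons carries the matching factor, so that the dependence on the strip index $j$ cancels in the ratio width$/$height. The engine of the computation is the sum-to-product identity $\sin\frac{(k+1)\pi}m+\sin\frac{(k-1)\pi}m = 2\cos\frac\pi m\,\sin\frac{k\pi}m$, relating the lengths of the edges of the two neighbouring polygons glued to $P(k)$, and this is exactly what produces the term $2\frac{\cos \pi/m}{\sin \pi/n}$; the remaining contribution, coming from the two slanted edges of $P(k)$ itself, supplies $2\cot(\pi/n)$. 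After simplification the common value $2\cot\frac\pi n + 2\frac{\cos \pi/m}{\sin \pi/n}$ emerges, manifestly independent of the cylinder. I expect the main obstacle to be the combinatorial bookkeeping of the decomposition --- tracking which slanted edges are identified, and hence the exact sequence of polygons and edges a given horizontal core curve visits before closing up, together with the parity case distinctions of Definition \ref{pk}. Verifying that adjacent strips match in height across every gluing is the geometric heart of the statement: it is this matching, forced by the careful choice of side lengths $\sin(k\pi/m)$, that makes the modulus uniform and is special to \bm surfaces. Once the decomposition is correctly recorded, the modulus computation reduces to the single trigonometric identity above.
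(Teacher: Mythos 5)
First, a point about the comparison itself: the paper does \emph{not} prove this statement. Theorem \ref{modthm} is imported wholesale from Davis \cite{D14} (Lemma 6.6), and the present paper only uses it to obtain the shear in the Veech group. So your proposal has to be judged on its own merits, as a reconstruction of a direct computation on the polygonal presentation — and in that respect it is essentially viable: I checked that your plan closes up. The bookkeeping you defer has a clean answer worth stating: both slanted edges bounding a strip, $\mathbf{v}_j$ and $\mathbf{v}_{2n-j}$, have indices of the same parity, hence are glued to the \emph{same} neighbouring polygon, so every horizontal cylinder consists of \emph{exactly two} strips — strip $j$ of some $P(k)$ and strip $n-j$ of $P(k+1)$. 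Writing $A=\sin\frac{k\pi}{m}$, $B=\sin\frac{(k+1)\pi}{m}$, $C=\sin\frac{(k+2)\pi}{m}$ and $s_j=\sin\frac{j\pi}{n}$, such a cylinder has height $Bs_j$ and constant circumference $\bigl((A+C)s_j+B(s_{j-1}+s_{j+1})\bigr)/\sin\frac{\pi}{n}$, whence
\begin{equation*}
\text{modulus}=\frac{(A+C)s_j+B\,(s_{j-1}+s_{j+1})}{B\,s_j\,\sin\frac{\pi}{n}}
=\frac{2\cos\frac{\pi}{m}+2\cos\frac{\pi}{n}}{\sin\frac{\pi}{n}},
\end{equation*}
using your identity $A+C=2B\cos\frac{\pi}{m}$ \emph{together with} the companion identity $s_{j-1}+s_{j+1}=2s_j\cos\frac{\pi}{n}$. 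Note that two sum-to-product identities are needed, one in $\pi/m$ and one in $\pi/n$ (plus routine cosine-sum formulas for the strip widths); it is the second identity, not ``the two slanted edges of $P(k)$ itself,'' that produces the $2\cot\frac{\pi}{n}$ term.

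There are, however, two genuine gaps. The serious one is your symmetry reduction. Rotation by $2\pi/n$ is indeed an automorphism, but the claimed automorphism ``rotation by $\pi/n$ composed with reversal of the chain'' does not exist when $m$ and $n$ are \emph{both even}: tracking the parity classes of the glued edges shows that the rotated $P(k)$ coincides with $P(m-1-k)$ exactly when $(m-1)(n+1)$ is even, i.e.\ when at least one of $m,n$ is odd. This is not a notational accident but precisely Hooper's exceptional case quoted in \S\ref{veechofbm}: for $m,n$ both even the Veech group has index $2$ in the $(m,n,\infty)$ triangle group, and no isometric affine automorphism carries direction $0$ to direction $\pi/n$ (reflection in the horizontal axis \emph{is} an automorphism for $n$ even, so if a conformal element taking $0$ to $\pi/n$ existed, the order-$2n$ rotation would lie in the Veech group, contradicting the index-$2$ statement). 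The theorem is still true there, but not by symmetry; the repair is to rotate the \emph{presentation} rather than the surface: apply the plane rotation $R_{-k\pi/n}$, observe that the result is again a chain of polygons $P_n(\cdot,\cdot)$ glued along opposite parallel edges with the same sequence of shared edge lengths $\sin\frac{(k+1)\pi}{m}$, and note that the computation above never uses which parity class the glued edges lie in. The second gap is your assertion that the vertex heights cut each polygon into strips each bounded by one \emph{full} edge on each side (equivalently, that left-side and right-side vertex heights of $P_n(a,b)$ agree); this is what legitimizes ``adjacent strips automatically share the same height.'' For $n$ even it follows from central symmetry, but for $n$ odd $P_n(a,b)$ is \emph{not} centrally symmetric (opposite edges have lengths $a$ and $b$), and the height matching is a short calculation resting on $\ell_{n+i}=\ell_{n-i}$ that must actually be carried out. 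Both repairs are easy, but as written the proposal asserts a false symmetry for an infinite family of surfaces and leaves its key geometric lemma unproved.
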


We will use this fact extensively, because it means that one element of the Veech group of $\M mn$ is a \emph{shear}, a parabolic element whose derivative is $\sm 1 s 01$ for some real number $s$. For $\M mn$, \mbox{$s=2\cot\pi/n +2 \frac{\cos \pi/m}{\sin \pi/n}$} as above.

\begin{theorem} [Hooper]\label{thm:Hooper}
$\M mn$ and $\M nm$ are affinely equivalent.
\end{theorem}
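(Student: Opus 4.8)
The plan is to realize the affine equivalence through what is later called the \emph{orthogonal presentation}, exploiting the fact that $\M mn$ carries two transversal cylinder decompositions whose directions differ by $\pi/n$, and that these two decompositions play symmetric roles when $m$ and $n$ are interchanged.

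First I would record the two cylinder decompositions of $\M mn$. Since each polygon $P(k)$ is an equiangular $2n$-gon with edge vectors in the directions $i\pi/n$, the surface decomposes into cylinders in the horizontal direction (angle $0$) and also in the direction of angle $\pi/n$. By Theorem \ref{modthm}, every cylinder in a direction $k\pi/n$ has the same modulus $2\cot \pi/n + 2\frac{\cos\pi/m}{\sin\pi/n}$; in particular the horizontal cylinders and the cylinders in direction $\pi/n$ share one common modulus. I would then choose $\nu \in GL(2,\mathbb{R})$ sending the horizontal direction to itself and the direction $\pi/n$ to the vertical direction (a shear with derivative of the form $\sm 1 s 0 1$, with $s=-\cot\pi/n$, followed by a diagonal rescaling suffices). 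The image $\nu \cdot \M mn$ is then cut by its horizontal and vertical cylinders into a grid of rectangles, which I call \emph{basic rectangles}; the common-modulus statement guarantees that the rectangles in each row and column have compatible dimensions, so that the orthogonal presentation is a genuine rectangular grid with combinatorially prescribed edge identifications.

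The heart of the argument is a transposition symmetry. I would show that the grid of basic rectangles of $\nu\cdot\M mn$, together with its horizontal and vertical edge gluings, is encoded by combinatorial data (a Hooper diagram, i.e.\ a grid graph) indexed by two parameters, one running ``in the $m$-direction'' and one ``in the $n$-direction,'' in which the roles of $m$ and $n$, and of the two orthogonal directions, are interchanged by reflecting across the main diagonal. Concretely, if $\mu$ denotes the analogous map putting $\M nm$ into its own orthogonal presentation, then the coordinate swap $R$, with derivative $\sm 0 1 1 0$, carries the grid of basic rectangles of $\nu\cdot\M mn$ exactly onto that of $\mu\cdot\M nm$, matching rectangles and edge identifications. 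Granting this, the composition $\Psi = \mu^{-1}\circ R \circ \nu$ is a composition of affine maps, hence affine, and is by construction a diffeomorphism from $\M mn$ to $\M nm$ (the leftover discrepancy being a translation equivalence once the derivative is stripped off), which establishes affine equivalence.

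The main obstacle is precisely this transposition step: one must give a sufficiently explicit description of the orthogonal presentation — the number of basic rectangles, their arrangement into rows and columns, and the rule by which horizontal and vertical edges are identified — to verify that interchanging $m$ with $n$ while swapping the two orthogonal directions reproduces exactly the presentation of the dual surface. This is a bookkeeping statement about the $m\leftrightarrow n$ symmetry built into Definitions \ref{pk} and \ref{bmdefinition}, and it is cleanest to organize through Hooper diagrams, whose transpose symmetry makes the claim transparent; the metric identifications (the alternating side lengths $\sin(k\pi/m)$ versus $\sin(k\pi/n)$) must then be checked to transpose consistently with the combinatorial gluings.
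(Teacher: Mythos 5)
Your strategy breaks down at its central step, the claimed ``transposition symmetry.'' The map $R$ with derivative $\sm 0110$ necessarily exchanges the two distinguished directions: it must carry the horizontal cylinders of $\nu\cdot\M mn$ (images of the horizontal cylinders of $\M mn$) onto vertical cylinders of the target, and the vertical ones onto horizontal ones, so your claimed identification $R\cdot(\nu\cdot\M mn) = \mu\cdot\M nm$ would match the horizontal cylinders of $\M mn$ with the $\pi/m$-direction cylinders of $\M nm$. When $m$ and $n$ are both even this is impossible for a counting reason. In the Hooper diagram $\G{m}{n}$ the horizontal cylinders correspond to the vertices $(i,j)$ with $i+j$ even and the $\pi/n$-direction cylinders to those with $i+j$ odd (Definition \ref{hooperdiagram}); when $m$ and $n$ are both even, $(m-1)(n-1)$ is odd, so the two counts differ by one (for instance $\M 46$ has eight horizontal cylinders and seven ``vertical'' ones). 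A cut-and-paste (translation equivalence) preserves the horizontal direction and hence the number of horizontal cylinders, so no choice of the diagonal rescalings hidden in $\nu$ and $\mu$ can repair this: $R\cdot(\nu\cdot\M 46)$ has seven horizontal cylinders while $\mu\cdot\M 64$ has eight. This is not a bookkeeping accident but exactly the case in which the Veech group has index two in the $(m,n,\infty)$ triangle group (\S\ref{veechofbm}); the missing elements are precisely the direction-swapping ones your construction requires. Even when at least one of $m,n$ is odd, where the counts agree, the symmetry is subtler than ``reflect the grid across the diagonal'': transposition preserves the parity of $i+j$ and so sends white vertices to white, whereas the geometric swap of horizontal and vertical must send white (horizontal) to black (vertical); one has to compose with a flip of one grid index, and such a flip reverses parity precisely when the corresponding parameter is odd --- which again isolates the both-even case as the one your architecture cannot reach.

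The construction in the paper (following Hooper) avoids the direction swap altogether, and this is the idea your proposal is missing. Both surfaces are realized inside a \emph{single} grid of basic rectangles: decomposing $\G{m}{n}$ vertically yields the orthogonal presentation of $\M mn$, whose sides appear as negative diagonals of alternating basic rectangles, while decomposing the \emph{same} diagram horizontally yields the dual orthogonal presentation of $\M nm$, whose sides appear as positive diagonals of the complementary rectangles (\S\ref{hoopertobm}--\S\ref{sec:affine}, Remark \ref{alternate}). The two presentations differ by a cut-and-paste together with the diagonal rescaling $\diag{m}{n}$, so the resulting affine equivalence $\AD mn$ has upper-triangular derivative $\derAD mn = \shear nm \diag{m}{n}\shear mn \flip$ (see \eqref{def:derivativeAD}): it fixes the horizontal direction, sending horizontal cylinders of $\M mn$ to horizontal cylinders of $\M nm$ and $\pi/n$-cylinders to $\pi/m$-cylinders. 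That is what makes the argument uniform in $m$ and $n$, including the case where both are even.
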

This means that $\M mn$ can be transformed by an affine map (a shear plus a dilation) and then cut and reassembled into $\M nm$. 

\begin{example} In Figure \ref{shear-equiv} it is for example shown how the surface in Figure \ref{m43} can be cut and reassembled into a sheared version of the surface in Figure \ref{m34}.
\begin{figure}[!h] 
\centering
\includegraphics[width=400pt]{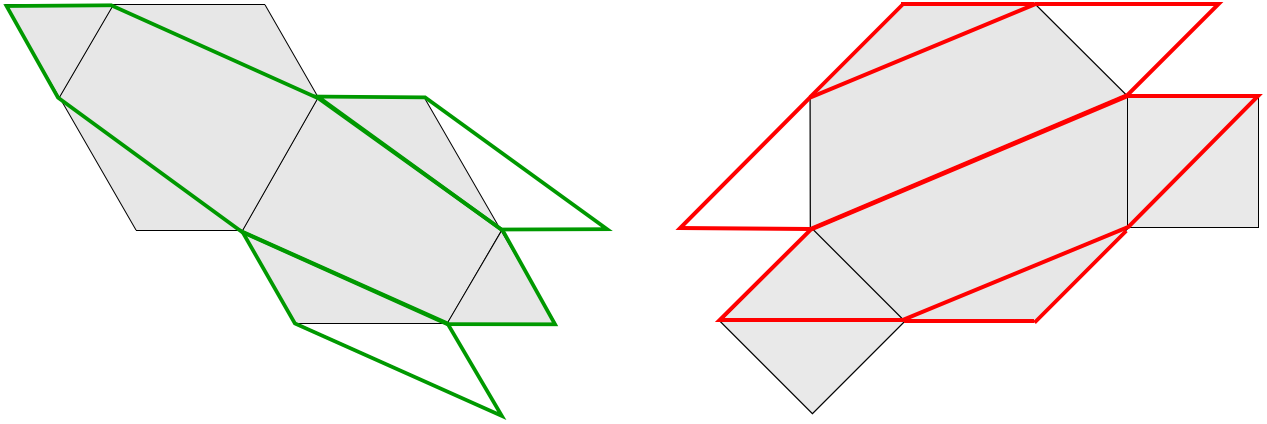}
\begin{quote}\caption{The bold outline on the left shows how the left surface can be cut and reassembled into a sheared version of the right surface, and vice-versa. \label{shear-equiv}} \end{quote}
\end{figure} 
\end{example}
We will use this affine equivalence extensively, since as already mentioned in the introduction our derivation and characterization of cutting sequences exploit the relation between cutting sequences on  $\M mn$ and  $\M nm$. The affine diffeomorphism between $\M mn$ and $\M nm$ that we use for derivation (which also includes a flip, since this allows a simpler description of cutting sequences) is described in \S \ref{sec:affine}.


\subsection{The Veech group of \bm surfaces} \label{veechofbm}
The Veech group of $\M mn$, as well as the Veech group of the $\M nm$, is isomorphic to the $(m,n,\infty)$ triangle group. The only exception to this is when $m$ and $n$ are both even, in which case the Veech group of $\M mn$ has index $2$ in the $(m,n,\infty)$ triangle group see \cite{Hooper}. Thus, the Veech group contains two elliptic elements of order $2m$ and $2n$ respectively. One can take as generators one of this two elements and  a shear (or a ``flip and shear'') automorphism from the $(m,n)$ surface to itself. In the $(n,m)$ polygon presentation of $\M mn$ the elliptic element of order $2m$ is a  rotation of order $\pi/m$ (while in the $(m,n)$ polygon decomposition the elliptic element is a rotation of order $\pi/n$). Thus, the elliptic element of order $2n$ acting on $\M mn$ can be obtained conjugating the rotation of $\pi/n$ on the dual surface $\M nm$ by the affine diffeomorphism between $\M mn$ and $\M nm$ given by Theorem \ref{thm:Hooper}. In section Section \ref{teich} we describe the action of these Veech group elements on a tessellation of the hyperbolic plane by $(m,n,\infty)$  triangles shown in Figure \ref{hypdisk}. 



\section{\bm surfaces via Hooper diagrams} \label{hooperdiagrams}
In \S\ref{bmdefsection} we recalled the construction of \bm surfaces by gluing a collection of semi-regular polygons. In his paper \cite{Hooper}, as well as this  polygonal presentation, Hooper gave a description of these surfaces by constructing a decorated diagram, that we will call the \emph{Hooper diagram $\G{m}{n}$} for the  \bm surface $\M mn$. { These diagrams are related to what is known as the   Thurston-Veech construction (sometimes also called the \emph{Bouillabaisse} construction) first described by Thurston \cite{Thurston} and independently by Veech  \cite{Veech}.  Indeed, Hooper diagrams describe the intersection pattern of two transversal cylinder decompositions of the surface, and hence provide data that can be used for the Thurston-Veech construction of pseudo-Anosov diffeormorphisms of a surface as product of two distinct multi-twists. We remark that similar diagrams and the related ribbon graphs already appeared in the literature before the work of Hooper \cite{Hooper} (for example in the works of Leininger \cite{Leininger} and McMullen  \cite{McP}). We choose to refer to them as \emph{Hooper diagrams} since it is the decorated version in \cite{Hooper} that we use, and Hooper was the first to find this  description of \bm surfaces (he also brought the use of this type of diagrams further, by exploiting them to study infinite translation surfaces in \cite{Hooper2} and \cite{Hooper3}).}

In this section we will explain how to construct the Hooper diagram given the polygonal presentation of the surface and vice versa, following the example of $\M{3}{4}$ throughout.

\subsection{From $\M{3}{4}$ to a Hooper diagram: an example}

Let us consider the \bm surface $\M{3}{4}$. 
To construct its Hooper diagram, we need to consider the two cylinder decompositions given in Figure \ref{octcyl}. 

\begin{figure}[!h]
\centering
\includegraphics[width=0.7\textwidth]{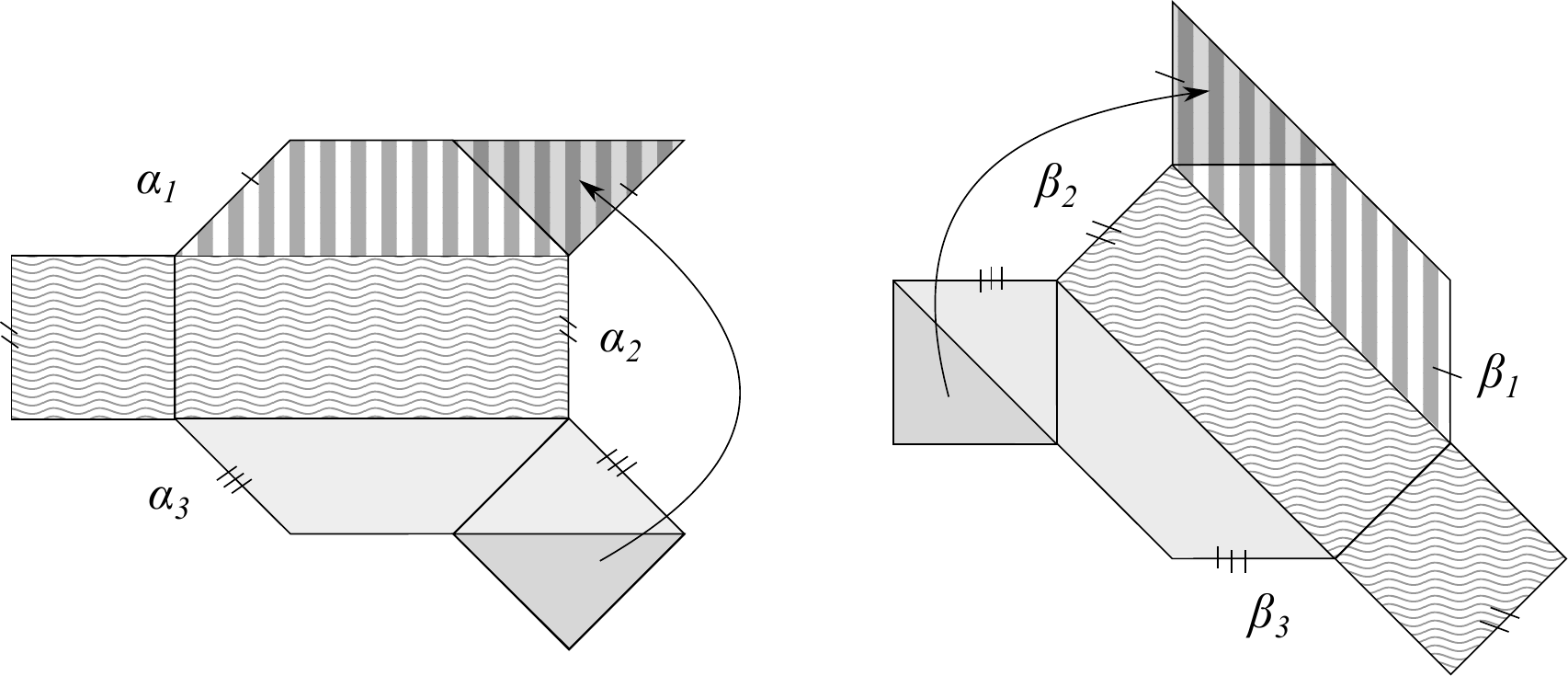} \hspace{0.08\textwidth}
\includegraphics[width=0.2\textwidth]{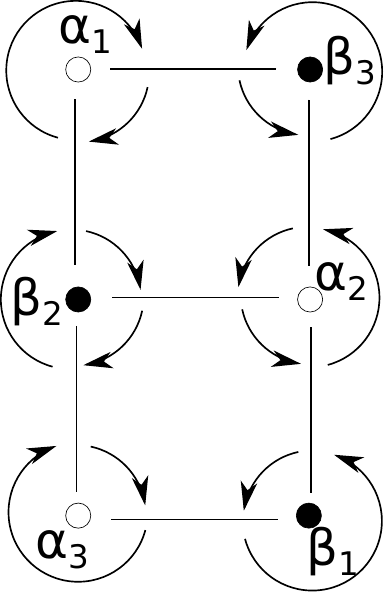}
\begin{quote}\caption{The cylinder decomposition for $\M{3}{4}$ and its Hooper diagram. \label{octcyl}} \end{quote}
\end{figure}

We have a \emph{horizontal} cylinder, and a cylinder in direction $\frac{3 \pi}{4}$ that we will call \emph{vertical}.
The horizontal cylinders will be called $\alpha_1$, $\alpha_2$ and $\alpha_3$ as in Figure \ref{octcyl}, while the vertical ones will be $\beta_1$, $\beta_2$ and $\beta_3$.
Notice that both decompositions give the same number of cylinders -- three cylinders in this case -- and this is true for all \bm surfaces, that the cylinder decompositions in each direction $k\pi/n$ yield the same number of cylinders. 

Let us now construct the corresponding graph $\G{3}{4}$ as the \emph{cylinder intersection graph} for our cylinder decompositions.
In general, it will be a bipartite graph with vertices $\mathcal V= \mathcal A \cup \mathcal B$, represented in Figure \ref{octcyl}  with black and white vertices, respectively. 
The black vertices are in  one-to-one correspondence with the vertical cylinders, while the white vertices are in  one-to-one correspondence with the horizontal cylinders. 
To describe the set of edges, we impose that there is an edge between two vertices $v_i$ and $v_j$ if the two corresponding cylinders intersect. 
It is clear that we will never have edges between vertices of the same type, because two parallel cylinders  never intersect. 
An edge will hence correspond to a parallelogram that is the intersection between  cylinders in two different decompositions.

In our case, the graph $\G{3}{4}$ will have six vertices: three white ones, corresponding to the cylinders $\alpha_i$, and three black ones, corresponding to the cylinders $\beta_i$, for $i=1,2,3$. 
Considering the intersections, as we can see in Figure \ref{octcyl}, the central cylinder $\alpha_2$ of the horizontal decomposition will cross all three cylinders of the vertical decomposition.
The other two will cross only two of them, $\beta_1$ and $\beta_2$ in the case of $\alpha_3$; $\beta_3$ and $\beta_1$ in the case of $\alpha_3$.

Finally, we need to record how the various pieces of a cylinder, seen as the various edges around a vertex, glue together. 
To do that we first establish a positive direction, gluing on the right for the orthogonal decomposition and gluing upwards for the vertical one. 
We then record this on the graph by adding some circular arrows around the vertices, giving an ordering for the edges issuing from that vertex. 
We can easily see that such arrows will have the same direction (clockwise or counter-clockwise) in each column, and alternating direction when considering the vertices on the same row. 
We start the diagram in a way such that we will have arrows turning clockwise in odd columns and arrows turning counter-clockwise in the even columns.
All we just said leads us to construct a graph as in Figure \ref{octcyl}.


We notice that the dimension of the graph is of three rows and two columns and this will be true in general: the graph $\G{m}{n}$ for $\M mn$  will have $n-1$ rows and $m-1$ columns.

\subsection{From  $\M{m}{n}$  to Hooper diagrams: the general case }\label{hoogen}

We will now explain how to extend this construction to a general \bm surface and see what type of graph we obtain. 

In general, our surface $\M{m}{n}$ will have two cylinder decompositions in two different directions that we will call \emph{horizontal} and \emph{vertical}. 
We define $\mathcal A=\{\alpha_i\}_{i \in \Lambda}$ and $\mathcal B= \{\beta_i\}_{i \in \Lambda}$ to be the set of horizontal and vertical cylinders, respectively.

The vertices of the cylinder intersection graph is the set of cylinders in the horizontal and vertical directions, $\mathcal A \cup \mathcal B$. 
The set of edges will be determined by the same rule as before: there is an edge between $\alpha_i$ and $\beta_j$ for every intersection between the two cylinders. 
Therefore, each edge represents a parallelogram, which we call a \emph{rectangle} because it has horizontal and vertical (by our definition of ``vertical'' explained above) sides. 
Let $\mathcal E$ be the collection of edges (or rectangles). 
Define the maps $\alpha \colon \mathcal E \to \mathcal A$ and $\beta \colon \mathcal E \to \mathcal B$ to be the maps that send the edge between $\alpha_i$ and $\beta_j$ to the nodes $\alpha_i$ and $\beta_j$, respectively.

The generalization of the black and white vertices is the concept of a $2$-colored graph:

\begin{definition}
A \emph{2-colored graph} is a graph equipped with a coloring function $C$ from the set of nodes $\mathcal V$ to $\{0,1\}$, with the property that for any two adjacent nodes, $x, y \in \mathcal V$, we have $C(x) \neq C(y)$.
\end{definition}

The graph we constructed is a 2-colored graph. 
To see that, simply define $C(x)=0$ if $x\in \alpha (\mathcal E)=\mathcal A$ and $C(x)=1$ if $x \in \beta (\mathcal E)=\mathcal B$.
Conversely, the maps $\alpha, \beta \colon \mathcal E \to \mathcal V$ as well as the decomposition $\mathcal V= \mathcal A \cup \mathcal B$ are determined by the coloring function.

As we said, we also need to record in our graph the way the rectangles forming the cylinders are glued to each other. 
To do that we define $\mathfrak e \colon \mathcal E \to \mathcal E$ be the permutation that sends a rectangle to the rectangle on its right, and let $\mathfrak n \colon \mathcal E \to \mathcal E$ be the permutation that sends a rectangle to the rectangle above it. (Here $\mathfrak e$ stands for ``east'' and $\mathfrak n$ stands for ``north.'')
Clearly, we will always have that $\mathfrak e(e)$  lies in the same cylinder as the rectangle $e$, hence $\alpha \circ \mathfrak e= \alpha$ and $\beta \circ \mathfrak n = \beta$.
Moreover, an orbit under $\mathfrak e$ is a horizontal cylinder and an orbit under $\mathfrak n$ is a vertical one.

\begin{corollary}
By construction, $\G mn$ is always a grid of $(n-1)\times(m-1)$ vertices.
\end{corollary}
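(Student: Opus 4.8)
The plan is to upgrade the explicit description just given --- which was carried out in full for $\G 34$ --- into a proof for general $m,n$, by separately pinning down the number of cylinders in each of the two directions and then the exact pattern in which horizontal and vertical cylinders intersect. Since we already know that $\G mn$ is the $2$-colored graph with white vertices $\mathcal A$ (horizontal cylinders), black vertices $\mathcal B$ (vertical cylinders) and edges $\mathcal E$ (intersection rectangles), and that the orbits of $\mathfrak e$ and $\mathfrak n$ recover $\mathcal A$ and $\mathcal B$, it suffices to prove two things: that $|\mathcal A|+|\mathcal B|=(n-1)(m-1)$, and that the incidence between $\mathcal A$ and $\mathcal B$ is exactly the edge relation of the \emph{bounded} rectangular $(n-1)\times(m-1)$ grid.

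For the counting step I would work directly with the $m$ equiangular $2n$-gons $P(0),\dots,P(m-1)$ whose edge vectors point in the directions $i\pi/n$. Because $v_0$ and $v_n$ are horizontal, each such polygon has exactly $n$ distinct vertex-heights, and so is cut in the horizontal direction into $n-1$ strips; the same holds in the direction $(n-1)\pi/n$. The two end polygons $P(0)$ and $P(m-1)$, being regular $n$-gons, are degenerate and cap the configuration. I would then follow how these strips are concatenated across the edge identifications of Definition \ref{bmdefinition} to close up into global cylinders, and count the resulting $\mathfrak e$- and $\mathfrak n$-orbits. The outcome is the total $(n-1)(m-1)$, with the checkerboard $2$-coloring determining the split between $|\mathcal A|$ and $|\mathcal B|$ (this is also where the exceptional case $m,n$ both even, in which the split is unbalanced, would show up).

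The step I expect to be the main obstacle is establishing the precise adjacency, namely that with the natural ordering a horizontal and a vertical cylinder share a rectangle if and only if they occupy grid-adjacent positions, so that $\G mn$ is the bounded rectangular grid rather than a cylindrical or toroidal one. Here I would argue locally at each rectangle: using $\alpha\circ\mathfrak e=\alpha$ and $\beta\circ\mathfrak n=\beta$, traveling east along a horizontal cylinder sweeps out a contiguous run of positions, and traveling north along a vertical cylinder does likewise; the delicate point is to show these runs terminate --- that the pattern does not wrap around --- which is exactly where the capping by the two regular $n$-gon ends $P(0)$ and $P(m-1)$ must be used. The affine duality $\M mn\cong\M nm$ of Theorem \ref{thm:Hooper} provides a useful consistency check, since it forces the transposed answer $(m-1)\times(n-1)$ for the dual surface and lets one treat the two cylinder directions symmetrically. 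Once the contiguity and the boundary are established, the vertex count $(n-1)(m-1)$ and the grid shape follow at once.
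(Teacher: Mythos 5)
Your proposal sets out to prove something the paper itself never actually proves: in the paper this corollary carries no proof at all, the $(n-1)\times(m-1)$ pattern being observed in the worked example of $\G 34$ and then asserted ``by construction'' for general $m,n$. In the formal treatment the grid structure is \emph{built into} Hooper's definition of the diagram (Definition~\ref{hooperdiagram}, whose vertex set is indexed by $\Lambda=\{(i,j):1\le i\le m-1,\ 1\le j\le n-1\}$ by fiat), and the fact that this grid is indeed the cylinder-intersection graph of the polygonally defined surface $\M mn$ is imported from \cite{Hooper}, via the reconstruction of \S\ref{hoopertobm}--\S\ref{moduli}. So your plan --- a direct verification from the semi-regular polygon presentation --- is a genuinely different and more self-contained route, and your reduction to two claims (the vertex count, and the bounded-grid incidence with no wrap-around and, implicitly, no multiple edges) is the right way to organize such a proof.

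However, the plan as written has concrete gaps, and not only in the adjacency step you flag. First, the opening claim is false for the end polygons: a regular $n$-gon cap does \emph{not} have $n$ distinct vertex heights (the triangles of $\M 43$ have two, not three; the two square ends of $\M 34$ have two and three, not four), so ``each polygon contributes $n-1$ strips'' would give $m(n-1)$ strips, an overcount; the correct bookkeeping requires the two ends to contribute \emph{jointly} $n-1$ strips, which is a computation (depending on the parity of $m$, $n$ and on which family of edges degenerates at each end) that the proposal does not perform. Even for the middle $2n$-gons, horizontality of $v_0$ and $v_n$ alone only gives that heights increase and then decrease; the coincidence of the ascending and descending height sets needs the palindromic relation $\ell_{n-i}=\ell_{n+i}$ between edge lengths. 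Second, and more seriously, the whole strip count presupposes that the \emph{global} horizontal cylinder decomposition cuts each polygon exactly at that polygon's own vertex heights, i.e.\ that under the identifications of Definition~\ref{bmdefinition} no horizontal saddle connection propagates from a vertex of a neighboring polygon into the interior of a strip. This alignment is a special feature of the $\sin(k\pi/m)$ side lengths of Definition~\ref{pk} --- it is essentially the property that Hooper's critical-eigenfunction widths encode, and it is also what underlies the equal-modulus statement of Theorem~\ref{modthm} --- and it fails for a generic equiangular gluing; without it both the count $(n-1)(m-1)$ and the degree-at-most-four incidence of a grid break down. Finally, the orbit count under $\mathfrak e$ and $\mathfrak n$, the exclusion of multi-edges, and the termination (no wrap-around) argument remain stated intentions rather than proofs. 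These are exactly the points where the content of the corollary lies; the paper sidesteps them by taking the grid structure from \cite{Hooper} as a definition, whereas your route must establish them from the polygons.
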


\subsection{Definition of Hooper diagrams and augmented diagrams}
 In \S \ref{hoogen}, we showed how from a surface we can construct a Hooper diagram, which is a 2-colored graph equipped with two edge permutations. In  \S\ref{hoopertobm}, we will show how to construct a \bm surface from a Hooper diagram.  We first give the formal definition of Hooper diagrams and define their \emph{augmented} version, which provides an useful tool to unify the treatment to include degenerate cases (coming from the boundary of the diagrams).

The data of a 2-colored graph $\graphg$, and the edge permutations $\mathfrak e$ and $\mathfrak n$, determine the combinatorics of our surface as a union of rectangles, as we will explain explicitly in this section. 
We will also give the width of each cylinder, to determine the geometry of the surface as well. 

We will first describe in general the Hooper diagram for $\M mn$. Here we use Hooper's notation and conventions from \cite{Hooper}.

\begin{definition}[Hooper diagram]\label{hooperdiagram}

Let $\Lambda=\{(i,j) \in \mathbb Z^2 \mid 1 \leq i \leq m-1$ and $1 \leq j \leq n-1\}$. 
Let $\mathcal A_{m,n}$ and $\mathcal B_{m,n}$ be two sets indexed by $\Lambda$, as follows:
\[
\mathcal A_{m,n}=\{\alpha_{i,j}, (i,j) \in \Lambda \mid i+j \text{ is even}\} \text{ and } \mathcal B_{m,n}=\{\beta_{i,j}, (i,j) \in \Lambda \mid i+j \text{ is odd}\}.
\]
Here $\mathcal A_{m,n}$ are the white vertices and $\mathcal B_{m,n}$ are the black vertices.

Let $\G{m}{n}$ be the graph with nodes $\mathcal A_{m,n} \cup \mathcal B_{m,n}$ formed by adding edges according to the usual notion of adjacency in $\mathbb Z^2$. 
In other words, we join an edge between $\alpha_{i,j}$ and $\beta_{i',j'}$ if and only if $(i-i')^2+(j-j')^2=1$, for all $(i,j), (i',j') \in \Lambda$ for which $\alpha_{i,j}$ and $\beta_{i',j'}$ exist. 
We define the counter-clockwise ordering of indices adjacent to $(i,j)$ to be the cyclic ordering
\[
(i+1,j) \to (i,j+1) \to (i-1,j) \to (i, j-1) \to(i+1,j).
\]
The clockwise order will clearly be the inverse order. 
We define then the map $\mathfrak e \colon \mathcal E \to \mathcal E$ to be the cyclic ordering of the edges with $\alpha_{i,j}$ as an endpoint. 
We order edges with endpoints $\alpha_{i,j}$ counter-clockwise when $i$ is odd and clockwise if $i$ is even. 
Similarly, $\mathfrak n \colon \mathcal E \to \mathcal E$ is determined by a cyclic ordering with $\beta_{i,j}$ as an endpoint. 
The opposite rule about the ordering of the cycle will be applied for $\beta_{i,j}$: we order the edges with endpoint $\beta_{i,j}$ clockwise when $j$ is odd and counter-clockwise when $j$ is even.

$\G{m}{n}$ is called the \emph{Hooper diagram} for $\M mn$.
\end{definition}

\smallskip
We now define the \emph{augmented Hooper diagram}, which will make it easier to construct the surface associated to a Hooper diagram. 
{
As we explained, the Hooper diagram is related to the polygonal representation of the surface in that the edges of the diagram represent cylinder intersections. 
When dividing the polygonal representation of the surface into parallelograms that are cylinder intersections, each polygon edge is either a diagonal of the parallelogram, or one of the sides of the parallelogram. For uniformity of treatment, it is convenient to think of these latter sides as \emph{diagonals} of \emph{degenerate} parallelograms  where one of the side lengths is zero. 
Introducing the augmented diagrams corresponds to introducing degenerate cylinder intersections (i.e. degenerate parallelograms) with one of the side lengths equal to zero,  
so that we can describe each side of the polygons as a diagonal of one of the (possibly degenerate) parallelograms corresponding to edges of the augmented Hooper diagram. }

  The \emph{augmented graph} $\G mn '$ is obtained by adding degenerate nodes and degenerate edges to the graph $\G{m}{n}$. 
If we consider the nodes of $\G{m}{n}$ in bijection with the coordinates $(i,j) \in \mathbb Z^2$, for $0<i<m$ and $0<j<n$, the nodes of $\G mn '$ will be in bijection with the coordinates $(i,j) \in \mathbb Z^2$, for $0 \leq i \leq m$ and $0 \leq j \leq n$.
The nodes we added are the \emph{degenerate nodes}.
On the new set of nodes we add a \emph{degenerate edge} if the nodes are at distance $1$ in the plane and they are not yet connected by an edge. 
Our graph $\G mn '$ is again bipartite and we extend coherently the naming conventions we described for $\G{m}{n}$.
We can see the augmented graph for $\M{3}{4}$ in Figure \ref{augmented}.

\begin{figure}[!h]
\centering
\includegraphics[width=0.5\textwidth]{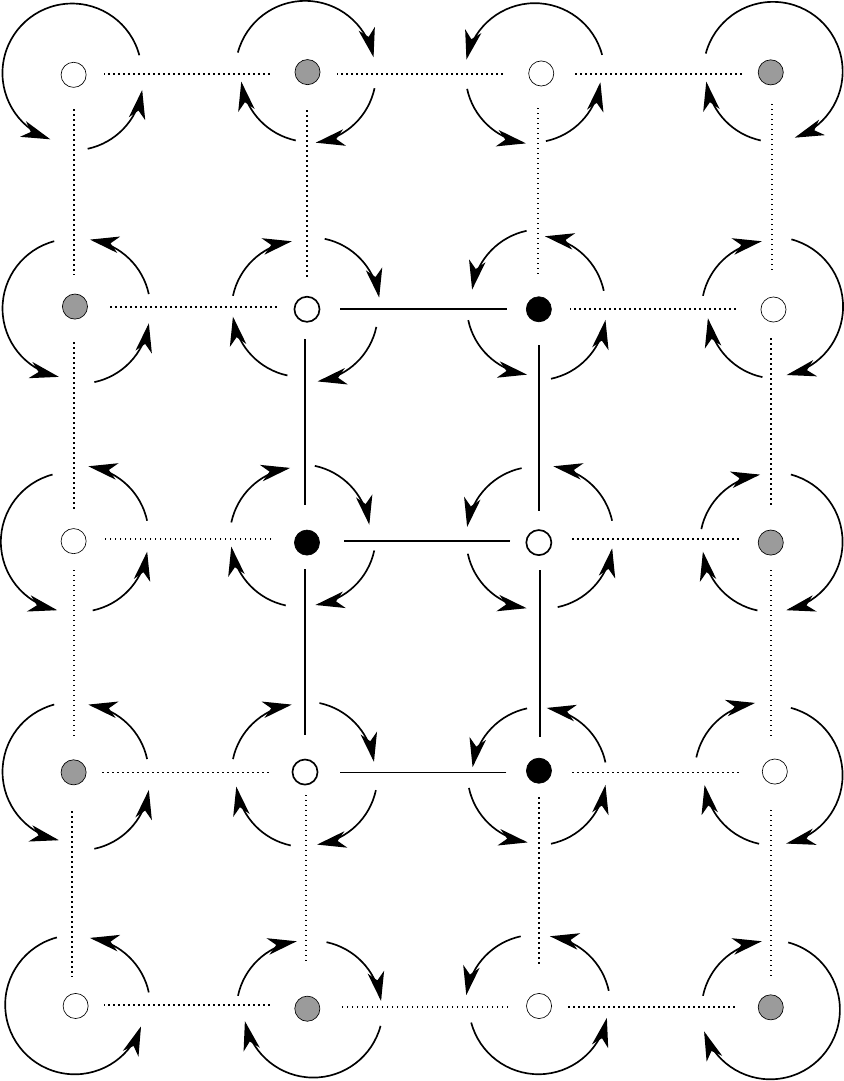}
\begin{quote}\caption{The augmented Hooper diagram for $\M{3}{4}$. \label{augmented}} \end{quote}
\end{figure}

Let $\mathcal E'$ denote the set of all edges of $\G mn '$, both original edges and degenerate ones. 
We say a degenerate edge $e \in \mathcal E'$ is \emph{ $\mathcal A$-degenerate}, \emph{$\mathcal B$-degenerate} or \emph{completely degenerate} if $\partial e$ contains a degenerate $\mathcal A$-node, a degenerate $\mathcal B$-node or both, respectively.
We also extend the edge permutations to $\mathfrak e', \mathfrak n' \colon \mathcal E' \to \mathcal E'$ following the same convention as before.

\subsection{From Hooper diagrams to \bm surfaces: combinatorics}\label{hoopertobm}

In Section \ref{hoogen}, we showed how from a surface we can construct a Hooper diagram. 
In this and the next  sections, we will show how to construct a \bm surface from a Hooper diagram $\G mn$ and describe it explicitly on the example of $\M{3}{4}$ we considered before. 
The data of a 2-colored graph $\graphg$, and the edge permutations $\mathfrak e$ and $\mathfrak n$, determine the combinatorics of our surface as a union of rectangles, as we will explain explicitly in this section.  We will also need the width of each cylinder, to determine the geometry of the surface as well. This is explained in the next section \S\ref{moduli}.

From the $(m,n)$ Hooper diagram $\G mn$ we can in fact recover the structure of two surfaces: $\M{m}{n}$ and $\M{n}{m}$ (which are affinely equivalent, see Theorem \ref{thm:Hooper}). In this section we will show how to construct $\M{m}{n}$, while in \S \ref{sec:affine} we will comment on how to recover also the dual surface. More precisely, we will often consider an \emph{intermediate} picture, that we will call the \emph{orthogonal presentation}, which contains both a sheared copy of $\M{m}{n}$ and a sheared copy of the dual $\M{n}{m}$ and allows us to easily see the relation between the two (see \S \ref{sec:affine}). 


To recover the combinatorics of the surface from its Hooper diagram, we need to decompose it into smaller pieces.
We will see that each piece corresponds to one polygon in the presentation in semi-regular polygons that was explained in the previous section.
The choice of which surface we obtain depends on our choice to decompose the graph into horizontal or vertical pieces. 
The vertical decomposition of the graph $\G{m}{n}$ will give us the combinatorics of the surface $\M{m}{n}$, while the horizontal decomposition produces $\M{n}{m}$, see \S\ref{sec:affine}. This is coherent with the operation of rotating the diagram to invert the role of $m$ and $n$, see Remark \ref{rk:rotating} for details. 



We now explain how to construct the surface starting from its graph, using the example of $\M{3}{4}$.
Let us decompose the augmented graph vertically, as shown in Figure \ref{dec}.  
We will consider as a piece a column of horizontal edges with the boundary vertices and all the edges between two of these vertices, no matter if they are degenerate or not.
In our case the decomposition will be as in the following figure, where, as before, the degenerate edges that have been added are represented with dotted lines.

\begin{figure}[!h]
\centering
\includegraphics[width=0.8\textwidth]{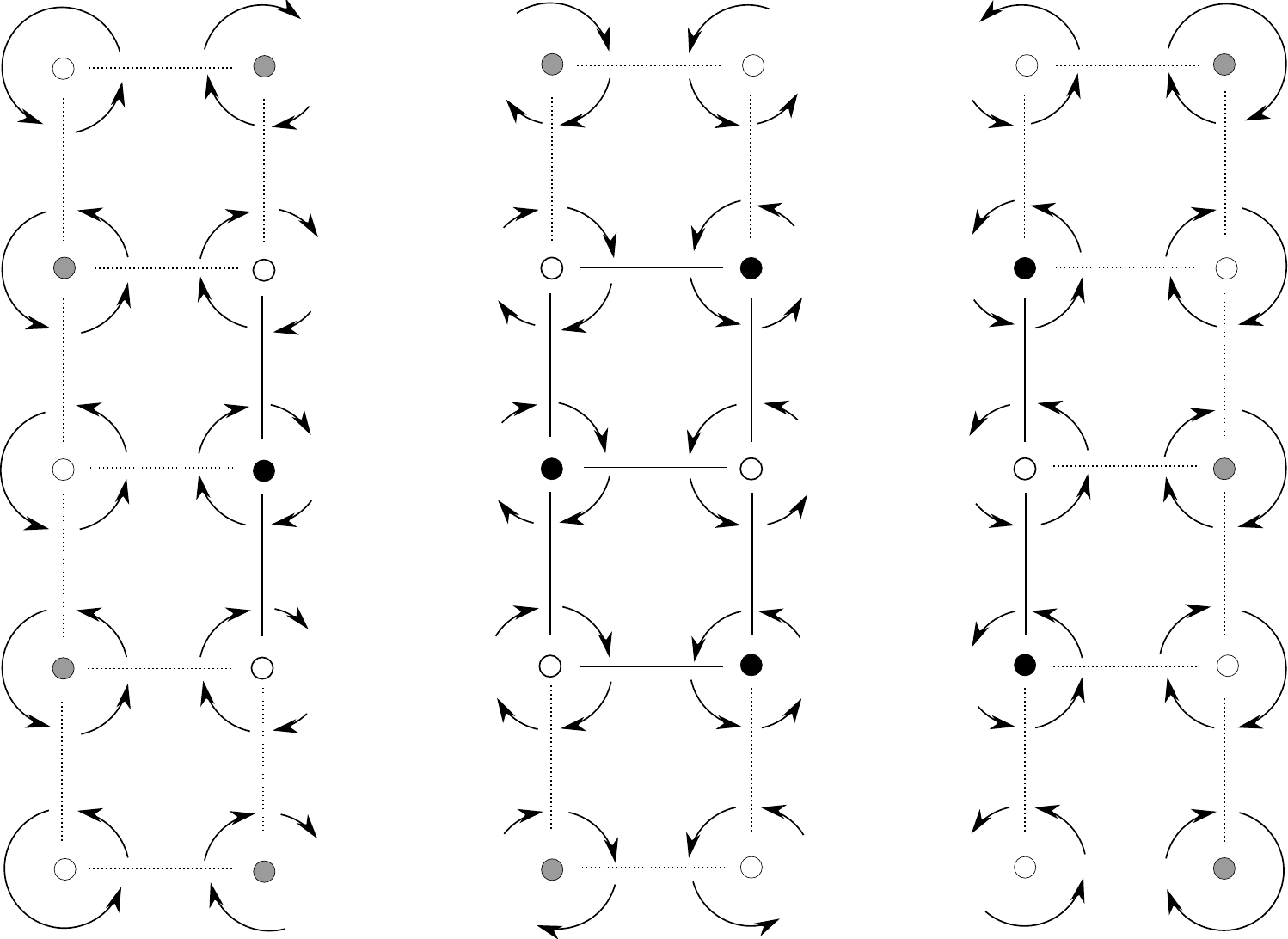}
\begin{quote}\caption{The three pieces of the vertical decomposition of $\G{3}{4}$. \label{dec}} \end{quote}
\end{figure}

Each edge will now represent a basic rectangle in our decomposition of the surface in polygons. 
We will still need the data of the width and height of the rectangle, which we will treat later. 
In Figure \ref{piece1} we label each edge and its corresponding basic rectangle with a letter, so that it is easy to see how to pass from one to the other. 

The \emph{degenerate edges} will correspond to degenerate rectangles, which means rectangles with zero width, or zero height, or both. 
The $\mathcal A$-degenerate edges correspond to rectangles with zero height (horizontal edges), the $\mathcal B$-degenerate edges correspond to rectangles with zero width (vertical edges), and the completely degenerate ones correspond to rectangles with zero width and zero height (points). 

Each rectangle coming from a vertical edge will contain a \emph{positive diagonal}, which means a diagonal with positive slope, going from the bottom left corner to the upper right corner. 
In the case of degenerate rectangles we will just identify the diagonal with the whole rectangle, so with a horizontal edge, a vertical edge or a point for $\mathcal A$-degenerate, $\mathcal B$-degenerate and completely degenerate edges respectively.
In the non-degenerate rectangles, this means that since each piece is repeated twice, in two pieces of our decomposition, each time we will include in our polygon one of the two triangles formed by the diagonal inside the rectangle. 

The permutation arrows between edges show us how the basic rectangles are glued. 
We will glue the rectangles according to the ``north'' and ``east'' conventions: following $\mathfrak e$-permutation arrows around white vertices corresponds to gluing on the right, and following 
 $\mathfrak n$-permutation arrows around black vertices corresponds to gluing  above. 
Moreover, such arrows will sometimes represent gluing in the interior of the same polygon, and other times they will represent gluing between a polygon and the following one. 
This will depend on whether the permutation arrows are internal to the piece we are considering or if they are between edges in different pieces of the decomposition. 

This is evident already in the first piece of our diagram. 
As in Figure \ref{piece1}, we can see that  the edges that contain both a black and a white degenerate vertex collapse to a point, as for the basic rectangles $a, e, f, h, j, l$. 
The edges containing only black degenerate vertices  collapse to a vertical edge, as for $b, d, g, m$. 
The edge $c$, containing a degenerate white vertex, will be a horizontal edge. 

\begin{figure}[!h]
\centering
\includegraphics[width=1\textwidth]{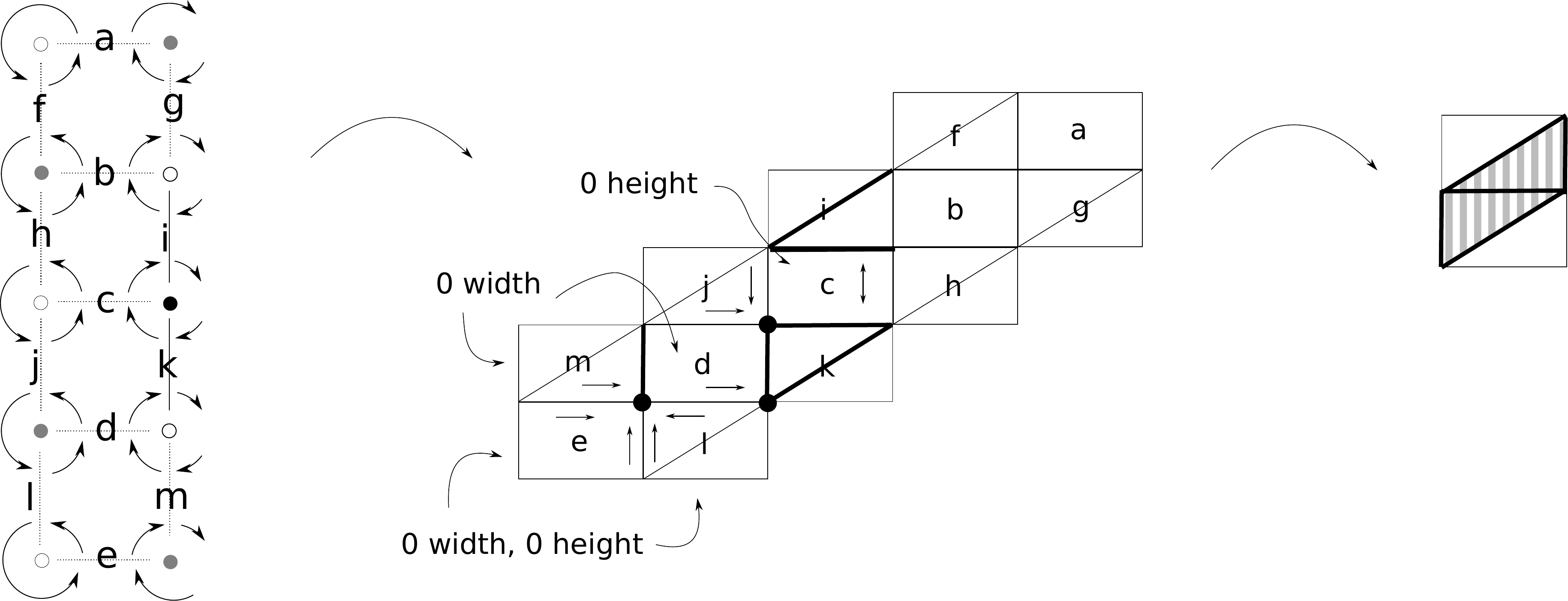}
\begin{quote}\caption{The first piece of the vertical decomposition of $\M{3}{4}$ and its orthogonal presentation. \label{piece1}} \end{quote}
\end{figure}

The remaining basic rectangles $k$ and $i$ are the only non-degenerate ones, each corresponding to half of a basic rectangle.
It is evident that the gluing between $k$ and $i$ internal to the piece is the one going upwards, passing through the horizontal edge represented by $c$. 
The result is a parallelogram as in the right picture of Figure \ref{piece1}. 
The diagonals in $k$ and $i$ will be glued to the other triangles, missing from the basic rectangle and that will appear in the following polygon. 
The other two sides will be glued to the next polygon and this is because the gluing correspond to the ``hanging arrows" shown in the left part of Figure \ref{octort}: a gluing on the left (arrow pointing to $m$ around a white vertex) for $m$ and a gluing on the right (arrow starting from $g$ around a white vertex). 

Doing the same thing for the other two pieces of the Hooper diagram, we get two parallelograms and a octagon glued together. 
We can see them in Figure \ref{octort} in what we will call the \emph{orthogonal presentation}. 
To return to the original polygonal presentation as described in section \ref{bmdefsection}, we need to shear back the cylinders to put them back in the original slope. 
The grid in the orthogonal presentation is in fact the vertical and horizontal cylinder decomposition. (Recall that the angle  we call \emph{vertical} is not ${\pi}/{2}$, but $\pi/n$.) 

\begin{figure}[!h]
\centering
\includegraphics[width=0.5\textwidth]{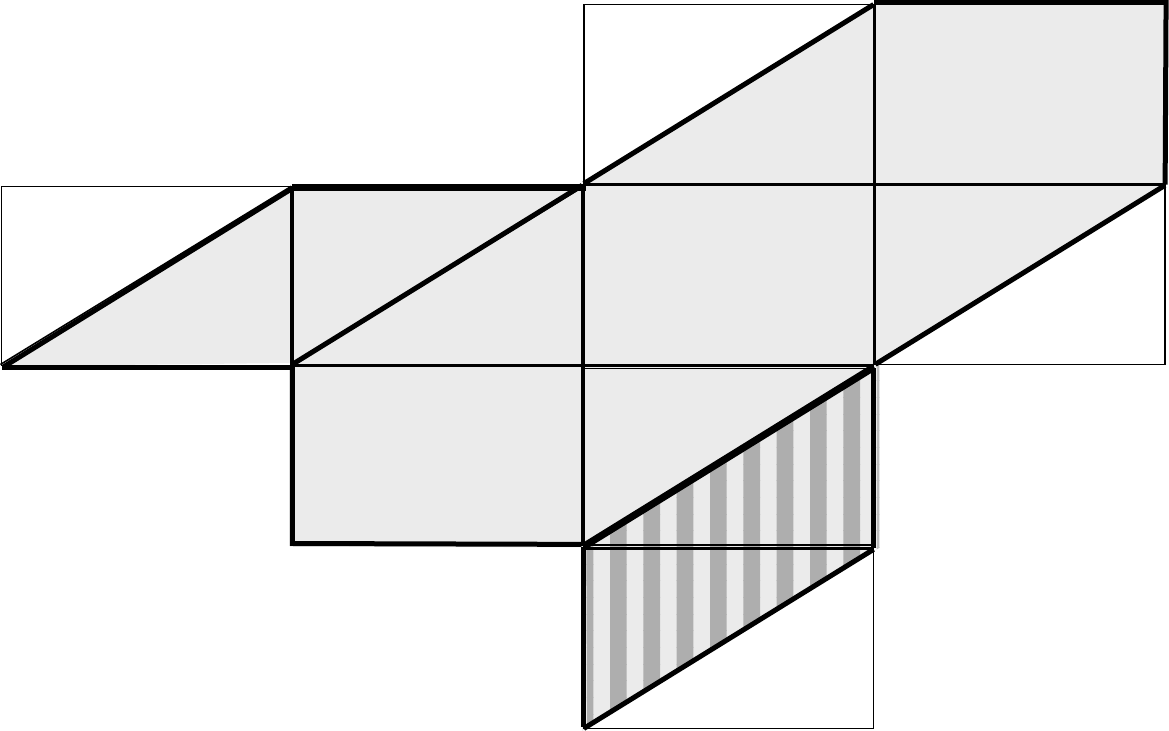}
\begin{quote}\caption{The orthogonal polygonal presentation of $\M{3}{4}$. \label{octort}} \end{quote}
\end{figure}

\subsection{From Hooper diagrams to \bm surfaces: widths of cylinders} \label{moduli}

Now that we reconstructed the combinatorial structure of the surface from a Hooper diagram,  we will explain how to recover the widths of the cylinders, which is the last piece of information to completely  determine the geometry of the surface. 
Indeed, widths automatically determine the heights of the cylinders as well: for how the two cylinder decompositions intersect, we can recover the heights from the formula:
\begin{align}\label{heigth}
\text{height}(\beta_i)= \sum_{j \in \Lambda} \#(\beta_i \cap \alpha_j) \text{width} (\alpha_j). 
\end{align}

This is because each part of the cylinder $\beta_i$ is in the surface, hence also in a cylinder $\alpha_j$. 
Measuring along the height of such a cylinder means counting each $\alpha_j$ we are intersecting and having a segment as long as its width. 

To recover the width we need to explain the concept of \emph{critical eigenfunctions}.
\begin{definition}
Let's assume in a general setting that $\graphg$ is a graph, connected, with no multiple edges or loops and $\mathcal V$ is the vertex set. 
Let $\mathcal E(x) \in \mathcal V$ be the set of vertices adjacent to $x \in \mathcal V$, which we assume is finite. 
Let $\mathbb C^\mathcal V$ be the set of functions $f \colon \mathcal V \to \mathbb C$. 
The adjacency operator is $H \colon \mathbb C ^\mathcal V \to \mathbb C ^\mathcal V$ defined by
\[
(Hf)(x)=\sum_{y \in \mathcal E(x)} f(x).
\]

An eigenfunction for $H$ corresponding to the eigenvalue $\lambda \in \mathbb C$ is a function $f \in \mathbb C^\mathcal V$, such that $Hf=\lambda f$.
\end{definition}

Now let $\mathcal L_\mathbb Z$ be the graph with integer vertices whose edges consist of pairs of integers whose difference is $\pm 1$.
$\graphg$ will now be a connected subgraph of $\mathcal L_\mathbb Z$, and again $\mathcal V$ is its vertex set.
If we assume $\mathcal V=\{1, \dots, n-1\}$, which will be our case, then:

\begin{definition}
The \emph{critical eigenfunction of $H$} is defined by 
\[
f(x)=\sin \frac{x \pi}{n}, \text{ corresponding to the eigenvalue } \lambda=\cos \frac{\pi}{n}.
\]
\end{definition}

We now consider $\mathcal I$ and $\mathcal J$, two connected subgraphs of $\mathcal L_\mathbb Z$, with vertex sets $\mathcal V_\mathcal I$ and $\mathcal V_\mathcal J$ respectively. 
Let $\graphg$ be the Cartesian product of the two graphs, as described in \cite{Hooper}.

Clearly our cylinder intersection graph is a graph of this type. 
For the graph $\G{m}{n}$, we then choose the widths of the cylinders to be defined by 
\[
w(\alpha_{i,j})=w(\beta_{i,j})=f_\mathcal I (i) f_\mathcal J(j),
\]
where $f_\mathcal I \colon \mathcal V_\mathcal I \to \mathbb R$ and $f_\mathcal J \colon \mathcal V_ \mathcal J \to \mathbb R$ are the critical eigenfunctions.

As we said, the graph fully determines the combinatorial structure for the surface. 
The flat structure is fully determined by choosing the widths of the cylinders, corresponding to vertices of the graph.
We take the critical eigenfunctions of the graph to be the widths of the cylinders:

\begin{corollary}
The \bm surface $\M m n$ has cylinder widths 
\[
w_{i,j}= \sin \left(\frac{ i \pi}{m} \right) \sin \left( \frac{j\pi}{n} \right),
\]
where $w_{i,j}$ is the width of the cylinder corresponging to the vertex $(i,j)$ of the Hooper diagram $\G m n $.
The height can be calculated using (\ref{heigth}).
\end{corollary}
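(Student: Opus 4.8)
The plan is to read the formula directly off the definition of the cylinder widths in terms of critical eigenfunctions, so the proof is essentially a substitution; the only genuine content is matching each path factor of the Hooper diagram to the correct index and modulus.

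First I would make the Cartesian product structure of $\G{m}{n}$ explicit. Its vertices are indexed by $(i,j)$ with $1\leq i\leq m-1$ and $1\leq j\leq n-1$, so $\G{m}{n}$ is the product of the path graph $\mathcal{I}$ on the column set $\mathcal{V}_{\mathcal{I}}=\{1,\dots,m-1\}$ with the path graph $\mathcal{J}$ on the row set $\mathcal{V}_{\mathcal{J}}=\{1,\dots,n-1\}$. By definition the widths are $w_{i,j}=w(\alpha_{i,j})=w(\beta_{i,j})=f_{\mathcal{I}}(i)\,f_{\mathcal{J}}(j)$, where $f_{\mathcal{I}}$ and $f_{\mathcal{J}}$ are the critical eigenfunctions of the two factors.

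Next I would record the explicit critical eigenfunction on a path $\{1,\dots,N-1\}$, namely $x\mapsto \sin\frac{x\pi}{N}$. To confirm it is the relevant eigenfunction of the adjacency operator I would invoke the sum-to-product identity
\[
\sin\tfrac{(x-1)\pi}{N}+\sin\tfrac{(x+1)\pi}{N}=2\cos\tfrac{\pi}{N}\,\sin\tfrac{x\pi}{N},
\]
noting that the absent boundary vertices $x=0$ and $x=N$ impose exactly the Dirichlet conditions $\sin 0=\sin\pi=0$ that this function already satisfies, so it is indeed an eigenfunction of the adjacency operator on the path. Specializing $N=m$ on $\mathcal{I}$ gives $f_{\mathcal{I}}(i)=\sin\frac{i\pi}{m}$ and $N=n$ on $\mathcal{J}$ gives $f_{\mathcal{J}}(j)=\sin\frac{j\pi}{n}$.

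Substituting these into $w_{i,j}=f_{\mathcal{I}}(i)\,f_{\mathcal{J}}(j)$ yields $w_{i,j}=\sin\frac{i\pi}{m}\sin\frac{j\pi}{n}$, and the height statement is then immediate from (\ref{heigth}). I do not expect a substantial obstacle here. The one point I would double-check carefully is the bookkeeping that the index $i$ (running over the $m-1$ columns) pairs with the modulus $m$, while $j$ (running over the $n-1$ rows) pairs with $n$, rather than the reverse, since a transposition of the two roles is precisely the kind of error that would slip in. I would verify it against the convention, established earlier, that $\G{m}{n}$ has $n-1$ rows and $m-1$ columns.
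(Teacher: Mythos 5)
Your proposal is correct and follows essentially the same route as the paper: the corollary is a direct substitution of the explicit critical eigenfunctions $f_{\mathcal I}(i)=\sin\frac{i\pi}{m}$ and $f_{\mathcal J}(j)=\sin\frac{j\pi}{n}$ into the defining relation $w_{i,j}=f_{\mathcal I}(i)f_{\mathcal J}(j)$, with the index bookkeeping ($i$ over the $m-1$ columns pairing with $m$, $j$ over the $n-1$ rows pairing with $n$) being the only point requiring care. Your extra verification via the sum-to-product identity that $\sin\frac{x\pi}{N}$ really is an adjacency eigenfunction with Dirichlet boundary conditions is a harmless addition that the paper leaves implicit in its definition of the critical eigenfunction.
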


\section{Affine equivalence of $\M mn$ and $\M nm$}\label{sec:affine}
In this section we explicitly describe the affine equivalence between the dual \bm surfaces $\M mn$ and $\M nm$ 
(see Theorem \ref{thm:Hooper}) that we will use to characterize cutting sequences.  As usual, we first describe it through the concrete example of   $\M 34$  and  $\M 43$, then comment on the general case.   
The semi-regular polygon presentation of $\M 34$ was shown in Figure \ref{octcyl}, with its horizontal and vertical cylinder decompositions, and the analogous picture for $\M 43$ is shown in Figure \ref{hexcyl}. In this section, we will show that the two surfaces are affinely equivalent.

\begin{figure}[!h]
\centering
\includegraphics[width=1\textwidth]{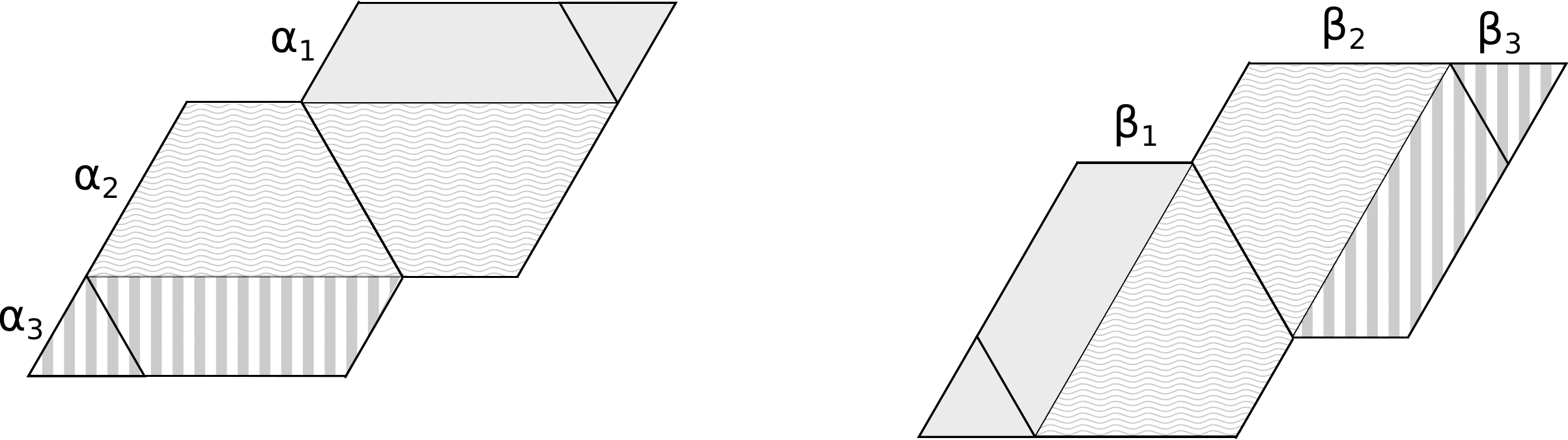}
\begin{quote}\caption{Horizontal and vertical cylinder decompositions for $\M{4}{3}$. \label{hexcyl}} \end{quote}
\end{figure}

We exploit  the orthogonal presentation we built in \S \ref{hooperdiagrams} (shown  for $\M{3}{4}$ in Figure \ref{octort}), which provides a convenient way  to visualize the central step of this equivalence. We first discuss how to go from one orthogonal decomposition to the dual one. We then combine this step with flips and shears, see \S\ref{flipandshears}.

\subsection{The dual orthogonal decomposition}

In \S \ref{hoopertobm}, we constructed an orthogonal presentation for $\M 34$, by \emph{cutting} the Hooper diagram vertically and associating to each piece a semi-regular polygon. This orthogonal presentation is in Figure \ref{octort}, and is in the top left of Figure \ref{hexort1} below.
We can consider the same graph $\G{3}{4}$ and decompose it into \emph{horizontal pieces}, instead of using the vertical pieces as we did in \S \ref{hoopertobm}, and then apply the same procedure.  
This produces the orthogonal presentation of the \emph{dual} \bm surface $\M 43$, shown  on the top right in Figure \ref{hexort1}.

\begin{figure}[!h]
\centering
\includegraphics[width=1\textwidth]{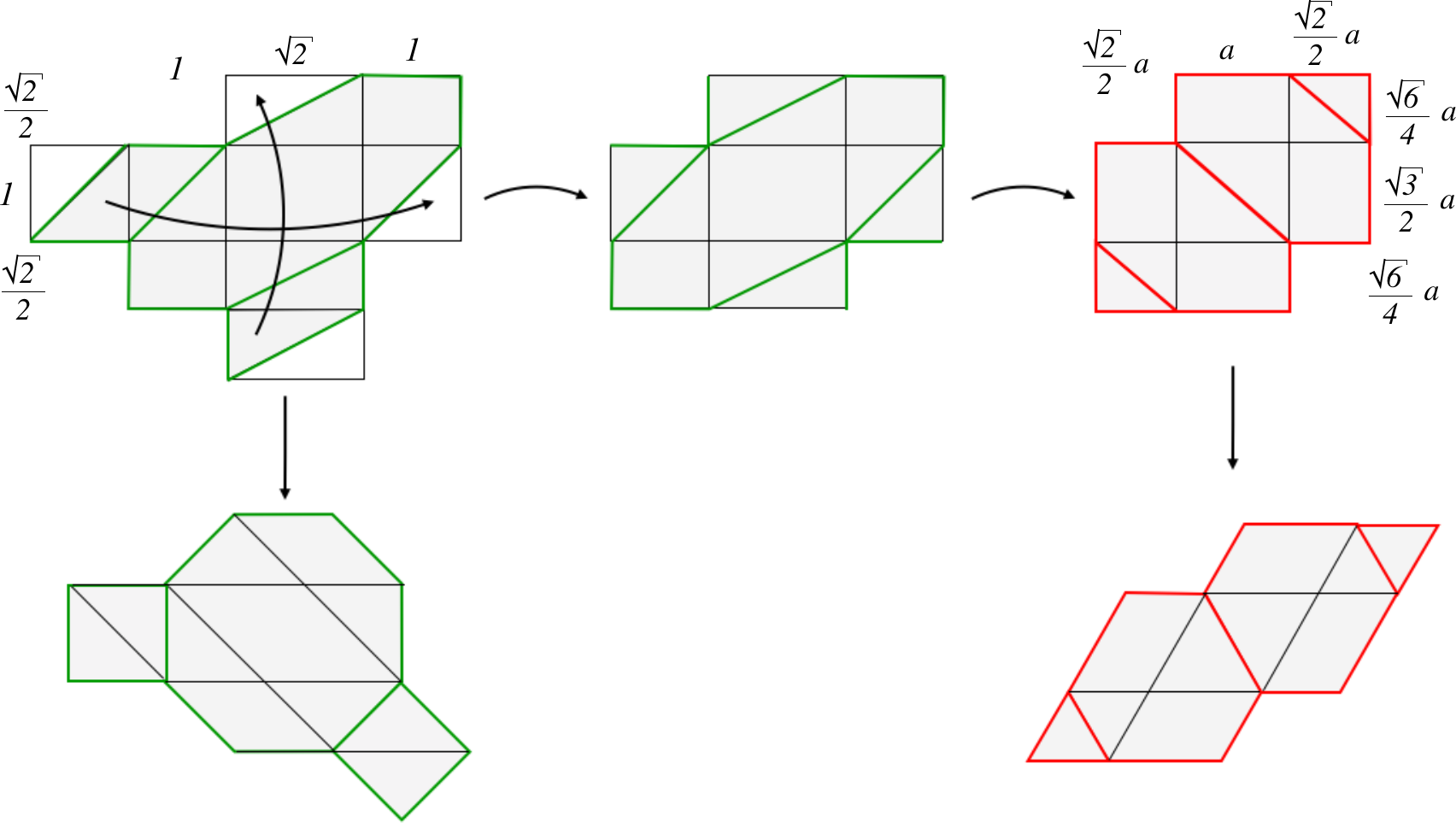}
\begin{quote}\caption{
The top row shows the orthogonal polygonal presentation of $\M 43$ from Figure \ref{octort}, and then a rearrangement of the same pieces, and then a different dissection of the same pieces into the dual orthogonal presentation of $\M 34$, after rescaling the lengths by a diagonal matrix. The second row shows the semi-regular presentations of $\M 43$ and $\M 34$, respectively, which are sheared images of the figures above. It also shows the length comparison for the two surfaces, where the number $a$ is calculated in (\ref{calculatea}). \label{hexort1}} \end{quote}
\end{figure}


\begin{remark}\label{rk:rotating} The same figure (i.e. the dual orthogonal presentation shown on the top right in Figure \ref{hexort1}) could  also be obtained by vertically decomposing  the graph $\G{4}{3}$ and repeating a procedure similar to the one described in \S \ref{hoopertobm}. 
The fact that the surface $\M mn$  and the surface $\M nm$ can each be obtained  from either of the respective graphs, $\G{m}{n}$ and $\G{n}{m}$ (decomposing each vertically), or both from the same graph $\G{m}{n}$ (one decomposing vertically, the other horizontally) is coherent with the construction of the graphs $\G{m}{n}$ and $\G{n}{m}$, because it is easy to check that we can obtain one from the other by rotating the graph by $\frac{\pi}{2}$ and changing the directions of the permutation cycles around the black dots.  

More precisely, from a point of view of the combinatorics of the surface, the change of direction of the permutations does not change it, because since the black dots represent a lateral gluing, the rectangles gluing on the right will be glued on the left instead and vice versa, which corresponds on the surface to a vertical flip. 
The rotation corresponds to the equivalence between the vertical decomposition of the first graph and the horizontal decomposition of the second one and vice versa. 
This can be seen on the polygonal presentation from the fact that we consider diagonals with different slopes if we decompose the graph vertically or horizontally. 
\end{remark}

\subsection{Cut and paste and rescaling between orthogonal presentations}\label{sec:cut_paste_rescale}
The procedure described above  can be done starting from any graph $\G{m}{n}$: by decomposing it vertically, we obtain an orthogonal presentation of $\M mn$; by decomposing it horizontally, a dual orthogonal presentation of $\M nm$. The two presentations, if we consider only the combinatorics of the surface, 
differ only by a cut and paste map. 
We can in fact cut along the horizontal and vertical diagonals in the two parallelograms that come from shearing a square and paste them along the side that was a diagonal of one of our basic rectangles, as shown in Figure \ref{hexort1} for the example of  $\M 43$ and $\M 34$.


\begin{remark} \label{alternate}
The rectangles containing a diagonal in \mnbms  are exactly the complementary ones to the rectangles containing a diagonal in $\M nm$.
This comes from the fact that by construction, only the vertical edges in one case and the horizontal ones in the other case are repeated and that the edges repeated in two different pieces are the ones which have a diagonal. 
One can see this in the top line of Figure \ref{hexort1}, and also later we will see them superimposed on the same picture in Figure \ref{coincide}.
\end{remark}

While from the point of view of the combinatorics of the surface the two presentations can be cut and pasted to each other, if we computed the associated widths of cylinders as described in \S \ref{moduli} we we would see that the lengths of the sides of the basic rectangles are not the same. Since we want both surfaces to have the \emph{same area} (in particular, we want them to be in the \emph{same Teichm\"uller disc}, 
we want to define a \emph{similarity} that allows us to rescale the lengths of the sides of the basic rectangles suitably. 

To determine the similarity, let us impose for the two surfaces in the orthogonal presentations  to have same areas, by keeping constant the ratios between the side lengths in the semi-regular polygon presentation.  Let us recall that the lengths, obtained from the polygonal description, or equivalently from the critical eigenfunctions for the graph, give us the lengths of the sides of the basic rectangles up to similarity. 

Let us work out  this explicitly in the $\M{3}{4}$ and $\M{4}{3}$ example. 
We can assume that the sides of the original octagon  all have length $1$. 
The area of the polygonal presentation of $\M{3}{4}$ will then be clearly $A_1=2(2+ \sqrt 2)$. 
Denoting by $a$ and $a'= \frac{\sqrt 2}{2} a$ the two side lengths of the sides of the polygons in $\M{4}{3}$, the area is $A_2= \sqrt 3 (1+\sqrt 2) a$. 
Requiring them to have the same area, $A_1=A_2$ gives us 
\be\label{calculatea}
a=\sqrt{\frac{2\sqrt 6}{3}} \quad \text{ and } \quad a'=\frac{\sqrt 2}{2} \sqrt{\frac{2\sqrt 6}{3}}.
\ee
From now on we will assume that $\M{4}{3}$ has these side lengths. 
Shearing the surface to make the two cylinder decomposition directions orthogonal gives us basic rectangles with the side lengths marked in Figure \ref{hexort1}.


The transformation that rescales the basic rectangles can be easily deduced from the figure, as the sides on the left and the corresponding sides on the right have the same ratio if we consider  the vertical ones and the horizontal ones separately. 
The transformation will hence be achieved by a diagonal matrix, with the two ratios as its entries. 
We remark that since we imposed for the area to be preserved, the matrix will be unitary.

For our example taking the orthogonal presentation of $\M 43$ to the orthogonal presentation of $\M 34$, this diagonal matrix is
\[
\diag{4}{3} = \matp {\sqrt[4]{\frac 32}} 0 0 {\sqrt[4]{\frac 23}}.
\]

We can extend all this reasoning to any generic \bm surface, and compute in a similar way a similarity that rescales the dual orthogonal presentation of  $\M{n}{m}$ so that it has the same area as the orthogonal presentation of  $\M{m}{n}$; see (\ref{def:generalmatrices}) for the general form.

\subsection{Flip and shears}\label{flipandshears}
So far we built a sheared copy of $\M{m}{n}$ (its orthogonal presentation) which can be cut and pasted and rescaled (as described in \S \ref{sec:cut_paste_rescale}) to obtained a  sheared copy of $\M{n}{m}$ (its dual orthogonal presentation). Thus, one can obtain an affine diffeomorphism between $\M{m}{n}$ and $\M{n}{m}$ through a shear, a cut and paste, a rescaling and another shear. In order to renormalize cutting sequences, we  also add a \emph{flip} (the reason will be clear later, see \S \ref{derivation}), to obtain the affine diffeomorphism $\AD m n: \M mn \to \M nm$ defined in formulas below. Let us first describe it in a concrete example.

\begin{example}
The affine diffeomorphism $\AD 43$ which we  use to map $\M 43$ to $\M 34$,  is realized by a sequence of flips, shears and geodesic flow shown in Figure \ref{hexTOoct}: starting from $\M 43$ we first apply the vertical flip $\flip$, then the shear $\shear {4}{3}$ to bring it to the orthogonal presentation. By cutting and pasting as explained in Figure \ref{hexort1} and then applying the diagonal matrix $\diag{4}{3}$ computed in the previous section, we obtain the  dual orthogonal presentation of $\M 34$. Finally, we shear the dual orthogonal presentation of  $\M 34$ to the semi-regular presentation of $\M 34$ by the shear $\shear {3}{4}$.

\begin{figure}[!h]
\centering
\includegraphics[width=1\textwidth]{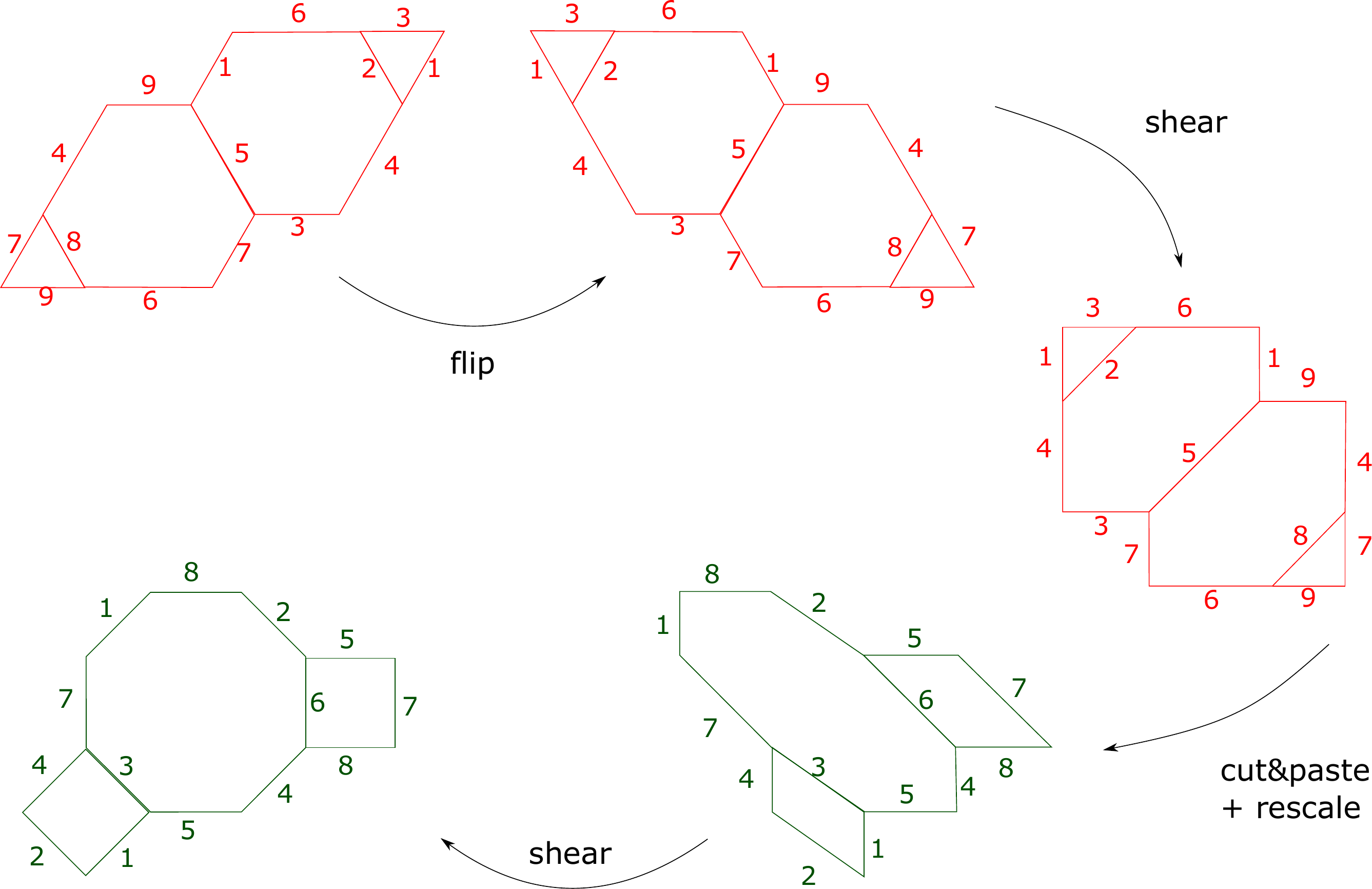}
\begin{quote}\caption{The affine diffeomorphism $\AD 43$ from $\M 43$ to $\M 34$, composition of $\flip$, $\shear {4}{3}$, cut and paste and $\diag{4}{3}$ and finally $\shear{3}{4}$. \label{hexTOoct}} \end{quote}
\end{figure} 
\end{example}

To define $\AD m n$ in  the \emph{general case},  consider a vertical flip $\flip$, the shear $\shear{m}{n}$ and the diagonal matrix $\diag{m}{n}$ given by:
\be \label{def:generalmatrices}
\flip  = \begin{pmatrix} -1 &  0 \\ 0 &  1  \end{pmatrix}, \qquad 
\shear m n  = \begin{pmatrix} 1 &  \cot \left( \frac {\pi}{n} \right) \\ 0 &  1  \end{pmatrix}, \qquad 
\diag{m}{n}  = \begin{pmatrix} \sqrt {\frac {\sin \frac {\pi}{n}}{\sin \frac {\pi}{m}}} & 0 \\ 0 & \sqrt {\frac {\sin \frac {\pi}{m}}{\sin \frac {\pi}{n}}}  \end{pmatrix}. 
\ee
The affine diffeomorphism $\AD m n$ is obtained by first applying the flip $f$ to $\M mn$ and shearing it by 
$\shear{m}{n}$, which produces the orthogonal presentation of $\M mn$. We then compose with the cut and paste map and the similarity given by $\diag{m}{n}$, which maps the orthogonal presentation of $\M mn$ to the dual orthogonal presentation of $\M nm$. Finally, we compose with the other shear $s_{nm}$ which produces the semi-regular presentation of $\M nm$. 

Thus, the \emph{linear part} of $\AD{m}{n}$, which we will denote by $\derAD m n $, is given by the following product:

\begin{equation} \label{def:derivativeAD}
\derAD m n   = \shear nm \diag{m}{n} \shear{m}{n} f = \begin{pmatrix} -\sqrt {\frac {\sin \frac {\pi}{n}}{\sin \frac {\pi}{m}}} & \frac{\cos\frac{\pi}{m}+\cos \frac{\pi}{n}}{\sqrt{\sin \frac{\pi}{m} \sin \frac{\pi}{n}}} \\ 0 &  \sqrt {\frac {\sin \frac {\pi}{m}}{\sin \frac {\pi}{n}}} \end{pmatrix}.
\end{equation}
The action of $\AD m n$ on directions will be described in \S \ref{sec:2farey}, and the action on cutting sequences in \S \ref{derivation}.

\section{Stairs and hats}\label{stairsandhats}

In this section we will explain in detail one particular configuration of basic rectangles in the  orthogonal presentation, and the corresponding configuration in the Hooper diagram. 
We put particular emphasis on it because we will be using throughout the next sections. 

Let us consider a piece of an orthogonal presentation given by six basic rectangles, glued together as in Figure \ref{stair}. 

\begin{figure}[!h]
\centering
\includegraphics[width=0.3\textwidth]{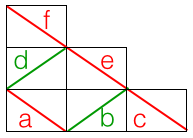}
\begin{quote}\caption{Configuration of a stair. \label{stair}} \end{quote}
\end{figure}

\begin{definition}
A \emph{stair} is a piece of an orthogonal presentation made of six basic rectangles. 
They are glued together so that we have three columns, made of three, two and one rectangle respectively, as shown in Figure \ref{stair}.
\end{definition}

As we did all through \S \ref{hooperdiagrams}, we will need to pass from the Hooper diagram to the orthogonal presentation. 
First, we will explain what a stair corresponds to in a Hooper diagram. 
Clearly, it will be a piece of diagram made of six edges, with some vertices between them. 
The exact configuration will depend on the parity of the vertices, i.e. on the position of the piece in the diagram. 

The piece corresponding to a stair will be one of the configurations in Figure \ref{hat}, which we call a \emph{hat}.

\begin{figure}[!h]
\centering
\includegraphics[width=0.8\textwidth]{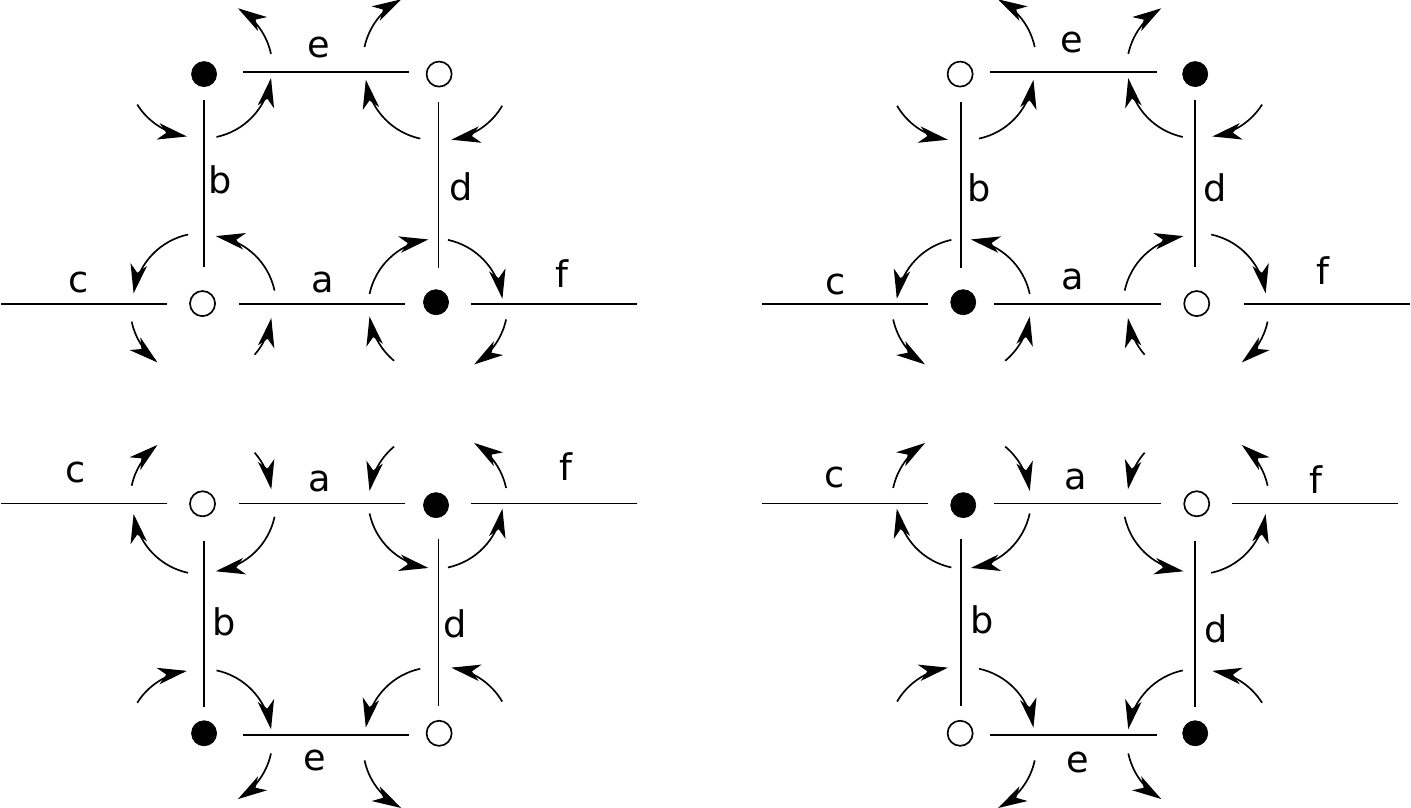}
\begin{quote}\caption{Possible configurations of hats. \label{hat}} \end{quote}
\end{figure}

More precisely:

\begin{definition}\label{def:hat}
A \emph{hat} is a piece of a Hooper diagram made of six edges. 
Two of them are vertical and the others are horizontal, in the configuration shown in Figure \ref{hat}. 
Moreover, if the two vertical ones go upwards from the vertices of the three-piece base, the first column has counter-clockwise permutation arrows; it has clockwise permutation arrows otherwise. 
\end{definition}

According to the parity, these vertices described can be black or white and have permutation arrows turning around clockwise or counter-clockwise. 
This gives us four possible configurations, as in Figure \ref{hat}.

The direction of the permutation arrows depends on the number of the column. 
As we saw, in fact, in odd columns we have arrows turning clockwise, while in even columns we have arrows turning counter-clockwise. 
As we explained in the definition, this determines also the position of the two vertical edges. 

Given the parity of the column, the two different possibilities of the vertex colorings are determined from the parity of the row. 
In an odd column we will have white vertices on odd rows and black vertices on even rows, 
and the opposite in an even column.

Notice that the vertex in the lower left corner determines everything: 
Its color together with the direction of its arrows determines the parity of the row and column of its position, and determines in which of the four possible hats we are in. 

The first case, with a white vertex and counter-clockwise arrows, corresponds to a corner position in an even row and an even column. 
The second one, with a black vertex but still counter-clockwise arrows, corresponds to an odd row and an even column. 
The third one, with a black vertex but clockwise arrows, corresponds to an even row and an odd column. 
The last one, with a white vertex and clockwise arrows again, corresponds to an odd row and an odd column. 

\subsection{Stairs and hats correspondence} 
We will now show that the stair and hat configurations correspond to each other. 
To do that we will use our method of passing from the Hooper diagram to the orthogonal decomposition and vice-versa. 

\begin{lemma}[Hat Lemma]\label{hatlemma}
The stair configurations correspond exactly to the four possible hat configurations.
\end{lemma}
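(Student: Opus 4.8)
The plan is to prove the Hat Lemma by running the dictionary between Hooper diagrams and orthogonal presentations set up in \S\ref{hoopertobm} on each of the four local configurations of Figure \ref{hat}, checking in every case that the resulting six basic rectangles assemble into a stair, and then reversing the construction to obtain the converse. Since that dictionary is entirely local and deterministic, the content of the lemma is really just the claim that it sends these four specific pieces to the one shape of Figure \ref{stair}.

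First I would isolate the three ingredients of the dictionary that are needed: each edge of the diagram becomes a basic rectangle; an edge shared between two vertical pieces (a \emph{vertical} edge) becomes a rectangle carrying a positive diagonal, while a horizontal edge is internal to a single piece; and the permutation arrows prescribe the gluings, an $\mathfrak e$-arrow around a white vertex gluing the two incident rectangles left-to-right and an $\mathfrak n$-arrow around a black vertex gluing them bottom-to-top. A hat has exactly two vertical and four horizontal edges, hence produces exactly six basic rectangles, two of which carry the diagonals; this already matches the six rectangles of a stair.

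Next I would carry out the placement explicitly for one representative, say the hat with a white lower-left vertex and counter-clockwise arrows (the even row, even column case). Going around each white vertex with $\mathfrak e$ and each black vertex with $\mathfrak n$, I would lay down the rectangles one at a time: the pairs sharing a white vertex sit side by side, the pairs sharing a black vertex stack vertically, and following the arrows through the whole hat forces the $3$-$2$-$1$ column arrangement of Figure \ref{stair}. For the other three hats I would invoke the parity analysis following Definition \ref{def:hat} rather than repeat the bookkeeping: reversing the column parity reverses the sense of all arrows, which by Remark \ref{rk:rotating} is a vertical flip of the rectangle picture, and reversing the row parity interchanges the black and white vertices, which transposes the roles of horizontal and vertical gluing. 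A vertically flipped stair and a transposed stair are again stairs, so all four hats yield stairs.

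For the converse and for exhaustiveness I would read the construction backwards: because the edge-to-rectangle correspondence is a bijection with fully determined gluing data, a stair of six rectangles with its two diagonal-bearing tiles can only have come from six edges glued in the hat pattern, and the vertex colours and arrow directions are then pinned down by the parity of the location, giving precisely one of the four hats. The main obstacle I anticipate is not conceptual but organizational: keeping the orientation conventions — clockwise versus counter-clockwise arrows, black versus white vertices, and the induced left/right and up/down gluings — consistent across the four parity cases, so that each configuration genuinely closes up into the staircase rather than some other six-rectangle shape. All the geometry is already contained in \S\ref{hoopertobm}; what remains is to check the four cases carefully.
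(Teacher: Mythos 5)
Your overall strategy coincides with the paper's: run the edge-to-rectangle dictionary forward on one hat and dispatch the remaining cases by a secondary argument, then reverse the construction for the converse; your explicit treatment of the first hat (white lower-left vertex, counter-clockwise arrows) is exactly the paper's computation. But your shortcut for the other three hats is wrong as stated. Reversing the column parity does not merely reverse the permutation arrows: by Definition \ref{def:hat} (see also Lemma \ref{hatdirection}) it simultaneously turns the hat upside-down, since clockwise arrows occur precisely when the two vertical edges point downward from the base. These two reversals cancel in the rectangle picture, and a direct check shows that all four hats produce the \emph{same} stair of Figure \ref{stair}. Your version, which accounts only for the arrow reversal via Remark \ref{rk:rotating}, would output a vertically flipped stair; unlike the transposed stair (the stair is genuinely symmetric under transposition, so the row-parity half of your argument is fine), a vertically flipped stair is \emph{not} a stair in the sense of the definition, so for the two clockwise cases your argument proves the wrong statement. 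Either note the cancellation explicitly or simply repeat the case-1 bookkeeping, as the paper does.

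The more serious gap is in the converse, which is where the paper's proof has its actual content. You assert that a stair ``can only have come from six edges glued in the hat pattern,'' with colours and arrows ``pinned down by the parity of the location''; but this presupposes that the six edges already sit in a Hooper diagram in the hat shape, which is precisely the exhaustiveness claim (the word ``exactly'' in the lemma). The paper's converse supplies two forcing steps that your plan omits: (i) once one vertical edge (say $b$) is drawn going up from its endpoint of the middle edge $a$, the other vertical edge $d$ is forced to go up as well, because the rectangle $e$ is glued both above $b$ and to the right of $d$, and the diagram cannot self-intersect or repeat an edge; and (ii) the middle edge $a$ must be horizontal, because two same-direction vertical edges force opposite arrow orientations at the two endpoints of $a$, which is impossible for vertices in a common column (same-column vertices carry the same orientation), so the endpoints must lie in adjacent columns. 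Only after these two steps do the a priori possibilities reduce to the $2 \times 2 = 4$ free choices (which endpoint of $a$ is white, and whether the vertical edges go up or down), i.e., to precisely the four hats. Without (i) and (ii), bijectivity of the edge-to-rectangle correspondence alone does not rule out, say, a vertical middle edge or vertical edges pointing in opposite directions, and the lemma is not proved. (Incidentally, the diagonals you invoke in your counting step play no role in this lemma; only the gluing combinatorics matters.)
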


\begin{proof}
First, we show that if we have one of the hat configurations, it actually gives a stair configuration. 
We will show it in detail for the first case and the others will work in exactly the same way. 

Let us consider a labeling on the hat in the upper-left of Figure \ref{hat}. 
As before, each edge corresponds to a basic rectangle. 
The three edges around the white vertex in the left bottom corner and the arrows around it, tell us that we will have three basic rectangles, glued one to each other on the right, in the order $a$ glued to $b$, glued to $c$. 
On the other hand, the three edges around the black vertex at the other extremity of the edge $a$, and its arrows, tell us that a basic rectangle labeled $f$ is glued on top of one labeled $d$ which is glued on top of the one labeled $a$. 
Finally, the basic rectangle $e$ is glued on top of $b$, and on the right of $d$, and we obtain the configuration in Figure \ref{stair}. 

The other three cases work the same way.

Secondly, we show that if we have a stair configuration, it will necessarily give a hat configuration on the Hooper diagram. 
Let us consider a stair configuration, with the same labels as in Figure \ref{stair}. 
The basic rectangle $a$ will correspond to an edge, and we do not know if it will be horizontal or vertical. 
We assume for the moment that it is a horizontal edge (we will explain later why the same figure, but rotated so that $a$ is vertical, is not acceptable). 
At this point we have the choice of on which extremity of $a$ we want to record the left-right adjacency and the upwards-downwards one. In other words, we have the choice of where to put a black vertex and where to put a white one. 
This gives us two possible cases. 

If we have the white vertex (resp. the black vertex) on the left of the edge $a$, we will record the gluing with $b$ and then $c$ (resp. $d$ and $f$) on that side. 
Again, we have the choice of recording it putting $b$ (resp. $d$) going upwards from the vertex or downwards. 
This leads to split each of the two cases in two more.
If $b$ (resp. $d$) is above the line of $a$, the permutation arrows around the white vertex will go counter-clockwise (resp. clockwise).
Now, on the other extremity of the edge $a$, we record the other adjacency and add the edges $d$ and $f$ (resp. $b$ and $c$) in order. 
It looks like we have again a choice of whether to draw $d$ (resp. $b$) going upwards or downwards, but it is not difficult to see that the previous choice determines also this one. 
In fact, if edge $b$ was going upwards, the edge $d$ will have to go upwards as well, because the edge $e$ is obtained both from the upwards gluing from $b$ and from the right gluing from $d$. 
The diagram does not intersect itself and we cannot repeat an edge, hence the two vertical ones have to be in the same direction. 

This also shows that $a$ needs to be horizontal, because having the two vertical edges in the same direction makes the permutation arrow go in different orientation around the two vertices, and we saw that if they are in the same column, then they must have the same orientation. 

It is clear that we cannot have any other possibility and the four possibilities just described correspond to the four hat configurations in Figure \ref{hat}. 
\end{proof}

\subsection{Degenerate hat configurations}
Let us recall that to unify and simplify the description of \bm surfaces via Hooper diagrams we introduced a \emph{augmented diagram}, which allows us to treat the boundary of the Hooper diagram as a degenerate case of a larger diagram (see \S \ref{hooperdiagram}). We now describe degenerate hat configurations that correspond to boundary configurations in the Hooper diagram. We will use them later, in \S \ref{sec:structure}, to prove our main structure theorem.

We will shade the six edges to pick out a hat configuration, as shown in Figure \ref{hat-cases}. The \emph{middle edge} is the one that is numbered in Figure \ref{hat-cases} (or edge $a$ from Figure \ref{hat}).


\begin{lemma}[Degenerate Hat Lemma] \label{degeneratehat}
All edges of an augmented Hooper diagram that form a subset of a hat, such that the middle edge of the hat is an edge of the augmented diagram, fall into one of the four cases in Figure \ref{hat-cases}.

%
%
%
%
\end{lemma}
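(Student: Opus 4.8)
The plan is to reduce the statement to a finite case check resting on the non-degenerate Hat Lemma (Lemma \ref{hatlemma}) together with the combinatorics of the frame added in the augmented diagram. First I would recall the two structural facts established there: in any hat the middle edge $a$ is \emph{horizontal}, and the four hat shapes of Figure \ref{hat} are distinguished by the colour of the lower-left vertex and the orientation of its permutation arrows, i.e.\ by the parities of the row and column in which $a$ sits. Since by hypothesis the middle edge $a$ is an edge of the augmented diagram (genuine or degenerate), it still joins a horizontally adjacent white and black node, and these parities still select which of the four base shapes of Figure \ref{hat} the configuration lies inside. This is the skeleton onto which the degenerations will be grafted.

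Next I would overlay the chosen base hat on the augmented diagram $\G mn'$ and read off, edge by edge, which of the six edges are actually present and of which type. Recall that $\G mn'$ is the genuine grid $1\le i\le m-1,\ 1\le j\le n-1$ together with a frame of degenerate nodes at $i\in\{0,m\}$ and $j\in\{0,n\}$, that a degenerate edge joins any two nodes at distance $1$ not already joined, and that such an edge is $\mathcal A$-degenerate, $\mathcal B$-degenerate or completely degenerate according to whether it carries a degenerate white node, a degenerate black node, or both (see the augmented diagram introduced just after Definition \ref{hooperdiagram}). The position of the middle edge relative to the four sides of the frame then dictates each of the remaining five edges: an edge of the hat is absent precisely when it would leave the node range $0\le i\le m,\ 0\le j\le n$, and is degenerate precisely when one of its endpoints lands on the frame.

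The heart of the proof is the bookkeeping of these positions. Using that $a$ is horizontal, its two \emph{vertical} edges can only run off the top or the bottom of the frame, while its horizontal continuation can only run off the left or the right; and by the orientation argument inside Lemma \ref{hatlemma} the two vertical edges always point the same way, so they degenerate together rather than independently. Running this through each of the four parity types and each admissible placement of $a$ against the frame, I expect every possibility to collapse onto exactly the four shaded diagrams of Figure \ref{hat-cases}, the shading recording which edges are $\mathcal A$-degenerate, $\mathcal B$-degenerate or completely degenerate.

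The step I expect to be the real obstacle is \emph{completeness} rather than the verification of any individual case: one must make sure that no admissible placement of the middle edge has been omitted, and in particular that placements near a \emph{corner} of the frame, where two sides meet and several edges degenerate at once, are still absorbed by one of the four listed cases and do not create a spurious fifth configuration. I would organize this, exactly as in the enumeration of the four interior types inside the proof of Lemma \ref{hatlemma}, by indexing placements through the coordinates of the lower-left vertex subject to the hypothesis that the middle edge survive, and checking each resulting residue class directly against Figure \ref{hat-cases}.
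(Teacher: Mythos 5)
Your proposal is correct and takes essentially the same approach as the paper: the paper's proof is a one-line appeal to inspection ("the reader can easily verify, using a diagram such as Figure \ref{all-hats}, that any orientation and placement of a hat whose middle edge is an edge of an augmented Hooper diagram falls into one of the cases $1$--$4$"), which is precisely the finite case check you carry out, organized by the parity facts from Lemma \ref{hatlemma} and the position of the middle edge relative to the frame of degenerate nodes. Your explicit bookkeeping (horizontal middle edge, the two vertical edges degenerating together, corner placements) simply spells out the enumeration the paper leaves to the reader.
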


\begin{figure}[!h] 
\centering
\includegraphics[width=.6\textwidth]{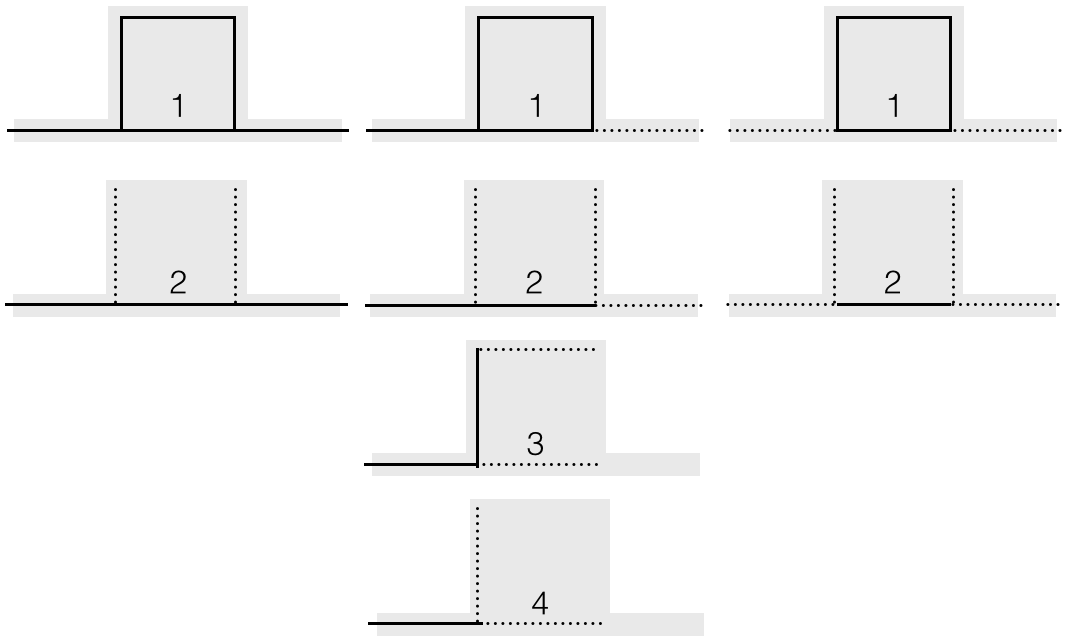} 
\begin{quote}\caption{The cases $1-4$ for hats \label{hat-cases}} \end{quote}
\end{figure}

\begin{proof}
The reader can easily verify, using a diagram such as Figure \ref{all-hats}, that any orientation and placement of a hat whose middle edge is an edge of an augmented Hooper diagram falls into one of the cases $1-4$.
\end{proof}

\begin{figure}[!h] 
\centering
\includegraphics[width=400pt]{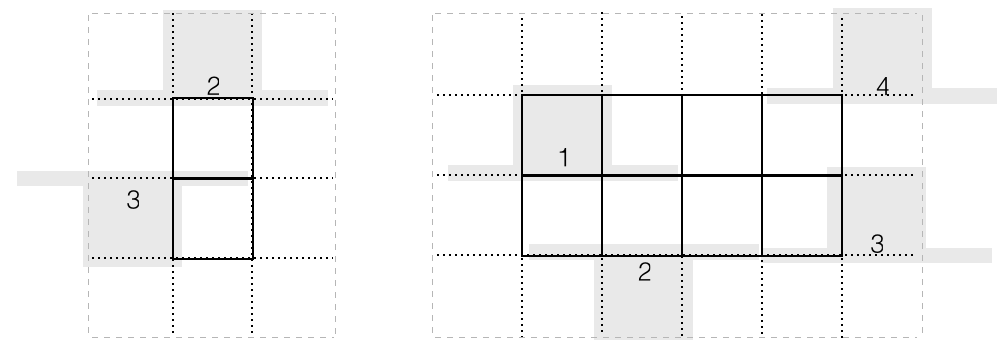} 
\begin{quote}\caption{Examples of the four possible cases for hats. We do not include the outer degenerate edges (dashed gray) in our hats because they are not adjacent to the middle edge. \label{all-hats}} \end{quote}
\end{figure}

These cases are illustrated in Figure \ref{all-hats}. We use hats, and degenerate hats, in Lemma \ref{degeneratearrow}, which is a main step in our Structure Theorem \ref{tdtheorem} for derivation diagrams.

%

\subsection{Dual surfaces}

In this section, we will prove a Lemma that uses the stairs and degenerate stair configurations, and  will be used later to define the derivation diagrams that give  derivation rules. 

Consider the superposition of the orthogonal presentation of $\M mn$ and the dual orthogonal presentation $\M nm$, as shown in Figure \ref{coincide}. 
Recall that sides of (the sheared images of) $\M mn$ and of $\M nm$ appear as diagonals of alternating basic rectangles. Sides of either presentation that are horizontal or vertical  can be thought of as degenerate diagonals, i.e. degenerate basic rectangles of zero width or zero height, described by the  \emph{augmented} Hooper diagram (see \S \ref{hoopertobm}).

As before, let us call \emph{positive diagonals} the sheared images of sides of $\M nm$ (which, if not vertical or horizotal, have slope $1$) and let us now call \emph{negative diagonals } the sheared images of sides of $\M mn$ (which, if not vertical or horizontal, have slope $-1$).  As observed in Remark \ref{alternate}, positive and negative diagonals \emph{alternate}, in the sense that the neighboring basic rectangles with a positive diagonal are adjacent (right/left or up/down) to basic rectangles with a negative diagonal. This remark holds true for all sides, including vertical and horizontal ones,  if we think of them as degenerated diaganals and draw them according to the following convention:

\begin{convention}\label{convention:ordered_sides}
When degenerate sides of the orthogonal presentation of $\M nm$ and of the dual orthogonal presentation of $\M nm$ \emph{coincide}, we think of them as degenerated diagonals and hence we draw them adjacent to each other and ordered so that degenerate positive (red) and negative (green) diagonals  \emph{alternate} in horizontal and vertical directions, as shown in Figure \ref{coincide} for $\M  43 $ and $\M 34$. 
\end{convention}

\begin{figure}[!h] 
\centering
\includegraphics[width=400pt]{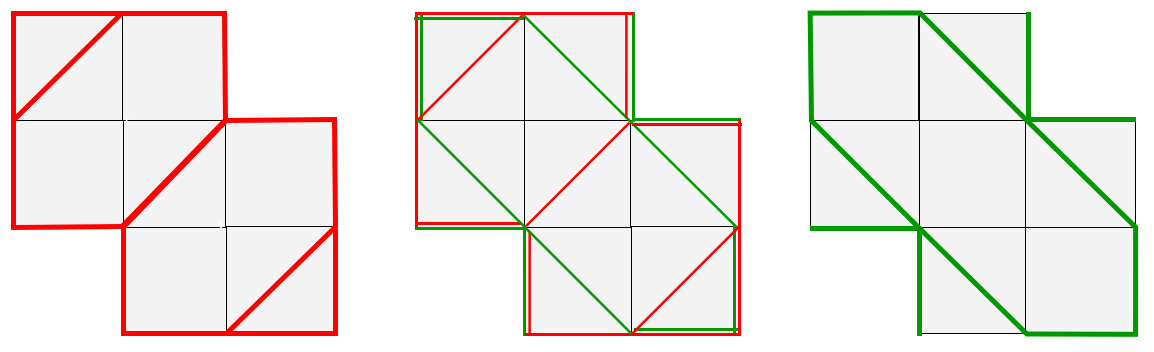} 
\begin{quote}\caption{The orthogonal presentations of $\M 43$ (green, left) and $\M34$ (red, right), superimposed on the same figure (center). Coinciding horizontal and vertical edges alternate red and green in the horizontal and vertical directions. Here the space between coinciding edges is exaggerated for clarity. \label{coincide}} \end{quote}
\end{figure}

Consider trajectories whose direction belongs to the first quadrant, i.e. such that $\theta \in [0, \pi/2]$. Let us say that a pair of negative diagonals is \emph{consecutive} if there exists such a trajectory which hits these two diagonals one after the other. 
 
 
\begin{lemma}\label{lemma:intertwined}
Consider a pair of consecutive negative diagonals. Then, the following dichotomy holds: for \emph{any}    trajectory whose direction belong to the first quadrant, i.e. such that $\theta \in [0, \pi/2]$, either
\begin{itemize}
\item   between any consecutive crossings of these pairs of negative diagonals, no positive diagonal is crossed, or
\item between any consecutive crossings of these pairs of negative diagonals, exactly one and the same positive diagonal is crossed.
\end{itemize} 
\end{lemma}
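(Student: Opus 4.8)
The plan is to reduce the statement to a purely local analysis of how a first-quadrant trajectory meets the corner-to-corner diagonals of the basic rectangles in the superimposed orthogonal presentations (Figure \ref{coincide}), using the alternation property of Remark \ref{alternate}. The key observation is that the outcome of the dichotomy will depend only on the relative position of the two rectangles $R_1, R_2$ carrying the diagonals $d_1,d_2$, and this position is fixed once the pair $(d_1,d_2)$ is fixed, so it cannot vary from trajectory to trajectory.

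First I would record two elementary crossing facts, both immediate from the fact that in a basic rectangle the positive diagonal runs from the bottom-left to the top-right corner and the negative diagonal from the top-left to the bottom-right corner, while a trajectory with $\theta \in [0,\pi/2]$ enters each rectangle through its left or bottom edge and leaves through its top or right edge. (F1) Such a trajectory \emph{always} crosses the negative diagonal of any rectangle it traverses: on the left/bottom edge it lies weakly below that diagonal and on the top/right edge weakly above it. (F2) It crosses the positive diagonal of a rectangle it traverses \emph{if and only if} it passes straight through, i.e.\ enters on the left and leaves on the right, or enters on the bottom and leaves on the top; if instead it turns the corner (left-to-top or bottom-to-right) it does not cross the positive diagonal. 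Both facts hold for every first-quadrant slope, since only the entry and exit edges enter the argument.

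Next I would use Remark \ref{alternate}: the diagonal type is a checkerboard coloring of the grid of basic rectangles, so two rectangles carrying negative diagonals are never adjacent. Combined with (F1) this pins down the local picture. Along the sequence of rectangles crossed by the trajectory the diagonal type alternates, and by (F1) the \emph{first} negative rectangle reached after $R_1$ is exactly the one containing the next negative diagonal. Hence between two \emph{consecutive} negative diagonals the trajectory traverses exactly one intermediate rectangle $Q$, which necessarily carries a positive diagonal, and the entire question reduces to whether the trajectory crosses $Q$'s positive diagonal, which by (F2) is decided by whether it goes straight through $Q$ or turns a corner in it. I then split into the three relative positions of $R_2$ forced by the first-quadrant condition and the distance-two conclusion: $R_2$ lies two steps to the right of $R_1$, two steps above it, or one step up and one step to the right. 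In the first two cases $Q$ is the unique rectangle between them and, in order to reach $R_2$, the trajectory must cross $Q$ from left to right (respectively bottom to top); by (F2) it crosses the \emph{same} positive diagonal of $Q$, giving the second alternative. In the diagonal case $Q$ is one of the two rectangles completing the $2\times 2$ block, and in either one the trajectory enters left and leaves top, or enters bottom and leaves right, hence turns a corner and crosses no positive diagonal, giving the first alternative. Since this alternative depends only on $(d_1,d_2)$, it holds for every first-quadrant trajectory and at every consecutive crossing of the pair, which is exactly the claimed dichotomy.

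I expect the main obstacle to be the rigorous treatment of the degenerate configurations, where one or both of the diagonals of $R_1, Q, R_2$ collapse to horizontal or vertical sides. Here the clean ``enter/exit edge'' bookkeeping must be carried out on the augmented Hooper diagram: the relevant local neighborhood of six basic rectangles is a (possibly degenerate) stair, which by the Hat Lemma \ref{hatlemma} and the Degenerate Hat Lemma \ref{degeneratehat} is one of finitely many hat configurations. In each one I would verify (F1)--(F2) and repeat the case analysis directly, using Convention \ref{convention:ordered_sides} to order coinciding degenerate sides so that positive and negative diagonals continue to alternate and the distance-two structure is preserved.
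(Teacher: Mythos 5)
Your proposal is correct and takes essentially the same approach as the paper: the paper likewise reduces to a case analysis of the three possible relative positions of consecutive negative diagonals inside a stair configuration (its pairs $a,c$ / $a,f$ / $a,e$ in Figure \ref{stair} are exactly your two-steps-right, two-steps-up, and diagonal cases), and it also treats horizontal and vertical sides as degenerate diagonals via Convention \ref{convention:ordered_sides} so that the same argument covers the degenerate stairs. Your crossing facts (F1)--(F2) simply supply the convexity argument that the paper leaves implicit as ``clear from the stairs picture.''
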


\begin{proof} 
Assume first that the two negative adjacent negative diagonals are non-degenerate. The fact that they are adjacent means that one can find a stair configuration as in Figure \ref{stair}, in which the two diagonals are the ones labeled by either $a$ and $c$, or $a$ and $e$, or $a$ and $f$.  It is then clear from the stairs picture in Figure \ref{stair} that (referring to the labeling in that figure)  if the pair is given by $a$ and $e$, then a trajectory whose direction belongs to the first quadrant that crosses these two negative diagonals never crosses any positive diagonal in between, while if the pair is $a$ and $c$  or $a$ and $f$, such a trajectory will always cross a negative diagonal between, i.e. $b$ (for the pair $a$ and $c$) or $d$ (for the pair $a$ and $f$).

Convention \ref{convention:ordered_sides}, which treats vertical and horizontal sides as degenerate diagonals, allows us to use exactly the same proof for degenerate stairs (when some of the $6$ edges in the stair are degenerate). 
\end{proof}

In \S \ref{sec:labeled_def} this Lemma will be used to define derivation diagrams.

\section{Transition and derivation diagrams} \label{transitiondiagrams}

We now return to the polygon decomposition of the \bm surfaces and to our goal of characterizing all cutting sequences. First, in \S \ref{howtolabel} we will describe how to label the edges of the semi-regular presentation of the \bm surface. We then show in \S \ref{sec:labelingHooper} how this labeling induces a labeling on the corresponding Hooper diagram.  In \S\ref{sec:transition_diagrams} we define \emph{transition diagrams}, which are essential for understanding cutting sequences. In \S\ref{sec:admissibility} we define \emph{admissible} cutting sequences, generalizing the work of Series and Smillie-Ulcigrai discussed in \S\ref{sec:Sturmian}-\ref{sec:polygons}. In \S\ref{sec:labeled_def} we define \emph{derivation diagrams}, which are the key tool we will use to characterize cutting sequences on \bm surfaces. In \S\ref{sec:structure}, we prove our structure theorem for derivation diagrams for trajectories in $\Sec 0mn$, which is the main result of this section. In \S\ref{sec:normalization}, we describe how to \emph{normalize} trajectories in other sectors to $\Sec 0mn$. In \S\ref{sec:other_sectors}, we describe transition diagrams for trajectories in other sectors.


\subsection{Edge labeling}\label{howtolabel}

To label the edges of the \bm surfaces, we use a ``\emph{zig-zag}'' pattern as follows. First, we label the lower-right diagonal edge of $P(0)$ with a $1$, and then go horizontally left to the lower-left diagonal edge of $P(0)$ and label it with a $2$ (see Figure \ref{labeling}). Then we go up diagonally to the right (at angle $\pi/n$) and label the next edge $3$, and then go horizontally left and label that edge $4$, and so on until label $n$. The $n$ edges of $P(1)$ that are identified with these edges have the same labels. 

Now we label the remaining $n$ edges of $P(1)$. If the bottom horizontal edge is already labeled (as in Figure \ref{labeling}a below), we start with the lowest-right diagonal edge and label it $n+1$, and then go horizontally to the left and label that edge $n+2$, and then zig-zag as before. If the bottom horizontal edge is not yet labeled (as in Figure \ref{labeling}b below), we label it $n+1$, and then go diagonally up to the right and label that edge $n+2$, and so on in the zig-zag. We do the same for $P(2)$ and the remaining polygons until all the edges are labeled.

\begin{figure}[!h] 
\centering
\includegraphics[width=0.8\textwidth]{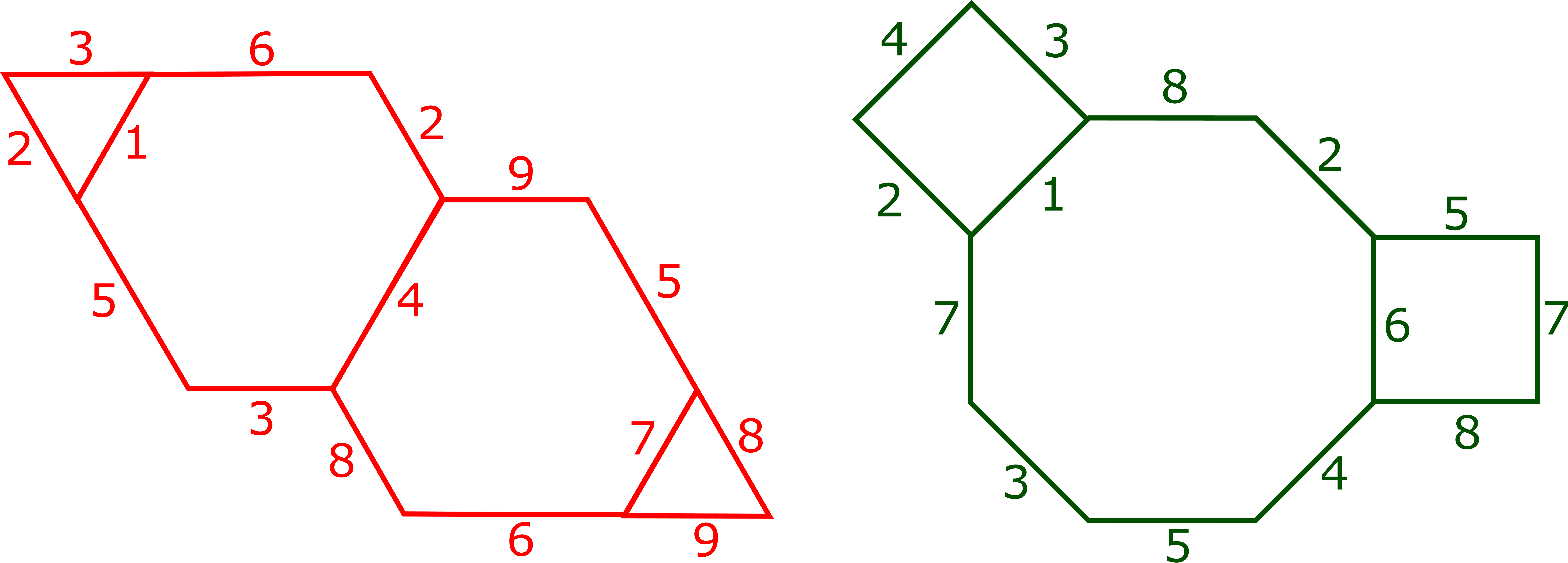}
\begin{quote}\caption{The edge labelings for $\M 34$ and $\M 43$ \label{labeling}} \end{quote}
\end{figure}

We choose to label the edges in this way because it makes the \emph{transition diagrams} easy to describe, as we will see. We can first reap the benefits of this labeling system by labeling the edges of the Hooper diagram.


\subsection{Labeling the Hooper diagram}\label{sec:labelingHooper}

Each edge of the Hooper diagram $\G mn$ corresponds to the intersection of a horizontal cylinder and a vertical cylinder, which is a basic rectangle in the orthogonal decomposition. Each non-degenerate basic rectangle is crossed by an edge of either $\M mn$ or $\M nm$: a negative diagonal for the (red) edges of $\M mn$ or a positive diagonal for the (green) edges of $\M nm$. We can label the edges of the Hooper diagram with the label of the edge that crosses the corresponding basic rectangle.

\begin{proposition}\label{hd-snake}
In $\G mn$, the labels are as follows:

The upper-left horizontal auxiliary edge is edge ${\rd 1}$ of $\M mn$, and thereafter the horizontal edges are labeled ${\rd 2,3,4}$, etc., ``snaking'' horizontally back and forth from top to bottom, as shown in Figure \ref{hd-labeled}b.

The upper-left vertical auxiliary edge is edge ${\gr 1}$ of $\M nm$, and thereafter the vertical edges are labeled ${\gr 2,3,4}$, etc., ``snaking'' vertically up and down from left to right, as shown in Figure \ref{hd-labeled}b.
\end{proposition}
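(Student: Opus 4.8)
The plan is to establish the labeling of the Hooper diagram by transporting the ``zig-zag'' edge labeling of the semi-regular polygons (defined in \S\ref{howtolabel}) through the explicit dictionary between the polygonal presentation, the orthogonal presentation, and the Hooper diagram that was built up in \S\ref{hooperdiagrams} and \S\ref{sec:affine}. The key observation to exploit is that each non-degenerate basic rectangle carries exactly one diagonal — a negative (red) one coming from an edge of $\M mn$ or a positive (green) one coming from an edge of $\M nm$ — and that, by Remark \ref{alternate}, these two types alternate across adjacent rectangles. Since the horizontal edges of $\G mn$ are precisely the ones whose associated basic rectangles are crossed by negative diagonals (edges of $\M mn$) and the vertical edges are the ones crossed by positive diagonals (edges of $\M nm$), it suffices to verify the two ``snaking'' patterns separately, and by the duality of the construction the vertical/green statement will follow from the horizontal/red one applied to $\G nm$ (using Remark \ref{rk:rotating}).

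First I would fix, via the vertical decomposition of the augmented diagram described in \S\ref{hoopertobm}, the correspondence between a column of horizontal edges in $\G mn$ and the basic rectangles making up one semi-regular polygon $P(k)$. Tracing the construction in Figure \ref{piece1} and Figure \ref{octort}, each horizontal edge of the diagram corresponds to a rectangle whose negative diagonal is a specific edge of $P(k)$. I would then check that the $\mathfrak e$- and $\mathfrak n$-permutation conventions of Definition \ref{hooperdiagram} — counter-clockwise in odd columns, clockwise in even columns, and the dual rule on rows — reproduce exactly the horizontal-left/diagonal-up-right alternation that defines the zig-zag labeling in \S\ref{howtolabel}. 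Concretely, the ``go horizontally left'' step of the zig-zag corresponds to moving one unit along a row of the diagram, while the ``go diagonally up'' step corresponds to passing to the next row, and the reversal of permutation orientation between consecutive rows is precisely what makes the labels $2,3,4,\dots$ snake back and forth. This is most cleanly done by comparing the two procedures on the degenerate edges at the boundary (the auxiliary edges), where the indexing starts, and then inducting across the rectangle.

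The main technical step — and the place I expect the real bookkeeping to live — is matching the \emph{starting corner and orientation}: verifying that the upper-left horizontal auxiliary edge indeed receives label ${\rd 1}$ and that the snaking proceeds top-to-bottom rather than in some reflected or rotated variant. This requires carefully tracking how the vertical flip $\flip$ and shear $\shear mn$ appearing in $\AD mn$ (see \eqref{def:generalmatrices}) reposition the polygon $P(0)$, whose lower-right diagonal edge is labeled $1$ by definition, into the top-left corner of the orthogonal presentation, and hence of the diagram. I would resolve this by a direct check on the base case $\M34$ and $\M43$ in Figure \ref{labeling} and Figure \ref{hd-labeled}, confirming the assignment there, and then arguing that the general combinatorial pattern is forced by the adjacency rules of Definition \ref{hooperdiagram} together with the hat/stair correspondence (Lemma \ref{hatlemma}), which guarantees that local configurations of labeled rectangles are rigid. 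The horizontal (red) claim being settled, the vertical (green) claim follows verbatim by applying the same argument to the dual surface $\M nm$ via its own graph $\G nm$ and invoking Remark \ref{rk:rotating}, since horizontal edges of $\G nm$ become vertical edges of $\G mn$ under the $\pi/2$ rotation relating the two diagrams.
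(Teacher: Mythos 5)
Your overall strategy---transporting the zig-zag labeling of \S\ref{howtolabel} through the correspondence between polygon edges, basic rectangles, and Hooper-diagram edges, and checking that the $\mathfrak e$/$\mathfrak n$ permutation conventions reproduce the snaking---is the same as the paper's, and the red/horizontal half of your argument matches the paper's Steps 3--4 almost exactly (horizontal moves in a polygon correspond to moves around white vertices, diagonal moves to moves around black vertices, and the alternation of arrow orientations produces the back-and-forth). But two steps of your sketch have real gaps. First, your placement of ${\rd 1}$: checking the base cases $\M 34$/$\M 43$ and then appealing to ``rigidity'' of local configurations (hats/stairs, adjacency rules) does not settle the general case, because rigidity only \emph{propagates} labels once a starting edge is fixed---it cannot \emph{locate} the starting edge for arbitrary $(m,n)$. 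The paper's Step 2 does this by a constraint argument valid for all $(m,n)$: ${\rd 1}$ corresponds to a degenerate rectangle, hence an outer (dotted) edge; it is an edge of $\M mn$, hence horizontal (by the edge count of Step 1); it is parallel to the vertical cylinder direction, hence lies in a horizontal cylinder and emanates from a white vertex; and going against the arrows from ${\rd 1}$ must reach ${\gr 1}$, which is also degenerate, forcing a corner. Some argument of this kind is needed; also, tracking $\flip$ and $\shear mn$ from $\AD mn$ is a detour, since the labeling is determined purely by the cylinder-intersection combinatorics and never references the affine map.

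Second, the claim that the green/vertical statement ``follows verbatim'' from Remark \ref{rk:rotating} is not correct as stated. A rotation by $\pi/2$ sends the upper-left corner of $\G nm$ to a \emph{different} corner of $\G mn$, and sends horizontal snaking from top to bottom to a vertical snaking with the wrong starting corner, unless one also composes with the reflection implicit in the arrow-reversal of Remark \ref{rk:rotating} (and the stylistic reflection of Remark \ref{hd-reflect}); moreover one must check that this graph isomorphism is compatible with the geometric labelings on both sides, i.e.\ that it matches edges corresponding to the same basic rectangle of the shared orthogonal presentation. The paper sidesteps all of this by placing the green labels \emph{within the same traversal} as the red ones: going against the arrows around a white vertex from ${\rd 1}$, one labels the intervening vertical edge ${\gr 1}$ and then ${\rd 2}$, and so on, so that the interlocking of the two labelings (which is precisely what the derivation diagrams later rely on) comes for free. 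Your duality route can be repaired, but it costs exactly the orientation bookkeeping it was meant to avoid.
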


In Figure \ref{hd-labeled}a, ``up'' and ``down'' are reversed because of the conventions in the Hooper diagram, but we choose to orient the $1$s in the upper left; see Remark \ref{hd-reflect}.

\begin{proof}
We begin with an Hooper diagram, including the edges that are either horizontally degenerate, or vertically degenerate. (We omit edges that are completely degenerate, because they are points and thus do not have polygon edges associated with them.) This is the black part of the diagram in Figure \ref{hd-labeled}. We will determine where the (colored) edge labels go on the diagram in several steps.

Recall that the white vertices represent horizontal cylinders, with the arrows indicating movement to the right, and the black vertices represent vertical cylinders, with the arrows indicating movement up.

\emph{Step 1}: The (red) edges of $\M mn$ and the (green) edges of $\M nm$ comprise the horizontal and vertical sets of edges of the Hooper diagram. We can determine which is which by counting: $\M mn$ has $n(m-1)$ edges and $\M nm$ has $n(m-1)$ edges. If $m=n$, the diagram is symmetric so it doesn't matter which is which.

For our example, $\M 43$ has $9$ edges, so they are the horizontal edges in Figure \ref{hd-labeled}a, and $\M 34$ has $8$ edges, so they are the vertical edges in Figure \ref{hd-labeled}a. This means that the horizontal edges will have red edge labels, and the vertical edges will have green edge labels.

\emph{Step 2}: We determine where to put the edge label ${\rd 1}$. ${\rd 1}$ is a degenerate edge, so it must be one of the outer (dotted) diagram edges. ${\rd 1}$ is in $\M mn$, so it must be a horizontal diagram edge. ${\rd 1}$ is parallel to the vertical cylinder decomposition, so it lies in a horizontal cylinder, so it emanates from a white vertex. When we go against the arrow direction from ${\rd 1}$, we get to ${\gr 1}$, which is also a degenerate edge, so it must be on a corner (see Figure \ref{hd34aux}). 

All of these narrow our choices to just one, or sometimes two when the diagram has extra symmetry; in that case, the two choices are equivalent. In our example, there is only one choice, the edge labeled ${\rd 1}$ in Figure \ref{hd-labeled}. 

\begin{figure}[!h] 
\centering
\includegraphics[width=350pt]{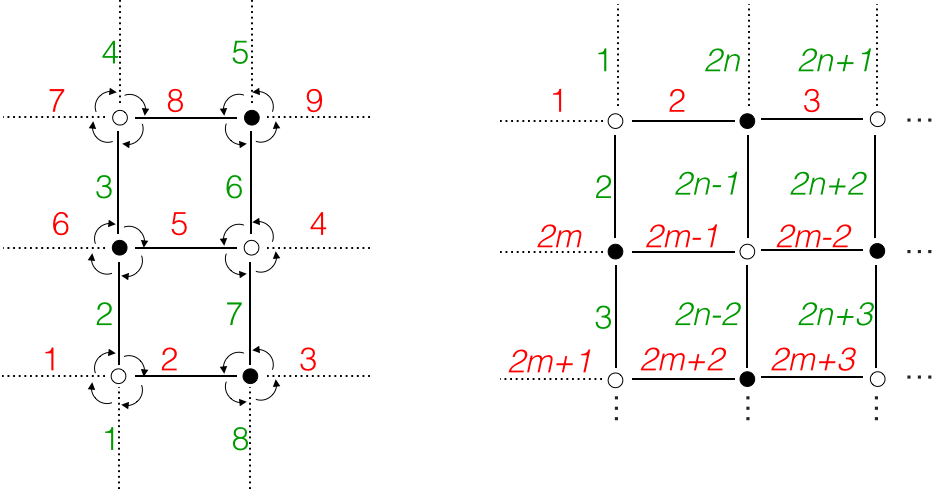}
\begin{quote}\caption{The labeled Hooper diagram for $\M 43$, and the general form (see Remark \ref{reflect}). We do not include the bottom edge, the right edge, or the bottom-right corner of the general form in Figure \ref{hd-labeled}, because the edge labels and the vertex colors depend on the parity of $m$ and $n$, so it is clearer to look at the example. \label{hd-labeled}} \end{quote}
\end{figure}

\emph{Step 3}: We determine where to place edges ${\gr 1},{\rd 2,\ldots,n,n+1}$. 

From edge ${\rd 1}$ in $\M mn$, we go horizontally to the left to get to ${\rd 2}$, and in between we pass through ${\gr 1}$ (see Figure \ref{hd34aux}). On the Hooper diagram, from edge ${\rd 1}$ we go against the arrows around the white vertex, and label the vertical edge ${\gr 1}$ and the next horizontal edge ${\rd 2}$. 

From edge ${\rd 2}$ in $\M mn$, we go in the direction of the vertical cylinder decomposition to get to ${\rd 3}$, so we go with the arrows around the black vertex and label the next horizontal edge ${\rd 3}$. In our example $\M 43$, this is the end of the row; for $m>3$, we continue until we get to ${\rd n}$, going left and up in the polygons and correspondingly going around the white and black vertices in the Hooper diagram.

To get from edge ${\rd n}$ to ${\rd n+1}$, in the polygons we go up and right for $n$ odd, and left and down for $n$ even, and we follow the arrows in the Hooper diagram to do the same. For our example $\M 43$, from ${\rd 3}$ to ${\rd 4}$, we go up and right, so in the Hooper diagram we follow the arrow around the black vertex to the vertical edge, and then at the other end of the vertical edge we follow the arrow around the white vertex, and label the horizontal edge ${\rd 4}$. The same is true for any odd $n$. When $n$ is even, we follow the same pattern on the Hooper diagram to go left and down and label edge ${n+1}$ in the same location.

\emph{Step 4}: We complete the labels of $\M mn$ and also label with $\M nm$.

The construction in Step $3$ shows why moving horizontally across a line in the Hooper diagram corresponds to the zig-zag labeling in each polygon of the \bm surface: going around white and black vertices corresponds to alternately going horizontally and vertically in the polygons. To get from one horizontal line to the next in the Hooper diagram, we follow the direction in the polygons. Thus, the ``snaking'' labeling in the Hooper diagram corresponds to the labeling described in Section \ref{howtolabel}. 

We already placed edge ${\gr 1}$ of $\M nm$, and we follow exactly the same method for the rest of the edges as we just described for $\M mn$. This leads to the overlaid ``snaking'' patterns shown in Figure \ref{hd-labeled}.
\end{proof}

\begin{remark}\label{hd-reflect}
When we defined the Hooper diagrams in Section \ref{hooperdiagrams}, we followed Hooper's convention of the arrangement of white and black vertices and arrow directions. In fact, this choice is somewhat arbitrary; the diagrams lead to the same polygon construction if we rotate them by a half-turn, or reflect them horizontally or vertically. Using Hooper's convention, along with our left-to-right numbering system in the polygons where we first label $P(0)$ with $1,\ldots,n$ and so on, leads to the edges ${\rd1}$ and ${\gr 1}$ being in the lower-left corner of the labeled Hooper diagram, with the numbering going up. We prefer to have the $1$s in the upper-left corner with the numbers going down, so after we finish labeling it, we will reflect the diagram horizontally, as in Figure \ref{hd-labeled}b for the general form. This choice is merely stylistic.
\end{remark}

\subsection{Transition diagrams: definitions and examples}\label{sec:transition_diagrams}
In this section we define \emph{transition diagrams}, which describe all possible transitions between edge labels for trajectories that belong to a given sector of directions (see Definition \ref{def:transition} below). 
We will first describe in this section transition diagrams for cutting sequences of trajectories whose direction belongs to the sector  $[0,\pi/n]$. Then, exploiting the symmetries of the polygonal presentation of \bm surfaces,  we will describe transition diagrams for the other sectors of width $\pi/n$, see \S\ref{sec:other_sectors}.


\begin{definition} \label{sectordef}
For $i=0,\ldots,2n-1$, let $\Sec i m n = [i\pi/n,(i+1)\pi/n]$. We  call \mbox{$\Sec 0 m n = [0,\pi/n]$} the \emph{standard sector}. For a trajectory $\tau$, we say $\tau \in \Sec i m n$ if the angle of the trajectory is in $\Sec i m n$.
\end{definition}

Let us first describe the \emph{transitions} that are allowed in each sector:

\begin{definition}\label{def:transition}
The \emph{transition} $n_1 n_2$ is \emph{allowed} in sector $\Sec i m n$ if some trajectory in $\Sec i m n$ cuts through edge $n_1$ and then through edge $n_2$.
\end{definition}

The main result of this section (Theorem \ref{tdtheorem}) is the description of the structure of  diagrams which describe of all possible transitions in $\Sec 0 mn$ for $\M mn$.


\begin{definition}
The \emph{transition diagram} $\T i m n$ for trajectories in $\Sec i  m n $ on $\M m n$ is a directed graph whose vertices are edge labels of the polygon decomposition of the surface, with an arrow from edge label $n_1$ to edge label $n_2$ if and only if the transition $n_1 n_2$ is allowed in $\Sec i  m n $.
\end{definition}


\begin{example}
We  construct $\T 0 43$ which is for sector \mbox{$\Sec 0  4 3 =[0,\pi/3]$} (Figure \ref{34td}). A trajectory passing through edge ${\rd 1}$ can then go horizontally across through edge ${\rd 2}$ or diagonally up through edge ${\rd 6}$, so we draw arrows from \mbox{${\rd 1}\to {\rd 2}$} and \mbox{${\rd 1}\to {\rd 6}$}. A trajectory passing through edge ${\rd 2}$ can go across through edge ${\rd 1}$, or up through edge ${\rd 3}$, so we draw arrows \mbox{${\rd 2}\to {\rd 1}$} and \mbox{${\rd 2}\to {\rd 3}$}. From edge ${\rd 3}$, we can only go up to edge ${\rd 4}$, so we draw \mbox{${\rd 3}\to {\rd 4}$}. The rest of the diagram is constructed in the same manner. We do \emph{not} draw (for example) an arrow from ${\rd 3}$ to ${\rd 6}$, because such a trajectory is not in $\Sec 0  43$ (it is in $\Sec 143$).
\end{example}

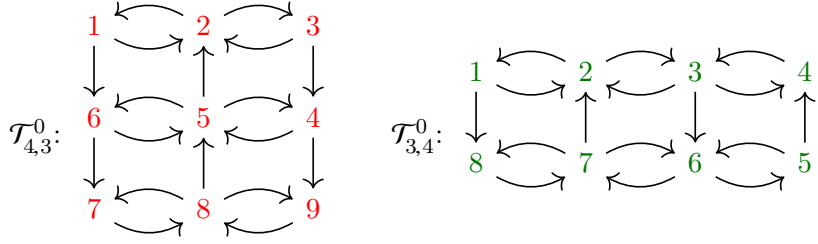
\begin{figure}[!h] 
\centering
$\T 0 43$:  \begin{tikzcd}
{\color{red}1}\arrow[bend right]{r} \arrow{d} 
&{\color{red}2} \arrow[bend right]{l} \arrow[bend left]{r} 
&{\color{red}3} \arrow[bend left]{l}   \arrow{d}  \\
{\color{red}6}\arrow[bend right]{r} \arrow{d}
&{\color{red}5} \arrow[bend right]{l} \arrow[bend left]{r}  \arrow{u}
&{\color{red}4} \arrow[bend left]{l} \arrow{d} \\ 
{\color{red}7}\arrow[bend right]{r}
&{\color{red}8} \arrow[bend right]{l} \arrow[bend left]{r}  \arrow{u}
&{\color{red}9} \arrow[bend left]{l}  \\ 
\end{tikzcd} \ \ \ \ \ 
$\T 0 34$:  \begin{tikzcd}
{\gr1}\arrow[bend right]{r} \arrow{d} 
&{\gr2} \arrow[bend right]{l} \arrow[bend left]{r} 
&{\gr3} \arrow[bend left]{l} \arrow[bend right]{r}  \arrow{d} 
&{\gr4}   \arrow[bend right]{l} \\
{\gr8}\arrow[bend right]{r}
&{\gr7} \arrow[bend right]{l} \arrow[bend left]{r}  \arrow{u}
&{\gr6} \arrow[bend left]{l} \arrow[bend right]{r} 
&{\gr5}  \arrow[bend right]{l} \arrow{u} \\ 
\end{tikzcd}
\begin{quote}\caption{Transition diagrams for the standard sector \label{34td}} \end{quote}
\end{figure}

\begin{example}
In Figure \ref{34td}, we also show $\T 0 34$, which is constructed in the same way for trajectories in sector \mbox{$\Sec 0 34 = [0,\pi/4]$} on $\M 34$.
\end{example}

We chose to label the edges as we did so that the numbers in the transition diagrams ``\emph{snake}'' back and forth across the table in this convenient way, just as in the Hooper diagram. The arrows are always as in Figure \ref{34td}: The arrows in the upper-left corner of every diagram are exactly as in the figure, and if $m$ and $n$ are larger, the same alternating pattern is extended down and to the right. We prove this general structure in the main result of this section, Theorem \ref{tdtheorem}.

\subsection{Admissibility of sequences}\label{sec:admissibility}
Consider the space ${\LL mn}^{\mathbb{Z}}$ of bi-infinite words $w$ in the symbols (edge label numbers) of the alphabet ${\LL mn}$ used to label the edges of the polygon presentation of $\M mn$.

\begin{definition}\label{admissibledef} Let us say that the word  $w$ in ${\LL mn}^{\mathbb{Z}}$ is \emph{admissible} if there exists a diagram $\T i m n$ for $i\in \{0, \dots, n-1\}$ such that all transitions in $w$ correspond to labels of edges of $\T i m n$. In this case, we will say that $w$ is \emph{admissible in (diagram) $\T i m n$}. Equivalently, the sequence $w$ is admissible in $\T i m n$  if it describes an infinite path on $\T i m n$. Similarly, a finite word $u$ is admissible (admissible in $\T i m n$) if it describes a finite path on a diagram (on $\T i m n$). 
\end{definition}

Admissibility is clearly a necessary condition for a sequence to be a cutting sequence:
\begin{lemma}\label{admissiblelemma}
Cutting sequences are admissible.
\end{lemma}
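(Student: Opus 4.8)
The plan is to unwind the definitions: a cutting sequence is assembled from the consecutive edge-crossings of one trajectory, and ``allowed transition'' is defined so that a transition is permitted in a sector as soon as \emph{some} trajectory of that sector realizes it. The whole content of the lemma is the observation that, for a cutting sequence, the realizing trajectory is the trajectory itself.

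Concretely, I would fix a bi-infinite linear trajectory $\tau$ on $\M mn$ (so $\tau$ avoids the singular set $\Sing$) and set $w = c(\tau) \in {\LL mn}^{\mathbb Z}$. Because $\tau$ is linear it has a single constant direction $\theta$; regarding $\tau$ as carried by an unoriented line we may take $\theta \in [0,\pi)$, and since the $n$ closed sectors $\Sec i m n = [i\pi/n,(i+1)\pi/n]$ with $i \in \{0,\dots,n-1\}$ cover $[0,\pi]$, the direction $\theta$ belongs to at least one such sector $\Sec i m n$ (if $\theta$ is an integer multiple of $\pi/n$ it lies on a shared endpoint and either adjacent sector may be chosen).

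Next I would take an arbitrary consecutive pair $w_k\, w_{k+1}$ of symbols of $w$. By definition of $c(\tau)$ these are the labels of two edges of the polygon decomposition that $\tau$ crosses one immediately after the other, so $\tau$ is itself a trajectory in $\Sec i m n$ effecting the transition $w_k\, w_{k+1}$. Hence, by Definition \ref{def:transition}, this transition is allowed in $\Sec i m n$ and the diagram $\T i m n$ contains the arrow $w_k \to w_{k+1}$. As $k \in \mathbb Z$ was arbitrary, $w$ traces out a bi-infinite path in $\T i m n$, which is precisely the requirement in Definition \ref{admissibledef} for $w$ to be admissible (in $\T i m n$).

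I expect no serious obstacle: the argument is a direct consequence of the existential phrasing of ``allowed transition'', with $\tau$ serving as its own witness. The only bookkeeping worth a sentence is the reduction of the direction to the range $[0,\pi)$ covered by the sectors $i \in \{0,\dots,n-1\}$ and the treatment of boundary directions $\theta = i\pi/n$, both handled above; note that since $\tau$ is bi-infinite it never meets a vertex, so its sequence of consecutive crossings is genuinely well defined and no crossing is ambiguous.
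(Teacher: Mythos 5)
Your proof is correct and follows essentially the same route as the paper's: reduce the direction to $[0,\pi]$, locate it in a closed sector $\Sec i m n$, and observe that every transition occurring in $w$ is realized by $\tau$ itself, hence is allowed in $\T i m n$ by definition. The only difference is that you spell out the ``trajectory is its own witness'' step explicitly, which the paper compresses into the remark that $\T i m n$ contains by definition all transitions occurring for trajectories with direction in $\Sec i m n$.
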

\begin{proof}
Let $w$ be a  cutting sequence of a linear a trajectory $\tau$ on $\M mn$. Up to orienting it suitably (and reversing the indexing by $\mathbb{Z}$ if necessary) we can assume without loss of generality that its direction $\theta$ belongs to $[0, \pi]$.  Then there exists some $0\leq i \leq n-1$ such that  $\theta \in \Sec i m n$. Since the diagram $\T i m n$ contains by definition all transitions which can occurr for cutting sequences of linear trajectories with direction in $\Sec i m n$, it follows that $w$ is admissible in $\T i m n$. 
\end{proof}

We remark that some words are admissible in more than one diagram. For example, since we are using closed sectors, a trajectory in direction $k\pi/n$ is admissible in sector $k$ and in sector $k+1$.
On the other hand, if $w$ is a non-periodic sequence, then it is admissible in a \emph{unique} diagram:

\begin{lemma}\label{lemma:uniqueness_sector}
If $w$ in ${\LL mn}^{\mathbb{Z}}$  is a \emph{non-periodic} cutting sequence of a linear trajectory on $\M mn$, then there exists a \emph{unique} $i\in \{0, \dots, n-1\}$ such that  $w$ is admissible in diagram $\T i m n$. 
\end{lemma}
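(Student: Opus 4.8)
The plan is to treat existence and uniqueness separately. Existence is immediate from Lemma \ref{admissiblelemma}: a cutting sequence is admissible, so at least one suitable $i$ exists. The entire content of the statement is \emph{uniqueness}, and the guiding idea is the one flagged in the remark preceding the lemma: the only way a sequence can be admissible in two diagrams is that the underlying trajectory travels in a boundary direction $k\pi/n$ separating two adjacent sectors, and such trajectories are precisely the periodic ones, which the hypothesis excludes.

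First I would reduce to a trajectory whose direction lies in the \emph{interior} of a single sector. Orient $\tau$ as in the proof of Lemma \ref{admissiblelemma}, so that its direction $\theta$ lies in $[0,\pi]$. I claim $\theta$ is not an integer multiple of $\pi/n$. Indeed, the directions $k\pi/n$ are exactly the cylinder-decomposition directions of $\M mn$: by Theorem \ref{modthm} the surface decomposes into cylinders of equal modulus in each such direction, so in particular each $k\pi/n$ contains a saddle connection. Since $\M mn$ is a Veech surface, the Veech dichotomy (\S\ref{sec:Veech}) then forces the surface to decompose into cylinders filled by closed geodesics in direction $k\pi/n$. A bi-infinite trajectory in this direction avoids the singular points, hence lies inside one of these cylinders and is itself a closed geodesic; its cutting sequence would then be periodic, contradicting the hypothesis. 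Therefore $\theta$ is not a multiple of $\pi/n$, so it belongs to the open interior of a \emph{unique} sector $\Sec i m n$ with $i \in \{0,\dots,n-1\}$, and in particular $w$ is admissible in $\T i m n$.

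It remains to exclude admissibility in any other diagram $\T j m n$ with $j \neq i$. Here I would invoke the Veech dichotomy a second time: since $\theta$ is not a cylinder direction, $\tau$ is dense and equidistributed, so it eventually realizes \emph{every} transition that is geometrically possible in direction $\theta$, all of which are arrows of $\T i m n$. Using the explicit structure of $\T i m n$ given by Theorem \ref{tdtheorem}, I would exhibit a ``diagonal'' transition characteristic of sector $i$ (for the standard sector $\Sec 0 m n$ one can take the transition ${\rd 1}\to{\rd 6}$ of Figure \ref{34td}, and its image under the relabeling of \S\ref{sec:normalization} for general $i$) which is possible for every direction in the open sector yet is \emph{not} an arrow of any other diagram $\T j m n$. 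Equidistribution guarantees this distinguishing transition appears in $w$, so $w$ cannot be admissible in $\T j m n$ for $j \neq i$, and uniqueness follows.

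The main obstacle is this last step: producing, for each pair of distinct sectors, a transition that is forced to occur (for an equidistributed interior trajectory) in one diagram but is forbidden in the other. This is the exact analogue of the Sturmian fact that a sequence avoiding both $00$ and $11$ must be the periodic word $\overline{01}$, and it must be extracted from the combinatorial description in Theorem \ref{tdtheorem}: one has to check that distinct sectors genuinely differ by a non-trivial relabeling of the edge pairs, so that the generic transition set of sector $i$ is never contained in the arrow set of sector $j$. The reduction carried out in the two preceding paragraphs, via the Veech dichotomy, is precisely what turns this into a finite, purely combinatorial verification.
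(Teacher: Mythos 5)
Your proposal is correct and follows essentially the same route as the paper's proof: non-periodicity plus the Veech dichotomy gives density of the trajectory, density together with the parallel-segment argument forces the transitions of $\T i mn$ to actually occur in $w$, and a finite combinatorial comparison of the diagrams then yields uniqueness. The only differences are cosmetic: you make explicit the exclusion of the cylinder directions $k\pi/n$ (which the paper leaves implicit), and you phrase the final check as producing one witness transition per pair of sectors, whereas the paper shows that \emph{all} arrows of $\T i mn$ occur in $w$ and then observes by inspection that no other diagram contains them all.
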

\begin{proof}
 We know that $w$ is the cutting sequence of some $\tau$ in an unknown  direction $\theta$. 
 Let $0\leq i \leq n-1$ be so that $w$ is admissible in $\T i mn$. A priori $w$ could be admissible in some other diagram too  and we want to rule out this possibility. 
 We are going to show that all transitions which are allowed in $\T i mn$ actually occur. 

Since $w$ is non-periodic, the trajectory $\tau$ cannot be periodic. The Veech dichotomy 
(see \S\ref{sec:Veech}) 
implies that $\tau$ is dense in $\M mn$.
Let $n_1 n_2$ be a transition allowed in $\T i mn$. This means that we can choose inside the polygons forming $\M mn $ a segment in direction $\theta$ that connects an interior point on a side labeled by $n_1$ with an interior point on a side labeled $n_2$. Since $\tau$ is dense, it comes arbitrarily close to the segment. Since by construction $\tau$ and the segment are parallel, this shows that $w$ contains the transition $n_1 n_2$. 

Repeating the argument for all transitions in $\T i mn$, we get that $w$ gives a path on $\T i mn$ which goes through all arrows.  
This implies that the the diagram in which $w$ is admissible is uniquely determined, since one can verify by inspection that there is a unique diagram which contains certain transitions. 
\end{proof}

\subsection{Derivation diagrams }\label{sec:labeled_def}
We now define \emph{derivation diagrams} and explain how to construct them. These diagrams, as explained in the introduction, will provide a concise way to encode the rule to derive cutting sequences. As usual, we start with a concrete example for $\M 43 $, then give the general definition and results.

As explained in Section \ref{hooperdiagrams}, the \bm surfaces $\M mn $ and $\M nm $ are cut-and-paste affinely  equivalent via a diffeomorphism $\Psi_{m,n}$. Hence, we can draw a flip-sheared version of $\M nm $ surface on the $\M nm$ polygon decomposition. This is shown for the special case of $m=4, n=3$ in Figure \ref{hd34aux}. When two edges coincide, we arrange them so that red and green edges alternate going horizontally, and also vertically (as shown in Figure \ref{hd34aux} for the example). 

\begin{figure}[!h] 
\centering
\includegraphics[height=140pt]{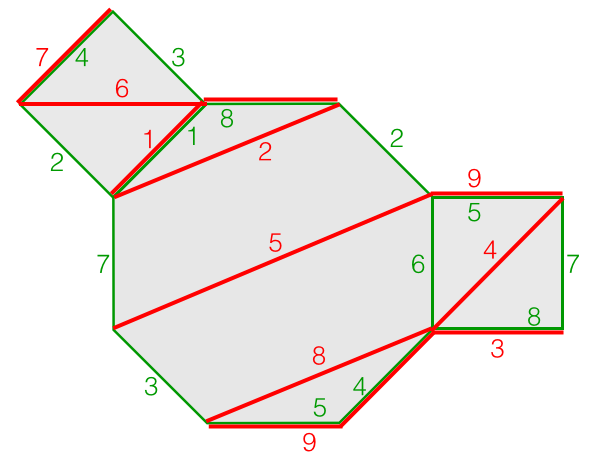}
\includegraphics[height=140pt]{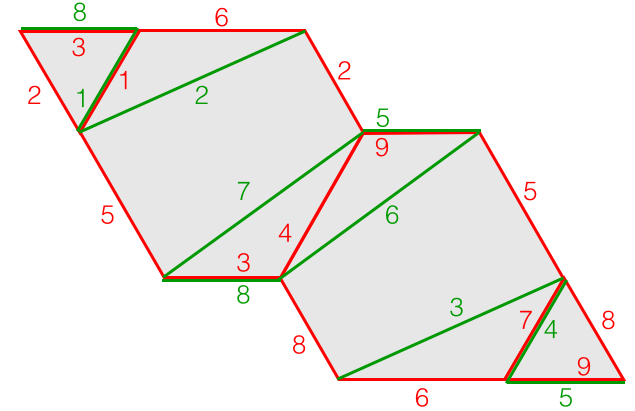}
\begin{quote}\caption{$\M 34$ with flip-sheared edges of $\M 43$, and $\M 43$ with flip-sheared edges of $\M 34$.  \label{hd34aux}} \end{quote}
\end{figure}


We add the following labeling to the transition diagram, thus making it into a derivation diagram. Recall that each arrow \mbox{$n_1\to n_2$} in the diagram represents a possible transition from edge $n_1$ to $n_2$  for a trajectory in $\Sec i m n$  in $\M m n $. We label the arrow \mbox{$n_1\to n_2$} with the edge label $n_3$ if  trajectories which hit the edge $n_1$ and then the edge  $n_2$ passes through some edge labeled  $n_3$ of the flip-sheared $\M n m $. 
 It turns out that, with a suitable convention to treat degenerate cases, this definition is well posed: either \emph{every} trajectory from $n_1$ to $n_2$ passes through $n_3$, or \emph{no} trajectory from $n_1$ to $n_2$ passes through $n_3$. This will be shown below in Lemma \ref{lemma:wellposed}.

\begin{example}
Figure \ref{hd34aux} shows $\M 34$ in  red with the flip-sheared edges of $\M 43$ in  green, and shows $\M 43$ in  green with the flip-sheared edges of $\M 34$ in  red. { The picture on the left and the picture on the right are two pictures of the same surface, under an affine automorphism: if we start with the figure on the left, flip it horizontally (via $x\to -x$), shear it to the right, and then cut and paste the pieces, we get the figure on the right. Similarly, if we do the same thing to the figure on the right, we get the figure on the left. We have shown both the red and green edges on both pictures; some edges are edges in both decompositions, so they have a red label and a green label.} We will construct the derivation diagram for each picture. 

The transition diagram for $\M 34$ is as before, but now we will add arrow labels (Figure \ref{34auxtd}). A trajectory passing from  {\rd 1} to {\rd 2} crosses edge {\gr 2}, so we label \mbox{ {\rd 1} $\to$ {\rd 2}} with {\gr 2}. A trajectory passing from {\rd 6} to {\rd 5} also passes through {\gr 2}, so we label \mbox{ {\rd 6} $\to$ {\rd 5}} with {\gr 2} as well. Since these arrows are next to each other, we just write one {\gr 2} and the arrows share the label. The rest of the diagram, and the diagram for $\M 43$, is constructed in the same way.

The only exceptions to this are the ``\emph{degenerate cases}'', where edges coincide. The edges that coincide here are {\rd 1} with {\gr 1}, {\rd 3} with {\gr 8}, {\rd 7} with {\gr 4}, and {\rd 9} with {\gr 5}. 
Four pairs of edges coincide in this way in the four corners of every transition diagram.

\begin{figure}[!h] 
\centering
\includegraphics[width=350pt]{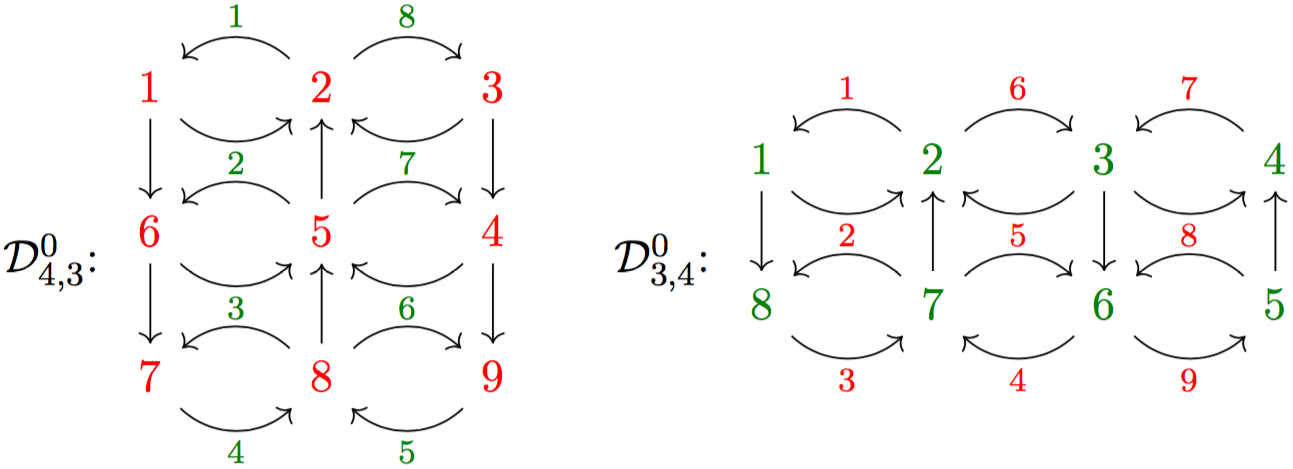}
\begin{quote}\caption{Derivation diagrams for $\M 43$ and $\M 34$ \label{34auxtd}} \end{quote}
\end{figure}
\end{example}


In general, we adopt the following convention, which corresponds (after a shear) to Convention \ref{convention:ordered_sides} for the orthogonal presentations. 

\begin{convention}\label{convention:ordered_sides_affine}
When sides of  $\M nm$ and of the flip and sheared pre-image of $\M nm$ by $\AD mn$ \emph{coincide}, we draw them adjacent to each other and ordered so that sides of $\M nm$ (red) and sides of $\M nm$ (green) diagonals  \emph{alternate}, as shown in Figures \ref{coincide} and \ref{hd34aux} for $\M  43 $ and $\M 34$. 
\end{convention}

With this convention, the following Lemma holds, which is essentially a restating of Lemma \ref{lemma:intertwined} from the orthogonal presentations: 

\begin{lemma}\label{lemma:wellposed}
Consider any segment of a trajectory on $\M mn $ with direction $\theta$ in the standard sector $\Sec 0 m n$ which crosses from the side of $\M mn $ labeled $n_1$ to the side of $\M mn $ labeled $n_2$. Consider the interwoven sides of the flip-sheared copy of $\M nm$ obtained asa  preimage of $\AD mn$. Then only one of the following is possible:
\begin{enumerate}
\item either no such segment crosses a side of the flip-sheared edges of $\M 34$, or
\item every such segment crosses the same side  of the flip-sheared edges of $\M 34$.
\end{enumerate}
\end{lemma}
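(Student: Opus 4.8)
The plan is to transfer the entire statement, via the affine map defining the orthogonal presentation, to the setting of Lemma \ref{lemma:intertwined}, which already established exactly this dichotomy for negative and positive diagonals. Concretely, I would apply to the polygon presentation of $\M mn$ --- together with the interwoven flip-sheared copy of $\M nm$ drawn on it --- the flip $\flip$ followed by the shear $\shear mn$, i.e. the first two factors of $\AD mn$ described in \S\ref{sec:affine}. By construction this affine map straightens the two transverse cylinder directions to the horizontal and vertical and carries the picture to the superimposed orthogonal presentation of Figure \ref{coincide}: the sides of $\M mn$ become the negative diagonals, and the interwoven sides of the flip-sheared $\M nm$ become the positive diagonals. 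Since the map is affine, it sends linear trajectory segments to linear trajectory segments and preserves every crossing incidence, and it carries the standard sector $\Sec 0 m n = [0,\pi/n]$ onto directions lying in the first quadrant $[0,\pi/2]$ --- this normalization of the sector being precisely the role of the flip in $\AD mn$.

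Under this dictionary, the two hypotheses of the present lemma match those of Lemma \ref{lemma:intertwined} line by line. A segment that crosses from the side of $\M mn$ labeled $n_1$ directly to the side labeled $n_2$ is, after applying the map, a segment crossing one negative diagonal and then the next with no intervening side of $\M mn$; that is exactly a pair of ``consecutive'' negative diagonals in the sense of Lemma \ref{lemma:intertwined} (here ``consecutive'' for the transition $n_1 \to n_2$ and ``consecutive'' for the negative diagonals both mean that the second is the next diagonal of that family crossed). Likewise, a side of the flip-sheared $\M nm$ crossed in between corresponds to a positive diagonal crossed between the two consecutive crossings. Lemma \ref{lemma:intertwined} then gives the dichotomy: either no positive diagonal is crossed between consecutive crossings, which pulls back to case (1) (no segment crosses a side of the flip-sheared $\M nm$), or exactly one and the same positive diagonal is crossed, which pulls back to case (2) (every such segment crosses the same side of the flip-sheared $\M nm$). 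Pulling the conclusion back through the inverse affine map proves the lemma in the non-degenerate case.

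Finally, the degenerate situations --- where a side of $\M mn$ coincides with a side of the flip-sheared $\M nm$, as happens in the four corners --- are handled by Convention \ref{convention:ordered_sides_affine}, which is by design the sheared (and flipped) image of Convention \ref{convention:ordered_sides}. Thus these coinciding sides are transported to degenerate diagonals ordered exactly as in the degenerate-stair case treated at the end of the proof of Lemma \ref{lemma:intertwined}, and the same argument applies verbatim. The one step requiring care --- and the likely main obstacle --- is the bookkeeping of this affine change of coordinates: one must verify that $\shear mn \circ \flip$ indeed maps the standard sector into the quadrant for which Lemma \ref{lemma:intertwined} is stated and that it matches the two families of sides with the two families of diagonals with the correct orientation, so that ``consecutive transition $n_1\to n_2$'' and ``consecutive pair of negative diagonals'' genuinely coincide. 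Once this identification is pinned down, the result is an immediate transcription of Lemma \ref{lemma:intertwined}.
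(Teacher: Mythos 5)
Your route is the same as the paper's: the paper's own proof of Lemma \ref{lemma:wellposed} is precisely a reduction to Lemma \ref{lemma:intertwined} via the affine correspondence between the polygon presentation of $\M mn$ (with the interwoven preimage of $\M nm$ under $\AD mn$) and the superimposed orthogonal presentations, with Convention \ref{convention:ordered_sides_affine} matched to Convention \ref{convention:ordered_sides} to cover the degenerate (coinciding) sides. So there is no difference in strategy, only in the amount of detail you spell out.

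There is, however, one concrete error, and it is exactly at the step you flag as ``requiring care'': with the matrices as defined in \eqref{def:generalmatrices}, the composition $\shear mn \circ \flip$ does \emph{not} carry $\Sec 0 m n$ into the first quadrant. A direction $\theta\in[0,\pi/n]$ is sent to the direction of $\bigl(-\cos\theta + \cot(\pi/n)\sin\theta,\ \sin\theta\bigr)$, whose first coordinate is $\leq 0$ on the standard sector, so the image is $[\pi/2,\pi]$, the \emph{second} quadrant; consistently, a side of $\M mn$ in direction $k\pi/n$ with $k\geq 2$ is sent to a vector with first coordinate $\sin\bigl((k-1)\pi/n\bigr)/\sin(\pi/n)>0$, i.e.\ to a \emph{positive} diagonal rather than a negative one. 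Thus Lemma \ref{lemma:intertwined} does not apply verbatim to the image of your chosen map, and the verification you defer would fail as stated. The repair is a single flip: since $\flip\,\shear mn\,\flip=\shear mn^{-1}$, your map equals $\flip\circ\shear mn^{-1}$, and the un-shear $\shear mn^{-1}$ alone does everything needed --- it sends $\Sec 0 m n$ onto $[0,\pi/2]$, sides of $\M mn$ to horizontal/vertical sides and negative diagonals, and the interwoven sides of the preimage of $\M nm$ to positive diagonals, landing exactly in the hypotheses of Lemma \ref{lemma:intertwined}. Equivalently, compose your map with one further flip, which is harmless because $\flip=\refl {n-1}{m}{n}$ is the linear part of an affine automorphism of $\M mn$. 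Note also that the role of the flip inside $\AD mn$ is not sector normalization (it does the opposite); it is there for the sake of the derivation rule. With this correction the rest of your argument --- the matching of ``consecutive transition'' with ``consecutive negative diagonals,'' the pull-back of the dichotomy, and the treatment of degenerate sides via the two Conventions --- goes through unchanged, and coincides with the paper's proof.
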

\begin{proof} Remark that the affine diffeomorphism that maps the orthogonal presentation of $\M mn$ to $\M mn$, by mapping negative diagonals to sides of $\M mn$, and the dual orthogonal presentation of $\M nm$ to the flip and sheard preimage of $\M nm$, by mapping positive diagonals to flip and sheared preimages of sides of $\M nm$ by $\AD mn$. Thus, Convention \ref{convention:ordered_sides_affine} for the sides of $\M mn$ and the sides of the preimage of $\M nm$ by $\AD mn$  correspond to Convention \ref{convention:ordered_sides_affine} for diagonals in the orthogonal presentations. Thus, the lemma follows immediately from  Lemma \ref{lemma:intertwined} for the orthogonal presentations. 
\end{proof}

With the above convention (Convention \ref{convention:ordered_sides_affine}), in virtue of  Lemma \ref{lemma:wellposed} the following definition is well posed.

\begin{definition} The \emph{derivation diagram} $\D 0 mn$ is the transition diagram $\T 0 mn$ for the standard sector with arrows labeled as follows. 
We label the arrow \mbox{$n_1\to n_2$} with the edge label $n_3$ if all the segments of trajectories with direction in the standard sector which hit the edge $n_1$ and then the edge  $n_2$ passes through some edge labeled  $n_3$ of the flip-sheared $\M nm$. Otherwise, we leave the arrow \mbox{$n_1\to n_2$} without a label.
\end{definition}

In the example of derivation diagram for the surface $\M 34$ in Figure \ref{34auxtd}, 
one  can see that the arrow labels in the example are also arranged elegantly: they snake up and down, interlaced with the edge labels in two alternating grids. The relation between the diagrams for $\M 34$ and $\M 43$ is simple as well: flip the edge labels across the diagonal, and then overlay the arrows in the standard pattern. 

This  structure holds for every \bm surface, as we prove in the following main  theorem of this section:

\begin{theorem}[Structure theorem for derivation diagrams]  \label{tdtheorem}
The structure of the derivation diagram for $\M mn$ in sector $[0,\pi/n]$ is as follows:
\begin{itemize}
\item The diagram consists of $n$ columns and $m-1$ rows of edge labels of $\M mn$.
\item The edge labels start with {\rd 1} in the upper-left corner and go left to right across the top row, then right to left across the second row, and so on, ``snaking'' back and forth down the diagram until the last edge label {\rd n(m-1)} is in the left or right bottom corner, depending on parity of $m$.
\item Vertical arrows between edge labels go down in odd-numbered columns and up in even-numbered columns.
\item Vertical arrows have no arrow labels.
\item A pair of left and right horizontal arrows connects every pair of horizontally-adjacentedge labels. 
\item Horizontal arrows have arrow labels, which are edge labels of $\M nm$. 
\end{itemize}
For convenience, we choose to arrange these arrow pairs so that the top arrow goes left and the bottom arrow goes right for odd-numbered columns of arrows, and vice-versa in even-numbered columns of arrows. With this arrangement, the arrow labels are as follows:
\begin{itemize}
\item The top-left arrow label is {\gr 1}, and then going down, the next two arrows are both labeled {\gr 2}, and the rest of the pairs are numbered consecutively, until the last remaining arrow is labeled {\gr n}. Then the arrow to the right is labeled {\gr n+1}, and going up the next two arrows are both labeled {\gr n+2}, and so on, ``snaking'' up and down across the diagram until the last arrow is labeled {\gr m(n-1)}.
\end{itemize}
\end{theorem}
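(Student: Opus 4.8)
The plan is to reduce the global statement to a purely local computation carried out at each edge of $\M mn$, and then to assemble the local pieces using the snaking labellings of Proposition \ref{hd-snake}. I would work throughout in the orthogonal presentation, in which the edges of $\M mn$ appear as negative diagonals of basic rectangles and the edges of $\M nm$ as positive diagonals, and in which the standard sector $\Sec 0 m n$ corresponds (after the shear defining the orthogonal presentation) to the first quadrant $\theta\in[0,\pi/2]$, so that the dichotomy of Lemma \ref{lemma:intertwined} is available. The vertices of the diagram are the edge labels of $\M mn$; by Proposition \ref{hd-snake} these labels, viewed as the red horizontal edges of the labelled Hooper diagram $\G mn$, snake horizontally back and forth from top to bottom through an array of $m-1$ rows and $n$ columns beginning with ${\rd 1}$ in the upper-left corner and ending with ${\rd n(m-1)}$ in a bottom corner whose side depends on the parity of $m$. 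Transcribing this array directly yields the first two bullets (shape and edge-label order).

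The core step is the local analysis of a single transition. Around the basic rectangle carrying the negative diagonal $n_1$, the adjacent basic rectangles form a hat in $\G mn$ — a genuine hat in the interior and a degenerate one at the boundary — which by the Hat Lemma \ref{hatlemma} and the Degenerate Hat Lemma \ref{degeneratehat} is realized by a stair of six basic rectangles as in Figure \ref{stair}. Taking $n_1$ to be the diagonal $a$ of this stair and applying Lemma \ref{lemma:intertwined} (equivalently Lemma \ref{lemma:wellposed}), I would read off the only transitions a first-quadrant trajectory can make out of $n_1$: to the edge $c$, crossing the single interposed positive diagonal $b$; to the edge $e$, crossing no positive diagonal; and to the edge $f$, crossing the positive diagonal $d$. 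Under the transcription of Proposition \ref{hd-snake}, the two transitions that cross a positive diagonal become the \emph{labelled} horizontal arrows joining $n_1$ to its horizontally-adjacent neighbours, while the one crossing nothing becomes the \emph{unlabelled} vertical arrow to the vertically-adjacent neighbour; this yields the bullets on the horizontal arrow pairs, the vertical arrows, and the absence of labels on vertical arrows. The orientation bullets — vertical arrows pointing down in odd columns and up in even columns, and the chosen left/right arrangement of each horizontal pair — follow from the parity rule governing the permutation arrows $\mathfrak e,\mathfrak n$, that is, from which of the four hat configurations of Figure \ref{hat} occurs, as recorded after Definition \ref{def:hat}.

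Next I would pin down the arrow labels. These are positive diagonals, i.e. the green vertical edges of $\G mn$, which by Proposition \ref{hd-snake} snake vertically up and down from left to right starting at ${\gr 1}$. The crucial combinatorial fact is that a single positive diagonal is shared by two adjacent stairs, so that two consecutive horizontal arrows in a column carry the same green label (as in Figure \ref{34auxtd}); this is exactly the alternation of Remark \ref{alternate} together with Lemma \ref{lemma:intertwined}. Combining this pairing with the vertical snake of the green labels produces the final bullet: ${\gr 1}$, then ${\gr 2}$ on the next two arrows, continuing to ${\gr n}$, then ${\gr n+1}$, and snaking up and down until the last arrow is labelled ${\gr m(n-1)}$. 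The well-definedness of every label — that a given side is crossed either by every trajectory of the transition or by none — is guaranteed by Convention \ref{convention:ordered_sides_affine} together with Lemma \ref{lemma:wellposed}.

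The step I expect to be the main obstacle is the bookkeeping that ties the two snaking labellings together: matching the horizontally-snaking red vertex labels against the vertically-snaking green arrow labels so that each shared-label pair lands on the correct pair of arrows, and identifying the stair-neighbours $c,e,f$ of a given edge with the grid-neighbours predicted by Proposition \ref{hd-snake}. One must also verify that the degenerate corner hats — where a red edge and a green edge coincide and Convention \ref{convention:ordered_sides_affine} must be invoked — produce the labels predicted by the pattern rather than genuine exceptions. Concretely this amounts to tabulating the four hat cases of Figure \ref{hat} together with their boundary degenerations and checking that the local output of Lemma \ref{lemma:intertwined} in each case agrees, after transcription, with the claimed entry of the grid; once this finite case analysis is complete, each bullet of the theorem is a direct transcription of the local picture.
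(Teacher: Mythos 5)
Your proposal is correct and follows essentially the same route as the paper: the local hat/stair analysis you describe (transitions from $a$ to $c$ through $b$, to $f$ through $d$, and to $e$ with no crossing, plus the degenerate boundary cases) is exactly the paper's Lemma \ref{degeneratearrow}, the column-parity argument for arrow orientation is Lemma \ref{hatdirection}, and the assembly via the two snaking labellings is precisely the paper's use of Proposition \ref{hd-snake} in the proof of Theorem \ref{tdtheorem}. The bookkeeping you flag as the main obstacle is indeed how the paper finishes, by transcribing the local arrow pictures onto the labelled Hooper diagram and merging adjacent identical arrow labels.
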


There are two examples of derivation diagrams in Figure \ref{34auxtd}, and the general form is shown in Figure \ref{gentransdiag}. Essentially, the two transition diagrams in Figure \ref{34td} are laid over each other as overlapping grids.

%
%
%

\begin{figure}[!h] 
\centering
\includegraphics[width=250pt]{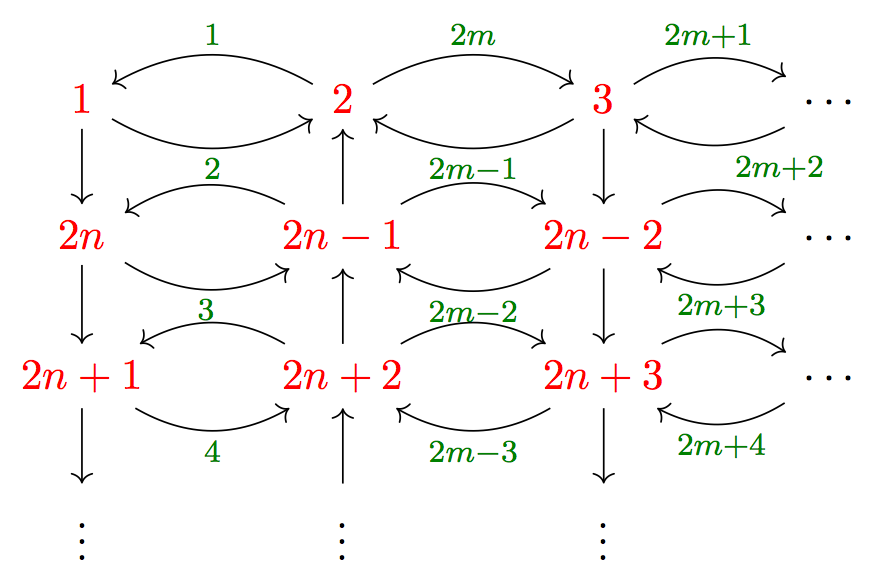} \\
\begin{quote}\caption{The form of a derivation diagram for $\M mn$ \label{gentransdiag}} \end{quote}
\end{figure}

Again, we omit the right and bottom edges of the diagram because their labels depend on the parity of $m$ and $n$; to understand the full diagram, it is clearer to look at an example such as Figure \ref{34auxtd}.

\subsection{The structure theorem for derivation diagrams }\label{sec:structure}
In this section we prove Theorem \ref{tdtheorem} describing the structure of derivation diagrams. 
For the proof 
we will use the \emph{stairs} and \emph{hats} that we defined in Section \ref{stairsandhats}. 

Let us recall that each edge  in the Hooper diagram corresponds to a basic rectangle, which is the intersection of two cylinders, as explained in Section \ref{hoogen}.  Each stair configuration of basic rectangles corresponds exactly to the four possible hat configurations, see Lemma \ref{hatlemma} and also Figure \ref{hat-cases}.  Recall that the \emph{middle edge} is the one that is numbered in Figure \ref{hat-cases}, and called $a$ in Figure \ref{hatarrow} below.

 We will now describe the labeling on these hats that corresponds to a given labeling by $a,b,c,d,e,f$ of the  basic rectangles in the stairs. Each basic rectangle either contains an edge of $\M mn$ (red, a negative diagonal) or an edge of $\M nm$ (green, a positive diagonal). 
Thus, giving a labeling of diagonals is equivalent to giving a labeling of basic rectangles. Furthermore, if we work with augmented diagrams and degenerate basic rectangles,  each edge of the \bm surface and of its dual \bm surface is in correspondence with a diagonal (positive or negative) of a basic rectangle (possibly degenerate).

Let us first establish: 

\begin{lemma}\label{hatdirection} 
Hats are right-side-up when the middle edge is in an even-numbered column, and upside-down when the middle edge is in an odd-numbered column.
\end{lemma}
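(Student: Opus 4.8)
The plan is to deduce the statement directly from the definition of a hat (Definition~\ref{def:hat}) together with the parity conventions governing the permutation arrows of the Hooper diagram, organizing the argument around the four hat configurations catalogued just after Definition~\ref{def:hat}.

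First I would recall the mechanism encoded in Definition~\ref{def:hat}: a hat has its two vertical edges pointing \emph{upward} out of the three-piece base precisely when the permutation arrows around the vertices of its first column turn counter-clockwise, and it has them pointing \emph{downward} precisely when those arrows turn clockwise. I will take ``right-side-up'' to mean the vertical edges point upward and ``upside-down'' to mean they point downward, so that the orientation of the hat is equivalent to the rotational sense (counter-clockwise versus clockwise) of the arrows in the column containing the middle edge $a$.

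Next I would invoke the structural property of the Hooper diagram established during its construction, namely that the permutation arrows turn clockwise in every odd-numbered column and counter-clockwise in every even-numbered column. This follows from the ordering conventions for $\mathfrak e$ and $\mathfrak n$, which make the rotational sense at a vertex depend only on the parity of its column; in particular all arrows within a single column share the same sense. Consequently the arrows in the middle edge's column are counter-clockwise exactly when that column is even, and clockwise exactly when it is odd.

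Combining the two facts yields the lemma: an even-numbered column forces counter-clockwise arrows, hence upward vertical edges and a right-side-up hat, while an odd-numbered column forces clockwise arrows, hence downward vertical edges and an upside-down hat. This is precisely the dichotomy recorded in the four cases after Definition~\ref{def:hat}, with the two even-column configurations (white/counter-clockwise and black/counter-clockwise) being right-side-up and the two odd-column configurations (black/clockwise and white/clockwise) being upside-down. The only delicate point, and the main thing to get right, is the consistent bookkeeping of orientation conventions: because the drawn diagrams reverse the naive up/down sense (compare Remark~\ref{hd-reflect}), I must ensure that ``vertical edges upward'' is matched to counter-clockwise in the same convention that declares odd columns clockwise, and confirm that the relevant column is indeed that of the middle edge rather than of the opposite endpoint of the vertical edges.
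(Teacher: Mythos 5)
Your ingredients are the same as the paper's: both arguments combine the defining property of hats (Definition~\ref{def:hat}, tying the direction of the two vertical edges to the rotational sense of the permutation arrows in the relevant column) with the fact that this rotational sense alternates with column parity, and both must then reconcile the result with the drawing convention of Remark~\ref{hd-reflect}. So the approach is not in dispute.

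The gap is exactly the step you defer in your last paragraph, and for a lemma whose entire content is orientation bookkeeping, deferring it means the proof does not close. Definition~\ref{def:hat} and the rule ``clockwise in odd columns, counter-clockwise in even columns'' are both stated in the original Hooper convention of \S\ref{hooperdiagrams}, so your chain (even column $\Rightarrow$ counter-clockwise $\Rightarrow$ vertical edges upward) computes the hat direction in the \emph{pre-reflection} picture. But the lemma and the figures it feeds (Figures~\ref{hat-cases}, \ref{all-hats}, \ref{construct1}) live in the \emph{post-reflection} convention of Remark~\ref{hd-reflect}, and that reflection reverses up and down. The paper's proof treats this reversal as an essential step, not a consistency check: working pre-reflection it concludes (from the arrows going \emph{down} in even-numbered columns and \emph{up} in odd-numbered ones, as in Figure~\ref{augmented}) that hats point down in even columns, and only after applying the reflection does it obtain ``right-side-up in even columns, as desired.'' Your pre-reflection intermediate conclusion (up in even columns) is the opposite of the paper's pre-reflection conclusion (down in even columns), so at least one sign in your chain does not survive the bookkeeping you postponed --- the likely culprit being the identification, which you flag but never verify, of the parity of the hat's ``first column'' with the parity of the column assigned to the middle edge. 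To repair the argument you would need to fix one convention, restate both the definition's criterion and the column-parity rule in that same convention, specify how a horizontal edge is assigned a column number, and then apply the reflection explicitly rather than promising to ``ensure'' compatibility.
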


\begin{proof}
Recall from Definition \ref{def:hat} that we have defined a \emph{hat} in such a way that the arrows from the Hooper diagram always go from the middle edge of the hat to each of the adjacent vertical edges $-$ from edge $a$ to edges $b$ and $d$ as shown in in Figure \ref{hat}. Since the arrows go down in even-numbered columns and up in odd-numbered columns of the Hooper diagram, as discussed in \S \ref{hoopertobm} and shown in Figure \ref{augmented}, the directions of the hats also alternate accordingly. When we perform the reflection discussed in Remark \ref{hd-reflect}, the directions are reversed, as desired.
\end{proof}


The following Lemma is key to proving the structure theorem, since it describes the \emph{local} structure of a transition diagram that corresponds to a (non-degenerate) hat/stair configuration. (Recall the cases $1-4$ for hats from the Degenerate Hat Lemma \ref{degeneratehat}.)

\begin{lemma} \label{degeneratearrow} Consider an edge $a$ of the \bm surface $\M mn$.
If the corresponding edge $a$ of $\G mn$ is the middle edge of a hat in case $1$, with adjacent edges $b,c,d,e,f$ as positioned in Figure \ref{hatarrow}a, then the allowed transitions starting with $a$ are as shown in Figure \ref{hatarrow}c.

\begin{figure}[!h] 
\centering
\includegraphics[width=400pt]{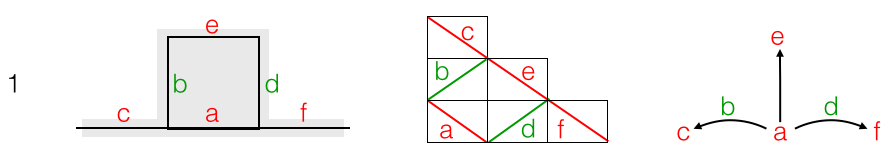} 
\begin{quote}\caption{(a) a hat in case $1$ (b) the corresponding stair diagram (c) the transitions from edge $a$ \label{hatarrow}} \end{quote}
\end{figure}

Furthermore, if $a$ is the middle edge of a hat in any of the degenerate cases $2-4$, the corresponding arrow picture is a subset of that picture, with exactly the edges that appear in the degenerate hat, as shown in Figure \ref{deghatarr}.

\begin{figure}[!h] 
\centering
\includegraphics[width=300pt]{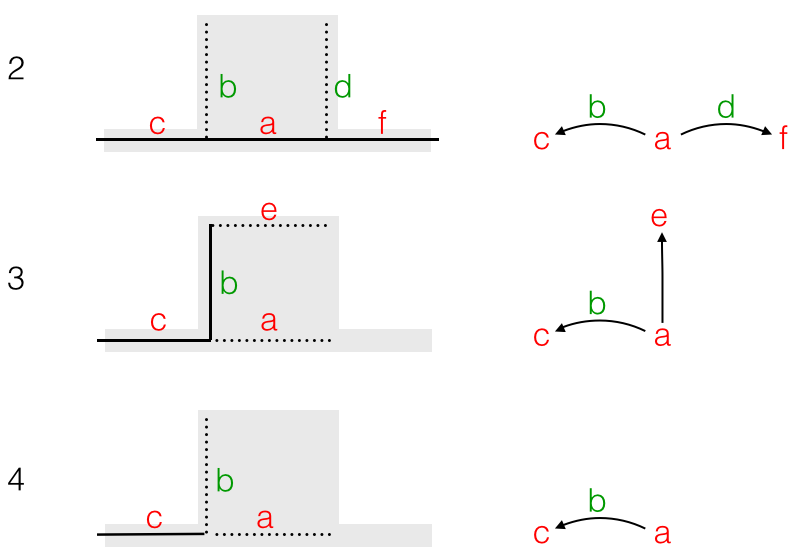} 
\begin{quote}\caption{The degenerate hats of cases $2-4$, and their corresponding transitions \label{deghatarr}} \end{quote}
\end{figure}
\end{lemma}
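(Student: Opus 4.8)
The plan is to translate everything into the orthogonal presentation and then reduce the statement to Lemma \ref{lemma:intertwined}. First I would invoke the Hat Lemma (Lemma \ref{hatlemma}) to replace the case-$1$ hat in $\G mn$ whose middle edge is $a$ by the corresponding stair of six basic rectangles $a,b,c,d,e,f$ in the orthogonal presentation, arranged as in Figure \ref{stair} (three columns of three, two and one rectangle). Recall that a side of $\M mn$ is a negative diagonal of a basic rectangle while a side of $\M nm$ is a positive diagonal, and that by Remark \ref{alternate} positive and negative diagonals alternate among adjacent rectangles. Hence the hypothesis that $a$ is a side of $\M mn$ (a negative diagonal) forces $c,e,f$ to be negative diagonals (sides of $\M mn$) and $b,d$ to be positive diagonals (sides of $\M nm$); this is exactly the colouring used in Lemma \ref{lemma:intertwined}.

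Next I would use the identification of $\M mn$ with its orthogonal presentation (\S\ref{sec:affine}), under which sides of $\M mn$ become negative diagonals and trajectories in the standard sector $\Sec 0 m n$ correspond to trajectories in the first quadrant $[0,\pi/2]$; this is the correspondence underlying Lemma \ref{lemma:wellposed}. Thus a transition $a\to n_2$ is allowed precisely when some first-quadrant trajectory crosses the negative diagonal $a$ and, as its very next negative-diagonal crossing, the negative diagonal $n_2$. Because $a$ sits at the bottom-left of the stair and first-quadrant trajectories move up and to the right, the only negative diagonals that can be crossed immediately after $a$ are $c$, $e$ and $f$, and no negative diagonal outside the stair can be reached without first crossing one of these. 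Applying the dichotomy of Lemma \ref{lemma:intertwined} to the consecutive pairs $(a,c)$, $(a,e)$, $(a,f)$ then gives exactly: $a\to c$ with intervening positive diagonal $b$, $a\to e$ with no intervening positive diagonal, and $a\to f$ with intervening positive diagonal $d$. Reading the intervening positive diagonals as the arrow labels (sides of $\M nm$) reproduces Figure \ref{hatarrow}c, settling the non-degenerate case.

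For the degenerate cases $2$–$4$ I would argue verbatim, using the Degenerate Hat Lemma (Lemma \ref{degeneratehat}) to enumerate the possible configurations and Convention \ref{convention:ordered_sides} to treat the degenerate (horizontal, vertical, or point) diagonals as ordered, alternating degenerate diagonals. Since Lemma \ref{lemma:intertwined} is already established for degenerate stairs, the same dichotomy applies unchanged; the only difference is that some of the rectangles $c,e,f$ (and the intervening $b,d$) are now degenerate, so any transition or label attached to a completely degenerate (point) rectangle simply drops out. The surviving transition picture is therefore the sub-picture of Figure \ref{hatarrow}c consisting of precisely the edges present in the degenerate hat, which is the content of Figure \ref{deghatarr}.

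The colour bookkeeping and the reading-off of the three transitions are routine once the stair is in place; the main obstacle is checking that the stair genuinely captures \emph{all} transitions out of $a$ (that no negative diagonal beyond the stair can be the immediate successor of $a$) and that the degenerate configurations are handled exhaustively and consistently. These two points are controlled, respectively, by the elementary geometry of first-quadrant trajectories inside a stair and by the combination of the Degenerate Hat Lemma with Convention \ref{convention:ordered_sides}, after which the statement follows directly from Lemma \ref{lemma:intertwined}.
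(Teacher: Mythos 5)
Your proposal is correct and takes essentially the same route as the paper's own proof: both pass from the case-$1$ hat to the stair configuration via the Hat Lemma (Lemma \ref{hatlemma}) and then read the three transitions $a\to c$ (through $b$), $a\to e$ (no label), $a\to f$ (through $d$) off the geometry of first-quadrant trajectories in the stair, your explicit appeal to Lemma \ref{lemma:intertwined} being just a re-packaging of that same stair analysis. Your handling of cases $2$--$4$ also matches the paper's (identical analysis, with pieces not corresponding to actual sides dropping out); the only cosmetic difference is bookkeeping--the paper phrases the drop-outs as ``missing edges'' while keeping the dotted degenerate edges, whereas you phrase them as completely degenerate (point) rectangles--but both yield exactly the edges appearing in the degenerate hat, as the statement requires.
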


\begin{proof}
First, we consider the case where $a$ is the middle edge of a hat in case $1$ (Figure \ref{hatarrow}a).
Assume that edges $a,b,c$ in the Hooper diagram are adjacent in a vertical cylinder, so then $a,d,f$ are adjacent in a horizontal cylinder. Then the stair corresponding to this hat is as in Figure \ref{hatarrow}b.

Now we can determine the possible transitions from edge $a$ to other edges of $\M mn$ -- in this case, edges $c,e$ and $f$. Going vertically, $a$ can go to $c$ through $b$; going horizontally, $a$ can go to $f$ through $d$, and going diagonally, $a$ can go to $e$ without passing through any edge of $\M nm$. We record this data with the arrows in Figure \ref{hatarrow}c.

If instead the edges $a,b,c$ are adjacent in a horizontal cylinder, and $a,d,f$ are adjacent in a vertical cylinder, the roles of $b$ and $d$ are exchanged, and the roles of $c$ and $f$ are exchanged, but the allowed transitions and arrows remain the same.

Now we consider the case where $a$ is the middle edge of a hat in cases $2-4$. The analysis about basic rectangles and diagonals is the same as in case $1$; the only difference is that the basic rectangles corresponding to auxiliary (dotted) edges are degenerate, and the basic rectangles corresponding to missing edges are missing. 

The degeneracy of the rectangles does not affect the adjacency, so the degenerate edges act the same as normal edges, and remain in the arrow diagram. The missing edges clearly cannot be included in transitions, so these are removed from the arrow diagram (Figure \ref{deghatarr}).
\end{proof}

We can now use these Lemmas to give the proof of Theorem \ref{tdtheorem}.

\begin{proof} [Proof of Theorem \ref{tdtheorem}]
We begin with a Hooper diagram as in Figure \ref{construct1}a. The edges are labeled corresponding to the case of the hat that has that edge as its middle edge. The label is above the edge if that hat is right-side-up, and below the edge if the label is upside-down, from Lemma \ref{hatdirection}.

Lemma \ref{degeneratearrow} tells us the allowed transitions in each case, and we copy the arrows onto the corresponding locations in the Hooper diagram, in Figure \ref{construct1}. Here the node at the tail of each arrow is the hat case number, and we have spaced out the arrows so that it is clear which arrows come from which hat.

\begin{figure}[!h] 
\centering
\includegraphics[height=200pt]{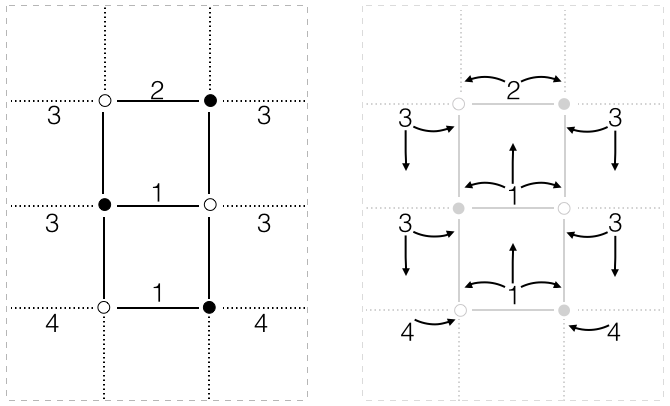}  
\begin{quote}\caption{The first steps of constructing the derivation diagram for $\M 43$ \label{construct1}} \end{quote}
\end{figure}

Now we determine the arrow labels. Proposition \ref{hd-snake} tells us that the edge labels from $\M mn$ and $\M nm$ snake back and forth and up and down, respectively, so we copy the labels in onto the Hooper diagram in Figure \ref{construct2}a. Then we use Lemma \ref{degeneratearrow} to copy these labels onto the arrow picture. For $\M 43$, this yields the derivation diagram in Figure \ref{construct2}b, and for $\M mn$ in general it yields the derivation diagram in Figure \ref{gentransdiag}.

\begin{figure}[!h] 
\centering
\includegraphics[height=200pt]{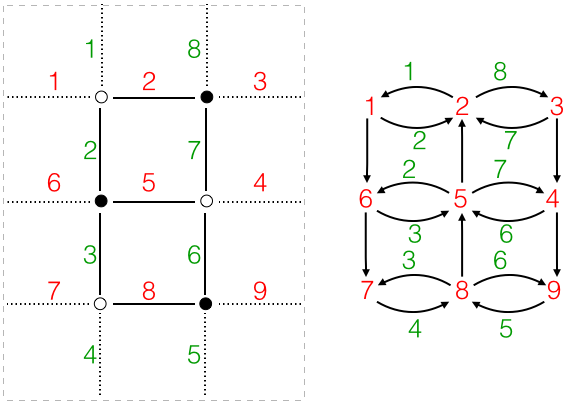} 
\begin{quote}\caption{Finishing the construction of the derivation diagram for $\M 43$ \label{construct2}} \end{quote}
\end{figure}

Where two identical arrow labels are adjacent (as for ${\gr 2, 3, 6, 7}$ here), we only write one label, and then get the diagram in Figure \ref{gentransdiag}, as desired.
\end{proof}


\subsection{Normalization}\label{sec:normalization}

Theorem \ref{tdtheorem} describes the transition diagram for \mbox{$\Sec 0 m n = [0,\pi/n]$}. Now we will describe how to transform any trajectory into a trajectory in $\Sec 0 m n$. 
To \emph{normalize} trajectories whose direction does not belong to the standard sector, we reflect each other sector $\Sec i m n$ for $1\leq i \leq 2n-1$ onto $\Sec 0 m n$. Remark that geodesic are line in a given direction and we can choose how to orient it. We can decide that all trajectories are ``\emph{going up},'' i.e. have their angle $\theta \in[0,\pi]$. Hence, often we will consider only sectors  $\Sec i m n$ for $1\leq i \leq n-1$. 



Recall that for $\M mn$, we defined $\Sec i m n  = [i\pi/n, (i+1)\pi/n]$.  

\begin{definition}\label{def:reflections}
For $0\leq i < 2n$ the transformation $\refl i m n$ is a reflection across the line $\theta = (i+1)\pi/(2n)$. Thus, $\refl i m n$ maps $\Sec i m n$ bijectively to $\Sec 0 m n$.
\end{definition}
\noindent See Example \ref{ex:refl_matrices} for the explicit form of the reflection matrices for $n=3$.

The reflection $\refl i m n$ also gives an
\emph{affine diffeomorphism} of $\M mn$, which is obtained by reflecting each polygon of $\M mn$ (see Example \ref{ex:reflections} below). 

\begin{convention}\label{convention:refl}
We use the same symbols $\refl i m n$  to denote matrices  in $SL(2 , \mathbb{R})$ and the corresponding affine diffeomorphisms of the \bm surface $\M mn$ .
\end{convention}

Each of the affine diffeomorphisms $\refl i m n$  also induces a \emph{permutation} on the edge labels   of $\M mn$, i.e. on the alphabet in $\LL m n $ (see Example \ref{ex:permutations} below).
We will denote the permutation corresponding to $\refl i mn$ by $\perm i mn$.  We now want to describe these permutations explicitly. 
To do this, we first notice that each flip can be seen as a composition of two flips that are easier to study (see Lemma \ref{comptransf} below). 
%
%
%
The following Definition \ref{diagram-actions-def} and  Lemma \ref{diagram-actions-lem}  then explain the actions of these fundamental transformations on the labels of the polygons. 
%
%

\begin{lemma}\label{comptransf}
Each of the reflections $\refl i m n$ can be written as a composition of the following:
\begin{itemize}
\item a flip along the axis at angle $\pi /n$, denoted by $\fl {n}$.
\item a flip along the axis at angle $\pi /(2n)$, denoted by $\fl {2n}$.
\end{itemize}
\end{lemma}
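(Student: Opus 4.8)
The plan is to prove the lemma by a short computation with $2\times 2$ matrices, using the classical fact that the composition of two reflections across lines through the origin is a rotation by twice the signed angle between their axes. By Convention \ref{convention:refl} each symbol $\refl i m n$ (and likewise each flip $\fl n$, $\fl{2n}$) denotes both a matrix in $SL_{\pm}(2,\R)$ and the affine diffeomorphism of $\M mn$ realizing it; since the derivative map $\Aff(\M mn)\to V(\M mn)$ is a homomorphism, it therefore suffices to verify the corresponding identity between matrices, the statement for the affine diffeomorphisms being an immediate consequence.

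First I would identify the two elementary flips among the reflections $\refl i m n$. Since $\fl n$ is the reflection across the axis at angle $\pi/n$ and $\fl{2n}$ the reflection across the axis at angle $\pi/(2n)$, comparison with Definition \ref{def:reflections} (where $\refl i m n$ is the reflection across the line $\theta=(i+1)\pi/(2n)$) gives at once
\[
\fl{2n}=\refl 0 m n=\mat{\cos\frac{\pi}{n}}{\sin\frac{\pi}{n}}{\sin\frac{\pi}{n}}{-\cos\frac{\pi}{n}},\qquad
\fl n=\refl 1 m n=\mat{\cos\frac{2\pi}{n}}{\sin\frac{2\pi}{n}}{\sin\frac{2\pi}{n}}{-\cos\frac{2\pi}{n}}.
\]
Multiplying these two matrices and collapsing the entries with the angle–subtraction identities (for instance $\cos\frac{2\pi}{n}\cos\frac{\pi}{n}+\sin\frac{2\pi}{n}\sin\frac{\pi}{n}=\cos\frac{\pi}{n}$) yields
\[
\fl n\fl{2n}=\mat{\cos\frac{\pi}{n}}{-\sin\frac{\pi}{n}}{\sin\frac{\pi}{n}}{\cos\frac{\pi}{n}}=\rotpin,
\]
the rotation by $\pi/n$. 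This is the geometric core of the argument: two flips whose axes differ by $\pi/(2n)$ compose to the rotation by $\pi/n$.

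It then remains to establish $\refl i m n=(\rotpin)^i\,\fl{2n}=(\fl n\fl{2n})^i\fl{2n}$, which I would prove by induction on $i$. The base case $i=0$ is exactly $\refl 0 m n=\fl{2n}$. For the inductive step, composing the rotation $\rotpin$ with the reflection $\refl{i-1} m n$ across the axis at angle $i\pi/(2n)$ produces the reflection across the axis at angle $i\pi/(2n)+\tfrac{1}{2}(\pi/n)=(i+1)\pi/(2n)$, namely $\refl i m n$; equivalently, one multiplies the rotation matrix $(\rotpin)^i$ (rotation by $i\pi/n$) by $\fl{2n}$ and simplifies with the angle–addition identities to recover precisely the matrix of $\refl i m n$ from Definition \ref{def:reflections}. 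Since $\fl{2n}\fl{2n}=\mathrm{Id}$, the case $i=1$ specializes to $\refl 1 m n=\fl n$, and in general this exhibits each $\refl i m n$ as an explicit alternating word in $\fl n$ and $\fl{2n}$ for every $0\le i<2n$, as claimed.

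I expect no real obstacle here: the only points needing care are the composition order and the sign–shift conventions in the trigonometric identities — concretely, the rule that composing a rotation by $\theta$ with the reflection across the axis at angle $\phi$ gives the reflection across the axis at angle $\phi+\theta/2$. Once $\fl n\fl{2n}=\rotpin$ is verified, the induction is entirely routine.
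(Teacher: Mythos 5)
Your proof is correct and takes essentially the same route as the paper: both arguments rest on the key observation that $\fl n \circ \fl {2n}$ is the counterclockwise rotation $\rotpin$ by $\pi/n$, and then express $\refl i m n$ as a power of this rotation composed with $\fl {2n}$. It is worth noting that your explicit formula $\refl i m n = (\fl n \fl {2n})^{i}\, \fl {2n}$ is the correct one, whereas the formula stated in the paper's proof, $(\fl n \circ \fl {2n})^{2n-i} \circ \fl {2n}$, has the exponent/order slipped: under the usual right-to-left composition convention it equals the reflection across the axis at angle $(1-i)\pi/(2n)$, which does not map $\Sec i m n$ to $\Sec 0 m n$ unless $i \equiv 0 \pmod n$, the correct variants being your $(\fl n \fl {2n})^{i}\, \fl {2n}$ or, equivalently, $\fl {2n} \circ (\fl n \circ \fl {2n})^{2n-i}$.
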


\begin{proof}
Recall that we numbered the sectors with $\Sec i m n  = [i\pi/n, (i+1)\pi/n]$, and that $\refl i m n$ reflects sector $\Sec imn$ into sector $\Sec 0mn$.  
Applying $\fl {2n}$ to $\Sec imn$ yields $\Sec {2n-1}mn$, with the opposite orientation. The composition $\fl n \circ \fl {2n}$ is a counter-clockwise rotation by $\pi/n$, preserving orientation. Thus, $$\refl imn = (\fl n \circ \fl {2n})^{2n-i} \circ \fl {2n}.$$ 
Notice that this is a composition of an odd number of flips, so it reverses orientation, as required.
\end{proof}

\begin{definition}\label{diagram-actions-def}
We define two actions on transition diagrams, which leave the arrows in place but move the numbers (edge labels) around.

The action $\nu$ is a flip that exchanges the top row with the bottom row, the second row with the next-to-bottom row, etc., as shown in the lower left of Figure \ref{diagram-actions-fig}. The action $\beta$ is a switching of certain adjacent pairs: the $1$ in the upper-left corner is preserved, and the $2$ and $3$ exchange places, $4$ and $5$ exchange places, and so on across the first row. { Then in the second row, the first and second are exchanged, the third and fourth are exchanged, etc., as shown in the lower right of Figure \ref{diagram-actions-fig} with bold arrows connecting pairs that are switched. The pairs that are switched in the second row are offset from those that are switched in the first, just as the second row of bricks is offset from the first when a mason builds a brick wall. In the same way as bricks, all the odd rows are the same as each other, and all the even rows are the same as each other. }
%
\end{definition}

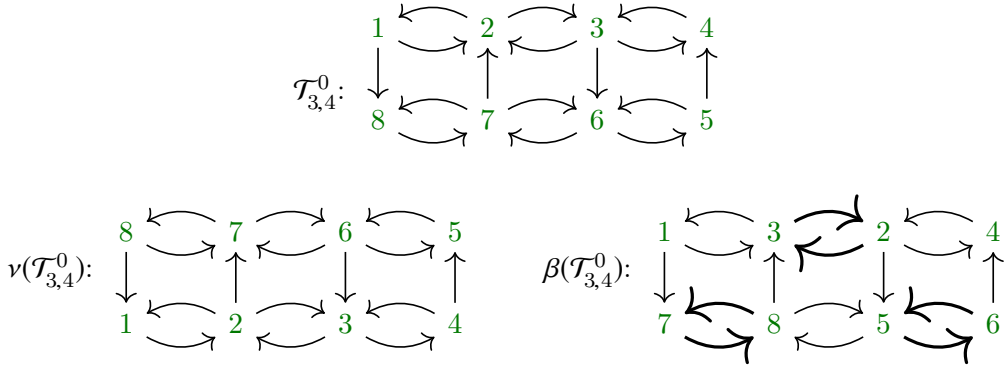
\begin{figure}[!h]
$\T 0 3 4$:  \begin{tikzcd}
{\gr1}\arrow[bend right]{r} \arrow{d} 
&{\gr2} \arrow[bend right]{l} \arrow[bend left]{r} 
&{\gr3} \arrow[bend left]{l} \arrow[bend right]{r}  \arrow{d} 
&{\gr4}   \arrow[bend right]{l} \\
{\gr8}\arrow[bend right]{r}
&{\gr7} \arrow[bend right]{l} \arrow[bend left]{r}  \arrow{u}
&{\gr6} \arrow[bend left]{l} \arrow[bend right]{r} 
&{\gr5}  \arrow[bend right]{l} \arrow{u} \\ 
\end{tikzcd} \\
$\nu(\T 0 3 4)$:   \begin{tikzcd}
{\gr8}\arrow[bend right]{r} \arrow{d} 
&{\gr7} \arrow[bend right]{l} \arrow[bend left]{r} 
&{\gr6} \arrow[bend left]{l} \arrow[bend right]{r}  \arrow{d} 
&{\gr5}   \arrow[bend right]{l} \\
{\gr1}\arrow[bend right]{r}
&{\gr2} \arrow[bend right]{l} \arrow[bend left]{r}  \arrow{u}
&{\gr3} \arrow[bend left]{l} \arrow[bend right]{r} 
&{\gr4}  \arrow[bend right]{l} \arrow{u} 
\end{tikzcd} \ \ \ \ \ \ 
$\beta(\T 0 3 4)$:   \begin{tikzcd}
{\gr1} \arrow[bend right]{r} \arrow{d} 
 & {\gr3} \arrow[bend right]{l} \arrow[very thick,bend left]{r} 
 & {\gr2} \arrow[very thick,bend left]{l} \arrow[bend right]{r}  \arrow{d} 
 & {\gr4} \arrow[bend right]{l} \\
{\gr7} \arrow[very thick,bend right]{r}
 & {\gr8} \arrow[very thick,bend right]{l} \arrow[bend left]{r}  \arrow{u}
 & {\gr5} \arrow[bend left]{l} \arrow[very thick,bend right]{r} 
 & {\gr6} \arrow[very thick,bend right]{l} \arrow{u} 
\end{tikzcd}
\begin{quote}\caption{The actions $\nu$ and $ \beta$ on a transition diagram. \label{diagram-actions-fig}}\end{quote}
\end{figure}

\begin{lemma} \label{diagram-actions-lem}
\begin{enumerate}
\item The flip $\fl {2n}$ has the effect of $\nu$ on the transition diagram. \label{pirotate}
\item The flip $\fl n$ has the effect of of $\beta$ on the transition diagram.\label{diagbuddy}
\end{enumerate}
\end{lemma}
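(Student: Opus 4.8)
The plan is to realize each flip as an explicit reflection of the semi-regular polygon presentation of $\M mn$, compute the permutation it induces on the edge labels in $\LL mn$, and match this permutation with the combinatorial operations $\nu$ and $\beta$ of Definition \ref{diagram-actions-def}. Since both $\fl n$ and $\fl {2n}$ are symmetries of the surface, each sends identified edges to identified edges and hence induces a well-defined permutation of the label set; note that a label corresponds to a \emph{pair} of identified polygon edges, so a flip may fix a label while interchanging the two edges of its pair. The first, purely local, ingredient is the action on edge directions: by Definition \ref{semiregular} the edges of each polygon point in the directions $i\pi/n$, and a reflection across the line at angle $\phi$ sends the direction $i\pi/n$ to $2\phi-i\pi/n$. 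Hence $\fl n$ acts on the direction indices by $i\mapsto (2-i)\bmod 2n$ and $\fl{2n}$ by $i\mapsto (1-i)\bmod 2n$. In particular $\fl n$ fixes the two edges of direction $\pi/n$ and $(n+1)\pi/n$ in each polygon and interchanges the remaining edges in pairs, while $\fl{2n}$ interchanges even and odd edges.

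For part \eqref{pirotate}, I would show that $\fl{2n}$, the reflection across the bisector $\pi/(2n)$ of the two cylinder directions, \emph{reverses the chain of polygons}, i.e. carries $P(k)$ to $P(m-1-k)$. Recall that the $n$ edge labels $(k-1)n+1,\dots,kn$ glued between $P(k-1)$ and $P(k)$ are exactly the labels occupying the $k$-th row of the transition diagram $\T 0 m n$ (Theorem \ref{tdtheorem}). Reversing the chain therefore carries the $k$-th gluing interface to the $(m-k)$-th, i.e. exchanges the $k$-th and $(m-k)$-th rows; checking that it matches columns within each interface (using the direction action $i\mapsto 1-i$ together with the zig-zag labeling of \S\ref{howtolabel}) then identifies the induced permutation with the row-flip $\nu$. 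The apparently fixed middle row occurring when $m$ is even is consistent with $\fl{2n}$ having no fixed edge, because $\fl{2n}$ interchanges the two edges of each pair in that interface while fixing the label.

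For part \eqref{diagbuddy}, the reflection $\fl n$ across the axis $\pi/n$ \emph{preserves} the position of each polygon in the chain, so it maps each interface (row) to itself, but reflects its edges internally according to $i\mapsto 2-i$. Translating this internal reflection through the zig-zag labeling, the two fixed edges remain fixed and the remaining edges are exchanged in adjacent pairs \emph{within each row}; the fact that the zig-zag reverses direction from one row of labels to the next produces exactly the one-step offset between consecutive rows, which is the defining feature of the ``brick'' exchange $\beta$. I would then verify both statements explicitly on $\M 34$ against Figure \ref{diagram-actions-fig}, where $\fl{2n}$ swaps the two interfaces $\{1,2,3,4\}$ and $\{5,6,7,8\}$ and $\fl n$ fixes $1,4$ while swapping $2\leftrightarrow 3$, $5\leftrightarrow 6$, $7\leftrightarrow 8$; the general case follows from the uniformity of the labeling (Proposition \ref{hd-snake} and Theorem \ref{tdtheorem}).

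The main obstacle is precisely the global claim about the action on the polygon chain: because $\fl n$ and $\fl{2n}$ are affine automorphisms realized only after a cut-and-paste re-identification, the naive planar reflection does not by itself reveal which polygon goes to which, so this must be pinned down by tracking edge identifications (equivalently, via the corresponding reflection symmetry of the Hooper diagram, Remark \ref{hd-reflect}). Once the chain action is established, the remaining work is to check that the parity-dependent direction of the zig-zag produces the exact column-matching needed for $\nu$ and the exact offset needed for $\beta$.
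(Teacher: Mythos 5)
Your proposal is correct and takes essentially the same approach as the paper: the paper likewise identifies the rows of $\T 0mn$ with the successive polygons/gluing interfaces (via Theorem \ref{tdtheorem}), observes that the flip along $\pi/(2n)$ exchanges the ``short'' and ``long'' sides and hence reverses the chain $P(k)\mapsto P(m-1-k)$, which exchanges rows as in $\nu$, and that the flip along $\pi/n$ exchanges pairs of edge labels opposite each other in direction $\pi/n$, which by the zig-zag labeling is exactly $\beta$. Your write-up is more detailed (explicit action on direction indices, the even-$m$ middle row, column matching, and the $\M 34$ check), but the underlying argument is the same.
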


\begin{proof}
\begin{enumerate}
\item Recall Definition \ref{pk}, where we named the polygons $P(0), P(1), \ldots, P(m-1)$ from left to right. By the Structure Theorem for derivation diagrams \ref{tdtheorem}, the first row of a transition diagram has the edge labels of $P(0)$, the second row has the edge labels of $P(1)$, and so on until the last row has the edge labels of $P(m-1)$.  A flip along the line at angle $\pi/(2n)$ exchanges the locations of the ``short'' and ``long'' sides, so it takes $P(0)$ to $P(m-1)$, and takes $P(1)$ to $P(m-1)$, etc. Thus it exchanges the rows by the action of $\nu$.
\item A flip along $\pi/n$ exchanges pairs of edge labels that are opposite each other in direction $\pi/n$ in the polygons, which because of the zig-zag labeling are exactly the ones exchanged by $\beta$.
\end{enumerate}
\end{proof}

\begin{corollary}\label{preserves-rows-cor}
The actions $\nu$ and $ \beta$ on the transition diagram corresponding to the actions of $\fl {2n}$ and $\fl {n}$, respectively, preserve the rows of the transition diagram $\T i mn$.
Consequently, the permutations $\perm i mn$ preserve the rows of $\T i mn$.
\end{corollary}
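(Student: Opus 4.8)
The plan is to treat the statement exactly as it is phrased, in two stages: first verify directly from Definition \ref{diagram-actions-def} that the two elementary actions $\nu$ and $\beta$ preserve the partition of the edge labels of a transition diagram into rows, and then deduce the claim for the permutations $\perm i m n$ by writing each reflection $\refl i m n$ as a composition of the two fundamental flips and invoking Lemma \ref{diagram-actions-lem}.

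For the first stage I would argue straight from the descriptions of the two actions. The action $\nu$ is a flip that exchanges the top row with the bottom row, the second row with the next-to-last row, and so on; hence it sends each row bijectively onto another (the vertically reflected) row. It therefore need not fix the individual rows, but it maps the collection of rows to itself, so the partition of the labels of $\T i m n$ into rows is preserved. The action $\beta$, by contrast, only interchanges labels lying in the \emph{same} row — the adjacent ``brick'' swaps across the first row, with offset pairs in the rows below, as illustrated in Figure \ref{diagram-actions-fig}. Consequently each row of $\beta(\T i m n)$ is sent to itself setwise, so again the row partition is preserved. Thus both generators respect the partition into rows, the only difference being that $\nu$ permutes the rows while $\beta$ fixes each of them.

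For the second stage I would combine the two preceding lemmas. By Lemma \ref{comptransf}, every reflection factors as $\refl i m n = (\fl n \circ \fl {2n})^{2n-i}\circ \fl {2n}$, a composition of copies of the fundamental flips $\fl n$ and $\fl {2n}$. By Lemma \ref{diagram-actions-lem}, the induced effect on the transition diagram of $\fl {2n}$ is the action $\nu$ and that of $\fl n$ is the action $\beta$. Hence the permutation $\perm i m n$ of edge labels induced by $\refl i m n$ corresponds to the composition of the actions $\nu$ and $\beta$ dictated by that factorization. Since the property of mapping rows to rows is stable under composition, and since each factor has this property by the first stage, the composite $\perm i m n$ preserves the rows of $\T i m n$, as claimed.

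I do not expect a genuine obstacle here: the result is essentially immediate once the machinery of Definition \ref{diagram-actions-def} and Lemmas \ref{comptransf}--\ref{diagram-actions-lem} is in place. The one point requiring care is the reading of ``preserving the rows.'' It must be understood as preserving the partition into rows (i.e.\ mapping rows to rows), rather than fixing each row individually, since $\nu$ genuinely permutes the rows. Making this reading explicit — and observing both that it is precisely the property needed downstream and that it passes to compositions — is the only place where a careless phrasing could open an apparent gap.
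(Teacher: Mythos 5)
Your proof is correct and follows essentially the same route as the paper's: check that $\nu$ and $\beta$ preserve rows, factor each $\refl i m n$ via Lemma \ref{comptransf}, identify the factors with $\nu$ and $\beta$ via Lemma \ref{diagram-actions-lem}, and conclude by stability under composition. Your explicit remark that ``preserving rows'' means preserving the partition into rows (since $\nu$ permutes rows rather than fixing them) is a useful clarification of what the paper leaves implicit, but not a different argument.
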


\begin{proof}
It follows immediately from Lemma \ref{diagram-actions-lem} that the action described on the diagrams preserve rows. 
Now, each permutation $\perm i mn$ corresponds to a reflection $\refl i mn$, which by Lemma \ref{comptransf} is obtained as a composition of the transformations $\fl {2n}$ and $\fl {n}$.
Thus the permutations are obtained by composing the permutations corresponding to $\fl {2n}$ and $\fl {n}$. 
Each permutation preserves the rows, hence their composition does too.
\end{proof}

%

\begin{example}[Matrices for $n=3$]\label{ex:refl_matrices}
 For $n=3$, the reflections $\refl i 4 3$ for $0\leq i \leq 2$ 
 that act on $\M 4 3$ { are  given by the following matrices}: 
 
$$\refl 043 = \mat {\bf  {-1}}001 \qquad \refl 1 43 = \mat {-1/2}{\sqrt{3}/2}{\sqrt{3}/2}{1/2} \qquad \refl 2 43 = \mat {-1}001.$$

\end{example}

\begin{example} [Reflections for $n=3$]\label{ex:reflections}
In Figure \ref{reflect} we show how the reflections $\refl i 4 3$ and $\refl i 3 4$ act as affine diffeomorphism on $\M 43$  and $\M 34$ respectively. The solid line reflects $\Sigma_1$ to $\Sigma_0$; the dashed line reflects $\Sigma_2$ to $\Sigma_0$, and for $\M 34$ the dotted line reflects $\Sigma_3$ to $\Sigma_0$.
\begin{figure}[!h] 
\centering
\includegraphics[width=.45\textwidth]{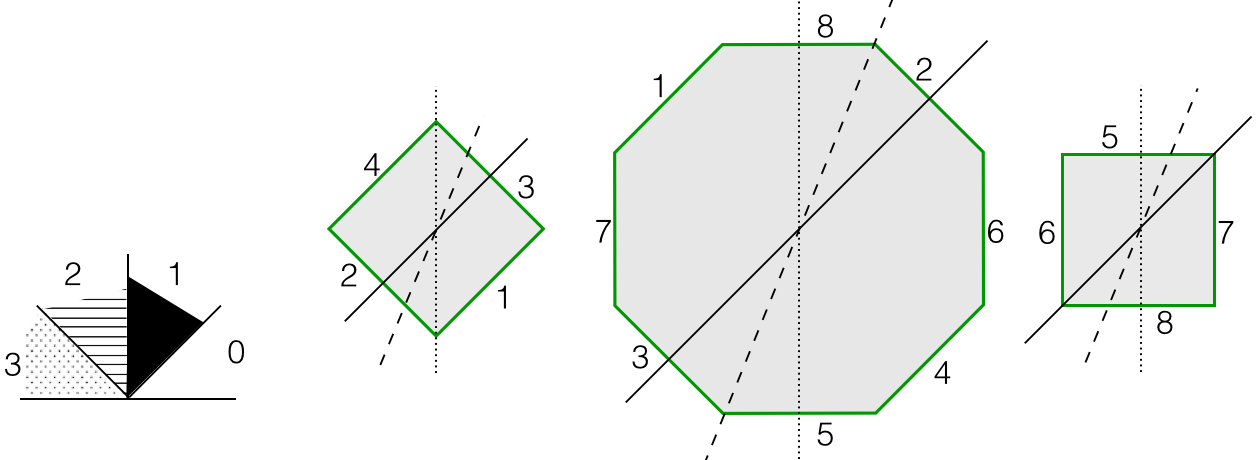} \hspace{0.08\textwidth}
\includegraphics[width=.45\textwidth]{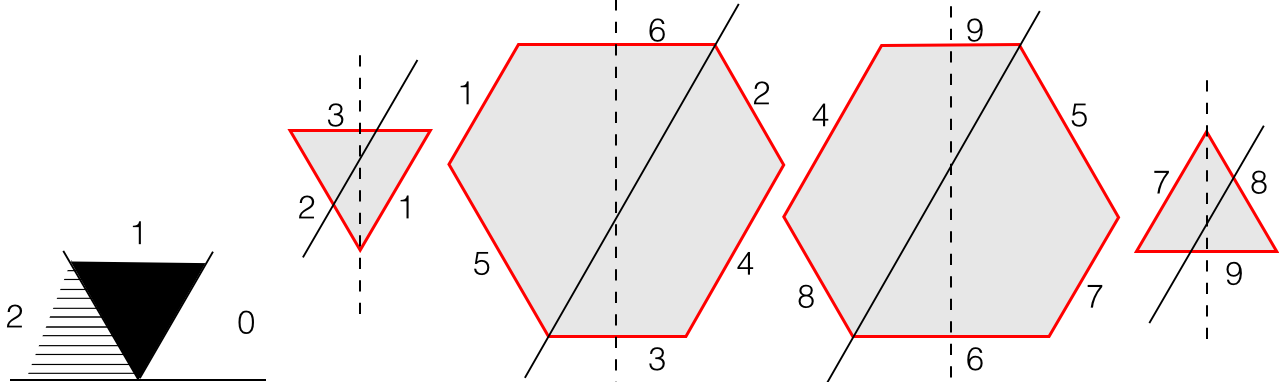}
\begin{quote}\caption{The action of reflections $\refl i m n$ on $\M 34$ and $\M 43$. \label{reflect}}\end{quote}
\end{figure}

\end{example}


\begin{example}[Permutations for $n=3$]\label{ex:permutations} Looking at Figure \ref{reflect}, we can see the permutation on edge labels induced by each $\refl i m n$.
\begin{align*}
&\M 43: &\perm 1 43  = &(17)(29)(38)(56) &\M 34: &&\perm 1 34 =& (14)(57)(68) \\
&&\perm 2 43 = &(12)(45)(78) &&&\perm 2 34 =& (16)(28)(35)(47) \\
& &&&&&\perm 3 34 =&  (12)(34)(67).
\end{align*}
\end{example}

\subsection{Transition diagrams for other sectors}\label{sec:other_sectors}
We can now explain how to draw a \emph{transition diagram} for trajectories in each sector.
Let us start with some examples, and then give a general rule to produce any such diagram. 

For our example surfaces $\M 43$ and $\M 34$, the transition diagrams for each sector are in Figures \ref{all-td43} and \ref{all-td34}, respectively.

\begin{figure}[!h] 
\centering

$\T 0 43$:  \begin{tikzcd}
{\color{red}1}\arrow[bend right]{r} \arrow{d} 
&{\color{red}2} \arrow[bend right]{l} \arrow[bend left]{r} 
&{\color{red}3} \arrow[bend left]{l}   \arrow{d}  \\
{\color{red}6}\arrow[bend right]{r} \arrow{d}
&{\color{red}5} \arrow[bend right]{l} \arrow[bend left]{r}  \arrow{u}
&{\color{red}4} \arrow[bend left]{l} \arrow{d} \\ 
{\color{red}7}\arrow[bend right]{r}
&{\color{red}8} \arrow[bend right]{l} \arrow[bend left]{r}  \arrow{u}
&{\color{red}9} \arrow[bend left]{l}  \\ 
\end{tikzcd}  \ \ \ \ \ \ 
$\T 1 43$:  \begin{tikzcd}
{\color{red}7}\arrow[bend right]{r} \arrow{d} 
&{\color{red}9} \arrow[bend right]{l} \arrow[bend left]{r} 
&{\color{red}8} \arrow[bend left]{l}   \arrow{d}  \\
{\color{red}5}\arrow[bend right]{r} \arrow{d}
&{\color{red}6} \arrow[bend right]{l} \arrow[bend left]{r}  \arrow{u}
&{\color{red}4} \arrow[bend left]{l} \arrow{d} \\ 
{\color{red}1}\arrow[bend right]{r}
&{\color{red}3} \arrow[bend right]{l} \arrow[bend left]{r}  \arrow{u}
&{\color{red}2} \arrow[bend left]{l}  \\ 
\end{tikzcd}  \ \ \ \ \ \
$\T 2 43$:  \begin{tikzcd}
{\color{red}2}\arrow[bend right]{r} \arrow{d} 
&{\color{red}1} \arrow[bend right]{l} \arrow[bend left]{r} 
&{\color{red}3} \arrow[bend left]{l}   \arrow{d}  \\
{\color{red}6}\arrow[bend right]{r} \arrow{d}
&{\color{red}4} \arrow[bend right]{l} \arrow[bend left]{r}  \arrow{u}
&{\color{red}5} \arrow[bend left]{l} \arrow{d} \\ 
{\color{red}8}\arrow[bend right]{r}
&{\color{red}7} \arrow[bend right]{l} \arrow[bend left]{r}  \arrow{u}
&{\color{red}9} \arrow[bend left]{l}  \\ 
\end{tikzcd} 

\begin{quote}\caption{Transition diagrams in each sector for $\M 43$ \label{all-td43}} \end{quote}
\end{figure}

\begin{figure}[!h] 
\centering
$\T 0 34$:  \begin{tikzcd}
{\gr1}\arrow[bend right]{r} \arrow{d} 
&{\gr2} \arrow[bend right]{l} \arrow[bend left]{r} 
&{\gr3} \arrow[bend left]{l} \arrow[bend right]{r}  \arrow{d} 
&{\gr4}   \arrow[bend right]{l} \\
{\gr8}\arrow[bend right]{r}
&{\gr7} \arrow[bend right]{l} \arrow[bend left]{r}  \arrow{u}
&{\gr6} \arrow[bend left]{l} \arrow[bend right]{r} 
&{\gr5}  \arrow[bend right]{l} \arrow{u} \\ 
\end{tikzcd} \ \ \ \ \ 
$\T 1 34$:  \begin{tikzcd}
{\gr4}\arrow[bend right]{r} \arrow{d} 
&{\gr2} \arrow[bend right]{l} \arrow[bend left]{r} 
&{\gr3} \arrow[bend left]{l} \arrow[bend right]{r}  \arrow{d} 
&{\gr1}   \arrow[bend right]{l} \\
{\gr6}\arrow[bend right]{r}
&{\gr5} \arrow[bend right]{l} \arrow[bend left]{r}  \arrow{u}
&{\gr8} \arrow[bend left]{l} \arrow[bend right]{r} 
&{\gr7}  \arrow[bend right]{l} \arrow{u} \\ 
\end{tikzcd} \\
$\T 2 34$:  \begin{tikzcd}
{\gr6}\arrow[bend right]{r} \arrow{d} 
&{\gr8} \arrow[bend right]{l} \arrow[bend left]{r} 
&{\gr5} \arrow[bend left]{l} \arrow[bend right]{r}  \arrow{d} 
&{\gr7}   \arrow[bend right]{l} \\
{\gr2}\arrow[bend right]{r}
&{\gr4} \arrow[bend right]{l} \arrow[bend left]{r}  \arrow{u}
&{\gr1} \arrow[bend left]{l} \arrow[bend right]{r} 
&{\gr3}  \arrow[bend right]{l} \arrow{u} \\ 
\end{tikzcd} \ \ \ \ \ 
$\T 3 34$:  \begin{tikzcd}
{\gr2}\arrow[bend right]{r} \arrow{d} 
&{\gr1} \arrow[bend right]{l} \arrow[bend left]{r} 
&{\gr4} \arrow[bend left]{l} \arrow[bend right]{r}  \arrow{d} 
&{\gr3}   \arrow[bend right]{l} \\
{\gr8}\arrow[bend right]{r}
&{\gr6} \arrow[bend right]{l} \arrow[bend left]{r}  \arrow{u}
&{\gr7} \arrow[bend left]{l} \arrow[bend right]{r} 
&{\gr5}  \arrow[bend right]{l} \arrow{u} \\ 
\end{tikzcd}

\begin{quote}\caption{Transition diagrams in each sector for $\M 34$ \label{all-td34}} \end{quote}
\end{figure}

\begin{corollary}\label{cor:shape}
Up to permuting the labels, the shape of the transition diagram is always the same. For  $\T i mn$, the labels in $\T 0 mn$ are permuted by $\perm i m n$. 
\end{corollary}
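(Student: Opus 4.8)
The plan is to deduce both assertions from the single fact that $\refl i m n$ is an \emph{affine diffeomorphism} of $\M mn$ carrying the sector $\Sec i m n$ onto the standard sector $\Sec 0 m n$ (Definition \ref{def:reflections}) and inducing on the edge labels exactly the permutation $\perm i m n$. First I would recall the general principle from \S\ref{subsec:affine} that an affine diffeomorphism sends linear trajectories to linear trajectories and polygon edges to polygon edges. Consequently, if $\tau$ is a trajectory with direction in $\Sec i m n$, then $\refl i m n(\tau)$ is a trajectory with direction in $\Sec 0 m n$, and whenever $\tau$ crosses the edge labeled $\ell$ its image crosses the edge labeled $\perm i m n(\ell)$, by the very definition of the induced label permutation.

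The core step is then a transition-by-transition translation. By Definition \ref{def:transition}, the transition $n_1 n_2$ is allowed in $\Sec i m n$ exactly when some trajectory in that sector crosses edge $n_1$ and then edge $n_2$. Applying $\refl i m n$ sends such a trajectory to a trajectory in $\Sec 0 m n$ crossing $\perm i m n(n_1)$ and then $\perm i m n(n_2)$; and since $\refl i m n$ is invertible, with inverse carrying $\Sec 0 m n$ back to $\Sec i m n$, the converse holds too. Hence there is an arrow $n_1 \to n_2$ in $\T i m n$ if and only if there is an arrow $\perm i m n(n_1) \to \perm i m n(n_2)$ in $\T 0 m n$. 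In other words $\perm i m n$ is an isomorphism of directed graphs from $\T i m n$ onto $\T 0 m n$, carrying vertices to vertices and arrows to arrows bijectively. Because a directed-graph isomorphism preserves the underlying arrow pattern, the shape of $\T i m n$ coincides with the shape of $\T 0 m n$ described in the Structure Theorem \ref{tdtheorem}, while the vertex labels of the two diagrams differ precisely by $\perm i m n$. To match the diagrams position-by-position in the grid, exactly as displayed in Figures \ref{all-td43} and \ref{all-td34}, I would invoke Corollary \ref{preserves-rows-cor}, which guarantees that $\perm i m n$ preserves the rows of the diagram; combined with the snaking structure of Theorem \ref{tdtheorem} this confirms that the relabeling rearranges the labels without disturbing the grid geometry.

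The genuinely substantive point, rather than an obstacle, is that the label permutation realized geometrically by the affine diffeomorphism $\refl i m n$ is \emph{literally} the combinatorial permutation $\perm i m n$; but this is true by construction, since $\perm i m n$ was defined as that induced permutation and computed through Lemma \ref{comptransf} and Lemma \ref{diagram-actions-lem}, so no further work is required. The only place demanding mild care is the bookkeeping of whether $\perm i m n$ or its inverse labels a given vertex, and the fact that the closed sectors may render a sequence admissible in two adjacent diagrams; neither of these affects the arrow-by-arrow correspondence above, and hence neither affects the shape claim.
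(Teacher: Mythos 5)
Your proof is correct. The paper states this corollary without a written proof, but the justification it has set up in \S\ref{sec:normalization} runs through a different chain: each reflection $\refl i m n$ is decomposed into the two elementary flips $\fl n$ and $\fl {2n}$ (Lemma \ref{comptransf}), and each of those flips is shown to act on the transition diagram by one of the explicit combinatorial actions $\beta$ or $\nu$ (Lemma \ref{diagram-actions-lem}), which by construction ``leave the arrows in place and move the numbers around''; the corollary then follows because a composition of such actions still fixes the arrow pattern and permutes labels, and this route has the added benefit of computing the permutations explicitly and yielding the row-preservation property (Corollary \ref{preserves-rows-cor}) that the paper needs later in the proof of Lemma \ref{periodicABAB}. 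You instead argue directly from the geometry: $\refl i m n$ is an affine diffeomorphism bijecting $\Sec i m n$ onto $\Sec 0 m n$, so by Definition \ref{def:transition} a transition $n_1 n_2$ is allowed in sector $i$ if and only if $\perm i m n(n_1)\,\perm i m n(n_2)$ is allowed in sector $0$, making $\perm i m n$ a directed-graph isomorphism $\T i m n \to \T 0 m n$. This is a legitimate and in fact more economical proof of the statement as written, since it never needs the flip decomposition or the explicit form of the actions $\nu$ and $\beta$; what it does not give you is the finer structural information (rows preserved, explicit brick/row patterns) that the paper's machinery extracts and reuses downstream. Your appeal to Corollary \ref{preserves-rows-cor} at the end is harmless but unnecessary for the corollary as stated, and your two closing caveats (involution bookkeeping, admissibility in two adjacent closed sectors) are correctly dismissed: the $\perm i m n$ are involutions, and admissibility of sequences plays no role in the diagram isomorphism.
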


\begin{definition}\label{def:universal}
We will call \emph{universal diagram} and denote by $\UD mn$ the unlabeled version of the diagrams $\T i mn$. 
\end{definition}
The universal diagrams for $\M 43$ and $\M 34$ are shown in Figure \ref{fig:universalex}. All transition diagrams for $\M mn$ have the same arrow structure, $\UD mn$, with different labels at the nodes.

\begin{figure}[!h] 
\centering
$\UD  43$: \begin{tikzcd}
\arrow[bend right]{r} \arrow{d} 
& \arrow[bend right]{l} \arrow[bend left]{r} 
& \arrow[bend left]{l}   \arrow{d}  \\
\arrow[bend right]{r} \arrow{d}
& \arrow[bend right]{l} \arrow[bend left]{r}  \arrow{u}
& \arrow[bend left]{l} \arrow{d} \\ 
\arrow[bend right]{r}
& \arrow[bend right]{l} \arrow[bend left]{r}  \arrow{u}
& \arrow[bend left]{l}  \\ 
\end{tikzcd}  \ \ \ \ \ \ 
$\UD  34$:  \begin{tikzcd}
\arrow[bend right]{r} \arrow{d} 
& \arrow[bend right]{l} \arrow[bend left]{r} 
& \arrow[bend left]{l} \arrow[bend right]{r}  \arrow{d} 
&   \arrow[bend right]{l} \\
\arrow[bend right]{r}
& \arrow[bend right]{l} \arrow[bend left]{r}  \arrow{u}
& \arrow[bend left]{l} \arrow[bend right]{r} 
&  \arrow[bend right]{l} \arrow{u} \\ 
\end{tikzcd} 
\begin{quote}\caption{The universal diagrams $\UD 43$ and $\UD 34$ \label{fig:universalex}} \end{quote}
\end{figure}

\smallskip
\section{Derivation} \label{derivation}
In this section we will describe the \emph{renormalization} procedure which will be our key tool to help us characterize (the closure of) all possible trajectories on a given surface. The idea is that we will describe geometric renormalization operators (given by compositions of affine maps and reflections) which transform a linear trajectory into another linear trajectory, and at the same time we will describe the corresponding combinatorial operations on cutting sequences. The renormalization will happen in two steps, by first transforming a trajectory on $\M mn$ into one on $\M nm$ (and describing how to transform the corresponding cutting sequence), then mapping a trajectory on $\M nm$ into a  new trajectory on $\M mn$ (and a new cutting sequence).  
The combinatorial operators which shadow this geometric renormalization at the level of cutting sequences will be our derivation operators, followed by a suitable normalization (given by permutations). Since linear trajectories will be by construction infinitely renormalizable under this procedure, their cutting sequences will have to be infinitely derivable (in the sense made precise in \S \ref{sec:infinitely_derivable} below).

More precisely, we begin this section by describing in \S \ref{sec:der_ex} an example which, for $\M 4 3$, shows geometrically how to renormalize trajectories and their cutting sequences.   In \S \ref{sec:der_general} we then define  the combinatorial derivation operator $\Der mn$ and prove that it has the   geometric meaning described in the example, which implies in particular that the derived sequence of a cutting sequence on $\M mn$ is a cutting sequence of a linear trajectory on the dual surface $\M nm$. In \S\ref{sec:nor} we define this operator for sequences admissible in the standard sector, and then define  a \emph{normalization} operator that maps admissible sequences to sequences admissible in the standard sector. Then, by combining $\Norm nm \circ \Der mn$ with the operator $\Norm mn \circ \Der nm$  one gets a derivation operator on cutting sequences for $\M mn$ back to itself. In \S \ref{sec:infinitely_derivable} we use this composition to give the definition of infinitely derivable and prove that cutting sequences are infinitely derivable (Theorem \ref{thm:infinitely_derivable}). 
In \S\ref{sec:sectors_sequences}, we use this result to associate to any given cutting sequence an infinite sequence (the sequence of admissible diagrams) which records combinatorial information on the sequence of derivatives. We will explain in the next section \S \ref{farey} how this sequence can be used to provide an analogue of the continued fraction expansion for directions. Finally, in  \S\ref{sec:fixedpoints} we characterize the (periodic) sequences which are fixed under the renormalization process, since this description will be needed for the characterization in \S\ref{sec:characterization}.





\subsection{A motivational example: derivation for $\M 43$  geometrically}\label{sec:der_ex}

Let us start with an example to  geometrically motivate the definition of  derivation.   
In \S\ref{sec:affine} we described an explicit affine diffeomorphim $\AD {4}{3} $  from $\M 43$ to $\M 34$, which is obtained by combining a flip, a shear, a cut and paste, a similarity and another shear. The effect of these steps on $\M 43$ are shown in Figure \ref{hexoct}. 
\begin{figure}[!h]
\centering
\includegraphics[width=1\textwidth]{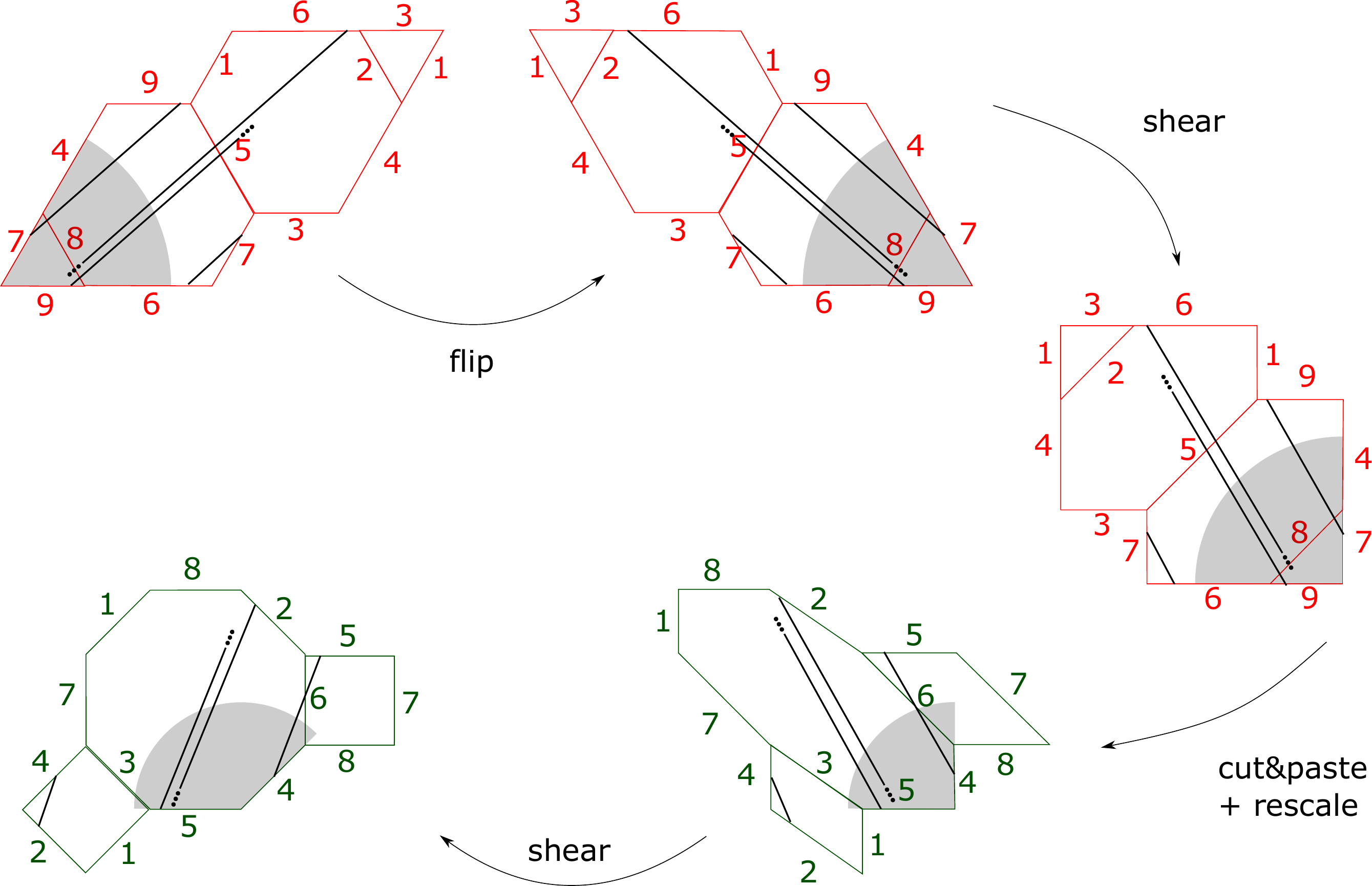}
\begin{quote}\caption{The effect of $\AD 43$ from $\M 43$ to $\M 34$. \label{hexoct}} \end{quote}
\end{figure}
Remark that the transformation $\AD {4}{3} $ acts on directions by mapping the standard sector $\Sec {0}43 $ for $\M 43$ to the sector $[\pi/4,\pi]$, which is  the complement in $[0,\pi] $ of the standard sector $\Sec {0} 34 $ for $\M 34$. This is shown in Figure \ref{hexoct}, where the image of the standard sector is followed step by step. 

In Figure \ref{fig:intertwined}, the preimages of the edges of $\M 34$ (with their edge labels) by  $\AD  43 $ are shown inside $\M 43$. Given a trajectory $\tau$ on $\M 43$ in a direction $\theta \in \Sec 043=[0,\pi/3]$ with cutting sequence $w$ in $\LL 43^{\mathbb{Z}}$ with respect to the edge labels of $\M 43$, one can also write the cutting sequence $w'$ of the same trajectory $\tau$ with respect to the edge labels of the pullback of the sides of $\M 34$  by $\AD {4}{3}$. This gives a symbolic sequence $w'$ in $\LL 34^{\mathbb{Z}}$. We want to define a combinatorial derivation operator so that the sequence $w'$ is obtained from the sequence $w$ by derivation. 

\begin{example}\label{reprise}
The periodic trajectory $\tau$ on $\M 43$ in Figure \ref{fig:intertwined}a has corresponding cutting sequence $w=\overline{\rd 1678785452}$, and the same trajectory on $\M 34$ has corresponding cutting sequence $w'=\overline{\gr 143476}$.\footnote{Here the overbar indicates a bi-infinite periodic sequence.} We can read off $w'$ on the left side of the figure as the pullback of the sides of $\M 34$, or on the right side of the figure in $\M 34$ itself. The reader can confirm that the path ${\rd 1678785452}$ in $\D 043$ in Figure \ref{34auxtd} collects exactly the arrow labels ${\gr 434761}$.
\end{example}

\begin{figure}[!h] 
\centering
\includegraphics[width=200pt]{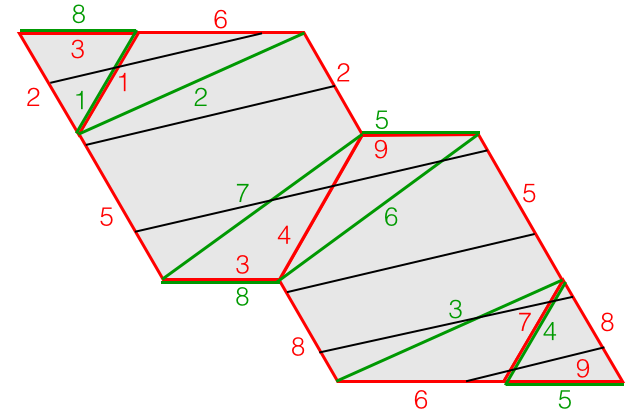}
\includegraphics[width=200pt]{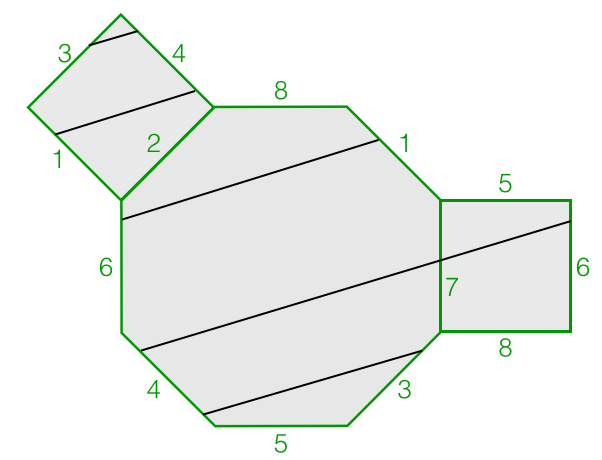}
\begin{quote}\caption{A trajectory in $\M 43$ and $\M 34$: The green edges inside $\M 43$ on the left are the preimages under $\AD  43 $ of $\M 34$ on the right. Figure \ref{hd34aux} showed this construction, and here we now show the same trajectory (the black line) on both surfaces. Note that the trajectories in the two surfaces are not parallel; on the left it is at an angle of about $13^{\circ}$, and on the right about $17^{\circ}$. \label{fig:intertwined}} \end{quote}
\end{figure}

In this explicit example, one can check by hand that  for each possible  transition  in $\T 043$ from an edge label of $\M 43$ to another one, either there are no pullbacks of edges of $\M 34$ crossed, or there is exactly one edge crossed (in general this follows from Lemma \ref{lemma:intertwined}).  By writing  the edge labels of these edges on top of the arrows in   $\T 043$ representing the transition, one obtains exactly the derivation diagram in Figure \ref{34auxtd}a.  Thus, consider the \emph{derivation operator} $\Der {4}{3}$ already mentioned in the introduction. It maps sequences admissible in $\T  043$ to sequences in the $\M 34$ edge labels, and is given by reading off the $\M 34$ edge labels on the arrows of the sequence described by the original sequence on $ \D  043$. It is clear from this geometric interpretation that the derivation operator $\Der {4}{3}$ is exactly such that the cutting sequence $w'$ of $\tau$ with respect to the pullback of the $\M 34$ edges satisfies $w'= \Der {4}{3} w$.

Let us now apply the affine diffeomorphism $\AD {4}{3}$. Then $\M 43$ is mapped to $\M 34$ and the trajectory $\tau$ is mapped to a linear trajectory $\tau'$ on $\M 34$ in a new direction $\theta'$, as shown in Figures \ref{hexoct}-\ref{fig:intertwined}. By construction, the sequence $w'= \Der {4}{3} w$ is the cutting sequence of a linear trajectory on $\M 34$. Since cutting sequences are admissible, this shows in particular that the derived sequence of a cutting sequence is admissible. 

The direction $\theta'$ image of $\theta \in [0,\pi/3]$ belongs   to $[\pi/4,\pi]$ by the initial remark on the action of $\AD {4}{3} $ on sectors.  Thus, $\Der  {4}{3}$ maps cutting sequences on $\M 43$ in a direction $\theta \in [0,\pi/3]$  to cutting sequences on  $\M 34$ in a direction $\theta \in [\pi/4,\pi]$. By applying a symmetry on $\M 34$ that maps the direction $\theta'$ to the standard sector $[0,\pi/4]$ for $\M 34$ and the correspoding permutation on edge labels, one obtains a new cutting sequence $\Norm 34 w'$ on $\M 34$. The map that sends the direction $\theta$ of $\tau$ to the direction $\theta'$ of $\tau'$ is the \emph{Farey map} $\F 4 3$, which will be described in \S \ref{farey}.

One can then perform a similar process again, starting this time from $\M 34$ and thus showing that $\Der {3}{4} \Norm 34 w'$ is a cutting sequence of the trajectory $\tau'$ with respect to the edge labels of the pullback of the sides of a sheared $\M 43$ by $\AD {3}{4}$, or equivalently, the cutting sequence of a new trajectory $\tau''$ on $\M 43$. For symmetry, we also apply a final flip $f'$ to reduce once more to trajectories with direction in the standard sector.  This second step is shown in Figure \ref{octhex}.

\begin{figure}[!h]
\centering
\includegraphics[width=1\textwidth]{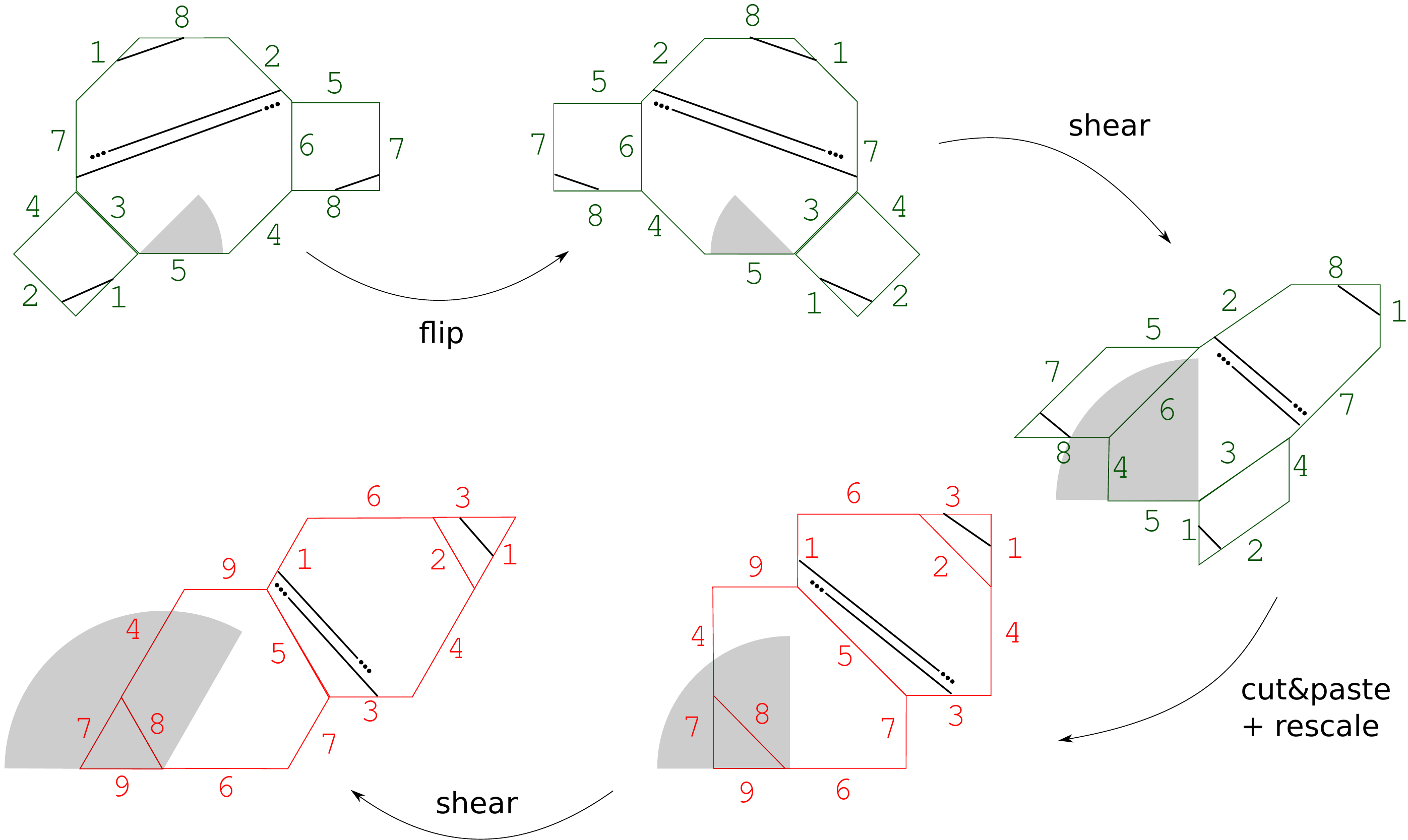}
\begin{quote}\caption{The effect of $\AD 34$ from $\M 34$ to $\M 43$. \label{octhex}} \end{quote}
\end{figure}

In  Figure \ref{fullmovie} we show the combined effect of applying $\AD 43$, then a flip, then $\AD 34$, then another flip.
By applying $\AD 43 \circ f \circ \AD 34$, we then obtain a new linear trajectory in $\M 43$  whose cutting sequence is $\Der 34 \Norm 34 \Der 43 w$. We can then apply another flip $\flip'$ to reduce again to a trajectory with direction in $\Sec 0 {4}{3}$ and repeat the same process. 

The effect of the composition $f' \circ \AD 43 \circ f \circ \AD 34$, which we call \emph{renormalization}, (see Remark \ref{rk:renormalization} below) corresponds to  applying derivation twice, with normalization to reduce to the standard sector in between and at the end. One gets in this way an operator from cutting sequences on $\M 43$ back to cutting sequences on $\M 43$. The cutting sequence of the \emph{initial} trajectory with respect to the sides of the image of $\M43$ by this element is the sequence $w$ derived, normalized, then derived and normalized again. If we apply $f' \circ \AD 43 \circ f \circ \AD 34$, it maps the original trajectory to a new trajectory whose cutting sequence with respect to $\M43$ is the sequence $w$ derived and normalized twice. Thus, deriving and normalizing twice produces cutting sequences. Repeating this renormalization process allows us to show, with this observation, that cutting sequences are infinitely derivable (see \S \ref{sec:infinitely_derivable}). 

\begin{remark}\label{rk:renormalization}
We will call \emph{renormalization} the  process just described (in the specific case of $m=4$, $n=3$), obtained by applying to $\M mn$ first $\AD mn$, then a flip to reduce to $\Sec 0 nm$ for $\M nm$, then $\AD nm$ and finally another flip. The name \emph{renormalization} must not be confused with the name \emph{normalization}, used to describe just the reduction (by flips) to standard sectors.  Renormalization maps trajectories on $\M mn$ back to trajectories on $\M mn$ but with the effect of \emph{shortening} long pieces of trajectories with directions in the standard sector. This follows since the standard sector is \emph{opened up} to the complementary sector, as shown in Figure \ref{fullmovie}. 

At the combinatorial level of cutting sequences, the effect of renormalization  
 corresponds to  applying derivation twice, once acting on cutting sequences on $\M mn$, once on $\M nm$, with \emph{normalization} in between and at the end, which acts by applying a permutation on cutting sequences to reduce to  sequences admissible in the  standard sector. One gets in this way an operator from cutting sequences on $\M mn$ back to cutting sequences on $\M mn$. 
The geometric fact that finite pieces of trajectories are shortened by renormalization has, as its combinatorial counterpart, that finite pieces of cutting sequences, when derived, become \emph{shorter}; see Remark \ref{rk:derivation_short}. 
\end{remark}





\begin{figure}[!h]
\centering
\includegraphics[width=1\textwidth]{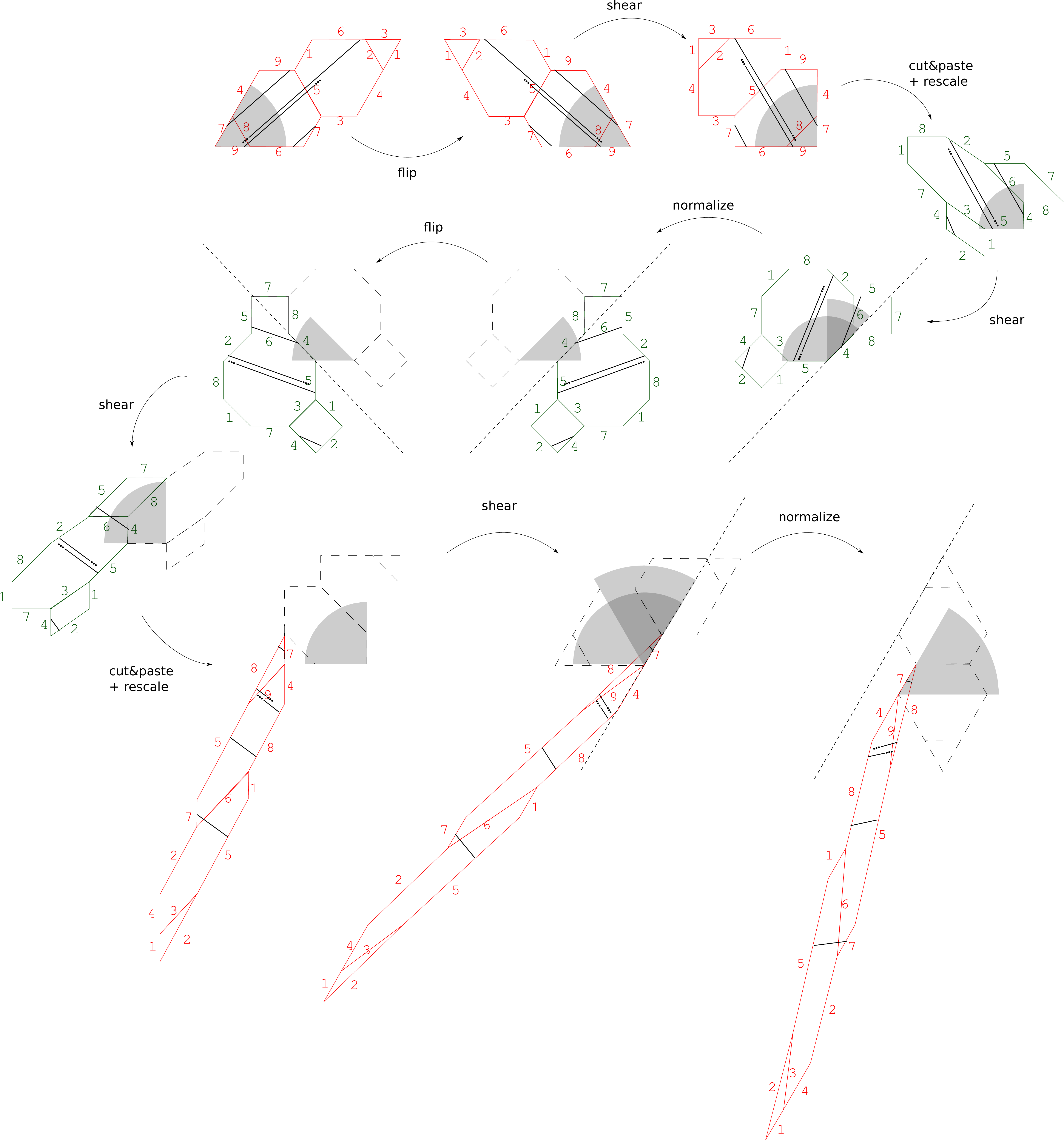}
\begin{quote}\caption{The full renormalization from $\M 43$ to itself. \label{fullmovie}} \end{quote}
\end{figure}





\subsection{Derivation operator for general $m,n$}\label{sec:der_general}
In general, we will now define an operator $\Der {m}{n}$ combinatorially, and then prove that it admits a 
geometric interpretation as in the example in the previous section. The \emph{derivation operator} $\Der {m}{n}$ is defined using the \emph{derivation diagram} $\D  0mn$ defined in \S \ref{sec:labeled_def} (see Theorem \ref{tdtheorem}) as follows. Recall that a sequence   \emph{admissible} in $\T  0mn$  describes a bi-infinite path on $\D  0mn$ (see the Definition \ref{def:transition} of admissible in \S\ref{sec:transition_diagrams}).

\begin{definition}\label{def:derivation}
 Given a sequence $w =(w_i) \in \LL mn ^{\mathbb{Z}}$ admissible in $\T  0mn$, the sequence   $\Der mn w$ is the sequence of  labels of the \emph{arrows} of $\D  0mn$  that are crossed by   an infinite path on $\D  0mn$ that goes through the vertices labeled by $(w_i)_{i \in \mathbb{Z}}$.
\end{definition}
An example was already given, in the introduction (Example \ref{ex:der34}) and in the previous section (Example \ref{reprise}). Derivation is well defined as an operator that maps  \emph{admissible} sequences in $\LL mn ^{\mathbb{Z}}$    to sequences in $\LL nm ^{\mathbb{Z}}$,  by virtue of the following Lemma:

\begin{lemma}\label{lem:derivation}
If $w =(w_i)$ is a bi-infinite sequence in  $\LL mn$ admissible in $\T  0mn$,  $\Der mn w$ is a bi-infinite sequence   in $\LL nm $. Thus,  
the operator $\Der {m}{n}$ maps sequences  sequences in $\LL mn ^{\mathbb{Z}}$ which are \emph{admissible}   in $\T  0mn$ to $\LL nm ^{\mathbb{Z}}$.
\end{lemma}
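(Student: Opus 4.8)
The plan is to check two things separately: that every symbol of $\Der mn w$ lies in the alphabet $\LL nm$, and — the substantive point — that $\Der mn w$ is genuinely bi-infinite, i.e. that infinitely many arrow labels are read off as the path described by $w$ is traversed both forward and backward. The first point is immediate from the Structure Theorem \ref{tdtheorem}: the labeled arrows of $\D 0 mn$ carry edge labels of the dual surface $\M nm$, and these range over $\{1, \dots, m(n-1)\}$, which is exactly $\LL nm = \{1, \dots, (n-1)m\}$. So every symbol produced by reading arrow labels belongs to $\LL nm$, and it remains only to prove bi-infiniteness.

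For this I would exploit the precise shape of $\D 0 mn$ provided by Theorem \ref{tdtheorem}. Recall that the diagram has $m-1$ rows and $n$ columns, and that its arrows are of two types: \emph{vertical} arrows, which join two vertices in the same column and carry no label, and \emph{horizontal} arrows, which join horizontally adjacent vertices and carry the labels that get read off. Crucially, the theorem asserts that the vertical arrows within each column all point in a single direction (down in odd columns, up in even columns). Since by hypothesis $w$ is admissible in $\T 0 mn$, it describes a bi-infinite path through the vertices of $\D 0 mn$, and $\Der mn w$ is by definition the sequence of labels of the horizontal arrows crossed by this path. It therefore suffices to show that the path crosses infinitely many horizontal arrows in each of the two directions.

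I would argue this by contradiction. Suppose that in the forward direction only finitely many horizontal arrows are crossed, so that the arrow from $w_i$ to $w_{i+1}$ is vertical for every $i \geq N$. A vertical arrow joins vertices in the same column, so $w_N, w_{N+1}, \dots$ all lie in one fixed column; and since every vertical arrow in that column points the same way, their row indices form a strictly monotone sequence. But a column contains only $m-1$ vertices, so such a sequence can have at most $m-1$ terms, contradicting the fact that it is infinite. The identical argument applies in the backward direction. Hence labeled (horizontal) arrows are crossed infinitely often in both directions, and $\Der mn w \in \LL nm^{\mathbb{Z}}$, as required. The only delicate point is the bookkeeping behind the Structure Theorem — in particular, checking that the ``single-direction vertical arrows per column'' property persists along the boundary rows and columns of the diagram — but this is already encoded in the statement of Theorem \ref{tdtheorem} and requires no new input; I expect it to be the main, and rather modest, obstacle.
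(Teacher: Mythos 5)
Your proof is correct and follows essentially the same route as the paper's: both invoke the Structure Theorem \ref{tdtheorem} to note that only the vertical arrows of $\D 0mn$ are unlabeled, and then use the fact that a column has only $m-1$ vertices (with all its vertical arrows pointing one way) to conclude that at most $m-2$ consecutive unlabeled arrows can be traversed, so labeled arrows are crossed infinitely often in both directions. The paper phrases this as a direct count (at least one horizontal arrow out of every $m-1$ arrows) rather than your contradiction, but the content is identical.
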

\begin{proof}
The proof of the Lemma is a consequence of the definition of derivation diagrams.
Let us recall from the structure theorem for derivation diagrams (Theorem \ref{tdtheorem}) that in $\D 0 mn$ the only arrows without edge labels are vertical. 
Since there are only $m-2$ vertical arrows in a row, the bi-infinite path described by $w$  is going to have at least one horizontal arrow out of every $m-1$ arrows. Thus, for every block of $m-1$ edge labels of $\LL mn$ in $w$ one get at least one edge label  of $\LL nm$ in the derived sequence $\Der mn w$. It follows that also $\Der mn w$ is an infinite sequence.
\end{proof}

Derivation is defined so that the following geometric interpretation holds. 

\begin{lemma}[Geometric interpretation for  derivation.]\label{lem:derivation_interpretation}
If $w$ is the cutting sequence of a linear trajectory of $\M mn$ in a direction $\theta \in \Sec 0 mn$, then the sequence $\Der mn w$  is the sequence of edge  labels of the crossed sides of the flip-sheared copy of $\M nm$ which is the preimage of $\M nm$ by the affine diffeomorphism $\AD {m}{n}$. 
\end{lemma}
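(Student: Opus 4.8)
The plan is to reduce the statement to the very construction of the derivation diagram $\D 0 mn$ (the definition preceding Theorem \ref{tdtheorem}), whose arrow labels were introduced precisely to record the geometric data in question; the only genuine work is to check that the combinatorial reading-off of arrow labels along the path determined by $w$ reproduces the \emph{temporal} order in which the trajectory meets the sides of the flip-sheared copy of $\M nm$. First I would fix the superimposed picture inside $\M mn$: the (red) sides of $\M mn$ together with the preimages under $\AD mn$ of the (green) sides of $\M nm$, drawn according to Convention \ref{convention:ordered_sides_affine} at the coincident edges. As $\tau$ travels, it crosses a bi-infinite sequence of red sides, which by definition is $w=(w_i)$; since $\theta \in \Sec 0 mn$, the sequence $w$ is admissible in $\T 0 mn$ and hence describes a bi-infinite path through the vertices $(w_i)$ of $\D 0 mn$. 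Simultaneously $\tau$ crosses a sequence of green sides (here we use the standing assumption that $\tau$, equivalently its $\AD mn$-image on $\M nm$, avoids singular points, so that this green cutting sequence is well defined); call the resulting sequence $w'$, read in the temporal order of the crossings. The goal is to show $w'=\Der mn w$.

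Next I would localize to a single transition. The portion of $\tau$ between two consecutive red crossings $w_i$ and $w_{i+1}$ is a segment in direction $\theta\in\Sec 0 mn$ running from the side labeled $w_i$ to the side labeled $w_{i+1}$. Lemma \ref{lemma:wellposed} asserts that along \emph{all} such segments either no green side is crossed, or one and the same green side $n_3$ is crossed; moreover, by the defining property of $\D 0 mn$, this green side $n_3$ is exactly the label attached to the arrow $w_i \to w_{i+1}$ (and the arrow carries no label in the first case). Consequently each green crossing of $\tau$ lies in a unique interval between consecutive red crossings, each such interval contributes at most one green side, and that side is the arrow label. Reading green crossings in temporal order therefore coincides with collecting the arrow labels along the path $(w_i)$ on $\D 0 mn$ — which is, by Definition \ref{def:derivation}, precisely $\Der mn w$. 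Finally, Lemma \ref{lem:derivation} guarantees that $\Der mn w$ is bi-infinite, so $w'$ is a genuine bi-infinite sequence and the identification $w'=\Der mn w$ is complete.

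The main point that requires care — rather than a deep obstacle — is the treatment of the four coincident-edge (degenerate) cases, where a red side of $\M mn$ and a green side of $\M nm$ occupy the same position. There the phrase ``crossed between $w_i$ and $w_{i+1}$'' only becomes unambiguous once one adopts the alternating ordering of Convention \ref{convention:ordered_sides_affine}, and one must verify that the arrow labels of $\D 0 mn$ were assigned using that same convention, so that the geometric ordering of crossings and the combinatorial ordering of arrow labels agree. Since Lemma \ref{lemma:wellposed} is already formulated relative to this convention (being the affine image, via $\AD mn$, of Lemma \ref{lemma:intertwined} for the orthogonal presentations), this amounts to carefully tracking the convention through the diffeomorphism $\AD mn$ rather than proving anything new, and this is where I would spend the bulk of the write-up.
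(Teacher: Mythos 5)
Your proposal is correct and follows essentially the same route as the paper's own proof: both reduce the statement to the definitions of the derivation diagram and of $\Der mn$, localize to a single transition $w_i w_{i+1}$ (a segment of $\tau$ between consecutive crossings of sides of $\M mn$), and invoke the well-posedness of the arrow labels (Lemma \ref{lemma:wellposed}, resting on Convention \ref{convention:ordered_sides_affine} for the coincident degenerate edges) to conclude that the labels read off along the path are exactly the crossed preimages of sides of $\M nm$ in temporal order. Your write-up is simply a more explicit unpacking of what the paper compresses into ``by the definitions of derivation diagrams and derivation.''
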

The proof of the Lemma is a consequence of the definition of derivation diagrams and of their Structure Theorem \ref{tdtheorem}.
\begin{proof}
Consider the sequence $w_{i} w_{i+1}$, $i \in \mathbb{Z}$ of transitions in $w$, which by assumption of $\theta \in \Sec 0 mn$ are all transitions  which appear in $\T 0 m n$.  Since $w$ is a cutting sequence, each transition $w_{i} w_{i+1}$  corresponds to a segment of the trajectory $\tau$ that crosses first the edge of $\M mn$ labeled $w_i$, then the edge labeled $ w_{i+1}$. By the definitions of derivation diagrams and derivation, if this segment crosses an  edge of the flip-sheared copy of $\M nm$ obtained as preimage by $\AD mn$, the derived sequence $w'$ contains the label of this edge. Thus, the derived sequence describes exactly the cutting sequence of $\tau$ with respect to the flip-sheared copy of $\M nm$ in the statement.
\end{proof}

\begin{remark}\label{rk:derivation_short}
As we can intuitively see in Figure \ref{fig:intertwined}, when from the cutting sequence of a trajectory in $\M 43$ we pass to the cutting sequence with respect to the edge labels that are the preimages by $\AD 43$ of $\M 34$, the number of sides crossed reduces. 
Combinatorially, this can be seen on the derivation diagram $\D 0 mn$ in general, by remarking that horizontal arrows have exactly one label, while vertical arrows have none. 
This means that when we consider a subsequence that comes from a finite path which travels along horizontal arrows, it will have the same number of labels after derivation, while if the subsequence contains also vertical arrows, the length of the subsequence after derivation will be shorter. 
Thus, \emph{the effect of derivation on finite subsequences of a word is not to increase their length}. 
\end{remark}

From Lemma \ref{lem:derivation_interpretation} we will now deduce that cutting sequences are derivable in the following sense. Recall that the permutations $\perm i mn$ introduced in \S \ref{sec:normalization} map sequences admissible in $\T i mn $ to  sequences admissible  in $\T 0mn $. 

\begin{definition}\label{def:derivable}
A  sequence $w$ admissible  in $\T 0mn $ is \emph{derivable} if   $\Der mn (w)$ is admissible in some diagram $\T i nm$. A  sequence $w$ admissible  in $\T i mn $ is derivable if $\perm i mn w$ is derivable. 
\end{definition}

\begin{proposition}\label{thm:derivable}
 Cutting sequences of linear trajectories in $\M mn$ are derivable. Furthermore:
\begin{enumerate}
\item The derived sequence $\Der mn (w)$ of a cutting sequence $w$ on $\M mn$ is the cutting sequence of a trajectory in $\M nm$.
\item  If $w$ is admissible in $\T 0mn $, then $\Der mn (w)$  is admissible in  some $\T i nm $ with $1\leq i \leq m$.
\end{enumerate}
\end{proposition}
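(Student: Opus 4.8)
The plan is to derive the whole proposition from the geometric interpretation of derivation (Lemma~\ref{lem:derivation_interpretation}), the fact that $\AD mn$ is an affine diffeomorphism onto $\M nm$, and a single computation of the action of its linear part $\derAD mn$ on directions.

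First I would reduce to the standard sector. Let $w$ be the cutting sequence of a linear trajectory $\tau$ on $\M mn$. By Lemma~\ref{admissiblelemma}, $w$ is admissible, say in $\T i mn$, so the direction $\theta$ of $\tau$ lies in $\Sec i mn$. Applying the affine diffeomorphism $\refl i mn$ (Convention~\ref{convention:refl}), which maps $\Sec i mn$ bijectively onto $\Sec 0 mn$, replaces $\tau$ by a trajectory with direction in $\Sec 0 mn$ whose cutting sequence is $\perm i mn w$; by Definition~\ref{def:derivable} this is exactly the reduction built into the notion of derivability. Hence it suffices to treat a cutting sequence $w$ that is admissible in $\T 0 mn$, i.e. with $\theta \in \Sec 0 mn = [0,\pi/n]$.

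Now I would apply Lemma~\ref{lem:derivation_interpretation}: since $\theta \in \Sec 0 mn$, the sequence $\Der mn w$ is the list of labels of the sides crossed by $\tau$ of the flip-sheared copy of $\M nm$ that is the preimage of $\M nm$ under $\AD mn$. Pushing forward by $\AD mn$, the trajectory $\tau$ becomes a linear trajectory $\tau' := \AD mn(\tau)$ on $\M nm$ (still vertex-avoiding, since $\AD mn$ is a diffeomorphism carrying singular points to singular points), and the flip-sheared copy becomes the genuine $\M nm$ with matching edge labels; since $\AD mn$ is a bijection respecting these labels, the cutting sequence of $\tau'$ on $\M nm$ equals $\Der mn w$. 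This proves part~(1), and derivability follows at once: being a cutting sequence on $\M nm$, $\Der mn w$ is admissible in some $\T i nm$ by Lemma~\ref{admissiblelemma} applied to $\M nm$.

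Finally, for part~(2) I would locate the sector of $\tau'$ by computing the projective action of $\derAD mn$ from \eqref{def:derivativeAD} on the two endpoints of the standard sector. Substituting $(\cos\theta,\sin\theta)$ and simplifying --- the only genuine calculation, in which the off-diagonal entry $\cos(\pi/m)+\cos(\pi/n)$ collapses against the $\sin(\pi/n)$ term --- shows that direction $0$ is sent to $\pi$ and direction $\pi/n$ is sent to $\pi/m$. Because $\det \derAD mn = -1$, this projective map is an orientation-reversing homeomorphism of $\RP$, so it carries $[0,\pi/n]$ monotonically onto the arc $[\pi/m,\pi]$. Since $[\pi/m,\pi]$ is covered by the sectors $\Sec i nm$ with $1 \le i \le m-1$, the direction $\theta'$ of $\tau'$ lies in some $\Sec i nm$ with $1\le i \le m$, so $\Der mn w$ is admissible in the corresponding $\T i nm$, establishing~(2). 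The main obstacle is precisely this endpoint computation; it is the general-$(m,n)$ version of the sector behaviour already observed for $\M 43$ in \S\ref{sec:der_ex}, where $[0,\pi/3]$ opened up to $[\pi/4,\pi]$, so that worked example serves as a template and sanity check.
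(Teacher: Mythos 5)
Your proof is correct and follows essentially the same route as the paper's: reduce to the standard sector via $\perm i mn$, apply Lemma \ref{lem:derivation_interpretation} and push forward by $\AD mn$ to obtain part (1) and derivability via Lemma \ref{admissiblelemma}, then prove part (2) by showing that $\Sec 0 mn$ is mapped onto $[\pi/m,\pi]=[0,\pi]\setminus \Sec 0 nm$. The only difference is cosmetic: you verify the sector image by computing the images of the two endpoint directions and invoking $\det \derAD mn = -1$ for orientation-reversal and monotonicity, whereas the paper tracks the image of $\Sec 0 mn$ through each elementary matrix in the product defining $\derAD mn$; both amount to the same verification.
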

\begin{proof}
We will first prove the first claim. 
Normalizing by $\perm i mn$ we can assume without loss of generality  by that $w$ is admissible in $\T 0mn $ (recall from Definition \ref{def:derivable} the notion of derivable in other sectors). The proof follows from Lemma \ref{lem:derivation_interpretation} by applying the affine diffeomorphism $\AD {m}{n}$:  $\AD {m}{n}$ maps the flip-sheared copy of $\M nm$ to the semi-regular presentation of $\M nm$, and the trajectory $\tau$ with cutting sequence $w$ to a new trajectory $\tau'$ on $\M nm$. This trajectory has cutting sequence $\Der mn w$  by Lemma \ref{lem:derivation_interpretation}. 

Since we just showed that the derived sequence is a cutting sequence and cutting sequences are admissible (see \ref{admissiblelemma}),
it follows that cutting sequences are derivable (according to Definition \ref{def:derivable}). 

Finally, the second claim in the Theorem follows by showing that the sector $\Sec 0 m n$ is mapped by $\AD {m}{n}$ to $[0,\pi]\backslash \Sec 0 n m $. This can be verified by describing the image of $\Sec 0 m n$ by each of the elementary maps (see for example Figure \ref{octhex}) comprising $\derAD mn$.
\end{proof}

\subsection{Normalization}\label{sec:nor}
After deriving a derivable sequence using $\Der mn$, we now want to apply  derivation again, this time using the operator $\Der nm$ for $\M nm$. 
Since we defined derivation only for sequences admissible in $\T 0 mn$ and  derived sequences are admissible in a sector $\T i mn$ with $i \neq 0$ (by the second claim in the Theorem \ref{thm:derivable}), so in order to apply derivation one more time we first want to \emph{normalize} the derived sequence as follows. 

\begin{definition}
Given a sequence $w = (w_j)_{j \in \mathbb{Z}}$ which is admissible in diagram $\T i mn$, the \emph{normalized sequence} which will be denoted by $\Norm mn w$ is the sequence $\Norm mn w = (\perm i mn w_j)_{j \in \mathbb{Z}}$. Thus, the operator $\Norm mn$ maps sequences admissible in $\T i mn$ to sequences admissible in $\T 0 mn$.
\end{definition}

Let us remark that a sequence $w$ can in principle be admissible in more than one diagram $\T i mn$. In this case, we can use any of the corresponding $\perm i mn$ to normalize $w$. On the other hand, we will apply $\Norm mn$ to cutting sequences and one can show that if a \bm cutting sequence is not \emph{periodic}, then it is admissible in a \emph{unique} diagram $\T i mn$ (Lemma \ref{lemma:uniqueness_sector}), hence $\Norm mn w$ is uniquely defined. 

\subsection{Cutting sequences are infinitely derivable}\label{sec:infinitely_derivable}
Let $w$ be a derivable sequence in $\T 0 mn$. Then by definition of derivability, $\Der  mn w$ is admissible in some  $\T i nm$. By normalizing, $\Norm nm \Der m n  w$ is admissible in $\T 0 nm$ and we can now apply $\Der nm $. 

\begin{definition}\label{def:infinitely_derivable}
A sequence $w$ in $\LL mn ^{\mathbb{Z}}$ is \emph{infinitely derivable} if it is admissible and, by alternatively applying derivation and normalization operators, one always gets admissible sequences, i.e. for any even integer  $k=2l$,
$$ \underbrace{(\Der nm  \Norm nm  \Der mn \Norm mn ) \cdots  (\Der nm  \Norm nm  \Der mn \Norm mn )}_{\text{$l$ times }} w$$
 is admissible on some $\T i mn $ for some $0\leq i \leq n-1$ 
 and for any odd integer $k=2l+1$,
$$ \underbrace{(\Der mn  \Norm mn  \Der nm \Norm nm )\cdots  (\Der mn \Norm mn \Der nm  \Norm nm ) }_{\text{$l$ times }} \Der mn \Norm mn w$$
for any $l \in \mathbb{N}$ is admissible on some $\T i  nm $ for some $0\leq i \leq m-1$. 
\end{definition}

We can  now show that cutting sequences are infinitely derivable. 

\begin{theorem}\label{thm:infinitely_derivable}
Let $w$ be a cutting sequence of a bi-infinite linear trajectory on $\M mn$. Then $w$ is \emph{infinitely derivable} in the sense of Definition \ref{def:infinitely_derivable}. 

\end{theorem}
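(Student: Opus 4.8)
The plan is to argue by induction on the number of derivation–normalization steps, carrying along the invariant that \emph{every} sequence appearing in the towers of Definition \ref{def:infinitely_derivable} is itself the cutting sequence of a genuine linear trajectory, alternately on $\M mn$ and on its dual $\M nm$. Once this invariant is in force, admissibility at each stage is automatic from Lemma \ref{admissiblelemma} (cutting sequences are admissible), and this is exactly what Definition \ref{def:infinitely_derivable} demands. So the whole statement reduces to checking that each of the two basic operations, derivation and normalization, sends cutting sequences to cutting sequences on the appropriate surface.

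For derivation this is precisely the content of Proposition \ref{thm:derivable}: if $w$ is a cutting sequence on $\M mn$, admissible (after normalizing) in $\T 0 mn$, then $\Der mn w$ is the cutting sequence of a linear trajectory on $\M nm$, and moreover it is admissible in some $\T i nm$ with $1\le i \le m$. For normalization, I would first observe that $\Norm mn$ acts by the permutation $\perm i mn$ induced by the affine reflection $\refl i mn$ of $\M mn$ (Convention \ref{convention:refl}, Definition \ref{def:reflections}). Since $\refl i mn$ is an affine diffeomorphism of the surface sending a trajectory $\tau$ to the reflected trajectory $\refl i mn\tau$, and it acts on edge labels exactly by $\perm i mn$ (Lemmas \ref{comptransf} and \ref{diagram-actions-lem}), the sequence $\Norm mn w$ is the cutting sequence of $\refl i mn\tau$, whose direction now lies in the standard sector $\Sec 0 mn$. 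Hence $\Norm mn$ carries cutting sequences to cutting sequences admissible in $\T 0 mn$.

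Granting these two facts, the induction is routine. The base case is Lemma \ref{admissiblelemma}. For the inductive step, assume we have reached a cutting sequence $v$ of a trajectory on $\M mn$ (the $\M nm$ case being identical by symmetry). Then $\Norm mn v$ is a cutting sequence admissible in $\T 0 mn$, so $\Der mn$ applies and $\Der mn \Norm mn v$ is, by Proposition \ref{thm:derivable}, a cutting sequence on $\M nm$; applying $\Norm nm$ and then $\Der nm$ returns a cutting sequence on $\M mn$, and so on. Every entry of the two displayed products in Definition \ref{def:infinitely_derivable} is therefore a cutting sequence, hence admissible, which establishes infinite derivability. The one step that genuinely requires care, and which I regard as the main obstacle, is the geometric identification of $\Norm mn$ with the reflection $\refl i mn$: one must be sure that permuting labels by $\perm i mn$ really reproduces the cutting sequence of the reflected trajectory rather than merely an admissible word. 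With Convention \ref{convention:refl} and the analysis of $\refl i mn$ on edge labels in hand, this identification holds, and together with the geometric interpretation of derivation (Lemma \ref{lem:derivation_interpretation}) it closes the argument without any further estimates.
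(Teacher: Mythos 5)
Your proposal is correct and follows essentially the same route as the paper's own proof: an induction maintaining the invariant that each derived-and-normalized sequence is itself a cutting sequence (alternately on $\M mn$ and $\M nm$), with derivation handled by Proposition \ref{thm:derivable}, normalization handled by observing that $\Norm mn$ is induced by the isometry $\refl i mn$ acting on trajectories, and admissibility then following from Lemma \ref{admissiblelemma}. The step you single out as the main obstacle is exactly the one the paper dispatches in a single sentence, so your treatment is if anything slightly more explicit but not different in substance.
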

\begin{proof}
We will prove this by induction on the number $k$ of times one has derived and normalized the sequence $w$ that the resulting sequence $w^{k}$  is admissible (on some $\T i  mn $ for $k$ even, on some $\T i  nm $ for $k$ odd).  Assume that we have proved it for $k$. Say that $k$ is odd, the other case being analogous.  In this case $w^{k}$ is a cutting sequence of a trajectory $\tau^k$ in some sector $\Sec i nm$. Since $\Norm nm $ acts on $\tau^k$ by the permutation $\perm i nm$ induced by an isometry of $\M nm$, also $\Norm nm w^k$ is a cutting sequence, of the trajectory $\perm i nm \tau^k$ which belongs to the standard sector $\Sec 0 n m$.   By applying $\Der mnm$, by the first part of Proposition \ref{thm:derivable}, the sequence  $w^{k+1}:=\Der nm \Norm nm w^{(n)}$ is again a cutting sequence.  Since cutting sequences are admissible (Lemma \ref{admissiblelemma}) this concludes the proof. 
\end{proof}

\subsection{Sequences of admissible sectors}\label{sec:sectors_sequences}
Let  $w$ be a cutting sequence of a bi-infinite linear trajectory $\tau$ on $\M mn$. 
Since by Theorem \ref{thm:infinitely_derivable} $w$ is infinitely derivable, one can alternatively derive it and normalize it to obtain a sequence $(w^k)_k$ of cutting sequences. More precisely:

\begin{definition}\label{def:derivatives}
The \emph{sequence of derivatives} $(w^k)_k$ starting from a cutting sequence of a bi-infinite linear trajectory $w$ on $\M mn$, is the sequence recursively defined by:  
\bes
w^0:=w, \qquad w^{k+1} := \begin{cases}  \Der nm (\Norm nm w^k)  , & k \text{ odd} ; \\ \Der mn  (\Norm mn w^k), & k \text{ even} .  \end{cases}
\ees
\end{definition}
This sequence is well-defined by Theorem \ref{thm:infinitely_derivable}, and by the same Theorem, 
 $(w^k)_k$ are all admissible sequences 
in at least one sector. Furthermore, if $w$ is non-periodic, each $w^k$ is admissible in a unique sector by Lemma \ref{lemma:uniqueness_sector}. 
   
 We now want to record after each derivation the sectors in which the cutting sequences $w^k$ are admissible. By Proposition \ref{thm:derivable},  for $k$ even $w^k$ is admissible in (at least one) of the sectors $\Sec i m n $ where $1\leq i \leq n-1$, while for $k$ odd $w^k$ is admissible in (at least one) of the sectors $\Sec i  nm  $ where $1\leq i \leq m-1$. Let us hence define two sequences of indices in $n-1$ and $m-1$ symbols respectively as follows.


\begin{definition}[Sequences of admissible sectors]\label{def:seq_sectors}
Let $w$ be a 
cutting sequence of a linear trajectory on $\M mn$ in the standard sector $\Sec 0 mn$.  Let us say that the two sequences $(a_k)_k \in \{1, \dots , m-1\}^\mathbb{N}$ and $(b_k)_k \in \{1, \dots ,n-1\}^\mathbb{N}$ are a \emph{pair of sequences of admissible sectors associated to $w$} if 
\begin{itemize}
\item
$w^{2k-1}$ is admissible in  $\Sec {a_k} nm$ for any $k\geq 1$;
\item
$w^{2k}$ is admissible in  $\Sec {b_k} mn $ for any $k\geq 1$.
\end{itemize}
\end{definition}
Thus, the sequence of admissible sectors for $w$, i.e. the sequence of sectors in which the derivatives $w^1, w^2, \dots$ are admissible, is given by 
$$  \Sec {a_1} {n}{m}, \Sec {b_1} {m}{n},  \Sec {a_2} {n}{m}, \Sec {b_2} {m}{n},\dots ,  \Sec {a_k} {n}{m},\Sec {b_k} {m}{n},  \dots .$$
We remark that the sequences of admissible sectors associated to a cutting sequence $w$ are \emph{unique} as long as $w$ is non-periodic, by virtue of Lemma \ref{lemma:uniqueness_sector}. 

\begin{convention}
If $w$ is a cutting sequence of a linear trajectory $\tau$ on $\M mn$ in a sector different from the standard one, we will denote the sector index by $b_0$, so that $\tau$ is admissible in $\Sec {b_0} mn$, where $0\leq b_0 \leq 2n-1$. 
\end{convention}

In \S \ref{sec:itineraries_vs_sectors}, after introducing the \bm Farey map $\F mn$, we will show that this pair of sequences is related to a symbolic coding of the map $\F mn$ and can be used to reconstruct from the sequence $w$, via a continued-fraction like algorithm, the direction of the trajectory of which $w$ is a cutting sequence (see \S \ref{sec:direction_recognition}, in particular Proposition \ref{prop:itineraries_vs_sectors}).

\subsection{Sequences fixed by renormalization}\label{sec:fixedpoints}
For the characterization of the closure of cutting sequences, we will also need the following characterization of periodic sequences, which are fixed points of our renormalization procedure. 
Let us first remark that  it makes sense to consider the restriction of the operators $ \Norm mn \Der nm $ and $ \Norm nm  \Der mn  $ to subwords of cutting sequences,   by following up in the process how a subword $u$ of the bi-infinite cutting sequence $w$ changes under derivation (some edge labels in $u$ will be dropped, while others will persist) and  under normalization (a permutation will act on the remaining edge labels). If $w' $ is obtained from a cutting sequence $w$ by applying a sequence of operators of the form $ \Norm mn \Der nm $ and $ \Norm nm  \Der mn  $ and $u'$ is the subword (possibly empty) obtained by following a subword $u$ of $w$ in the process, we will say that \emph{$u'$ is the image of $u$ in $w'$}.

\begin{lemma}\label{periodicABAB}
Let $w$ be the cutting sequence of a linear trajectory on $\M mn$ admissible in the standard sector $\Sec 0 mn$. If $w$  is fixed by our renormalization procedure, i.e. 
$$ \Norm mn \Der nm  \Norm nm  \Der mn w = w , $$ 
then $w$ is an infinite periodic word of the form $\dots n_1 n_2 n_1 n_2 \dots$ for some edge labels $n_1, n_2 \in {\LL mn} $.

Furthermore, if  $u$ is a subword of a cutting sequence $w$ on  $\M mn$ such that the image $u'$ of $u$ in $w' =  \Norm mn \Der nm  \Norm nm  \Der mn w$ has the same length (i.e. the same number of edge labels)  of $u$, then $u$ is a finite subword of the infinite periodic word $\dots n_1 n_2 n_1 n_2 \dots$ for some edge labels $n_1, n_2 \in {\LL mn}$. 
\end{lemma}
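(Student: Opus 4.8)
The plan is to reduce the statement to a purely combinatorial fact about the derivation diagrams, namely that the only paths whose length survives a full renormalization are the period-$2$ ``back and forth'' paths, and then to exploit the horizontal/vertical duality between $\D 0 mn$ and $\D 0 nm$ to prove this.

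First I would record the elementary but crucial bookkeeping. Renormalization is $\Norm mn \Der nm \Norm nm \Der mn$; the two normalization operators act by the permutations $\perm i mn$ of \S\ref{sec:normalization} and hence preserve the length of any (sub)word, while by Remark \ref{rk:derivation_short} each derivation operator never increases length, a symbol being \emph{kept} exactly when the transition it begins runs along a \emph{horizontal} (labelled) arrow of the relevant derivation diagram, and \emph{dropped} when the transition runs along a \emph{vertical} (unlabelled) arrow. Consequently the image $u'$ of a finite subword $u$ has the same length as $u$ if and only if length is preserved at \emph{each} of the two derivation steps; that is, if and only if the path described by $u$ in $\D 0 mn$ uses \emph{only horizontal arrows}, and the path described by its normalized derivative in $\D 0 nm$ also uses only horizontal arrows. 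Applied to the bi-infinite word, the same dichotomy shows that the fixed-point equation $\Norm mn \Der nm \Norm nm \Der mn w = w$ forces every transition of $w$ to be horizontal in both diagrams: any vertical transition would delete a symbol, so $w'$ would be obtained from $w$ by removing a positive density of letters and could not coincide with $w$.

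Next I would analyse horizontal-only paths. By the Structure Theorem \ref{tdtheorem} the horizontal arrows join horizontally adjacent edge labels within a single row, so a horizontal-only path stays in one fixed row and moves like a billiard among its columns: it either oscillates between two adjacent columns, producing a period-$2$ word $\dots n_1 n_2 n_1 n_2 \dots$, or it contains a \emph{monotone run} through three consecutive columns $c, c+1, c+2$. The heart of the argument is to rule out the monotone run under the hypothesis that the \emph{derived} path is again horizontal-only. Here I would invoke the duality of Remark \ref{rk:rotating}, which interchanges ``horizontal'' and ``vertical'' when one passes from $\M mn$ to $\M nm$: reading off the green ($\M nm$) arrow labels along the primal path, a turnaround $c\to c+1\to c$ yields two labels whose transition is \emph{horizontal} in $\D 0 nm$, whereas a monotone run $c\to c+1\to c+2$ yields two labels whose transition is \emph{vertical} in $\D 0 nm$. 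This local claim is exactly the step I expect to be the main obstacle, and I would establish it by direct inspection of the explicit green-label ``snaking'' prescribed in Theorem \ref{tdtheorem} — equivalently, by the local stair/hat computation of Lemma \ref{degeneratearrow} combined with the alternation of diagonals in Lemma \ref{lemma:intertwined} — verifying only these two configurations. Granting it, a monotone run in the primal path would force a vertical arrow in the derived path, contradicting that the derived path is horizontal-only, so the primal path can contain no monotone run of length three.

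Combining these steps finishes both statements. For a subword $u$ with $|u'|=|u|$, the path of $u$ is horizontal-only in $\D 0 mn$ and its normalized derivative is horizontal-only in $\D 0 nm$; hence $u$ admits no monotone run of length three and therefore merely oscillates between two horizontally adjacent labels $n_1,n_2$, i.e.\ $u$ is a finite subword of $\dots n_1 n_2 n_1 n_2\dots$ for some $n_1,n_2 \in \LL mn$. For the fixed word $w$, the first paragraph shows every transition of $w$ is horizontal in both diagrams, so the analysis of the third paragraph applies to $w$ itself (or, equivalently, to each of its finite subwords), and $w$ is forced to be the infinite period-$2$ word $\dots n_1 n_2 n_1 n_2 \dots$, as claimed; in particular $w$ is periodic.
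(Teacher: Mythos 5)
Your proposal follows the same skeleton as the paper's proof: length preservation under derivation forces the path of $u$ to avoid the unlabelled vertical arrows of $\D 0 mn$ and hence to stay in a single row; within a row the path either oscillates between two adjacent vertices (giving $\dots n_1 n_2 n_1 n_2 \dots$) or contains a monotone run through three consecutive vertices; and everything reduces to excluding the monotone run. The gap is in how you exclude it. Your key local claim --- that the two arrow labels $n_4, n_5$ read off a monotone run form a transition which is a \emph{vertical arrow} of $\D 0 nm$, so that the derived path would fail to be horizontal-only --- is false as stated: in general the ordered pair $n_4 n_5$ is not an arrow of $\D 0 nm$ \emph{at all}, vertical or horizontal. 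The paper points this out explicitly in a footnote inside its proof of Lemma \ref{periodicABAB}. Concretely, in $\M 43$ the monotone run ${\rd 1}\to{\rd 2}\to{\rd 3}$ produces the labels ${\gr 2}$ and ${\gr 7}$; in $\T 0 34$ these lie in different rows, and the only arrow joining them is ${\gr 7}\to{\gr 2}$, so ${\gr 2}\,{\gr 7}$ is not a transition of $\T 0 34$ at all. What the snaking pattern of Theorem \ref{tdtheorem} actually yields is the weaker (but correct) statement that $n_4$ and $n_5$ lie in \emph{different rows} of $\T 0 nm$.

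This matters because of a second point your third paragraph glosses over: the derived-but-not-yet-normalized sequence is not a path on $\D 0 nm$; being the cutting sequence of a trajectory in a non-standard sector, it is admissible only in some $\T i nm$ with $i\neq 0$, and it is there that $n_4 n_5$ \emph{is} an arrow. Your first paragraph correctly identifies that it is the \emph{normalized} derivative that must be horizontal-only in $\D 0 nm$, but the normalized derivative carries the permuted labels $n_4'=\perm i nm (n_4)$ and $n_5'=\perm i nm (n_5)$, not $n_4, n_5$. So horizontal-only gives you ``$n_4', n_5'$ lie on the same row of $\T 0 nm$,'' and to contradict ``$n_4, n_5$ lie on different rows of $\T 0 nm$'' you must know that the normalization permutations preserve rows. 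That is exactly Corollary \ref{preserves-rows-cor} (proved via Lemmas \ref{comptransf} and \ref{diagram-actions-lem}), which your proposal never invokes; without this bridge the contradiction does not close. Both defects are repairable --- state the local claim as ``different rows of $\T 0 nm$'' and insert the row-preservation corollary --- but with those repairs your argument becomes precisely the paper's proof rather than an alternative to it.
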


\begin{proof}
Let us first remark that it is enough to prove the second statement, since if $ w=w'$ where $w'$ is by definition $\Norm mn \Der nm  \Norm nm  \Der mn w $, in particular for any finite subword $u$ of $w$, the image $u'$ of $u$ in $w$ has the same number of edge labels as $u$. Thus, considering arbitrarily long finite subwords of $w$, the second statement implies that $w$ is the infinite word $\dots n_1 n_2 n_1 n_2 \dots$. 

Let $w$ be a cutting sequence of a linear trajectory $\tau$ on   $\M mn$. Without loss of generality we can assume that $\tau$ is in direction $\Sec 0 mn$, as the second part of the statement does not change up to applying the relabeling induced by permutations. Thus, we can assume that  the  cutting sequence $w$ (by definition of the transition diagrams) describes  an infinite path in $\T 0 mn$.   
Consider now the same path in the derivation diagram $\D 0 mn$. Any given finite subword $u$ of $w$ corresponds to a finite path on $\D 0 mn$. If we assume that the image $u'$ of $u$ in $\Der mn w $ has the same number of edge labels as $u$, the path cannot cross any vertical single arrow, as otherwise $u'$ would be shorter (see  Remark \ref{rk:derivation_short}). Thus, the path described by $u$ should consist of arrows all belonging to  the same row of $\D 0 mn$. 

Without loss of generality, we can assume that the finite path described by $u$ contains the arrow $r_1$ in the piece of diagram drawn here below.
\begin{figure}[!h] 
\centering 
\includegraphics[width=180pt]{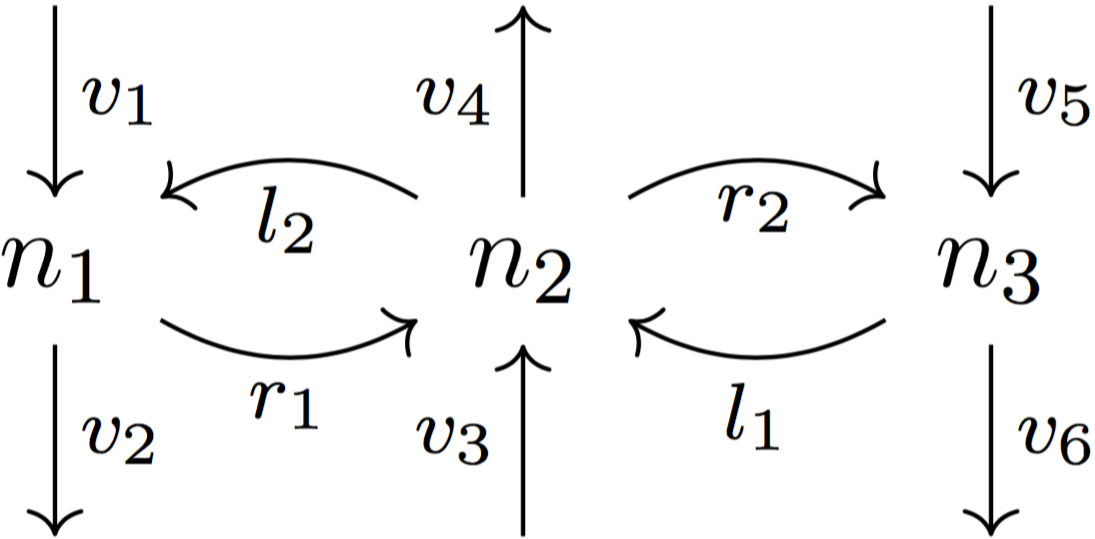}
\end{figure}
Since it cannot contain vertical arrows, the only arrows that can follow $r_1$ in the path can either be $l_2$ or $r_2$. Correspondingly, the sequence of edge labels that can appear in the word are either  $ n_1n_2n_1 $ or $n_1n_2n_3 $.  If we prove that the latter case leads to a contradiction, then by repeating the same type of argument, we get that the path must be going back and forth between edge labels $n_1$ and $n_2$ and hence the word $u$ if a finite subword of the infinite periodic word $\dots n_1 n_2 n_1 n_2 \dots$.

Let us assume that the arrow $r_1$ is followed by $r_2$ and show that it leads to a contradiction. Let us denote by $n_4$ and $n_5$  the edge labels of $r_1$ and $r_2$ respectively in the derivation diagram $\D 0 mn$. Then, by definition of the derivation operator $\Der nm$,  the image $u'$ of $u$ in $\Der nm w$  will contain the string $ \dots n_4 n_5 \dots$.   
 We claim that the transition diagram $\T 0 nm$ (for the standard sector of the dual surface $\M nm$) will contain a piece that looks like the following figure,  up to change the direction of the arrows:
\begin{figure}[!h] 
\centering 
\begin{tikzcd}
{n_4}\arrow[bend right]{r} \arrow{d} 
&{} \arrow[bend right]{l}  \\
{}\arrow[bend right]{r}
&{n_5} \arrow[bend right]{l} \arrow{u} \\ 
\end{tikzcd} \ \ \ \ \
\end{figure}
In particular we claim that the location of the edge labels $A',B'$ in $\T 0 nm$  is at opposite vertices as shown in the figure above. This can be deduced by the Structure Theorem \ref{tdtheorem} looking at how the labels of the arrows of the derivation diagram $\D 0 mn$ snake in  Figure \ref{gentransdiag} and comparing them with the labels of  the 
transition diagram $\T 0 mn$. 
For a concrete example, refer to  Figure \ref{34auxtd}: pairs of labels of arrows like $r_1$ and $r_2$ are for example {\color{green}$2$} and {\color{green}$8$} or {\color{green}$6$} and {\color{green}$2$} which indeed lie at opposite vertices in the derivation diagram for $\M 34$ and one can verify from Figure \ref{gentransdiag} that this is indeed always the case. In particular, $n_4,n_5$ \emph{do not} belong to the same row of $\T 0 nm$.

We know that the derived sequence $w'= \Der nm w$ is the cutting sequence of a linear trajectory on $\M nm$ and that it is admissible in (at least one) transition diagram $\T i nm$ for some $1\leq i \leq m$ (see Proposition \ref{thm:derivable}).  This means that there will be an arrow between the two vertices labeled $n_4$ and $n_5$ in the transition diagram $\T i nm$ \footnote{We remark that this does not yet imply though that there has to be an arrow connecting $n_4, n_5$ in $\T 0 nm$ and indeed this does not have to be the case in general.}.  

 Let us now apply the normalization operator $\Norm nm$, which corresponds to acting by the permutation $\perm i nm $ for some $1\leq i \leq m$ on the labels in $w$. Denote by $n_4',n_5'$ the images of the labels $n_4, n_5$.  
Since $\Norm nm  \Der mn w$ is admissible on $\T 0 nm$ and contains the transition $n_4' n_5'$, by construction $n_4'$ and $n_5'$ must be connected by an arrow of $\T 0 nm$. Furthermore,  the assumption on $u $ (that the image of $u$ in $w' =  \Norm mn \Der nm  \Norm nm  \Der mn w$ has the same length as $u$) implies that $n_4'$ and $n_5'$ also have to be on the same row of $\T 0 nm$ (otherwise  the image of $u$ in $w'$ would be shorter than $u$ because of the effect of $\Der nm$, see Remark \ref{rk:derivation_short}).  This means that also $n_4$ and $n_5$ were on the same row of $\T i nm$ (since by definition of $\perm i nm $, $n_4'= \pi_i(n_4)$ and $n_5'= \pi_i(n_5)$ are the labels on $\T 0 nm$ of the vertices which were labeled $n_4$ and $n_5$, respectively, on $\T i nm$). 
 
By Corollary \ref{preserves-rows-cor}, the action of $(\perm i nm) ^{-1}= \perm i nm$ ($\perm i nm$ are involutions since the reflections $\refl i nm$ are)  maps the transition diagram  $\T i nm$ to $\T 0 nm$ by mapping labels on the same rows to labels on the same row. Thus, we get that $n_4$ and $n_5$, which we just said are on the same row of $\T i nm$, are also on the same row of $\T 0 nm$, in contradiction with what we proved earlier (see the above Figure, that shows that  $n_4$ and $n_5$ are \emph{not} on the same row on $\T 0 nm$).   This concludes the proof that $u$ has the desired form and hence, by the initial remark, the proof of the Lemma.
\end{proof}
Finally, for the characterization we will also need to use that all sequences which have the form of fixed sequences under derivation, i.e. of the form $\dots n_1n_2n_1n_2 \dots$, are actually cutting sequences:

\begin{lemma}\label{realizecs}
Given a transition $n_1n_2$, such that the word $n_1n_2n_1n_2$ is admissible in some diagram $\T i mn$, 
the periodic sequence of form $\dots n_1n_2n_1n_2 \dots$ is the cutting sequence of a periodic trajectory in $\M mn$.
\end{lemma}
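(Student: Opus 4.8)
The plan is to realize $\overline{n_1n_2}$ as the core curve of a cylinder spanning two adjacent polygons, after first reducing to the standard sector. First I would reduce to the case $i=0$. If $\overline{n_1n_2}$ is admissible in $\T i mn$, then applying the permutation $\perm i mn$ produces the word $\overline{m_1m_2}$ with $m_j=\perm i mn(n_j)$, which is admissible in $\T 0 mn$. Since $\perm i mn$ is induced by the reflection $\refl i mn$, an isometry of $\M mn$ carrying trajectories in sector $i$ to trajectories in the standard sector $\Sec 0 mn$, it suffices to produce a periodic trajectory $\tau_0$ with direction in $\Sec 0 mn$ and cutting sequence $\overline{m_1m_2}$; then $(\refl i mn)^{-1}\tau_0$ is periodic with cutting sequence $\overline{n_1n_2}$. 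So assume $w=\overline{n_1n_2}$ is admissible in $\T 0 mn$.

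Next I would use the Structure Theorem \ref{tdtheorem} to pin down the geometry of the pair $n_1,n_2$. Since $w$ contains both transitions $n_1\to n_2$ and $n_2\to n_1$, and since in $\T 0 mn$ the only bidirectional arrows are the left--right pairs joining horizontally adjacent labels (the vertical arrows being single and unlabeled), the labels $n_1$ and $n_2$ must lie in the \emph{same row} and be horizontally adjacent. By the correspondence between the rows of the diagram and the $m-1$ gluing interfaces of the polygons of Definition \ref{pk}, this means that $n_1$ and $n_2$ are two edges of the seam glued between a pair of adjacent polygons $P(k)$ and $P(k+1)$. Consequently, a trajectory realizing $\overline{n_1n_2}$ crosses $n_1$ from $P(k)$ into $P(k+1)$ and $n_2$ back from $P(k+1)$ into $P(k)$, so it is confined to $P(k)\cup P(k+1)$, bouncing across the seam.

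Finally I would construct the closed orbit. The natural object is the first-return map $R$ to the edge $n_1$ along a direction $\theta\in(0,\pi/n)$, restricted to the subinterval of points whose trajectory crosses $n_2$ exactly once before returning to $n_1$. Being a composition of straight crossings through $P(k)$ and $P(k+1)$ and the edge identifications of $n_1$ and $n_2$, the map $R$ is an orientation-preserving isometry, i.e.\ a translation of a subinterval of $n_1$. Equivalently, passing to the orthogonal presentation, $n_1$ and $n_2$ are negative diagonals of two basic rectangles appearing in a (possibly degenerate) stair configuration, and I would exhibit the maximal family of parallel bouncing segments between them as a cylinder $C$; its core curve is then a closed geodesic whose direction lies in $\Sec 0 mn$ precisely because both transitions $n_1\to n_2$ and $n_2\to n_1$ are admissible there.

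The main obstacle will be this last step: verifying rigorously that the bouncing region is a genuine cylinder and, crucially, that its core crosses \emph{only} $n_1$ and $n_2$, so that the cutting sequence is exactly the period-two word $\overline{n_1n_2}$ rather than some longer periodic word. I expect to control this using Lemma \ref{lemma:intertwined}, which governs exactly which diagonals can be crossed between consecutive hits of the pair $n_1,n_2$ in the stair, and thereby rules out any additional edge crossings. Alternatively, I could invoke the Veech dichotomy to pass to a completely periodic direction inside the sector and then identify $C$ as the cylinder across the $P(k)$--$P(k+1)$ seam bounded by the relevant saddle connections, with the translation computation for $R$ above guaranteeing that the orbit closes up with period two.
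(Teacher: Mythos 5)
Your combinatorial reduction is correct and matches the paper's first step: since both $n_1n_2$ and $n_2n_1$ must be admissible, the Structure Theorem \ref{tdtheorem} forces $n_1,n_2$ to be horizontally adjacent labels in the same row, and rows do correspond to the $n$ edges glued between a fixed pair of adjacent polygons $P(k),P(k+1)$, so the trajectory must bounce across that seam. The gap is in your construction of the trajectory itself. The direction realizing $\overline{n_1n_2}$ is \emph{never} in the open sector $(0,\pi/n)$ where you run your first-return map: a trajectory whose cutting sequence is exactly $\overline{n_1n_2}$ stays forever in the unfolded chain of copies of $P(k)\cup P(k+1)$ glued alternately along copies of $n_1$ and $n_2$; this chain is invariant under a single translation $v$ and has bounded width transverse to $v$, so the trajectory must be parallel to $v$, and $v$ points in direction $0$ or $\pi/n$ (a boundary direction of the closed sector), depending on the pair. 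Your own return map $R$ makes this visible: being a translation of an interval, it admits orbits that cross $n_1,n_2$ alternately forever only if $R$ is the identity, which happens precisely in those two cylinder directions and for no $\theta$ in the open sector. Your proposal contains no mechanism for identifying which of these two directions works for a given pair, nor for showing the bouncing family is nonempty --- and that identification is the actual content of the lemma.

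This is exactly what the paper extracts from the Hooper diagram, which is the bookkeeping device for the two distinguished cylinder decompositions: since $n_1,n_2$ label adjacent horizontal edges of $\G{m}{n}$, they share a vertex; if that vertex is white, the two basic rectangles having $n_1,n_2$ as diagonals lie in a single \emph{horizontal} cylinder, and if black, in a single cylinder in direction $\pi/n$. In either case the cylinder consists of four (possibly degenerate) basic rectangles of which only these two contain sides of $\M mn$, so the core curve crosses precisely $n_1,n_2$ alternately. Note also that neither of your fallback tools closes the gap: Lemma \ref{lemma:intertwined} governs which sides of the \emph{dual} surface $\M nm$ (positive diagonals) are crossed between consecutive negative diagonals --- it cannot rule out crossings of additional sides of $\M mn$ itself; and invoking the Veech dichotomy at ``a completely periodic direction inside the sector'' fails for the same reason as the return-map argument, since cylinders in interior periodic directions have core curves with longer cutting sequences.
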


\begin{proof}
If $n_1n_2n_1n_2$ is admissible in a diagram $\T i mn$, then both $n_1n_2$ and $n_2n_1$ are admissible in that diagram, so  the labels $n_1$ and $n_2$ are  connected by arrows in both directions, so $n_1$ and $n_2$ must be on the same row of the diagram $\T i mn$ and must be adjacent.

We recall from Section \ref{hooperdiagrams} that white vertices in a Hooper diagram correspond to horizontal cylinders and black ones correspond to "vertical" cylinders.
Edges around a vertex correspond to (possibly degenerate) basic rectangles composing the cylinder.
Moreover, sides in the polygonal representation are diagonals of the basic rectangles which correspond to horizontal sides in the Hooper diagram. 
Using this last fact, we can label the horizontal edges of a Hooper diagram with the edge labels of the polygonal representation (see Figure \ref{hd-labeled}) and we proved in Section \ref{transitiondiagrams} that the labels of the Hooper diagram have the same structure as the transition diagram in the standard sector (see Figure \ref{construct2}).
The latter observation means that since $n_1$ and $n_2$ were adjacent and in the same row of $\T i mn$, they will label two adjacent horizontal edges of the Hooper diagram.

Let us consider first the case when $n_1$ and $n_2$ label horizontal edges of the Hooper diagram which share a white vertex as a common endpoint. 
They will hence correspond to the two basic rectangles of the horizontal cylinder represented by the white vertex in the Hooper diagram, which have the sides labeled by $n_1$ and $n_2$ as diagonals. 
Consider now a horizontal trajectory in the polygon contained in this horizontal cylinder. By looking at the way the arrows in the Hooper diagram follow each other around a white edge, we can see that the horizontal trajectory will cross in cyclical order the four (possibly degenerate) basic rectangles forming the cylinder, crossing first a basic rectangle which does not contain sides of the polygonal representation (corresponding to a vertical edge in the Hooper diagram), then the one with a diagaonl labeled by $n_1$ (corresponding to a horizontal edge in the Hooper diagram), then another basic rectangle which does not contain any side (corresponding to the other vertical edge) and finally the one with a diagaonl labeled by $n_2$.
This means that the cutting sequence of such trajectory corresponds to the periodic trajectory $\ldots n_1n_2n_1n_2\ldots$.

Recalling that "vertical" means vertical in the orthogonal decomposition, and at an angle of $\pi/n$ in the polygon decomposition (see Figure \ref{hexort1} for the correspondence), the same argument can be used for the case when $n_1$ and $n_2$ label horizontal edges of the Hooper diagram connected by a black vertex. 
This proves that a path at angle $\pi/n$ across the cylinder represented by the black dot corresponds to the periodic trajectory $\ldots n_1n_2n_1n_2\ldots$.
\end{proof}

\section{The \bm Farey maps} \label{farey}
We will describe in this section a one-dimensional map that describes the effect of renormalization on the direction of a trajectory. We call this map the \emph{\bm Farey map}, since it plays an analogous role to the Farey map for Sturmian sequences.  We define the \bm Farey map $\FF mn$ in two steps, i.e. as composition of the two maps $\F mn$ and $\F nm $, which correspond respectively to the action of $\Der mn $ and $\Der nm $, each composed with normalization.  

\subsection{Projective transformations and projective coordinates}\label{sec:projective}
Let us introduce some preliminary notation on projective maps. 
Let $\RP$ be the space of lines in $\R^2$. A line in $\R^2$ is determined by a non-zero column vector with coordinates $x$ and $y$. There are two \emph{coordinate systems} on $\RP$ which will prove to be useful in what follows and that we will use interchangably.  The first is the \emph{inverse slope coordinate}, $u$.  We set $u((x,y))=x/y$.   The second useful coordinate is the \emph{angle coordinate} $\theta \in [0, \pi]$, where $\theta$ corresponds to the line generated by the vector with coordinates $x=\cos(\theta)$ and $y=\sin(\theta)$. Note that since we are parametrizing lines rather than vectors, $\theta$ runs from $0$ to $\pi$ rather than from $0$ to $2\pi$.

An interval in $\RP$ corresponds to a collection of lines in $\R^2$. We will think of such an interval as corresponding to a sector in the upper half plane (the same convention is adopted in \cite{SU, SU2}).

\begin{convention}
We will still denote by  $ \Sec i mn$ for $0\leq i \leq n-1$ (see  Definition \ref{sectordef})
the  sector of $\RP$ 
corresponding to the angle coordinate sectors $ \left[ i \pi /n ,  (i+1)\pi/n \right]$ for $i=0,\dots, 2n-1$, each  of length $\pi/n$ in $[0,\pi]$. 
 We will abuse notation by writing $u \in \Sec i mn$ or $\theta \in \Sec i mn$, meaning that the coordinates belong to the corresponding interval of coordinates.
\end{convention}

A linear transformation of $\R^2$  induces a \emph{projective transformation} of $\RP$ as follows.  If $L= \left(\begin{smallmatrix}a& b \\ c& d  \end{smallmatrix} \right)$ is a matrix in $GL(2, \mathbb{R})$, the induced projective transformation is given by the associated \emph{linear fractional transformation} $ L[x]= \frac{ax+b}{cx+d}$. This linear fractional transformation records the action of $L$ on the space of directions in inverse slope coordinates. Let $PGL(2,\R)$ be the quotient of $GL(2, \mathbb{R})$ by   all homotheties $\{ \lambda I,\ \lambda \in \mathbb{R}\}$, where $I$ denotes the identity matrix. Remark that the linear fractional transformation associated to a homothety is the identity.
The \emph{group of projective transformations} of $\RP$ is   $PGL(2,\R)$. 

\subsection{The projective action  $\F mn$ of $\AD mn$} \label{sec:2farey}
Let us recall that in \S \ref{sec:affine} we defined an affine diffeomorphism $\AD m n$ from $\M mn$ to $\M nm$ which acts as a \emph{flip and shear}. The linear part of $\AD mn$ is the $SL(2, \R)$ matrix 
$\derAD m n $ in \eqref{def:derivativeAD}, obtained as the product $\shear nm \diag{m}{n} \shear{m}{n} f$ of the matrices in  \eqref{def:generalmatrices}.  
 The diffeomorphism $\AD m n$ acts projectively by sending the standard sector $\Sec 0 mn$ to the complement $(\pi/m, \pi)$ of the standard sector $\Sec 0 nm$. 
 This is shown for $m=4,n=3$ in Figures \ref{hexoct} and \ref{octhex}, for $\AD4 3$ and $\AD 3 4 $ respectively, where the effect of each elementary matrix in the product giving $\derAD m n $ is illustrated. 

Let $\refl  i nm$ for $i=0, \dots m-1$ be the isometry of $\M nm$ described in \S \ref{transitiondiagrams}, which maps $\Sec i  nm$ to $\Sec 0  nm$. We will abuse the notation and also denote by $\refl  i nm$  the matrix in  $PGL(2,\R)$ which represents them (see Example \ref{ex:refl_matrices} for the matrices $\refl  i 4 3$ for $\M 43$). We stress that when we consider products of the matrices $\refl  i nm$ we are always thinking of the product as representing the corresponding coset in $PGL(2,\R)$.


Let us define the map $\F m n$ so that it  records the projective action of $ \AD m n$ on the standard sector  $\Sec 0 mn$, composed with \emph{normalization}. Let us recall  from \S\ref{sec:normalization} that we normalize trajectories in  $\Sec j n m$ by applying the reflection $\refl  j nm $ that maps them to $\Sec 0 n m$ (see Definition \ref{def:reflections}). Thus, we have to compose $\derAD m n $ with $\refl  j nm $ exactly when the image under  $\derAD m n $ is contained in $\Sec j nm$. Let us hence define the subsectors $\Subsec j mn \subset \Sec 0 mn$ for $1\leq j \leq m-1$ which are given in inverse slope coordinates by
\be \label{def:subsectors}
\Subsec j mn : = \{ 
({\derAD m n})^{-1} [u], u \in   \Sec j nm \} \quad \text{for} \quad 1 \leq j \leq m-1.
\ee
\begin{remark}\label{rk:subsec}
Thus, $u \in \Subsec j mn$ iff $\derAD m n [u] \in   \Sec j nm$. 
\end{remark}
We can then define   the map $\F m n : \Sec 0 mn \to\Sec 0 nm $ to be the piecewise-projective map, whose action on the subsector of directions corresponding to $\Subsec j mn$ is given by 
the  projective action given by $\refl  j nm \derAD m n $, that is, in inverse slope coordinates, by the following piecewise linear fractional transformation: 
\be \label{def:Fmn}
 \F mn (u)  = \refl  j nm \derAD m n [u ] =  \frac{a_i u + b_i}{c_i u + d_i}, \qquad \mathrm{where} \, \, \begin{pmatrix} a_i & b_i \\ c_i & d_i \end{pmatrix}:= \refl  j  n m \derAD m n, \quad \mathrm{for\ } u  \in  \Subsec j mn , \leq j \leq m-1. 
\ee
 
The action in angle coordinates is obtained by conjugating  by conjugating by $\cot : [0,\pi] \rightarrow \mathbb{R}$, so that if $\theta \in \overline{\Sigma_i}$ we have $F (\theta) =   \cot^{-1}\left( \frac{a_1 \cot(\theta) + b_i}{c_i \cot(\theta) + d_i}\right)$. Let us remark that the change from the coordinates $u$ to $\theta$ through cotangent reverses orientation.  

In Figure \ref{fareymap} we show the graphs of the map $\F 43$ and $\F 34$ in angle coordinates.   



\begin{figure}[!h]
\centering
\includegraphics[width=.44\textwidth]{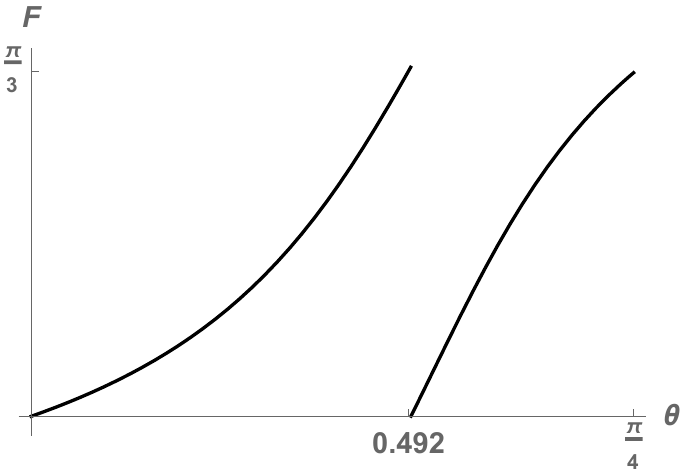} \ \ \ \ \ 
\includegraphics[width=.44\textwidth]{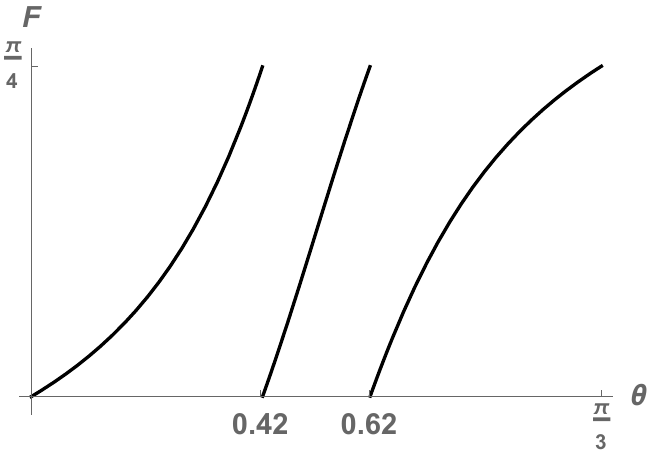}
\begin{quote}\caption{The Farey maps $\F 43$ and $\F 34$ .  \label{fareymap}} \end{quote}
\end{figure}

Remark that since the image of the standard sector $\Sec 0 m n$ by $\F m n$  is contained in the standard sector $\Sec 0 n m$, we can compose $\F m n$ with $\F n m$.

\begin{definition}\label{def:FareyFF}
The \emph{\bm Farey map} $\FF m n: \Sec 0 mn \to \Sec 0 mn$ for \M mn is the composition $\FF m n :=  \F n m \circ \F m n$ of the maps $\F m n$ and $\F n m$ given by \eqref{def:Fmn}.
\end{definition}
In Figure \ref{ffareymap} we show the graphs of the maps $\FF 4 3$ and $\FF 34$ in angle coordinates.  
The  map $\FF m n $  is  pointwise expanding, but not uniformly expanding since the expansion constant tends to $1$ at the endpoints of the sectors. Since each branch   of $\FF m n $   is monotonic,  the inverse maps 
of each branch 
are well defined. 

\begin{figure}[!h]
\centering
\includegraphics[width=.44\textwidth]{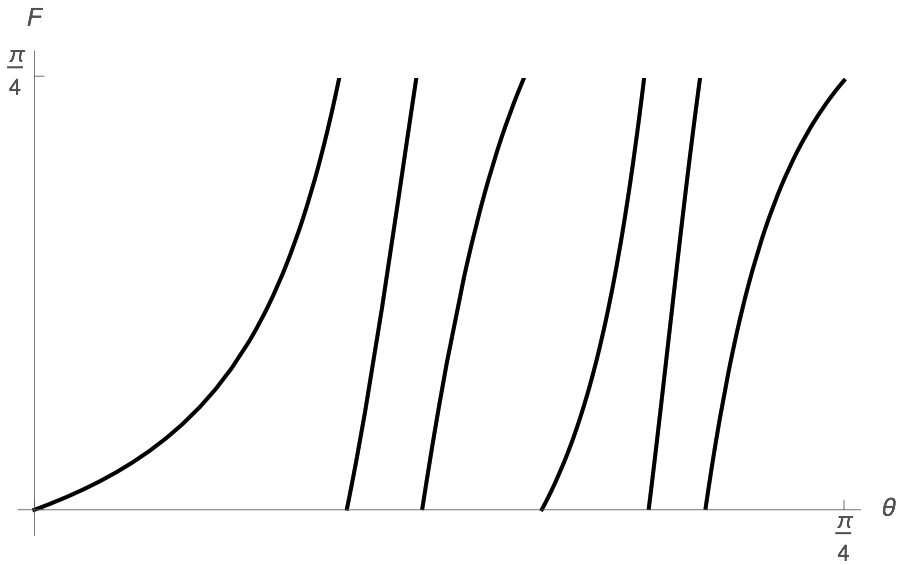} \ \ \ \ \ 
\includegraphics[width=.44\textwidth]{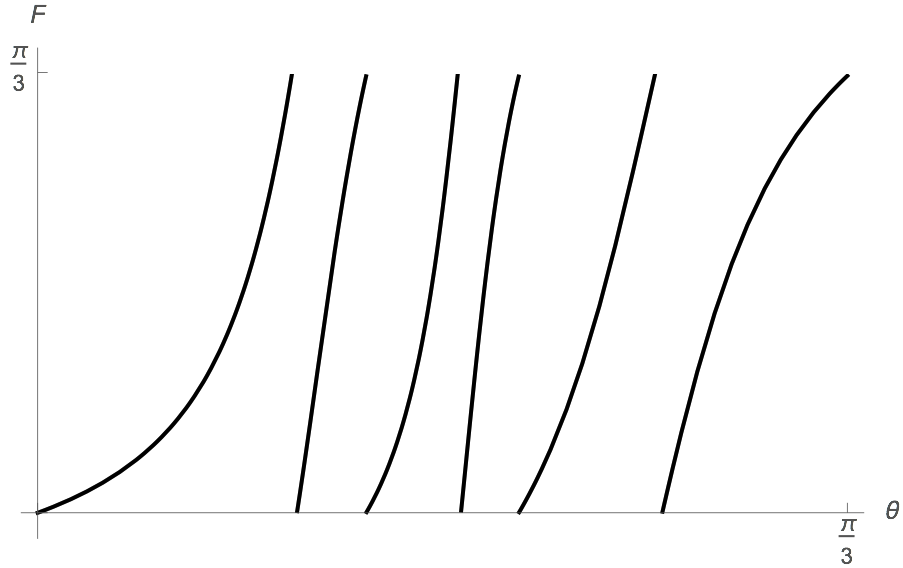}
\begin{quote}\caption{The Farey maps $\FF 34$ and $\FF 43$.  \label{ffareymap}} \end{quote}
\end{figure}

\subsection{Itineraries and sectors}\label{sec:itineraries_vs_sectors}
The \bm Farey map $\FF m n$ has $(m-1)( n-1) $ branches. The intervals of definitions of these branches are the following subsectors of $\Sec 0 m n $ that we will denote by $\Subsubsec i j m n$ for $1\leq i \leq m-1, 1\leq j \leq n-1$:
\be\label{def:subsubsec}
\Subsubsec i j m n =  \Subsec i   mn \cap (\F m n)^{-1} (\Subsec j  nm), \qquad i=1, \dots,  m-1, \quad  j =1 \dots,  n-1.
\ee
Thus, explicitely,
\be \label{explicitdefFareyFF}
\FF m n(u) = \refl {b_1} m n \derAD nm \refl {a_1} n m \derAD mn [u], \qquad \text{iff} \ u \in \Subsubsec i j m n, 
\ee
Remark that if $\theta \in \Subsubsec i j m n$, then the  
affine diffeomorphism $\AD m n$ sends the direction $\theta$ to a direction $\theta'$ in the sector $\Sec i  n m$ and then,  after normalizing the direction $\theta'$ to a direction $\refl i nm [\theta]$ in the standard sector $\Sec 0  n m$, the  
affine diffeomorphism $\AD n m$ sends it to a direction $\theta''$ in the sector $\Sec j  m n$. Thus the indices $i, j$ record the visited sectors.

\smallskip
Let us \emph{code} the orbit of a direction under $\FF m n$  as follows. 
\begin{definition}[Itinerary]\label{itinerarydef}
To any $\theta\in \Sec 0 mn$ we can assign two  sequences $(a_k)_k \in \{1, \dots , m-1\}^\mathbb{N}$ and $(b_k)_k \in \{1, \dots , n-1\}^\mathbb{N}$  defined by  
\bes
{\left(\F m n\right)}^{k-1}(\theta)\in \Subsubsec {a_k} {b_k} m n   \text{ for each } k \in \mathbb{N}. 
\ees
We call the sequence $\left((a_k, b_k) \right)_k  $ the \emph{itinerary} of $\theta$ under $\FF m n$. %
\end{definition}


Let us recall that in \S \ref{sec:sectors_sequences} given a cutting sequence $w$, by performing derivation and normalization, we  assigned to it a pair of sequences recording the sectors in which derivatives of $w$ are admissible (see Definition \ref{def:seq_sectors}), uniquely when $w$ is non-periodic (by Lemma \ref{lemma:uniqueness_sector}). These sequences are related to  itineraries of $\F m n$ as follows:


\begin{proposition}\label{prop:itineraries_vs_sectors}
Let $w$ be a non-periodic cutting sequence of a bi-infinite linear trajectory on $\M mn$ in a direction $\theta$ in $\Sec 0 mn$. Let 
 $(a_k)_k \in \{1, \dots , m-1\}^\mathbb{N}$ and $(b_k)_k \in \{1, \dots , n-1\}^\mathbb{N}$ be the pair of sequences of admissible sectors associated to $w$ (see Definiton \ref{def:seq_sectors}).   
Then  the sequence $\left((a_k, b_k) \right)_k  $ is the \emph{itinerary} of $\theta$ under $\FF m n$. 
\end{proposition}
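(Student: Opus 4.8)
The goal is to prove that Proposition~\ref{prop:itineraries_vs_sectors} identifies the itinerary of $\theta$ under $\FF mn$ with the pair of sequences of admissible sectors associated to the cutting sequence $w$. My plan is to set up an induction that couples, at each stage, the geometric renormalization of the trajectory direction with the combinatorial derivation of its cutting sequence, showing that the two processes track each other sector by sector.

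First I would record the base geometric correspondence. Let $\tau$ be the trajectory in direction $\theta\in\Sec 0mn$ with cutting sequence $w=w^0$. The key link is the geometric interpretation of derivation (Lemma~\ref{lem:derivation_interpretation}) together with Proposition~\ref{thm:derivable}: applying $\Der mn$ and then $\AD mn$ produces the cutting sequence $w^1=\Der mn w$ of the image trajectory $\AD mn\tau$ on $\M nm$, whose direction is exactly the projective image $\derAD mn[\theta]$. By the definition of the subsectors $\Subsec j mn$ in \eqref{def:subsectors} and Remark~\ref{rk:subsec}, the index of the sector $\Sec j nm$ containing this image direction is precisely the first branch index of $\F mn$ at $\theta$, i.e. $\theta\in\Subsubsec {a_1}{b_1}mn$ forces $\derAD mn[\theta]\in\Sec{a_1}nm$. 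On the other hand, by Definition~\ref{def:seq_sectors}, $a_1$ is by definition the index of the sector in which $w^1$ is admissible. Since $w^1$ is the cutting sequence of $\AD mn\tau$, and admissibility in $\T{a_1}nm$ is forced by the direction lying in $\Sec{a_1}nm$ (Lemma~\ref{admissiblelemma} and the uniqueness in Lemma~\ref{lemma:uniqueness_sector}, using non-periodicity), these two $a_1$'s coincide.

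Next I would perform the normalization step and the second half of the branch. Normalizing the direction by $\refl{a_1}nm$ sends $\derAD mn[\theta]$ into $\Sec 0nm$, which is exactly $\F mn(\theta)$; combinatorially, $\Norm nm w^1$ is the cutting sequence of $\refl{a_1}nm\AD mn\tau$, a trajectory on $\M nm$ in the standard sector. Applying the dual statements of Lemma~\ref{lem:derivation_interpretation} and Proposition~\ref{thm:derivable} with the roles of $m,n$ exchanged then yields that $w^2=\Der nm\Norm nm w^1$ is the cutting sequence of the trajectory $\AD nm\refl{a_1}nm\AD mn\tau$ in direction $\derAD nm\refl{a_1}nm\derAD mn[\theta]$, and by \eqref{explicitdefFareyFF} this direction lies in $\Sec{b_1}mn$ precisely when $\theta\in\Subsubsec{a_1}{b_1}mn$. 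Again this matches the defining property of $b_1$ in Definition~\ref{def:seq_sectors}. The composite normalization-and-derivation thus carries the direction to $\FF mn(\theta)=\F nm(\F mn(\theta))\in\Sec 0mn$ while producing $w^2$, which is the cutting sequence of a trajectory on $\M mn$ in that new direction.

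Finally I would close the induction. The inductive hypothesis is that after $k$ applications of the full Farey map, the direction has been carried to $(\FF mn)^k(\theta)\in\Sec 0mn$ and the corresponding derivative $w^{2k}$ is its cutting sequence; the computation above (which only used $\theta\in\Sec 0mn$ and the cutting-sequence property) applies verbatim to $(\FF mn)^k(\theta)$ in place of $\theta$, giving that $(a_{k+1},b_{k+1})$ equals the branch index of $\FF mn$ at $(\FF mn)^k(\theta)$, which is exactly the $(k+1)$-st term of the itinerary by Definition~\ref{itinerarydef}. This simultaneously advances both the geometric and combinatorial data and completes the induction. The uniqueness of each index throughout relies on non-periodicity via Lemma~\ref{lemma:uniqueness_sector}, so that the sequences of admissible sectors are well defined and single-valued. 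The main obstacle I anticipate is being careful and precise about the bookkeeping at the normalization step: one must check that the permutation $\perm{a_k}nm$ acting on edge labels is the combinatorial shadow of the isometry $\refl{a_k}nm$ acting on the trajectory direction, so that \emph{normalizing the sequence} and \emph{normalizing the direction} are genuinely the same operation, applied consistently on the correct surface ($\M nm$ versus $\M mn$) at each parity of the step. Once this identification of normalization on sequences with normalization on directions is pinned down, the rest is a direct translation between \eqref{explicitdefFareyFF} and Definition~\ref{def:seq_sectors}.
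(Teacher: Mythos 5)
Your proposal is correct and takes essentially the same route as the paper's own proof: both couple the recursion of derived and normalized cutting sequences with the parallel recursion of trajectories and their directions, use Proposition \ref{thm:derivable} together with non-periodicity (Lemma \ref{lemma:uniqueness_sector}) to identify the unique admissible sector of each derivative with the sector containing the direction of the corresponding image trajectory, and then invoke Remark \ref{rk:subsec} and \eqref{explicitdefFareyFF} to translate those sector indices into the branch indices of $\FF mn$. The only slip is in the wording of your inductive hypothesis, where the cutting sequence of the trajectory in direction $(\FF mn)^k(\theta)$ is the \emph{normalized} derivative $\Norm mn w^{2k}$ rather than $w^{2k}$ itself (whose trajectory lies in $\Sec {b_k} mn$, not the standard sector); your own normalization step and your closing remark about matching $\perm {a_k}{n}{m}$ with $\refl {a_k}{n}{m}$ already resolve this correctly.
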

The proof is based on the fact  that the \bm Farey map shadows at the projective level the action of the geometric renormalization procedure that is behind the combinatorial derivation and normalization procedure on cutting sequences. 
\begin{proof}
Let $(w^k)_k$ be the sequence of derivatives of $w$ given by Definition \ref{def:derivatives}. Remark that since $\tau$ is not periodic, none of its derivatives $w^k$ is periodic. Thus, since $w^k$ is non-periodic, it is admissible in a unique diagram that (by definition of $(a_k)_k, (b_k)_k$ as  sequences of admissible sectors) is $\T {a_j} {n}{m}$  for $k=2j-1$ odd and  $\T {b_j} {m}{n}$  for $k=2j$ even. Thus, the sequence  $(u^k)_k$ of \emph{normalized derivatives}, given by $u^k := \Norm nm w^k$ for $k$ odd and $u^k := \Norm mn w^k$ for $k$ even, is well defined and is explicitly given by $u^k= \perm {a_j} {n}{m} w^k$  for $k=2j-1$ odd and  $u^k= \perm {b_j} {m}{n} w^k$  for $k=2j$ even.   

 From Proposition \ref{thm:derivable} we know that, for any $k$, $w^k$ is the cutting sequence of a trajectory $\tau^k$ and thus $u^k$ is the cutting sequence of a trajectory $\overline{\tau}^k$ in the standard sector obtained by normalizing $\tau^k$. These trajectories can be constructed recursively as follows.  Set $\tau^0:= \tau= \overline{\tau}^0$ (since we are assuming that $\tau$ is in the standard sector). Assume that for some $k\geq 1$ we have already defined ${\tau}^{k-1}$ and $\overline{\tau}^{k-1}$ so that their cutting sequences are respectively $w^{k-1}$ and $u^{k-1} $. Deriving $u^{k-1}$, we get $w^{k}$, which, by definition of sequence of sectors and the initial observation, is admissible \emph{only} in $\T {a_j} {n}{m}$  for $k=2j-1$ odd and \emph{only}  in $\D {b_j} {m}{n}$  for $k=2j$ even. It follows that the trajectory  $\tau_{k}$ of which $w^{k} $ is a cutting sequence belongs to $\Sec {a_j} {n}{m}$  for $k=2j-1$ odd and  $\Sec {b_j} {m}{n}$  for $k=2j$ even.  Thus, to normalize it we should apply $\refl {a_j} n m $ or $\refl {b_j} m n $ according to the parity. Hence, set for any $k\geq 1$: 
 \begin {eqnarray} \label{trajectories_recursively}
  \tau_{k} &:= &\begin{cases}     \AD m n \overline{\tau}^{k-1}, & k-1\ \text{even} ,\\ \AD n m \overline{\tau}^{k-1} , & k-1\ \text{odd} ; \end{cases}\\
 \overline{\tau}_{k} &:= & \begin{cases}  \refl {a_j}{n}{m} \tau^{k} , & k=2j-1\ \text{odd} ; \\   \refl {b_j}{m}{n} \tau^{k}, & k=2j\ \text{even} .  \end{cases}
\end{eqnarray}
Let $(\overline{\theta}^k)_k$ be the directions of the normalized trajectories $(\overline{\tau}^k)_k$. 
Recalling now the definition of the  \bm Farey map $\FF mn = \F nm \circ \F m n$, and of each of the maps $\F nm $ and $\F m n$ defined in (\ref{def:Fmn}), we see then that the directions $(\overline{\theta}^k)_k$ of $(\overline{\tau}^k)_k$ satisfy for any $k\geq 1$:
\bes
\overline{\theta}^{k} = \begin{cases}  \F mn (\overline{\theta}^{k-1}) =  \refl {a_j}{n}{m} \derAD mn [\overline{\theta}^{k-1}] , & k=2j-1 \ \text{odd} ; \\  \F nm (\overline{\theta}^{k-1}) =  \refl {b_j}{m}{n} \derAD nm [\theta^{k-1}], & k=2j \ \text{even} .  \end{cases}
\ees
It follows from Remark \ref{rk:subsec} that  $\overline{\theta}^{k-1}$ belongs to $\Subsec {a_j}{n}{m}$ for $k=2j-1$ odd (since  $\theta^k= \derAD mn [\overline{\theta}^{k-1}] $, which belongs to $\Sec {a_j}{n}{m}$) and  to $\Subsec {b_j}{m}{n}$ for $k=2j$ even (since  $\theta^k= \derAD nm [\overline{\theta}^{k-1}] $, which belongs to $\Sec {b_j}{m}{n}$). 
 
From the definition of $\FF mn^l$ as composition, we also have that $\overline{\theta}^{2l} = \FF mn ^l (\theta)$ for every $l \geq 0$.
Thus, by definition \eqref{def:subsubsec} of the subsectors $\Subsubsec i j mn$ and using the previous formulas for $k-1:=2l$ (so that   $k$ is odd and we can write it as $k=2j-1$ for $j=l+1$), we have that   $\FF mn ^l (\theta)$ belongs to $\Subsubsec {a_{l+1}} {b_{l+1}} mn$ for every $l\geq 0$. This shows that $\left((a_k,b_k)\right)_k$ is the itinerary of $\theta$ under the \bm Farey map and concludes the proof. 
\end{proof}

In the next section we show that, thanks to Proposition \ref{prop:itineraries_vs_sectors} and the expanding nature of the map $\F mn$, one can recover the direction of a trajectory from its cutting sequence (see Proposition \ref{directionsthm}).



\subsection{Direction recognition}\label{sec:direction_recognition}
Given a cutting sequence $w$, we might want to recover the direction of the corresponding trajectory. This can be done by exploiting a continued fraction-like algorithm associated to the \bm Farey map, as follows. 

 Let $\mathcal{I}$  be the set of all possible itineraries of $\FF mn$, i.e. the set
\bes
\mathcal{I}_{m,n}:= \{ \left((a_k, b_k) \right)_k , \quad (a_k)_k \in \{1, \dots , m-1\}^\mathbb{N}, \quad (b_k)_k \in \{1, \dots , n-1\}^\mathbb{N}\}.
\ees
Let us recall that $\FF mn$ is monotonic and surjective when restricted to each subiterval 
 $\Subsubsec i j m n$ for $1\leq i \leq m-1, 1\leq j \leq n-1$. Let us denote by $\FFF m n  i j$ the restriction of $\FF mn$ to $\Subsubsec i j m n$. Each of these branches $\FFF m n  i j$ is invertible. 

 Given  $\left((a_k, b_k) \right)_k \in \mathcal{I}_{m,n}$, one can check that intersection 
\be \label{sectorCFdef}
\bigcap_{k \in \mathbb{N}} (\FFF m n  {a_1} {b_1} )^{-1} (\FFF m n  {a_2} {b_2})^{-1} \cdots  (\FFF m n  {a_k} {b_k})^{-1} [0, \pi] 
\ee is non empty and consists of a single point $\theta$ {(the inverse branches \FF m n are indeed strictly contracting apart from finitely many parabolic points, and one can argue this as in the proof of  Lemma 2.2.15 in \S~4.3 of \cite{SU})}. In this case we write
$\theta = [a_1, b_1, a_2, b_2 , \dots ]_{m,n}$
 and say that $[a_1, b_1, a_2, b_2 , \dots ]_{m,n} $ is a \emph{\bm additive continued fraction expansion} of $\theta$. To extend this continued fraction beyond directions in the standard sector, we set the following convention. For an integer $0\leq b_0 \leq 2n-1 $,  and sequences $(a_k)_k, (b_k)_k$ as above, we set
\be\label{bmCFdef}
[b_0; a_1, b_1, a_2, b_2 , \dots ]_{m,n}:= 
\left(\refl {b_0} {m}{n}\right)^{-1} [\theta], \qquad \text{where \ } \theta = [a_1, b_1, a_2, b_2 , \dots ]_{m,n}.
\ee
The index $b_0$ is here such that the above angle lies in $\Sec {b_0} mn$. 
The notation recalls the standard continued fraction notation, and $b_0$ plays the role analogous to the integer part. 

\smallskip
We have the following result, which allows us to reconstruct the direction of a trajectory from the combinatorial knowledge of the sequence of admissible sectors of its cutting sequence:

\begin{proposition}[Direction recognition]\label{directionsthm}
If $w$ is a \emph{non-periodic} cutting sequence of a linear trajectory in direction $\theta \in [0, 2\pi]$, then
\bes
\theta=[b_0; a_1, b_1, a_2, b_2 , \dots ]_{m,n}, 
\ees 
where $b_0$ is such that $\theta \in \Sec {b_0} {m}{n}$ and the two sequences \mbox{$(a_k)_k \in \{1, \dots , m-1\}^\mathbb{N}$} and \mbox{$(b_k)_k \in \{1, \dots , n-1\}^\mathbb{N}$} are a \emph{pair of sequences of admissible sectors associated to $w$}.
\end{proposition}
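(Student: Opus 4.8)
The plan is to combine the two results the paper has already set up: Proposition \ref{prop:itineraries_vs_sectors}, which identifies the pair of sequences of admissible sectors $(a_k)_k,(b_k)_k$ with the itinerary of $\theta$ under the \bm Farey map $\FF mn$, and the nested-intersection characterization of the continued fraction expansion in \eqref{sectorCFdef}--\eqref{bmCFdef}. First I would reduce to the standard sector: since $w$ is a cutting sequence of a trajectory in direction $\theta$, there is a unique $b_0$ with $0\le b_0\le 2n-1$ such that $\theta\in\Sec {b_0} mn$ (using Lemma \ref{lemma:uniqueness_sector} for the uniqueness in the non-periodic case), and after applying the reflection $\refl {b_0} mn$ we may assume $\theta':=\refl {b_0} mn[\theta]\in\Sec 0 mn$. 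By the definition \eqref{bmCFdef} it then suffices to prove that $\theta'=[a_1,b_1,a_2,b_2,\dots]_{m,n}$, i.e. that $\theta'$ is the unique point in the nested intersection \eqref{sectorCFdef} associated to the itinerary $\left((a_k,b_k)\right)_k$.

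The second step is the heart of the argument and is essentially a translation between the forward coding and the backward intersection. By Proposition \ref{prop:itineraries_vs_sectors}, the sequences $(a_k)_k,(b_k)_k$ associated to $w$ are exactly the itinerary of $\theta'$ under $\FF mn$, meaning $(\FF mn)^{k-1}(\theta')\in\Subsubsec {a_k}{b_k}mn$ for every $k$. Unwinding the definition of the itinerary in terms of the invertible branches $\FFF mn {a_k}{b_k}$, the relation $(\FF mn)^{k-1}(\theta')\in\Subsubsec {a_k}{b_k}mn$ for $k=1,\dots,K$ is equivalent to
\[
\theta'\in (\FFF mn {a_1}{b_1})^{-1}(\FFF mn {a_2}{b_2})^{-1}\cdots(\FFF mn {a_K}{b_K})^{-1}[0,\pi],
\]
since applying $\FF mn$ repeatedly and using that each branch is the restriction of $\FF mn$ to its subsector shows that membership in the $K$-fold preimage of $[0,\pi]$ records precisely the first $K$ symbols of the itinerary. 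Intersecting over all $K\in\mathbb{N}$ shows $\theta'$ lies in the set \eqref{sectorCFdef} for the itinerary $\left((a_k,b_k)\right)_k$.

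The final step is to invoke uniqueness: the paper states (just before \eqref{sectorCFdef}) that for any itinerary in $\mathcal{I}_{m,n}$ this nested intersection is non-empty and consists of a single point $\theta$, which is by definition denoted $[a_1,b_1,a_2,b_2,\dots]_{m,n}$. Since we have exhibited $\theta'$ as a member of this intersection, and the intersection is a single point, we conclude $\theta'=[a_1,b_1,a_2,b_2,\dots]_{m,n}$. Combining with $\theta=(\refl {b_0}mn)^{-1}[\theta']$ and the convention \eqref{bmCFdef} gives $\theta=[b_0;a_1,b_1,a_2,b_2,\dots]_{m,n}$, as desired.

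I expect the main obstacle to be the careful bookkeeping in the second step: making precise that the \emph{forward} itinerary condition $(\FF mn)^{k-1}(\theta')\in\Subsubsec {a_k}{b_k}mn$ is genuinely equivalent to the \emph{backward} nested-preimage condition. This requires checking that each $\FFF mn {a_k}{b_k}$ is a genuine bijection from $\Subsubsec {a_k}{b_k}mn$ onto $[0,\pi]$ (surjectivity and monotonicity of each branch, noted in \S\ref{farey}), so that the preimages compose without losing or gaining points, and that the subsectors $\Subsubsec ijmn$ partition $\Sec 0 mn$ so that the itinerary symbols are unambiguously determined. The expanding nature of $\FF mn$ guarantees that the diameters of the nested sets shrink to zero, which is what forces the intersection to be a single point rather than a larger set; this is exactly the ingredient behind the assertion preceding \eqref{sectorCFdef}, which we are entitled to assume.
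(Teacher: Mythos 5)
Your proposal is correct and follows essentially the same route as the paper's own proof: reduce to the standard sector via $\refl {b_0}{m}{n}$, invoke Proposition \ref{prop:itineraries_vs_sectors} to identify the admissible-sector sequences with the itinerary of $\theta$ under $\FF mn$, unwind the itinerary through the inverse branches $(\FFF mn {a_k}{b_k})^{-1}$ to place $\theta$ in the nested intersection \eqref{sectorCFdef}, and conclude by the single-point property of that intersection. The only difference is cosmetic: the paper writes the recursion $\theta^{k}=(\FFF m n {a_k} {b_k})^{-1} (\theta^{k+1})$ explicitly, while you phrase the same step as an equivalence between the forward itinerary condition and membership in the $K$-fold preimage of $[0,\pi]$.
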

Let us recall that $b_0$ and the sequences $(a_k)_k \in \{1, \dots , n-1\}^\mathbb{N}$ and $(b_k)_k \in \{1, \dots , m-1\}^\mathbb{N}$  are uniquely determined when $w$ is non-periodic (see \S \ref{sec:sectors_sequences}). Let us also remark that the above Proposition implies in particular that the direction $\theta$ is \emph{uniquely determined} by the combinatorial information given by deriving $w$. 

\begin{proof}
Without loss of generality, by applying $\perm {b_0}{m}{n}$ to $w$ and $\refl {b_0}{m}{n}$ to $\tau$, we can assume that the direction $\theta $ of $\tau$ is in the standard sector and reduce to proving that $\theta $ is the unique point of intersection of \eqref{sectorCFdef}. 
By Proposition \ref{prop:itineraries_vs_sectors}, the itinerary of $\theta$ under $\FF mn$ is $\left((a_k, b_k)\right)_k$. 
 By definition of itinerary, for every $k \in \mathbb{N}$ we have that  $\theta^k:= {\left(\F m n\right)}^k(\theta) \in \Subsubsec {a_k}{b_k}{m}{n} $. Thus, since $\FF mn$ restricted to  $\Subsubsec {a_k}{b_k}{m}{n} $ is by definition the branch $\FFF m n  {a_k} {b_k}$, we can write $$\theta^{k}=(\FFF m n  {a_k} {b_k})^{-1} (\theta^{k+1}), \qquad  \forall \ k \in \mathbb{N}.$$ 
  This shows that $\theta$ belongs to the intersection \eqref{sectorCFdef} and,  
 since the intersection consists of an unique point, it shows that $\theta = [ a_1,b_1,a_2,b_2,\dots]_{m,n}$. 
\end{proof} 
\section{Characterization of cutting sequences} \label{sec:characterization}
In this section we will give a complete characterization of (the  closure of) \bm cutting sequences in the set of all bi-infinite sequences in the alphabet $\LL mn $. 
As in \cite{SU} we cannot give a straightforward characterization of the cutting sequences as the closure of infinitely derivable sequences. 
In fact, such a characterization holds only for the case of the Sturmian sequences on the square, presented in \cite{Series}. As in the regular $2n$-gons case for $n \geq 3$,  we can still give a full characterization and we will present it in two different ways. 

The first way, as in \cite{SU}, consists in introducing the so-called \emph{generation rules}, a combinatorial operation on sequences inverting the derivation previously defined. These are defined in \S \ref{sec:generation}, where it is shown that they allow us to \emph{invert} derivation. In \S \ref{sec:generation_characterization} we then state and prove a characterization using generation (see Theorem \ref{thm:generation_characterization}). 
The second way, presented in \S \ref{sec:substitutionscharacterization}, will be obtained from the previous one by replacing the generation rules with the better known substitutions, in order to obtain an $\mathcal{S}$-adic presentation, i.e. Theorem \ref{thm:substitutionscharacterization}. 

\subsection{Generation as an inverse to derivation}\label{sec:generation}
In this section we  define \emph{generation operators},\footnote{The name was introduced in \cite{SU}, where this type of operator was also used to invert derivation.} which will allow us to \emph{invert derivation}. Generations are combinatorial operations on sequences which, like derivation, act by interpolating a sequence with new edge labels and dropping the previous ones. They will be used to produce sequences which, derived, give back the original sequence.  
Let us recall that in our renormalization procedure we always compose the derivation operators (alternatively $\Der m n$ and $\Der n m$) with a normalization operator ($\Norm nm $ or $\Norm mn$ respectively) which maps sequences admissible in other sectors back to sequences admissible in the standard sectors (on which the derivation operators are defined). Thus, we want more precisely to define operators that invert the action of the composition $\Norm nm \Der m n$ of derivation and normalization on sequences in $\M mn$. 
 
 It turns out that the operator $\Norm nm \Der m n$ cannot be inverted uniquely. This is because, as we saw in \S \ref{derivation}, under the action of $\AD m n$, one of our sectors in $\M mn$ opens up to a whole range of sectors in $\M nm$ (more precisely $m-1$ sectors, as many as the sectors in the complement of the standard one for $\M nm$.) 
Then by normalizing, we bring each of these sectors back to the standard one. 
As a consequence, when we have the cutting sequence of a trajectory in $\Sec 0mn$ for $\M nm $, there exist $m-1$ cutting sequences of trajectories in the standard sector for $\M nm$ which, derived and normalized, produce the same cutting sequence. To uniquely determine an inverse, we have to specify the sector in which the derived sequence is admissible before normalizing.

We will hence define $m-1$ generations $\gen i mn $ for $1\leq i \leq m-1$, each of which inverts $\Norm nm \Der m n$: each $\gen i mn $  will send an admissible sequence $w$ in $\T 0 nm$
 to an admissible sequence  $\gen i mn w $  in $\T 0 mn$ which, when derived and normalized, gives back the sequence $w$. For how we defined derivation and normalization, the derived and normalized sequence of the cutting sequence of a trajectory in $\M{m}{n}$, is the cutting sequence of a trajectory in $\M{n}{m}$.  Generations $\gen i mn $  will act in the same way: applying them to a cutting sequence of a trajectory in the standard sector on $\M{n}{m}$ will give a cutting sequence in $\M{m}{n}$.
 
We will first define operators $\mathfrak g_i^0$, for $1\leq i \leq m-1$ (which invert $\Der m n $), and then to use them to define  $\gen i n m$ (which inverts $\Norm nm \Der m n $). The operator $\mathfrak g_i^0$ applied to the sequence $w$ of a trajectory in $\Sec j mn$ in $\M{m}{n}$, will produce a sequence $W=\mathfrak g_i^j w$ admissible in transition diagram $\T i n m$, and such that $\Der m n W=w$. 

As usual let us first start with defining generations for the $\M{3}{4}$ case.
First we will define $\mathfrak g_i^0$.
Such an operator, applied to the cutting sequence of a trajectory in $\M{3}{4}$ admissible in the diagram $\T i 3 4$, gives a sequence admissible in diagram $\T 0 4 3$, for trajectories in $\M{4}{3}$.
In the proof of Proposition \ref{inverse}, we will explain how to construct the diagram from which we deduce such an operator. 
For the general case, the following definition will remain the same, but the diagrams in Figures \ref{gendiagram-43} and \ref{gendiagram-34} will be obviously different, find in the way described in the proof of the Proposition.

\begin{definition}
Let $w$ be a sequence admissible in diagram $\T k n m$. 
Then $\mathfrak g_k^0 w$ is the sequence obtained by following the path defined by $w$ in $\T k n m$, interpolating the elements of $w$ with the labels on the arrows of a diagram analogous to the ones in Figure \ref{gendiagram-43} (which we will call \emph{generation diagrams} and denote by $\GD k n m$), and dropping the previous ones.

For example, if our sequence contains $w= \dots n_1 n_2 n_3 \dots$, and the arrow from  $n_1$ to $n_2$ in diagram $\GD k n m$ has the label $w_{n_1 n_2}^k$, while the arrow  from  $n_2$ to $n_3$ in  $\GD k n m$ has the label $w_{n_2 n_3}^k$, then $\mathfrak g_k^0 w= \dots w^k_{n_1 n_2} w^k_{n_2 n_3}  \dots$.
\end{definition}

\begin{figure}[!h] 
\centering 
\includegraphics[width=280pt]{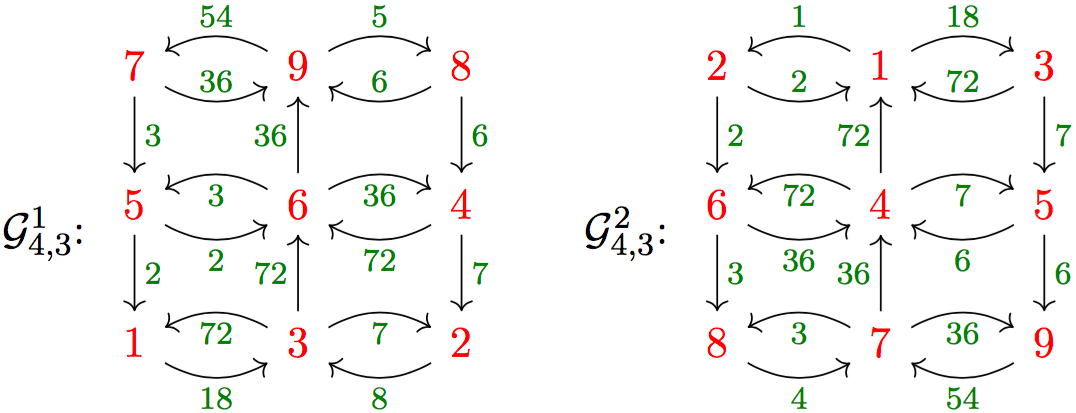}
\begin{quote}
\caption{Generation diagrams describing the operator $\mathfrak g_i^0$ for $\M 43$  \label{gendiagram-43}} \end{quote}
\end{figure}

\begin{figure}[!h] 
\centering 
\includegraphics[width=0.8\textwidth]{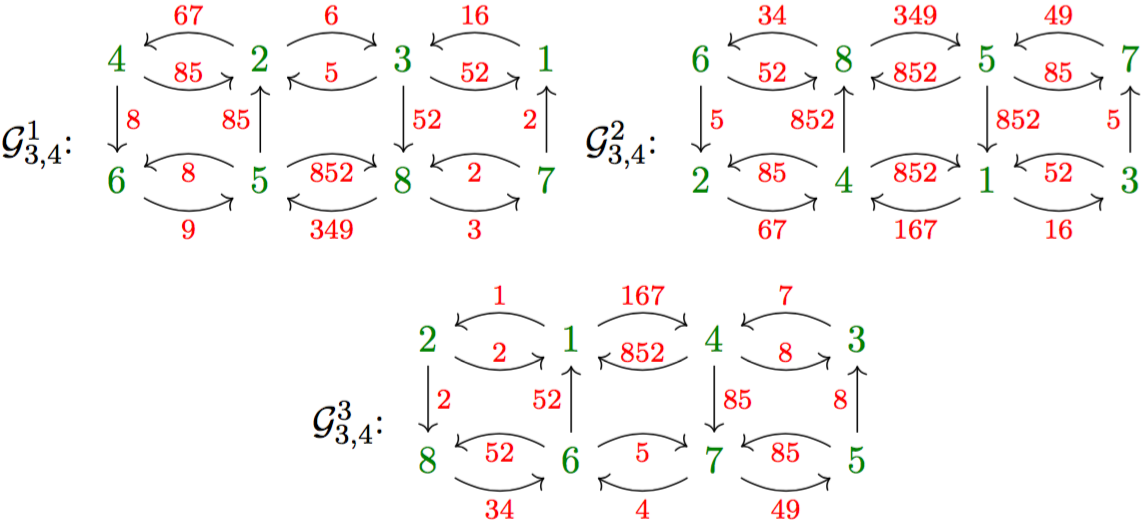}
\begin{quote}
\caption{Generation diagrams describing the generation operator $\mathfrak g_i^0$ for $\M 34$} \label{gendiagram-34} \end{quote}
\end{figure}

We then define the generation operator as follows:

\begin{definition} \label{genoperator}
The generation operator $\gen i n m$ is defined by
\[
\gen i n m w= \mathfrak g_i^0 (\perm i n m)^{-1} w,
\]
where $w$ is a sequence admissible in $\T 0 n m$ and $\perm i n m$ is the $i^{\text{th}}$ isometry permutation in $\M n m$.
\end{definition}

As we said, this operator inverts the derivation and normalization operation on sequences. More specifically, we have the following:

\begin{proposition}[Generation as an inverse to derivation.]\label{inverse}
Let $w$ be a sequence admissible in diagram $\T 0 n m$. 
Then for every $1\leq i \leq n-1$, the sequence $W=\gen i n m w$ is admissible in diagram $\T 0 m n$ and satisfies the equation $\Norm nm \Der m n W=w$. Moreover, the derivative $\Der mn W$ (before normalization) is a sequence admissible in diagram $\T i m n$.
\end{proposition}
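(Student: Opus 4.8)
The plan is to reduce the proposition to two properties of the auxiliary operators $\mathfrak{g}_i^0$ (the ones read off the generation diagrams $\GD i n m$) and then assemble the statement via permutation bookkeeping. Recall $\gen i n m = \mathfrak{g}_i^0 (\perm i n m)^{-1}$, and that each reflection $\refl i n m$ is an involution, so the induced permutation satisfies $(\perm i n m)^{-1} = \perm i n m$. The two properties I would isolate are: (a) $\mathfrak{g}_i^0$ maps sequences admissible in $\T i n m$ to sequences admissible in $\T 0 m n$; and (b) $\Der m n \circ \mathfrak{g}_i^0 = \mathrm{id}$ on sequences admissible in $\T i n m$. Granting (a) and (b), since $\perm i n m$ maps $\T i n m$ to $\T 0 n m$ and is involutive, $\perm i n m w$ is admissible in $\T i n m$; then (a) gives that $W = \mathfrak{g}_i^0 \perm i n m w$ is admissible in $\T 0 m n$, (b) gives $\Der m n W = \perm i n m w$ (admissible in $\T i n m$, the ``before normalization'' claim), and finally $\Norm n m \Der m n W = \perm i n m(\perm i n m w) = w$, the asserted identity.

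The real content, flagged already in the definition of $\mathfrak{g}_i^0$, is the construction of the generation diagram $\GD i n m$. I would obtain it by inverting the derivation diagram $\D 0 m n$ furnished by the Structure Theorem \ref{tdtheorem}. In $\D 0 m n$ the vertices are the $\M mn$ edge labels in the snaking $(m-1)\times n$ grid, the horizontal arrows carry $\M nm$ arrow labels, and the vertical arrows are unlabeled. Reading the $\M nm$ labels as the vertices of the dual diagram $\T i n m$, I take these as the vertices of $\GD i n m$; to each admissible transition $N_1 \to N_2$ of $\T i n m$ I attach as arrow label the finite string of $\M mn$ edge labels that a trajectory with direction in $\Sec 0 m n$ crosses between the $\M nm$ side $N_1$ and the $\M nm$ side $N_2$. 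That this string is well defined is exactly Lemma \ref{lemma:wellposed} (the affine restatement of Lemma \ref{lemma:intertwined}): on the superposition of $\M mn$ with the flip-sheared $\M nm$ of Figure \ref{hd34aux}, the $\M mn$ sides crossed between two consecutive $\M nm$ sides do not depend on the trajectory. Equivalently, $\GD i n m$ is $\D 0 m n$ read backwards: between two successive labeled arrows the path runs along a uniquely determined chain of unlabeled vertical arrows, and the $\M mn$ vertices on that chain are the interpolated string.

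Property (b) is then nearly tautological: applying $\Der m n$ to $\mathfrak{g}_i^0 w$ re-reads the $\M nm$ arrow labels along the interpolated path in $\D 0 m n$, and by construction these are, in order and without repetition, the vertices of $w$. I would back this with the geometric check of Lemma \ref{lem:derivation_interpretation}: when $w$ is the cutting sequence of an $\M nm$ trajectory $\sigma$ with direction in $\Sec i n m$, the construction makes $\mathfrak{g}_i^0 w$ the cutting sequence of $(\AD m n)^{-1}\sigma$, a trajectory on $\M mn$ whose direction lies in the subsector $\Subsec i m n \subset \Sec 0 m n$ (using that $\AD m n$ carries $\Subsec i m n$ onto $\Sec i n m$, Remark \ref{rk:subsec}); then $\Der m n \mathfrak{g}_i^0 w$ is the cutting sequence of $\AD m n (\AD m n)^{-1}\sigma = \sigma$, i.e.\ $w$. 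As the identity is governed transition-by-transition by a finite diagram, it passes from cutting sequences to all admissible sequences. Property (a) holds because each interpolation string is, geometrically, a block of consecutive sides crossed by a trajectory in $\Sec 0 m n$, hence a legal path in $\T 0 m n$, and concatenation along any admissible path of $\T i n m$ remains legal, as I would verify from the snaking structure of Theorem \ref{tdtheorem}.

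The hard part will be the junction bookkeeping inside the construction of $\GD i n m$: I must check that the last $\M mn$ label of the string attached to $N_1 \to N_2$ agrees with the first label of the string attached to $N_2 \to N_3$, so the concatenated path in $\T 0 m n$ has no gaps or overlaps, and that reading the $\M nm$ labels back recovers $w$ exactly at these junctions. This is precisely where the snaking of Theorem \ref{tdtheorem} and the alternation of positive and negative diagonals from Lemma \ref{lemma:intertwined} must be used carefully. The dependence on $i$ also enters here: a single transition $N_1 \to N_2$ of the dual diagram is realized by different $\M mn$ strings according to the sector $\Sec i n m$ of the $\M nm$ trajectory, and this multiplicity is exactly what forces the several distinct generation operators $\mathfrak{g}_i^0$ inverting the single operator $\Norm n m \Der m n$.
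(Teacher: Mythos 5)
Your proposal is correct and takes essentially the same route as the paper's proof: there too, the generation diagram $\GD i nm$ is built by finding, for each transition of $\T i nm$, the unique interpolating path in the derivation diagram $\D 0 mn$ that crosses no other $\LL nm$ arrow label (existence coming from the fact that derivation maps the standard sector onto all the nonstandard ones, uniqueness from Lemma \ref{uniquepath}), after which the inversion identity $\Norm nm \Der mn \gen i nm = \mathrm{id}$ and the admissibility claims hold by construction, exactly as in your properties (a) and (b) together with the involution bookkeeping $(\perm i nm)^{-1}=\perm i nm$. The one inaccuracy is your claim that well-definedness of the interpolated string is ``exactly'' Lemma \ref{lemma:wellposed} --- that lemma is the dual statement (at most one, and a well-determined, $\M nm$ label between consecutive $\M mn$ labels) --- whereas what is really needed is the paper's Lemma \ref{uniquepath}, whose content your ``uniquely determined chain of unlabeled vertical arrows'' observation reproves only after handling the choice between the two horizontal arrows sharing a label, i.e.\ the junction bookkeeping you yourself flag as the hard part.
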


In order to prove the Proposition, the following Lemma will be crucial. 
The proof of the Proposition relies in fact on the idea that the diagrams in Figures \ref{gendiagram-43} and \ref{gendiagram-34} are constructed exactly in such a way to invert the derivation operation. 
This Lemma is useful exactly in this sense and is true for generic \bm surfaces.

\begin{lemma} \label{uniquepath}
Let us consider the derivation diagram for $\M{m}{n}$, 
 as in Figure \ref{hextooct}. 
{ Given two green edge labels on arrows of the derivation diagram, if there is a path from one to the other that follows the arrows on the diagram without crossing another green edge label in between, then the path is unique. }
In other words, we cannot always go from a green edge label to an other green edge label following a path satisfying such conditions, but if it is possible, then there exists only one such path.
\end{lemma}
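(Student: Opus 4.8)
The plan is to reduce the statement to an elementary graph-theoretic fact about the vertical arrows of the derivation diagram $\D 0mn$, whose structure is fully described by the Structure Theorem \ref{tdtheorem}. First I would isolate the three features of $\D 0mn$ that matter here: (i) the green arrow labels (edge labels of $\M nm$) sit exactly on the horizontal arrows, while the vertical arrows carry no label; (ii) horizontal arrows connect only horizontally-adjacent vertices; and (iii) in each column the vertical arrows all point the same way (down in odd columns, up in even columns), joining consecutive rows, so that in particular there are no vertical arrows between distinct columns.

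Next I would analyze what a path that does not cross an intermediate green label must look like. Since by (i) \emph{every} horizontal arrow bears a green label, any arrow traversed strictly between the two prescribed green arrows must be vertical. Hence such a path decomposes as the initial green (horizontal) arrow, followed by a directed walk using only vertical arrows, followed by the terminal green arrow. The intermediate walk therefore runs entirely inside the subgraph of $\D 0mn$ consisting of the vertical arrows, and it goes from the head vertex of the first green arrow to the tail vertex of the second.

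Then I would identify the structure of this vertical subgraph: by (ii)--(iii) it is a disjoint union of $n$ directed simple paths, one per column, each a monotone chain through its $m-1$ vertices, with no arrows linking different columns. A branchless directed graph of this form admits at most one directed walk between any ordered pair of vertices: such a walk exists precisely when the two vertices lie in the same column and the target is downstream of the source, and then it is the unique intervening sub-chain. This yields uniqueness of the intermediate vertical walk, hence of the whole path, and at the same time explains why such a path need not exist at all (the two vertices may lie in different columns, or in the wrong order), which matches the ``if it is possible'' clause of the statement.

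The step I expect to require the most care is verifying that the vertical subgraph is genuinely branchless, i.e.\ that each vertex has at most one outgoing and one incoming vertical arrow; this is exactly the column-parity direction rule of Theorem \ref{tdtheorem} combined with the absence of vertical arrows across columns, so once those are cited the conclusion is immediate. A minor point I would state explicitly is that ``getting from one green label to the other'' is to be read as passing from one fixed green-labeled arrow to another fixed green-labeled arrow, so that the head and tail vertices bounding the intermediate walk are determined; the shared-label convention (two adjacent arrows written with a single green label) does not affect the argument, since each occurrence is a distinct arrow with distinct endpoints.
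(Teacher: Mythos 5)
Your reduction captures the sound part of the argument, and it is in fact the same mechanism the paper uses: by Theorem \ref{tdtheorem} every horizontal arrow of the derivation diagram carries a green label while vertical arrows carry none, so any admissible path is a green-labelled arrow, followed by a walk in the vertical subgraph, followed by the target green-labelled arrow; and the vertical arrows form disjoint monotone chains (down in odd columns, up in even columns), so between a fixed ordered pair of vertices there is at most one such walk. For a \emph{fixed} pair of labelled arrows this gives uniqueness, stated more crisply than the paper's step-by-step discussion.

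The gap is precisely the point you set aside as minor. The lemma is about green \emph{edge labels}, not about fixed labelled arrows, and by Theorem \ref{tdtheorem} a green label is generically carried by \emph{two} distinct arrows of the diagram (``going down, the next two arrows are both labeled ${2}$\dots''); this is not a drawing convention but a geometric fact, namely that two different transitions of $\M mn$ cross the same side of $\M nm$, as in Figure \ref{34auxtd}. So a path ``from one green label to the other'' involves a choice of which carrying arrow to start from and which to end at, the uniqueness assertion quantifies over these choices, and this stronger statement is what Proposition \ref{inverse} needs: there one is handed a transition $n_1 n_2$ of the derived sequence --- two labels, with no information about which arrows carry them --- and must recover a \emph{unique} interpolating word of labels of $\M mn$. (The paper's own proof makes this the crux: ``we will have the choice of which of the two arrows carrying that edge label to follow.'') Your argument yields at most one path \emph{per choice} of carrying arrows, hence a priori up to four paths between two labels. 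To close the gap you must rule out that two different choices both admit connecting vertical walks, and this needs more of the structure in Theorem \ref{tdtheorem}: the two arrows carrying a common label connect the same pair of adjacent columns, occupy consecutive rows, and point in opposite horizontal directions, so their heads lie in two \emph{different} columns (likewise their tails). Consequently, if the two labels sit between pairs of columns that are neither equal nor adjacent, no path exists; if adjacent, only one of the four choices is column-compatible and your chain argument finishes; and if equal, the two column-compatible choices impose contradictory row orderings --- one requires the target label to sit strictly above the source, the other strictly below --- because the two bounding columns carry opposite vertical orientations. Only with this case analysis, which is in substance what the paper's proof is doing, is the lemma as stated (and as used) actually proved.
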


\begin{proof}
The condition of not passing through another green edge label implies that we can move either in the same column, upwards or downwards, or on the next one, on the left or on the right, because we have a green edge label on each horizontal arrow. 
Starting from a green edge label, unless we are on a boundary edge, we have the first choice to do. 
We will have the choice of which of the two arrows carrying that edge label to follow.
The choice will be related to the two different cases of moving upwards or downwards if we are going to the same column, or if we are moving left or right if we are changing the column.

Let us now assume that we want to reach an edge label on the same column. 
For the structure of the derivation diagram, we know that if we follow one of the arrows we will get to a red vertex whose column has arrows going upwards, while if we choose the other one we will get to arrows going downwards.
According to whether the green edge label we want to reach is higher or lower with respect to the starting one, we will choose which way to go. 
Clearly, in the opposite case, we will be restricted to go on the wrong side and we will never reach the targeted green edge label. 
At that point, we follow arrows upwards or downwards until reaching the level of the green edge label where we want to arrive.
In fact, trying to move again to try to change column would make us cross a new green edge label, which we can afford to do only once we reach the level of the edge label we want.
After stopping on the right red vertex we have half more arrow to move back to the column of the green edge label, reaching the one we were targeting. 
This is obviously possible, because the horizontal arrows are always double. 

On the contrary, we consider now that we want to reach a green edge label on a column adjacent to the previous one.
In this case, the first choice of the edge to follow for the first half arrow depends on whether the adjacent column is the right or the left one. 
Clearly, going in the other direction would make it impossible to reach the green edge label we want. 
As before, at that point, we can only follow the arrows going upwards or downwards, according to the parity of the column.
As we said, we are assuming that there is a path connecting the two edge labels. 
In fact,  if for example the arrows are going downwards and the edge label is on a higher row, then such a path  does not exist, but this is a case we are not considering.

From what we said, it is clear that at each step the choice made is the only possible one to reach the targeted edge label.
\end{proof}
The path that was found in the proof of the Lemma \ref{uniquepath} will be used again later in the proof of Proposition \ref{inverse}.

We also prove the following Lemma, which will be used later in the proof of Theorem \ref{thm:substitutionscharacterization}. 

\begin{lemma}\label{lemma:uniqueprecedent}
For any vertex $n_1$ of any generation diagram $\GD i mn$, the labels of all the arrows of $\GD i mn$ which end in vertex $ n_1$ end with the same edge label of the alphabet $\LL nm$. 
\end{lemma}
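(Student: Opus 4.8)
The plan is to read the statement directly off the explicit construction of the generation diagrams carried out in the proof of Proposition \ref{inverse}. Recall that $\GD i mn$ has the same underlying graph as the transition diagram $\T i mn$ (its vertices are the edge labels of $\M mn$), and that the label of an arrow $n_0\to n_1$ is the word in $\LL nm$ obtained, through the unique path of Lemma \ref{uniquepath}, by recording the edge labels of $\M nm$ that a trajectory crosses between the edge $n_0$ and the edge $n_1$. In particular the \emph{last} letter of the label of any arrow ending at $n_1$ is exactly the edge of $\M nm$ that is crossed immediately before the edge $n_1$. Thus the lemma is equivalent to the assertion that this ``edge of $\M nm$ crossed just before $n_1$'' is determined by $n_1$ alone and is independent of the predecessor $n_0$.

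First I would reduce to the standard sector. Since $\perm i mn$ is the relabeling induced by an isometry that preserves the rows of the transition diagram (Corollary \ref{preserves-rows-cor}) and carries $\T i mn$ to $\T 0 mn$, it suffices to establish the statement for $\GD 0 mn$ and then transport the conclusion back through $\perm i mn$. Working in the standard sector I would then invoke the Structure Theorem \ref{tdtheorem}: locate the vertex $n_1$ inside the $(m-1)\times n$ rectangular grid, and use the explicit ``snaking'' pattern by which the $\LL nm$ labels are attached to the horizontal arrows (Figure \ref{gentransdiag}) to read off which $\LL nm$ label occupies the position immediately preceding $n_1$ along an admissible incoming path. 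The geometric counterpart, which I would use to organize the bookkeeping, is the local hat/stairs correspondence of Section \ref{stairsandhats} together with the alternation of positive and negative diagonals (Lemma \ref{lemma:intertwined} and Convention \ref{convention:ordered_sides_affine}): the edge $n_1$ is a negative diagonal of a basic rectangle, and the positive diagonal met just before it is the one in the adjacent rectangle singled out by that alternation.

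The main obstacle, and the step I expect to require the most care, is the bookkeeping for those $\LL nm$ labels that occur on \emph{two} distinct arrows of the derivation diagram (the ``paired'' labels produced by the snaking) and for the four degenerate corner configurations in which an edge of $\M mn$ coincides with an edge of $\M nm$. For these one cannot argue purely formally: the apparent ambiguity (two arrows carrying the same $\LL nm$ label but with \emph{a priori} different adjacent rectangles) must be resolved by checking, on the general pattern of Figure \ref{gentransdiag} and with the orientation conventions of Convention \ref{convention:ordered_sides_affine} in force, that the monotonicity of admissible paths in the standard sector forces every incoming path to approach $n_1$ across one and the same basic rectangle. I would carry this out as a finite case analysis on the position of $n_1$ in the grid (interior column versus boundary column, and the four corner cases), each time comparing the snaking of the vertex labels with that of the arrow labels, exactly as in the proof of the Structure Theorem. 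Once this case check is complete the common final letter is exhibited explicitly, which proves the lemma.
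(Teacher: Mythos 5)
Your opening reformulation is correct and coincides with the paper's first step: the last letter of the label on an arrow into $n_1$ is the edge of $\M nm$ crossed immediately before the edge $n_1$, and you have also correctly located the difficulty (labels of $\LL nm$ occurring on two distinct arrows of the derivation diagram). However, your reduction step breaks the proof. There is no diagram $\GD 0 mn$ to reduce to: generation diagrams exist only for $i\geq 1$, because derived sequences are never admissible in the standard sector (Proposition \ref{thm:derivable}). More importantly, the conclusion cannot be ``transported back through $\perm i mn$'': that permutation acts on the vertex alphabet $\LL mn$, whereas the letter the lemma is about lies in $\LL nm$, and this letter \emph{genuinely depends on} $i$. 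You can verify this from the paper's own tables. In $\T 1 43$ (Figure \ref{all-td43}) the arrows ending at the vertex ${\rd 2}$ are $r_6={\rd 3}\to{\rd 2}$ and $v_6={\rd 4}\to{\rd 2}$, and Figure \ref{pseudo43} gives $\PSub 1 43 (r_6)=r_4$ and $\PSub 1 43 (v_6)=l_5$; both $r_4$ and $l_5$ are arrows of $\UD 34$ ending at the vertex ${\gr 7}$, so by Definition \ref{def:pseudosubstitutions} the labels of arrows into ${\rd 2}$ in $\GD 1 43$ end with ${\gr 7}$. In $\T 2 43$ the unique arrow into ${\rd 2}$ is $l_2={\rd 1}\to{\rd 2}$, and $\PSub 2 43 (l_2)=l_3$ ends at ${\gr 1}$, so in $\GD 2 43$ the incoming label ends with ${\gr 1}$. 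Thus the ``unique precedent'' of ${\rd 2}$ is ${\gr 7}$ in one diagram and ${\gr 1}$ in the other: no statement proved on a single diagram can be relabelled into both.

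The same example refutes the key claim you rely on to resolve the paired-label ambiguity, namely that ``monotonicity of admissible paths in the standard sector forces every incoming path to approach $n_1$ across one and the same basic rectangle.'' Over the whole standard sector this is false: since the generation word of a transition is intrinsic to that transition (by the uniqueness of connecting paths, Lemma \ref{uniquepath}), the transitions ${\rd 1}\to{\rd 2}$ and ${\rd 3}\to{\rd 2}$ carry words ending in ${\gr 1}$ and ${\gr 7}$ respectively wherever they occur; both transitions are present in $\T 0 43$, so a hypothetical standard-sector generation diagram would not even satisfy the lemma. Which of the two $n_1$-labelled arrows of $\D 0 nm$ terminates an incoming path is decided by the finer subdivision of $\Sec 0 nm$ into the subsectors indexed by $i$ --- exactly the datum your reduction discards. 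A corrected combinatorial proof along your lines must therefore keep $i$ fixed and check, for each $\T i mn$ with $1\leq i\leq n-1$ separately, that the incoming transitions coexisting in that one diagram all terminate at the same $n_1$-labelled arrow of $\D 0 nm$. This is also where the paper's route differs from yours: it never moves between sectors, but works on an arbitrary fixed $\GD i mn$, realizes each of its arrow labels as the cutting sequence of an actual trajectory segment with direction in $\Sec 0 nm$ (via Proposition \ref{inverse}), and reads off the common last letter from the stair configuration of Figure \ref{stair} in the orthogonal presentation, with Convention \ref{convention:ordered_sides} covering the degenerate diagonals; any write-up, geometric or combinatorial, must keep the dependence on $i$ in view rather than quotient it away at the start.
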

The proof of the Lemma is given below. For example, in Figure \ref{gendiagram-43}, one can see that the three arrows which end in {\rd 9} carry the labels {\gr 6} and {\gr 36}, which all end with {\gr 6}. In this case one can verify by inspection of $\GD i 43$ that the same is true for any other vertex.
\begin{definition}[Unique precedent]
For any ${ n_1} \in \LL mn$, the unique edge label ${ n_1} \in \LL nm$ given by Lemma \ref{lemma:uniqueprecedent} (i.e. the edge label of $\LL nm$ with which all labels of arrows ending at vertex ${ n_1}$ ends) will be called the \emph{unique precedent} of ${n_1}$. 
\end{definition}
\begin{proof}[Proof of Lemma \ref{lemma:uniqueprecedent}]
The proof uses the stairs configuration introduced in \S \ref{stairsandhats}. 
Let us first recall that (by definition of generation diagrams and  Proposition \ref{inverse}) given a path in $\GD i mn$, the generated sequences obtained by reading off the labeles of arrows  of $\GD i mn$ along the path are by construction admissible sequences in $\T 0 nm$ that, derived, give the sequence of labels of vertices crossed by the  path. Furthermore, each label of an arrow on $\GD i mn$ is a cutting sequence of a piece of a trajectory in the standard sector $\Sec 0 nm$ that crosses the sequence of sides of $\M nm$ described by the label string. This is because, when following on $\GD i mn$ a path coming from a cutting sequence, we produce a cutting sequence, with the labels of the arrows crossed as subsequences.
Such a label string will hence be part of a cutting sequence in sector $\Sec 0 nm$. 
The label of the incoming vertex is an edge label of the flip and sheared copy of $\M mn$ that is hit next by the same trajectory. If we apply a shear to pass to the orthogonal presentation, we are considering trajectories with slope in the first quadrant, and the labels of an arrow describe the sequence of {\rd negative} diagonals of basic rectangles hit (see for example Figure \ref{coincide}), while the vertex label is the label of a {\gr positive} diagonal. 

Without loss of generality,  we can  assume that the edge label of $\LL mn$ that we are considering is the label of the {\gr positive} diagonal $\gr b$ in the stair configuration in Figure \ref{stair}, since   
recalling Convention \ref{convention:ordered_sides}, vertical or horizontal sides can be considered as degenerated diagonals in a degenerated stair (corresponding to a degenerated hat in the augmented Hooper diagram). 
One can then see from Figure \ref{stair} that any  trajectories with slope in the first quadrant which hit the {\gr positive} diagonal labeled by  {\gr b} in Figure \ref{stair}, last hit the  {\rd negative} diagonal labeled by  {\rd a}. This hence shows that all labels of arrows in  $\GD i mn$ which end in the vertex corresponding to the side {\gr b} end with the edge label $\rd a$ of $\LL nm$.
\end{proof}

We are now ready to prove Proposition \ref{inverse} and at the same time explain how to construct in general the generation diagrams for the operator $\mathfrak g_i^0$.

\begin{proof}[Proof of Proposition \ref{inverse}] 
As we explained in \S \ref{derivation}, the operation of derivation consists of taking a cutting sequence in $\M{m}{n}$ and interpolating pairs of edge labels with new ones, then dropping the previous ones.
In this way we get a cutting sequence in $\M{n}{m}$. 
To invert it, given a cutting sequence in $\M{n}{m}$, we want to recover the previous edge labels to appear in the new ones. 
As we saw, derivation might insert or not an edge label between two original ones, and if it does, it is only one. 
This implies that generation will add edge labels (one or a string) between each and every pair of edge labels of the new sequence. 

For clarity, we first explain how to recover the edge labels to interpolate through the example of $\M{3}{4}$. The  proof for general $(m,n)$ follows verbatim the proof in this special case.
In \S \ref{derivation}, we started from a sequence in the standard sector of $\M{4}{3}$, colored in red in the figures, and got a sequence in $\M 3 4$, colored in green in the figures. 
The method consisted in interpolating the red edge labels with the green ones, following the diagram in figure \ref{hextooct} (see also Figure \ref{34auxtd} in \S \ref{transitiondiagrams}).
 
\begin{figure}[!h]
\centering
\includegraphics[width=100pt]{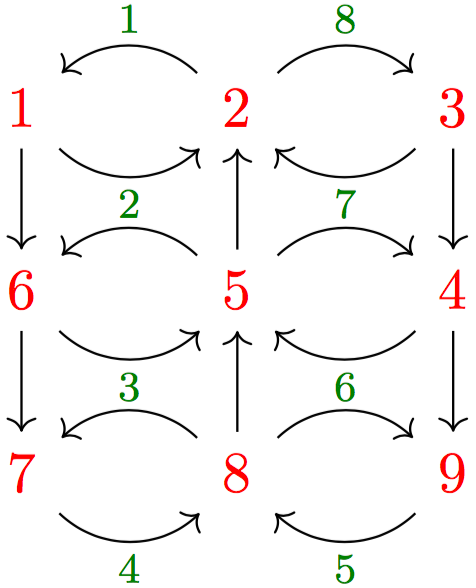}
\begin{quote}\caption{The derivation diagram for $\M{4}{3}$. \label{hextooct}} \end{quote}
\end{figure}

Let us now assume that we have a sequence $w$ in green edge labels.
It will be a path in one of the transition diagrams $\T i 3 4$.
Since we saw that a sequence in the standard sector gives a sequence admissible in one of the other ones for the other surface, $i$ will be between 1 and $n-1$, so here $i=1,2,3$. 

Let us define $k$ such that $w$ is admissible in $\T k 3 4$.
Each pair of green edge labels is hence a transition in $\T k 3 4$.
For each of these pairs, we want to recover which path in the diagram in Figure \ref{hextooct} (i. e. from which cutting sequence in red edge labels) it can come from. 
This means that we have two green edge labels and we want to find a path leading from one to the other through edges and vertices of our derivation diagram for $\M{4}{3}$.
Since we are considering a transition in $w$, we want a path which does not intersect other green edge labels in the middle, or we would have the corresponding transition instead. 
A path connecting two green edge labels admitted in $\T k 3 4$ will always exist, because the derivation opens the standard sectors surjectively on all the others.
These are exactly the hypotheses of  Lemma \ref{uniquepath}, so we can find a unique such path. 
We then record the red edge labels crossed by such a path on the arrow in $\T k 3 4$ corresponding to the transition that we are considering. 

Such diagrams with labels are called $\GD k nm$, and in the case $\M{3}{4}$ this procedure gives the diagrams in Figure \ref{gendiagram-34}.
By construction, each of these strings that we add on the arrows represents the unique string the transition in $w$ can come from. 
Hence, it creates an operator that inverts derivation.

The same procedure can be applied to a generic \bm surface, as we saw that in all cases the two transition diagrams for the $(m,n)$ and $(n,m)$ surfaces are combined together forming the derivation diagram we described in  \S \ref{transitiondiagrams}.
\end{proof}

%
%
%
%
%
%
%

\subsection{Characterization via generation operators\label{sec:generation_characterization}}
The following theorem gives a characterization of the closure of the set of cutting sequences. 

\begin{theorem}[Characterization of \bm cutting sequences via generation]\label{thm:generation_characterization}
A word $w$ is in the closure of the set of cutting sequences of bi-infinite linear trajectories on $\M mn$  if and only if there exists $0\leq b_0 \leq 2n-1$ 
 and two sequences $(a_k)_k \in \{1, \dots , m-1\}^\mathbb{N}$ and $(b_k)_k \in \{1, \dots , n-1\}^\mathbb{N}$ 
such that
\be \label{infiniteintersection}
w \in \mathscr{G}\left(b_0, (a_1,b_1)\dots,(a_k,b_k)  \right) := \bigcap_{k\in\mathbb{N}} (\perm {b_0}{m} n)^{-1}(\gen {a_1} n m  \gen {b_1} mn)( \gen {a_2} n m  \gen {b_2}  mn )  \ldots ( \gen {a_k} n m  \gen {b_k} m n) \textrm{Ad}_{m,n},
\ee
where $\textrm{Ad}_{m,n}$ denotes  the set of words in $\LL mn ^\mathbb{Z} $ which are admissible in  $\T 0m n $.
 
Thus, a word $w$ belongs to the  closure of the set of cutting sequences if and only if 
\be \label{unionfiniteintersection}
w \in \bigcup_{0 \leq b_0 \leq 2n-1} \, \, \bigcap_{k\in\mathbb{N}} \, \, \bigcup_{\substack{ 1\leq a_k \leq m-1 \\ 1\leq b_k  \leq n-1} }   \mathscr{G}\left(b_0,(a_1,b_1)\dots,(a_k,b_k)  \right). 
\ee
\end{theorem}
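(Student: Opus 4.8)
The plan is to prove the characterization first in the existential form \eqref{infiniteintersection} — that $w$ lies in the closure of the cutting sequences if and only if there exist $b_0$ and sequences $(a_k)_k,(b_k)_k$ with $w\in\mathscr{G}(b_0,(a_1,b_1),\dots)$ — and then to deduce the reorganized form \eqref{unionfiniteintersection} by a formal manipulation. For this last step, set
\[
K_k := (\perm {b_0}m n)^{-1}(\gen {a_1} n m \gen {b_1} mn)\cdots(\gen {a_k} n m \gen {b_k} m n)\,\textrm{Ad}_{m,n}.
\]
These sets are nested and decreasing in $k$, since by Proposition \ref{inverse} every generation operator maps $\textrm{Ad}_{m,n}$ into itself; moreover $K_k$ depends only on the finite prefix $(a_1,b_1),\dots,(a_k,b_k)$, and at each level there are only finitely many choices of $(a_k,b_k)$. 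Hence the existence of a consistent infinite sequence realizing $w$ is equivalent, by a K\"onig-type compactness argument, to $w\in\bigcap_k\bigcup_{a_k,b_k}K_k$, which together with the finite union over $b_0$ gives \eqref{unionfiniteintersection}.

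Let $\mathcal C$ denote the set of cutting sequences and $\mathcal G$ the set on the right-hand side of \eqref{unionfiniteintersection}. I would first show $\mathcal C\subseteq\mathcal G$ and that $\mathcal G$ is closed, which together give $\overline{\mathcal C}\subseteq\mathcal G$. Closedness is immediate: the operators $\gen i mn$ and the permutations $\perm i mn$ are continuous on $\LL mn^{\mathbb Z}$ and $\textrm{Ad}_{m,n}$ is closed, so each $K_k$ is closed, and $\mathcal G$ is obtained from these by finite unions (over $b_0$ and over the finitely many $(a_k,b_k)$) and the intersection over $k$. For $\mathcal C\subseteq\mathcal G$, take a cutting sequence $w$; by Theorem \ref{thm:infinitely_derivable} it is infinitely derivable, and I let $b_0$ be its sector and $(a_k)_k,(b_k)_k$ the sequences of admissible sectors of Definition \ref{def:seq_sectors}. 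The normalized derivatives of $w$ are admissible in the standard sector, hence lie in $\textrm{Ad}_{m,n}$ (resp. its $\M nm$ analogue), and since by Proposition \ref{inverse} each generation operator inverts $\Norm nm\Der mn$, applying the appropriate $2k$-fold composition of generation operators to the $2k$-th normalized derivative returns $\perm {b_0}mn w$. Thus $w\in K_k$ for all $k$, so $w\in\mathscr{G}(b_0,\dots)\subseteq\mathcal G$. When $w$ is periodic the sector sequences need not be unique (Lemma \ref{lemma:uniqueness_sector} fails), but their mere existence suffices, and one may invoke Lemmas \ref{periodicABAB} and \ref{realizecs} for the fixed points.

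The substantial direction is $\mathcal G\subseteq\overline{\mathcal C}$. Fix $w\in\mathscr{G}(b_0,(a_k),(b_k))$, so for each $k$ there is $v_k\in\textrm{Ad}_{m,n}$ with $w=(\perm {b_0}mn)^{-1}(\gen {a_1} n m \gen {b_1} mn)\cdots(\gen {a_k} n m \gen {b_k} m n)v_k$. I would show that every finite subword of $w$ occurs in a genuine cutting sequence, which by definition of the closure yields $w\in\overline{\mathcal C}$. The key geometric input is that, dually to the shortening of Remark \ref{rk:derivation_short}, each generation operator strictly lengthens words by interpolating new labels; hence generation is \emph{local} and \emph{expanding}, so that a central window of $w$ of any fixed length $L$ is the image, under $(\perm {b_0}mn)^{-1}$ and the $2k$ generation operators, of a central window $\tilde u$ of $v_k$ whose length tends to $1$ as $k\to\infty$. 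Taking $k$ large, $\tilde u$ becomes a single admissible transition of $\T 0 mn$, which is realized by the cutting sequence $c(\tau_0)$ of some trajectory in the standard sector. By the geometric content of Proposition \ref{inverse} the generation operators send cutting sequences of $\M nm$ in the standard sector to cutting sequences of $\M mn$, so $(\perm {b_0}mn)^{-1}(\gen {a_1} n m \gen {b_1} mn)\cdots(\gen {a_k} n m \gen {b_k} m n)c(\tau_0)$ is itself a cutting sequence, and by locality it agrees with $w$ on the window of length $L$. Letting $L\to\infty$ exhibits $w$ as a limit of cutting sequences.

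The main obstacle is making the locality-and-expansion estimate precise: one must control, uniformly in the itinerary, how many symbols of $v_k$ influence a fixed central window of the $2k$-fold generated word, and verify that this number shrinks to a single transition as $k$ grows. This is exactly where the structure of the derivation diagrams from Theorem \ref{tdtheorem}, together with the fact that every $m-1$ consecutive arrows contain at least one labelled horizontal arrow (as in the proof of Lemma \ref{lem:derivation}), is essential. A secondary technical point is the bookkeeping for periodic cutting sequences, where Lemma \ref{lemma:uniqueness_sector} does not apply and one relies on Lemmas \ref{periodicABAB} and \ref{realizecs} to identify the renormalization fixed points and realize the eventual $\dots n_1 n_2 n_1 n_2\dots$ patterns by periodic trajectories. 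Finally, I would record that the direction $\theta=[b_0;a_1,b_1,a_2,b_2,\dots]_{m,n}$ furnished by the continued fraction \eqref{sectorCFdef} is compatible with this construction, so that the approximating cutting sequences may be chosen in directions converging to $\theta$, reconciling the statement with Propositions \ref{prop:itineraries_vs_sectors} and \ref{directionsthm}.
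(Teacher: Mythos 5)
Your proposal follows the same overall route as the paper: the same three-part structure (cutting sequences lie in the set, the set is closed, cutting sequences are dense in it), the same inputs (Theorem \ref{thm:infinitely_derivable} and the sequences of admissible sectors for the containment, continuity of the operators plus closedness of the subshift $\textrm{Ad}_{m,n}$ for closedness, and Proposition \ref{inverse} together with the geometric pull-back of trajectories for density). The comparison therefore comes down to the density step, and there your argument, as stated, has a step that would fail.

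You claim that each generation operator \emph{strictly} lengthens words, so that the central window $\tilde u$ of $v_k$ controlling a fixed window of $w$ ``shrinks to a single transition as $k$ grows,'' and you identify verifying this uniform shrinking as the main obstacle. That verification is impossible, because the claim is false exactly at the fixed points of renormalization: for $w = \dots n_1 n_2 n_1 n_2 \dots$ (which does belong to the set $\mathcal G$), the images of a window under successive derivations travel only along horizontal arrows of the derivation diagrams and never drop below their original length --- derivation does not increase lengths (Remark \ref{rk:derivation_short}) but it need not decrease them. The fact you invoke from Lemma \ref{lem:derivation} (at least one labelled horizontal arrow among any $m-1$ consecutive arrows) bounds how much derivation can \emph{shrink} a word; what your argument would need is a bound forcing vertical (unlabelled) arrows to occur, and no such bound exists. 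The paper's proof is organized instead as a dichotomy on the images $v^k$ of the fixed window: either some $v^{\overline{k}+1}$ is empty, in which case the two surviving letters bounding it form a transition of a transition diagram, realized by some trajectory; or the length of $v^k$ stabilizes, in which case Lemma \ref{periodicABAB} forces $v^{\overline{k}}$ to be a block of $\dots n_1 n_2 n_1 n_2 \dots$ and Lemma \ref{realizecs} realizes it by a periodic trajectory. You do cite these two lemmas, but as a ``secondary technical point'' about periodic cutting sequences and the failure of Lemma \ref{lemma:uniqueness_sector}; in fact they handle one of the two co-equal cases of the main argument, for arbitrary words in $\mathcal G$, not a side issue. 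Once your argument is restructured as this dichotomy (keeping your pull-back of the realizing trajectory through the affine diffeomorphisms, which is exactly the paper's reconstruction of $\tau^0$ from $\tau^{\overline{k}}$), it coincides with the paper's proof.
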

\begin{remark}
As we will show in the proof, the sequences $(a_k)_k \in \{1, \dots , n-1\}^\mathbb{N}$ and $(b_k)_k \in \{1, \dots , m-1\}^\mathbb{N}$  in Theorem \ref{thm:generation_characterization} will be given by the itinerary of the direction $\theta$ of the trajectory of which $w$ is cutting sequence under the \bm Farey map $\FF mn$.
\end{remark}

\begin{proof}[Proof of \ref{thm:generation_characterization}]
Let  us denote by $I \subset {\LL mn}^{\mathbb{Z}}$  the union of intersections in \eqref{unionfiniteintersection}, by  $CS$   the set of cutting sequences of bi-infinite linear trajectories on $\M mn$ and by  $\overline{CS}$ be its closure in ${\LL mn}^{\mathbb{Z}}$.     
In order to show that $\overline{CS} = \I$, one has to show that $CS \subset \I$, that $\I$ is closed and that $CS$ is dense in  $\I$.

\smallskip
{\it Step 1 ($CS \subset \I$)} 
Let $w$ be the cutting sequence of a trajectory $\tau$ in direction $\theta$. Let $b_0$ be such that $\theta \in \Sec {b_0}{m}{n} $ and let $(a_k)_k $ and $(b_k)_k $ be such that  $\left( (a_k, b_k) \right)_k$ is the itinerary of $\theta_0:=\refl {b_0} m n [\theta] \in \Sec 0 mn$ under $\FF mn$ (where $\refl {b_0} m n [\theta] $ denotes the action of $\refl {b_0} m n $ on directions, see the notation introduced in \S \ref{sec:projective}). 

 Let  $(w^k)_k$ be the sequence of derivatives, see Definition \ref{def:derivatives}. From Propostion \ref{thm:derivable}, it follows that $w^k$ is the cutting sequence of a trajectory $\tau^k$. Furtheremore,   the sequence $(\tau^k)_k$ is obtained by the following recursive definition (which gives the geometric counterpart of the renormalization process on cutting sequeneces obtained by alternatively deriving and normalizing):
 \be \label{trajectories_rec}
\tau^0:=\tau, \qquad \tau^{k+1} := \begin{cases}  \AD m n (\refl {b_j} mn)  \tau^k , & k=2j \text{ even} ; \\  \AD nm (\refl {a_j} nm) \tau^k, & k=2j-1 \text{ odd} .  \end{cases}
\ee
 The direction of the trajectory $\tau^k$ belongs to $\Sec {a_j}{n}{m}$ for $k=2j-1$ odd and to $\Sec {b_j}{n}{m}$ for $k=2j$ even, as shown in the proof of Proposition \ref{prop:itineraries_vs_sectors}. 

Let  $(u_k)_k$ be the sequence of \emph{normalized derivatives}, given by
\bes
u_k := \begin{cases}  \perm {a_j} mn w_k , & k=2j-1 \, \text{ odd} ; \\  \perm {b_j} mn w_k, & k=2j \, \text{ even} . 
\end{cases}
\ees 
 Remark that when $w$ is non-periodic, this could be simply written as  $u_k := \Norm mn w_k$ or $u_k := \Norm nm w_k$ according to the parity of $k$, but for periodic sequences the operators $\Norm mn$ and $\Norm nm$ are a priori not defined (since a derivative could possibly be admissible in more than one sector), so we are using the knowledge of the direction of the associated trajectory to define normalizations. 

 We will  then show that for any $k\geq 0$:  
 \be \label{inductionass}
w= (\perm {b_0} m n)^{-1} (\gen {a_1} n m  \gen {b_1} mn)( \gen {a_2} n m  \gen {b_2}  mn )  \ldots ( \gen {a_k} n m  \gen {b_k} m n) u_{2k}.
\ee
This will show  that $w$ belongs to the intersections \eqref{infiniteintersection} and hence that $CS \subset \I$.

First let us remark that by replacing $w$ with $\Norm mn w= \perm {b_0} m n w$ we can assume without loss of generality that $b_0=0$, so $\perm {b_0} m n$ is the identity. Notice also that by Proposition \ref{inverse} $w^k$ is the cutting sequence of a trajectory $\tau^k$ whose direction, by definition of the Farey map and its itinerary,   is in $\Sec {a_j}{n}{ m}$ for $k=2j-1$ odd and in $\Sec {b_j}{ m}{ n}$ for $k=2j$ even. Thus, if $k=2j-1$ is odd (respectively $k=2j$ is even), $u^{k}= \Norm nm  \Der m n u^{k-1}$ (respectively $ u^{k}=\Norm mn  \Der nm  u^k$)  and $w^{k}$ is the cutting sequence of a trajectory in sector $\Sec {a_j} nm$ (respectively $\Sec {b_j} mn$. By Proposition \ref{inverse}, $u^{k-1}$ is hence equal to $ \gen {a_j} n m u^{k}$ (respectively $ \gen {b_j} m n u^{k-1}$). Thus, if by the inductive assumption  we have \eqref{inductionass} for $k-1$, we can write $u_{2(k-1)} = \gen {a_k} n m  \gen {b_k} m n) u^{2k}$ and get \eqref{inductionass} for $k$.   This  concludes the proof of this step.

\smallskip
{\it Step 2 ($\I$ is closed)}
 $\I$ is given by \eqref{unionfiniteintersection} as a union of  countable intersections of finite unions.   
 Since the set $\mathrm{Ad}_{m,n}$ of admissible words in $\T 0 m n$  is a subshift of finite type, $\mathrm{Ad}_{m,n}$  is closed (see for example Chapter $6$  of \cite{LM:sym}). Moreover, one can check that the composition $\gen {i}{n}{m} \gen {j}{m}{n}$ is an operator from $\LL m n^\mathbb{Z}$ back to itself which is Lipschiz, since if $u, v \in \mathrm{Ad}_{m,n}$  have a common subword, the interpolated words $\gen {i}{n}{m} \gen {j}{m}{n}{u}$ and  $\gen {i}{n}{m} \gen {j}{m}{n}{v}$ have an even longer common subword. Thus, the sets $\mathscr{G}\left((a_1,b_1)\dots,(a_k,b_k)  \right)$ in (\ref{unionfiniteintersection}) are closed, since they are the image of a closed set under a continuous map from the compact space ${\LL mn}^{\mathbb{Z}}$.  
  Since  in (\ref{unionfiniteintersection}), for each  $k$, one considers a finite union of closed sets, 
  $\I$  is a finite union of countable intersection of closed sets  and thus it is closed.  

\smallskip
{\it Step 3 ($CS$ is dense in  $\I$)}   
  By  the definition of topology on ${\LL mn}^\mathbb{Z}$ (see for example \cite{LM:sym}), to show that cutting sequences are dense in \eqref{unionfiniteintersection}, it is enough to show that each arbitrarily long finite subword $u$ of a word $w$  in the intersection  \eqref{infiniteintersection} is contained in a bi-infinite cutting sequence  of a trajectory on $\M mn$.

Let $v$ be such a finite subword  and let $b_0$ and $\left((a_k, b_k)\right)_k$ be the integer and sequences, respectively, that appear in the expression \eqref{infiniteintersection}. 
Let $(w^k)_k$ be the sequence of derivatives given by Definition \ref{def:derivatives} and let $(v^k)_k$ be the subwords (possibly empty) which are images of $v$ in $w^k$ (using the terminology introduced at the very beginning of \S~\ref{sec:fixedpoints}). 
Recall that  the operator $\Der nm  \Norm nm \Der mn  \Norm mn$ either strictly decreases or does not increase the length of finite subwords (see Remark \ref{rk:derivation_short}).  
Thus, either there exists a minimal $\overline{k}$ such that $v^{{\overline{k}+1}}$ is empty (let us call this situation Case $(i)$), or there exists a minimal $\overline{k}$ such that  $v^{\overline{k}}$ has the same length as $v^{{k}}$ for all $k\geq \overline{k}$  (Case $(ii)$).

Let us show that in both cases  $v^{\overline{k}}$ is a subword of the cutting sequence of some periodic trajectory $\tau^{\overline{k}}$. 
In Case $(i)$, 
let $n_1$ (respectively $n_2$) be the  last (respectively the first) edge label of $w^{\overline{k}}$ which survives in $w^{\overline{k}+1} $ before (respectively after) the occurrence of the subword $ { u^{\overline{k}}}$. Thus, since $u^{\overline{k}+1}$ is the empty word by definition of $\overline{k}$,  $n_1n_2$ is a transition in $w^{\overline{k}+1}$. 
By definition of a transition, we can hence find a trajectory ${\tau}^{\overline{k}+1} $ which contains the transition $n_1n_2$ in its cutting sequence.  
If we set $\tau^{\overline{k}}$ to be equal to $(\refl {a_j} nm)^{-1} (\AD nm )^{-1}  {\tau}^{\overline{k}+1} $, 
 if $\overline{k}=2j-1$ is odd  (respectively  $(\refl {b_j} mn)^{-1} (\AD mn )^{-1}   {\tau}^{\overline{k}+1} $ if $k=2j$ is even), the cutting sequence of $\tau^{\overline{k}}$  contains the block $ { v^{\overline{k}}}$ in its cutting sequence. 

In Case $(ii)$, note that since  $v^{\overline{k}}$ has the same length as $v^{\overline{k}+2}$, by Lemma \ref{periodicABAB} it must be a finite subword of the infinite periodic word $\dots n_1n_2n_1n_2 \dots$ for some edge labels $n_1,n_2$. 
Now, by Lemma \ref{realizecs}, all  words of this type are cutting sequences of periodic trajectories, so there exists a periodic trajectory  ${\tau}^{\overline{k}} $  which contains $v^{\overline{k}}$ in its cutting sequence. 



Finally, once we have found a trajectory  $\tau^{\overline{k}}$  which contains $v^{\overline{k}}$ in its cutting sequence, we will reconstruct a trajectory $\tau$  which contains $v$ in its cutting sequence
 by applying  in reverse order the steps which invert derivation at the combinatorial level (i.e. the generations given by the knowledge of the sequences of admissible sectors) on cutting sequences, and at the same time applying the corresponding affine diffeomorphisms on trajectories. 
More precisely, we can define by recursion trajectories $\tau^{{k}}$ which contain $v^{{k}}$ in their cutting sequence for $k= \overline{k}-1, \overline{k}-2, \dots, 1 ,0$ as follows.  
Let us make the  inductive assumption that  $v^{k}$ is contained in the cutting sequence of $\tau^k$. 
Let us denote by $\overline{w}^k$  the cutting sequence of the normalized trajectory $\overline{\tau}^k$ and by $\overline{v}^k$ the block in $\overline{w}^k$  which corresponds to ${v}^k$ in $w^k$. By definition of itinerary and by Proposition \ref{inverse}, we then have that  
$u^{k-1} = \gen {a_j}{n}{m} u^{k}$ for $k=2j-1$ odd or $u^{k-1} = \gen {b_j} mn w^{k}$ for $k=2j$ even. 
 Thus, setting $\overline{\tau}^{k-1}$ to be equal to ${\refl{a_j}{n}{m}}^{-1} {\AD mn}^{-1}  \overline{\tau}^{k}$ or $ {\AD nm}^{-1} {\refl{b_j}{m}{n}}^{-1} \overline{\tau}^{k}$ respectively, we have that by Proposition \ref{inverse} the derived sequence $\overline{w}^{k}$ contains $\overline{v}^k$. Thus, if we  set $\tau^{k-1} $ to be respectively 
${\refl{b_{j-1}}{m}{n}} \overline{\tau}^{k-1}$ or ${\refl{a_{j-1}}{n}{m}} \overline{\tau}^{k-1}$, $\tau^{k-1}$ has a cutting sequence which  contains $v^{k-1}$.  

Continuing this recursion for $\overline{k}$ steps, we finally obtain a trajectory $\tau^0$ which contains the finite subword $v$. This concludes the proof that cutting sequences are dense in $\I$.
\end{proof}

\subsection{An $\mathcal{S}$-adic characterization via substitutions}\label{sec:substitutionscharacterization}
In this section we present an alternative characterization using the more familiar language of substitutions. This will be obtained by starting from the characterization via generations (Theorem \ref{thm:generation_characterization}) in the previous section \S\ref{sec:generation_characterization},  and showing that generations can be converted to substitutions on a different alphabet corresponding to \emph{arrows} (or transitions) in transition diagrams. Let us first recall the formal definition of a substitution.
 
\begin{definition}[Substitution]\label{def:substitution}
A \emph{substitution} $\sigma $ on the alphabet $\mathcal{A}$ is a map that sends each symbol in the alphabet to a finite word  in the same alphabet, then extended to act on $\mathcal{A} ^{\mathbb{Z}}$  by juxtaposition, so that if for $a\in\mathcal{A}$ we have $\sigma (a) = w_a $ where $w_a$ are finite words in $\mathcal{A}$, then for $w= (a_i)^\mathbb{Z} \in \{0,1\}^ \mathbb{Z}$ we have that $\sigma(\cdots a_{-1}  a_0  a_1  \cdots) = \cdots w_{a_{-1}}  w_{a_0}  w_{a_1}  \cdots$. 
\end{definition}

Let us now define a new alphabet $\Ar mn$, which we will use to label \emph{arrows} of a transition diagram of $\M mn$. The cardinality of the alphabet $\Ar mn$ is $N_{m,n}:=\NA mn $ since this is the number of arrows in the diagrams $\T i mn$. Recall that from each vertex in  $\T i mn$ there is at most one outgoing vertical arrow, for a total of $n(m-2)$ vertical arrows. On the other hand,  there can be two outgoing horizontal arrows, going one right and one left, for a total of $2(m-1)(n-2)$ horizontal arrows.  Hence, we will use as edge labels $v_i$, 
$l_i, r_i$ where $v, l, r$ will stays respectively for  \emph{vertical}, \emph{left} and  \emph{right} and the index $i$ runs from $1$ to the number of arrows in each group, i.e. 
\bes
\Ar mn =  \{ v_i, 1\leq i \leq n(m-2)\} \cup \{ r_i, 1\leq i \leq (m-1)(n-2)\} \cup  \{ l_i, 1\leq i \leq (m-1)(n-2)\}.
\ees


We label the \emph{arrows} of the universal diagram $\UD  m n$ in a \emph{snaking pattern} starting from the upper left corner, as shown in Figure \ref{fig:arrows_names_ex}  for $\M 43 $ and $\M 34$,
 where the labels of the alphabet $\Ar 43$ are all in red (since they represent transitions between the red vertices), while the labels of $\Ar 43$ are all in green. In particular for vertical arrows $v_i$, $v_1$ is the vertical arrow from the top left vertex, then $i$ increases by going down on odd columns and up on even ones; right arrows $r_i$ are numbered so that $r_1$  is also exiting the top left vertex and $i$ always increases going from left to right in each row; finally left arrows $l_i$ are numbered so that $l_1$  exits the top right vertex and $i$ always increases going from right to left in each row. 

This labeling of $\UD  m n$  induces a labeling of arrows on each $\T i m n$ for $0\leq i \leq n-1$, where all arrows are labeled in the same way in each diagram.



\begin{figure}[!h] 
\centering
\includegraphics[width=350pt]{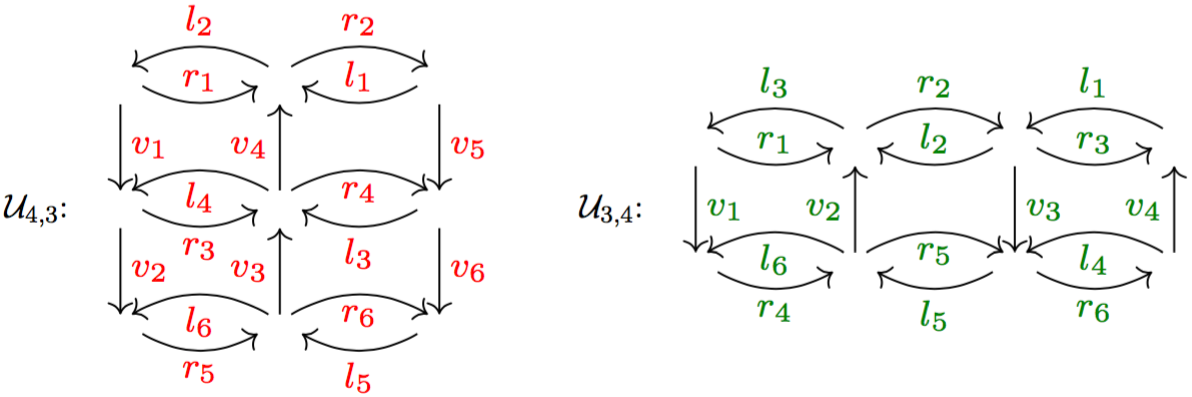}
\begin{quote}
\caption{The labeling of $\UD  m n$ with the labels of $\Ar mn$ for \mbox{$m=4,n=3$} and for \mbox{$m=3,n=4$}   \label{fig:arrows_names_ex}} \end{quote}
\end{figure}


Let us call \emph{admissible} the words in the alphabet $\Ar mn$  that correspond to  paths of arrows  in a transition diagram (in a similar way to Definition \ref{admissibledef}). 
\begin{definition}\label{def:admissible} Let us say that the word  $a$ in ${\Ar mn}^{\mathbb{Z}}$ is \emph{admissible} 
if it describes an infinite path on $\UD  m n$, i.e. all pairs of consecutive labels $a_i a_{i+1}$ are such that $a_i$ labels an arrow that ends in a vertex in which the arrow labeled by $a_{i+1}$ starts.
\end{definition}

Let us also define an operator
$\Tr i m n$ which,  for $0 \leq i \leq 2n-1$,  allows us to convert admissible words in $\Ar mn^{\mathbb{Z}}$ to words in $\LL mn^{\mathbb{Z}}$ that are admissible in diagram $\T i m n$. 

\begin{definition}\label{def:Tr}
The operator $\Tr 0 m n $ sends an \emph{admissible} sequence $(a_k)_k$ in $ \Ar mn^{\mathbb{Z}}$ to $(w_k)_k$ the sequence  in $\LL mn^{\mathbb{Z}}$ admissible in $\T 0 mn $ obtained by reading off the names of the vertices of a path in $\T 0 mn $ which goes through all the arrows $\dots a_{-1}, a_0, a_1, \dots$.

The operators $\Tr  i m n $ for $0\leq i \leq 2n-1$ are obtained by composing $\Tr 0 m n $ with the action on $\LL mn$ of $\perm i mn$, so that $\Tr  i m n := \perm i mn \circ \Tr 0 m n$ maps admissible sequences in $ \Ar mn^{\mathbb{Z}}$ to  the sequences in $\LL mn^{\mathbb{Z}}$ admissible in $\T i mn $.
\end{definition}

\begin{example}\label{ex:Tr}
Let $m=4$ and $n=3$. 
Consider an admissible sequence in $\Ar 43 ^ \mathbb Z$ containing ${\re r_1l_2v_1}$. 
This is possible because the string represents a path in $\UD 43$ (see Figure \ref{fig:arrows_names_ex}). 
Now, to calculate $\Tr 0 mn (\dots {\re r_1l_2v_1} \dots)$, we need to look at $\T 0 43$ (Figure \ref{all-td43}). 
We record the vertices of the path represented by these arrows and we get a word which will contain the subword ${\re 1216}$.
So $\Tr 0 mn (\dots {\rd r_1l_2v_1}\dots )=\dots {\re 1216} \dots $.
\end{example}

\begin{remark}\label{invertibilityTr}
The operator $\Tr  i m n $  is invertible and for $0\leq i \leq n-1$ the inverse $(\Tr  i m n )^{-1}$ maps a sequence $(w_k)_k$ in $\LL mn^{\mathbb{Z}}$ admissible in $\T i mn $ to the admissible sequence $(a_k)_k$ in $\Ar mn^{\mathbb{Z}}$  obtained by reading off the names  $\dots a_{-1}, a_0, a_1, \dots$ of the arrows of a path in $\T i mn $ which goes through all the vertices $\dots w_{-1}, w_0, w_1, \dots$.
\end{remark}

\smallskip
The main result of this section is the following characterization.

\begin{theorem}[An $\mathcal{S}$-adic characterization of \bm cutting sequences.]\label{thm:substitutionscharacterization}
There exist \mbox{$(n-1)(m-1)$} substitutions $\sigma_{i,j}$ for $1\leq i \leq n-1 $ and  $1\leq j \leq m-1 $ on the alphabet $ \Ar mn$ such that the following holds:

The sequence $w$ is the closure of the set of cutting sequences of a bi-infinite linear trajectory on $\M mn$ 
if and only if there exist two sequences $(a_k)_k \in \{1, \dots , n-1\}^\mathbb{N}$ and $ (b_k)_k \in \{1, \dots , m-1\}^\mathbb{N}$ and $0\leq b_0 \leq 2n-1$ such that

\be\label{intersection_substitutions}
w \in \bigcap_{k\in \N} \Tr {b_0}{m}{n} {\Sub{a_1}{b_1}{m}{n}} {\Sub{a_2}{b_2}{m}{n}} \dots {\Sub{a_k}{b_k}{m}{n}} \Ar mn^{\mathbb{Z}}.
\ee

Furthermore,  the sequence $\left((a_k, b_k) \right)_k  $ the {itinerary} of $\theta$ under $\FF m n$. 
\end{theorem}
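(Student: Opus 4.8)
The plan is to reduce Theorem \ref{thm:substitutionscharacterization} to the already-established generation characterization (Theorem \ref{thm:generation_characterization}) by translating everything from the vertex alphabet $\LL mn$ to the arrow alphabet $\Ar mn$. The key conceptual point is that the operator $\Tr i mn$ provides a dictionary between admissible vertex-sequences and admissible arrow-sequences (Remark \ref{invertibilityTr}), and under this dictionary the geometric generation operators $\gen i mn$ acting on vertices should become genuine substitutions $\Sub i j mn$ acting on arrows. So first I would \emph{define} the substitutions: for each pair $(i,j)$ with $1\leq i\leq n-1$, $1\leq j\leq m-1$, set
\be
\Sub i j mn := (\Tr 0 nm)^{-1} \circ \gen i nm \gen j mn \circ \Tr 0 mn,
\ee
and then verify that this conjugated operator really is a substitution in the sense of Definition \ref{def:substitution}, i.e. that it acts letter-by-letter by juxtaposition rather than in some context-dependent way. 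This is exactly the content of Lemma \ref{lemma:uniqueprecedent} (the ``unique precedent'' property): because every arrow entering a given vertex $n_1$ of $\GD i mn$ carries an arrow-label ending in the same symbol, the image of an arrow under the composition depends only on that arrow and not on its neighbours in the sequence. I would spell out how the unique-precedent property guarantees well-definedness of the letter-to-word assignment, so that $\Sub i j mn$ extends to $\Ar mn^\mathbb{Z}$ by concatenation.

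Next I would show the two intersections agree. The characterization \eqref{intersection_substitutions} is obtained from \eqref{infiniteintersection}–\eqref{unionfiniteintersection} by applying $(\Tr 0 mn)^{-1}$ on the left, inserting the identity $\Tr 0 {\bullet}{\bullet} (\Tr 0 {\bullet}{\bullet})^{-1}$ between each pair of consecutive generation operators (telescoping), and using that $\Tr 0 mn$ and its inverse are continuous bijections between admissible vertex-sequences and admissible arrow-sequences. Concretely, using Definition \ref{def:Tr} of $\Tr {b_0}mn = \perm {b_0}mn \circ \Tr 0 mn$, the composite
\be
\Tr {b_0}mn\, \Sub{a_1}{b_1}mn \cdots \Sub{a_k}{b_k}mn\, \Ar mn^\mathbb{Z}
\ee
equals $\perm{b_0}mn \Tr 0 mn (\Tr 0 mn)^{-1}(\gen{a_1}nm\gen{b_1}mn)\cdots \Tr 0 mn\, \Ar mn^{\mathbb{Z}}$, and the interior $\Tr 0 mn (\Tr 0 mn)^{-1}$ factors cancel, leaving exactly $\mathscr{G}(b_0,(a_1,b_1),\dots,(a_k,b_k))$ from \eqref{infiniteintersection} after identifying $\Tr 0 mn \Ar mn^\mathbb{Z} = \mathrm{Ad}_{m,n}$. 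Thus membership in the arrow-intersection is equivalent to membership in the vertex-intersection, and Theorem \ref{thm:generation_characterization} then gives the ``if and only if'' with the closure of cutting sequences. The final clause, that $\left((a_k,b_k)\right)_k$ is the itinerary of $\theta$ under $\FF mn$, is inherited directly from Proposition \ref{prop:itineraries_vs_sectors} together with the remark following Theorem \ref{thm:generation_characterization}, since the indexing sequences are unchanged by the change of alphabet.

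I expect the main obstacle to be the careful bookkeeping in establishing that $\Sub i j mn$ is genuinely a substitution and not merely an operator on admissible sequences. The subtlety is that $\gen i nm \gen j mn$ is only defined on \emph{admissible} sequences (it interpolates along paths in diagrams), whereas a substitution in the sense of Definition \ref{def:substitution} must be defined freely on $\Ar mn^\mathbb{Z}$ by applying a fixed word-image to each letter. Reconciling these requires showing that the conjugate of an arrow, i.e. $(\Tr 0 nm)^{-1}\gen i nm\gen j mn \Tr 0 mn$ applied to a single arrow-letter, yields a well-defined finite word independent of context; this is precisely where Lemma \ref{lemma:uniqueprecedent} is indispensable, since it ensures the generated label-strings overlap consistently at the shared vertices where consecutive arrows meet. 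I would also need to check that admissibility is preserved in both directions (images of admissible arrow-words under $\Sub i j mn$ are admissible, and conversely), so that the intersection in \eqref{intersection_substitutions} is nonempty and matches the vertex-level intersection exactly. Handling the degenerate corner cases where red and green edges coincide (Convention \ref{convention:ordered_sides_affine}) will require the same degenerate-stair reasoning already used in the proof of Lemma \ref{lemma:uniqueprecedent}, and I would invoke that argument rather than redo it.
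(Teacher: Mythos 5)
Your proposal is correct and takes essentially the same route as the paper: where you define $\Sub i j m n$ abstractly as the conjugate $(\Tr 0 m n)^{-1}\circ \gen i nm \gen j mn \circ \Tr 0 m n$ and then invoke Lemma \ref{lemma:uniqueprecedent} to see that this conjugate genuinely acts letter-by-letter, the paper defines the same substitutions concretely as compositions of pseudo-substitutions read off the generation diagrams (Definitions \ref{def:pseudosubstitutions} and \ref{def:substitutions}, whose well-posedness rests on the same unique-precedent Lemma \ref{lemma:uniqueprecedent}) and records the conjugation identity as Lemma \ref{lemma:conjugationgensub}, after which both arguments conclude with the identical telescoping insertion of $\Tr 0 m n (\Tr 0 m n)^{-1}$ into Theorem \ref{thm:generation_characterization} and inherit the itinerary clause from Proposition \ref{prop:itineraries_vs_sectors}. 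The only slip is typographical: the left factor in your displayed definition should be $(\Tr 0 m n)^{-1}$ rather than $(\Tr 0 n m)^{-1}$, since the output of $\gen i nm \gen j mn \circ \Tr 0 m n$ is a vertex sequence admissible in $\T 0 m n$, exactly as your own telescoping computation later assumes.
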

This gives the desired $\mathcal{S}$-adic characterization, where 
\bes
\mathcal{S}= \mathcal{S}_{m,n}= \{ \sigma_{i,j}, \qquad 1\leq i \leq n-1 , 1\leq i \leq m-1 \}.
\ees
Equivalently, \eqref{intersection_substitutions} can be rephrased by saying that  any sequence in the closure of the set of cutting sequences is obtained as an \emph{inverse limit} of products of the substitutions in $\mathcal{S}_{m,n}$, i.e. there exists a sequence of labels $a_k$ in $\Ar mn$ such that
\begin{equation}\label{inverse_limit}
w=  \lim_{k \to \infty} \Tr {b_0}{m}{n} {\Sub{a_1}{b_1}{m}{n}} {\Sub{a_2}{b_2}{m}{n}} \dots {\Sub{a_k}{b_k}{m}{n}} a_k.
\end{equation}
The above expression is known as \emph{$\mathcal{S}$-adic expansion} of $w$. We refer to \cite{BD} for details. 

The proof of Theorem \ref{intersection_substitutions}, which is presented in the next section \S\ref{proof:substitutions}, essentially consists of rephrasing Theorem \ref{thm:generation_characterization} in the language of substitutions.

As an example of the substitutions which occur, we list one of  the substitutions for $m=4, n=3$ below  (Example \ref{ex:substitution1for43}) and give the other substitutions for $m=4, n=3$ in Example \ref{ex:substitutionsfor43} as composition of the pseudosubstitutions (see Definition \ref{def:pseudosubstitutions} below) in Example \ref{ex:pseudosubstitutionsfor43}.   We explain in the next section how these substitutions were computed (see in particular Example \ref{ex:substitution_howto}).

\begin{example}[Substitutions for $\M 43$]\label{ex:substitution1for43}
The substitution $\Sub 11  43$ for cutting sequences on $\M 43$ is the following:
\begin{align*}
&\Sub 11 43:&
 \Sub 11  43( r_1)&= l_2 v_1  r_3v_4  
&  \Sub 11  43( l_1)&= l_1  
&  \Sub 11  43( v_1)&= l_2 v_1  \\&&
 \Sub 11  43(r_2)&= r_2  
&  \Sub 11  43(l_2)&= r_2 l_1 
&  \Sub 11  43(v_2)&= r_3  \\&&
 \Sub 11  43(r_3)&= r_3  
&  \Sub 11  43(l_3)&= r_2 v_5 v_6l_5 v_3  
&  \Sub 11  43(v_3)&= r_6 l_5 v_3  \\&&
 \Sub 11  43( r_4)&= l_4 r_3 v_4  
&  \Sub 11  43( l_4)&= l_4  
&  \Sub 11  43( v_4)&= l_4 r_3 v_4  \\&&
 \Sub 11  43( r_5)&= l_4 v_2 r_5  
&  \Sub 11  43( l_5)&= l_5  
&  \Sub 11  43( v_5)&= l_1  \\&&
 \Sub 11  43( r_6)&= r_6  
&  \Sub 11  43( l_6)&= r_6 l_5 v_3 
&  \Sub 11  43( v_6)&= r_2 v_5 v_6 
\end{align*}
In Example \ref{ex:substitution1for43} below we explain how the above substitution can be  obtained from the generation rules in the previous section.  The other substitutions for $\M 43$ are given in Example \ref{ex:substitutionsfor43}, see also Example \ref{ex:pseudosubstitutionsfor43}.
%

\subsection{From generations to substitutions}\label{proof:substitutions}
We will now provide the \emph{recipe} of how to translate generation operators (in the alphabet $\LL mn$) into a substitution (in the alphabet $\AA mn$), and in particular to obtain the substitutions in the previous example. This is done in Definition \ref{def:substitutions} and Lemma \ref{lemma:conjugationgensub}.   They constitute the heart of the proof of Theorem \ref{thm:substitutionscharacterization} from Theorem \ref{thm:generation_characterization}, which is presented at the end of this section.
 We begin first with a concrete example, which the definitions below will then formalize.
 


\begin{example}\label{ex:substitution_howto}
Let $m=4$ and $n=3$. Let us explain how to associate to the composition of the two generation operators $\gen 1 34 \circ \gen 1 43 $ a substitution on the arrows alphabet $\Ar 43$. For clarity, we will denote in red the symbols (edge labels) of the alphabet  $\rd \Ar 43$ and in green the ones of $\gr \Ar 34$. 
 Let us first consider the generation diagram $\T 1 43$ used to define $\gen 1 43 $. Start from the arrow labeled by $\rd r_1$ on the universal diagram $\UD 43$, which in this diagram is the arrow from the vertex labeled by $\rd 7$ to the vertex $\rd 9$. The generating word on this arrow in $\GD 1 43$ is the green word $\gr 3 6$. Remark also that all the arrows incoming to the red vertex $\rd 7$ (in this case only one) carry a green word which ends with $\gr 4$ (in this case {\gr 54}), while all the arrows outgoing from the red vertex $\rd 9$ (two) carry a green word which starts with $\gr 5$ ({\gr 5} and {\gr 54}). Thus, the derived sequence of a sequence which contains the transition $\rd 7 9$ contain the word  $\gr 4  3 6 5$. We look now at the transition diagram $\T 0 34$ on page \pageref{all-td34} (the first in Figure \ref{all-td34}) and see that a path which goes through $\gr 4  3 6 5 $ crosses the arrows which are labeled by ${\gr l_1 v_3 r_6}$ in $\UD 34$ (see Figure \ref{fig:arrows_names_ex}). We choose to include in the green path associated to the transition $\rd r_1$ the first arrow but not the last one, which will be included in the green path associated to the following red transition. Thus, we say that the label $\rd r_1$ of  $\rd \Ar 43$ is mapped to  the word ${\gr l_1 v_3 }$ in the alphabet $\gr \Ar 34$. We repeat the same process for every arrow on $\GD 0 43$. This gives a map from edge labels in $\rd \Ar 43$  to words in $\gr \Ar 34$, which can be extended to words in $\rd \Ar 43$ by juxtaposition. We call this type of operator a \emph{pseudo-substitution} (see Definition \ref{def:pseudosubstitutions}), since it acts as a substitution but between two different alphabets. { Note that a pseudo-substitution is sometimes called a \emph{free semi-group morphism.}}


Similarly, we repeat the same process for arrows for the dual \bm surface $\M 34$. For example, for the generation diagram   $\GD 1 34$ used to define $\gen 1 34 $ we see that the arrow labeled by $\gr l_1$ is the arrow from $\gr 1$ to $\gr 3$ and carries the word $\rd 16$. Furthermore, the unique incoming arrow to $\gr 1$ carries the label $\rd 2$. Since the word $\rd 216$ describes  in the diagram $\T 0 43$, describes a path through the arrows labeled by $\rd l_2 v_1$ in $\UD 34$ in Figure \ref{fig:arrows_names_ex}, we  associate to $\gr l_1$ the word $\rd l_2 v_1$.
Reasoning in a similar way, we associate to $\gr v_3$  the word  $\rd r_3v_4$ (given by the path $\rd 652$). Thus, by juxtaposition, the word $\gr l_3 v_4 r_6$ in  $\gr \Ar 43$ maps to the word $\rd l_2 v_1  r_3v_4$ in $\rd \Ar 43$. 

Thus, the composition $\gen 2 34 \circ \gen 1 43 $ sends $\rd r_1$ to $\rd l_2 v_1  r_3v_4$. Thus we can define a standard substitution $\sigma_{1,2}^{43}$ in the alphabet $\rd \Ar 43$, by setting $\sigma_{1,2}^{43}({\rd r_1})= {\rd l_2 v_1  r_3v_4 r_2}$ and similarly for the other labels of $\rd \Ar 43$. This produces the substitutions in the Example \ref{ex:substitution1for43} above.
\end{example}

We will now state formally how to obtain substitutions from generations, thus formalizing the process explained in the Example \ref{ex:substitution_howto} above.  
As we already saw, since each generation operator maps cutting sequences on $\M nm$ to cutting sequences on $\M nm$, in order to get substitutions (in the standard sense of Definition \ref{def:substitution}) we will need to compose \emph{two} generation operators. It is easier though to first describe the substitutions in two steps, each of which correspond to one of the generation operators. 
Since the alphabet $\AA mn$ on which the substitution acts  corresponds to \emph{transitions} in the original alphabet $\LL mn$ and the transitions for $\M nm$ and for $\M mn$, the  intermediate steps will be described by \emph{pseudo-substitutions}, which are like substitutions but act on two different alphabets in departure and arrival:

\begin{definition}\label{def:pseudosubstitution}[Pseudo-substitution]
A \emph{pseudo-substitution} $\sigma $ from alphabet $\mathcal{A} $ to an alphabet $\mathcal{A}'$ is a map that sends each letter $a \in \mathcal{A}$  to a finite word  in $\mathcal{A}'$, then extended to act on $\mathcal{A} ^{\mathbb{Z}}$  by justapposition, so that if $\sigma (a) = w_a $ for some finite words $w_a$  in the letters of $\mathcal{A}'$ as $a \in \mathcal{A}$, then for $w= (a_i)^\mathbb{Z} \in \mathcal{A}^ \mathbb{Z}$ we have that $\sigma(\cdots a_{-1}  a_0  a_1  \cdots) = \cdots w_{a_{-1}}  w_{a_0}  w_{a_1}  \cdots$. 
\end{definition}

\begin{definition}[Pseudo-substitution associated to a generation]\label{def:pseudosubstitutions}
Let  $\PSub i  m n  $ for $\leq i \leq n-1$  
be the pseudo-substitution between the alphabets $\AA mn$ and $\AA nm$  
 defined as follows. 
Assume that the arrow from vertex $j$ to vertex $k$   of $\T i mn $ is labeled by $a$ in $\UD mn$. Let  $w_1 w_2 \dots w_N$ be the finite word  associated to this arrow in the generation diagram ${ \GD i m n}$. Then set
\bes
\PSub i  m n (a) = a_0 a_1 a_2 \dots a_{N-1}, 
\ees
where $a_k$ for $1\leq k \leq N-1$ are the labels in $\AA mn$ of the arrow from $w_i$ to $w_{i+1}$, while $a_0$ is the label of the arrow from the unique edge label in $\LL nm$ which always preceeds $j$ in paths on $\GD i mn$ 
 to $w_1$. 
\end{definition}

\begin{example}\label{ex:pseudosubstitutionsfor43}
Let $m=4$, $n=3$. For $i=1$, as we already saw in the beginning of Example \ref{ex:substitution_howto}, the arrow labeled by $\rd r_1$ on the universal diagram $\UD 43$, which in  the arrow from the vertex labeled by $\rd 7$ to the vertex $\rd 9$ in $\GD 1 43$, is labeled by  the green word $\gr 3 6$ and the labels of arrows incoming to the red vertex $\rd 7$ end with $\gr 4$. Furthermore, the path $\gr 436$ on $\T 0 43$ correspond to the arrows labeled by ${\gr l_1 v_3  }$. Thus we  set $\PSub i 43({\rd r_1})={\gr l_1 v_3 } $. Similarly, the arrow ${\rd r_2}$ in $\GD 1 43$ goes from {\rd 9} to {\rd 8}, is labeled by {\gr 5} and all three arrows which and in {\rd 9} have labels which end with {\gr 6}. Thus, since the path {\gr 65} correspond to the arrow $r_6$ in $\T 0 34$, we set  $\PSub i 43({\rd r_1})={r_6 } $.
Reasoning in the same way and generalizing it to $i=2$, we get the full pseudosubstitutions for $\M 43$ (Figure \ref{pseudo43}).

\begin{figure}[!h] 
\begin{align*}
&\PSub 1 43: 
 &\PSub 1 43 (r_1) &=l_1v_3
 &\PSub 1 43 (l_1) &=l_4
 &\PSub 1 43 (v_1) &=l_1 \\&
 &\PSub 1 43 (r_2) &=r_6
 &\PSub 1 43 (l_2) &=r_6v_4
 &\PSub 1 43 (v_2) &=l_2 \\&
 &\PSub 1 43 (r_3) &=l_2
 &\PSub 1 43 (l_3) &=l_5v_2
 &\PSub 1 43 (v_3) &=r_4v_2 \\&
 &\PSub 1 43 (r_4) &=r_2v_3
 &\PSub 1 43 (l_4) &=r_2
 &\PSub 1 43 (v_4) &=r_2v_3 \\&
 &\PSub 1 43 (r_5) &=l_3v_1
 &\PSub 1 43 (l_5) &=l_6
 &\PSub 1 43 (v_5) &=l_4 \\&
 &\PSub 1 43 (r_6) &=r_4
 &\PSub 1 43 (l_6) &=r_4v_2
 &\PSub 1 43 (v_6) &=l_5 \\
 &&&&&&& \\
&\PSub 2 43: 
 &\PSub 2 43 (r_1) &=r_1
 &\PSub 2 43 (l_1) &=r_4v_2
 &\PSub 2 43 (v_1) &=r_1 \\&
 &\PSub 2 43 (r_2) &=l_3v_1
 &\PSub 2 43 (l_2) &=l_3
 &\PSub 2 43 (v_2) &=r_2 \\&
 &\PSub 2 43 (r_3) &=r_2v_3l_5
 &\PSub 2 43 (l_3) &=r_5
 &\PSub 2 43 (v_3) &=l_1v_3 \\&
 &\PSub 2 43 (r_4) &=l_5
 &\PSub 2 43 (l_4) &=l_5v_2
 &\PSub 2 43 (v_4) &=l_5v_2 \\&
 &\PSub 2 43 (r_5) &=r_3
 &\PSub 2 43 (l_5) &=r_6v_4
 &\PSub 2 43 (v_5) &=r_4 \\&
 &\PSub 2 43 (r_6) &=r_4
 &\PSub 2 43 (l_6) &=l_1
 &\PSub 2 43 (v_6) &=r_5
\end{align*}
\begin{quote}\caption{The pseudosubstitutions for $\M 43$\label{pseudo43}} \end{quote}
\end{figure}

Let now $m=3$ and $n=4$. In the same way, for $i=1,2,3$, we can calculate the pseudosubstitutions for $\M 34$ (Figure \ref{pseudo34}).

\begin{figure}
\begin{align*}
&\PSub 1 34: 
&\PSub 1 34 (r_1)&=r_5v_3
&\PSub 1 34 (l_1)&=l_2v_1
&\PSub 1 34 (v_1)&=r_5 \\&
&\PSub 1 34 (r_2)&=l_4
&\PSub 1 34 (l_2)&=r_3
&\PSub 1 34 (v_2)&=l_5v_3 \\&
&\PSub 1 34 (r_3)&=r_3v_4
&\PSub 1 34 (l_3)&=l_4v_2
&\PSub 1 34 (v_3)&=r_3v_4 \\&
&\PSub 1 34 (r_4)&=r_6
&\PSub 1 34 (l_4)&=l_1
&\PSub 1 34 (v_4)&=l_1 \\&
&\PSub 1 34 (r_5)&=l_5v_3v_4
&\PSub 1 34 (l_5)&=r_2v_5v_6 \\&
&\PSub 1 34 (r_6)&=r_2
&\PSub 1 34 (l_6)&=l_5 \\&
 &&&&&&& \\
&\PSub 2 34: 
&\PSub 2 34 (r_1)&=l_3v_4
&\PSub 2 34 (l_1)&=r_4v_6
&\PSub 2 34 (v_1)&=l_3 \\&
&\PSub 2 34 (r_2)&=r_2v_5v_6
&\PSub 2 34 (l_2)&=l_5v_3v_4
&\PSub 2 34 (v_2)&=r_5v_3v_4 \\&
&\PSub 2 34 (r_3)&=l_5v_3
&\PSub 2 34 (l_3)&=r_2v_5
&\PSub 2 34 (v_3)&=l_5v_3v_4 \\&
&\PSub 2 34 (r_4)&=l_4v_2
&\PSub 2 34 (l_4)&=r_3v_4
&\PSub 2 34 (v_4)&=r_3 \\&
&\PSub 2 34 (r_5)&=r_5v_3v_4
&\PSub 2 34 (l_5)&=l_2v_1v_2 \\&
&\PSub 2 34 (r_6)&=l_2v_1
&\PSub 2 34 (l_6)&=r_5v_3 \\&
 &&&&&&& \\
&\PSub 3 34: 
&\PSub 3 34 (r_1)&=r_1
&\PSub 3 34 (l_1)&=l_6
&\PSub 3 34 (v_1)&=r_1 \\&
&\PSub 3 34 (r_2)&=l_2v_1v_2
&\PSub 3 34 (l_2)&=r_5v_3v_4
&\PSub 3 34 (v_2)&=l_3v_4 \\&
&\PSub 3 34 (r_3)&=r_5
&\PSub 3 34 (l_3)&=l_2
&\PSub 3 34 (v_3)&=r_5v_3 \\&
&\PSub 3 34 (r_4)&=r_2v_5
&\PSub 3 34 (l_4)&=l_5v_3
&\PSub 3 34 (v_4)&=l_5 \\&
&\PSub 3 34 (r_5)&=l_3
&\PSub 3 34 (l_5)&=r_4 \\&
&\PSub 3 34 (r_6)&=r_4v_6
&\PSub 3 34 (l_6)&=l_3v_4 
\end{align*}
\begin{quote}\caption{The pseudosubstitutions for $\M 34$\label{pseudo34}} \end{quote}
\end{figure}
\end{example}
\end{example}


It is easy to check from the definition  that given a pseudo-substitution $\sigma$  between the alphabets $\mathcal{A}$ and $\mathcal{A}'$ and a pseudo-substitution $\tau$  between the alphabets $\mathcal{A}'$ and $\mathcal{A}$, their composition $\tau \circ \sigma$ is a substitution on the alphabet $\mathcal{A}$. Thus the following definition is well posed. 

\begin{definition}[Substitution associated to pair of generation]\label{def:substitutions}
For $\leq i \leq n-1$, $1\leq j \leq m-1$, let  $\Sub i j  m n  $ be the  substitution on the alphabets $\AA mn$ defined by
$$
\Sub i j  m n := \PSub j  n m  \circ \PSub i  m n . 
$$
\end{definition}
\begin{example}\label{ex:substitutionsfor43}
In Example \ref{ex:substitution1for43} we wrote the substitution $\Sub 11 43$ explicitly. The full list of substitutions for $\M 43$ can be produced by composing the pseudosubstitutions in Example \ref{ex:pseudosubstitutionsfor43}, as by Definition \ref{def:substitutions}, i.e. 
\bes
\mathcal{S}_{4,3} =\{ \Sub ij  43 :=  \PSub j  n m  \circ \PSub i  m n,  \qquad \text{for} \ 1\leq i \leq 2, 1\leq j\leq 3\}.
\ees
\end{example}

The following Lemma shows that, up to changing  alphabet from vertices labels to arrows labels as given by the operator $\Tr 0 m n $ and its inverse (see Remark \ref{invertibilityTr}),  the  substitutions $\Sub i k m n$  act as the composition of two generation operators. 
\begin{lemma}[From generations to substitutions]
\label{lemma:conjugationgensub}
 The substitutions $\Sub i j m n$ defined in Definition \ref{def:substitutions}  for any $1\leq i \leq n-1, 1\leq j \leq m-1$ are such that
\be\label{covariance}
\Tr 0 m n \circ \Sub i j m n \circ (\Tr 0 m n )^{-1} = \gen j nm \circ \gen i mn.
\ee
\end{lemma}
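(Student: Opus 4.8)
The plan is to verify the conjugation identity \eqref{covariance} by unwinding all the definitions involved and checking that both sides act identically on an admissible sequence in $\AA mn^{\mathbb{Z}}$. First I would fix an admissible word $a = (a_k)_k \in \AA mn^{\mathbb{Z}}$ and trace it through the left-hand side. Applying $(\Tr 0 m n)^{-1}$ is, by Remark \ref{invertibilityTr}, just the change of alphabet from arrows to the vertices of a path: it produces the sequence $w = (w_k)_k$ in $\LL mn^{\mathbb{Z}}$ admissible in $\T 0 m n$ whose consecutive transitions are exactly the arrows $a_k$. The content of the Lemma is then that applying the composite substitution $\Sub i j m n = \PSub j n m \circ \PSub i m n$ on arrows and converting back via $\Tr 0 m n$ reproduces precisely the effect of $\gen j n m \circ \gen i m n$ on $w$.

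The key step is to handle one pseudo-substitution at a time, matching it to one generation. I would prove the intermediate claim that for a word $w$ admissible in $\T i m n$,
\[
\Tr 0 n m \circ \PSub i m n \circ (\Tr i m n)^{-1} \, (\Tr i m n \, w) = \gen i m n \, w,
\]
i.e. that $\PSub i m n$ is the ``arrow-alphabet shadow'' of the generation operator $\gen i m n$. This is essentially a bookkeeping translation of Definition \ref{def:pseudosubstitutions}: by construction, $\PSub i m n$ sends the arrow label $a$ (the transition from vertex $j$ to vertex $k$ in $\T i m n$) to the string of \emph{arrow} labels $a_0 a_1 \dots a_{N-1}$ tracing the path in $\UD n m$ through the generating word $w_1 \dots w_N$ on that arrow in $\GD i m n$, with the initial arrow $a_0$ supplied by the \emph{unique precedent} guaranteed by Lemma \ref{lemma:uniqueprecedent}. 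The role of the unique precedent is exactly to ensure that when two consecutive arrows $a, a'$ of $w$ are juxtaposed, the generating strings glue into a single coherent path with no missing or doubled arrow at the seam: the precedent of the head vertex of $a'$ coincides with the tail vertex recorded by the last generated label of $a$. I would verify that this matching makes $\PSub i m n$ well-defined as a pseudo-substitution and that reading off vertices (via $\Tr 0 n m$) of the concatenated arrow-path recovers precisely $\mathfrak g_i^0 (\perm i n m)^{-1} w = \gen i m n w$ from Definition \ref{genoperator}, since the generation diagram $\GD i m n$ was built (Proposition \ref{inverse}) to interpolate exactly the strings read off in this path.

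Having established the single-step identity for each of $\PSub i m n$ (mapping $\AA mn \to \AA nm$) and $\PSub j n m$ (mapping $\AA nm \to \AA mn$), I would compose them. The point is that the intermediate alphabet $\AA nm$ is exactly where the output of the first generation $\gen i m n$ lives, and that the conversion operators $\Tr 0 n m$ and $(\Tr 0 n m)^{-1}$ appearing when composing the two shadow identities cancel, since $\gen i m n$ outputs sequences admissible in $\T 0 n m$ which are then fed to $\gen j n m$. Concatenating the two intermediate claims yields
\[
\Tr 0 m n \circ (\PSub j n m \circ \PSub i m n) \circ (\Tr 0 m n)^{-1} = \gen j n m \circ \gen i m n,
\]
which is \eqref{covariance} after substituting $\Sub i j m n = \PSub j n m \circ \PSub i m n$ from Definition \ref{def:substitutions}.

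I expect the main obstacle to be the seam/boundary bookkeeping: rigorously showing that the juxtaposition of the per-arrow generating strings produces exactly the bi-infinite path with no overcounting or undercounting at the junction between consecutive arrows. This is precisely the issue that the ``unique precedent'' prefix $a_0$ in Definition \ref{def:pseudosubstitutions} is designed to resolve, and it is why Lemma \ref{lemma:uniqueprecedent} is invoked as a prerequisite; the careful point is that each generated block must include its incoming arrow but \emph{not} its outgoing one (as in the convention highlighted in Example \ref{ex:substitution_howto}, where we include the first arrow but not the last), so that gluing consecutive blocks neither repeats nor omits an arrow. Once this convention is pinned down and shown consistent with the definition of $\Der mn$ inverting $\gen i m n$ (Proposition \ref{inverse}), the rest is a direct unwinding of Definitions \ref{genoperator}, \ref{def:Tr}, \ref{def:pseudosubstitutions} and \ref{def:substitutions}.
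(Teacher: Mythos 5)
Your proposal is correct and takes essentially the same route as the paper's proof: the paper likewise reduces \eqref{covariance} to the single-step identity $\Tr 0 n m \circ \PSub i m n \circ (\Tr 0 m n )^{-1} = \gen i mn$ (inserting $(\Tr 0 n m)^{-1} \circ \Tr 0 n m$ between the two pseudo-substitutions, which is your cancellation step read backwards) and verifies it by tracking a block of transitions $n_1 n_2 n_3$ through both sides, with the unique precedent of Lemma \ref{lemma:uniqueprecedent} supplying exactly the seam arrow so that consecutive generated blocks concatenate into a single path with no arrow repeated or omitted. The only blemish is the typing in your displayed intermediate claim, which should read $\Tr 0 n m \circ \PSub i m n \circ (\Tr 0 m n )^{-1} = \gen i mn$ acting on words admissible in $\T 0 m n$; this is a notational slip, not a gap.
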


Before giving the proof, we show by  an example the action of the two sides of the above formula. 
\begin{example}
Let us verify for example that the formula in Lemma \ref{lemma:conjugationgensub} holds for $m=4,n=3$ and $i=1,j=1$ when applied to a word  $w$ admissible in $\T 0 43$  which contains the transition  $\rd 123$. 

Let us first compute the action of the right hand side of \eqref{covariance}. 
Recall (see Definition \ref{genoperator}) that $\gen 1 43$ is given by first applying  $\perm 1 43$, then $\mathfrak g_1^0$. Since $\perm 1 43$ maps  $\rd 123$ to 
 $\rd 7 9 8$, by looking at the generation diagram $\GD 1 43$ (Figure \ref{gendiagram-43}), we see that $\gen 1 43 w$ will contain the string   $\gr 4365$. Then, to apply  $\gen 1 43$ we first apply $\perm 1 34$, which sends $\gr 4365$ to $\gr 1387$, then look at the generation diagram $\GD 1 34$ (Figure \ref{gendiagram-34}), to see that a path which contains $\gr 1387$ will also contain $\rd 216523$.  
Hence $\gen 1 34  \gen 1 43 w$ will contain $\rd 216523$. 

Let us now compute the action of the left hand side of (\ref{covariance}).
Since the arrow from the vertex $\rd 1$ to $\rd 2$ in $\T 1 43$ is labeled by $\rd r_1$ (Figure \ref{fig:arrows_names_ex}), 
and the one from $\rd 2$ to $\rd 3$ is labeled by $\rd r_2$, the operator $(\Tr 0 m n )^{-1}$ sends $\rd 123$ to $\rd r_1 r_2$. Then, from Example \ref{ex:pseudosubstitutionsfor43} we have that $\Sub 11 43 ({\rd r_1 r_2}) = {l_2v_1r_3v_4r_2 }$. Finally, 
$\Tr 0 m n$ maps this word in $\AA 43$ to $\rd 216523$ (see Example \ref{ex:Tr}). 
Thus, we have verified again that $\Tr 0 43 \circ \Sub 11 43 \circ (\Tr 0 43 )^{-1} (w)$ contains $\rd 216523$. 
\end{example}

\begin{proof}[Proof of Lemma \ref{lemma:conjugationgensub}]
Since $$\Sub i j  m n := \PSub j  n m  \circ \PSub i  m n = \PSub j  n m  \circ  (\Tr 0 m n )^{-1} \circ \Tr 0 m n \circ \PSub i  m n,$$ 
it is enough to show that 
\bes
\Tr 0 n m \circ \PSub i  m n \circ (\Tr 0 m n )^{-1} =  \gen i mn, \qquad \text{for all} \ 1\leq i \leq m-1, \quad \text{for all} \ m,n.
\ees
Consider any sequence $w \in {\LL mn}^\mathbb{Z}$. Let $ n_1 n_2$ and $ n_2 n_3$ be any two pairs of transitions in $\T 0 mn$.  Let $u_1 u_2 \dots u_{N}$ be the label of the arrow from $n_1$ to $n_2$ in $\GD i mn$ and $v_1 v_2 \dots v_{M}$ the one of the arrow 
between $n_2$ and $n_3$.  Then, for every occurrence of the transitions $n_1 n_2 n_3$ transitions  in  $w$ (i.e. every time $w_{p-1}=n_1 , w_p=n_2, w_{p+1} =n_3 $ for some $p \in \mathbb{N}$) gives rise to a block of the form  $ u_1 u_2 \dots u_N v_1 v_2 \dots v_M$ in $\LL nm$ in $\gen i mn(w)$.


Let $a$ be the label in $\AA mn$ of the arrow from $n_1$ to $n_2$ and $b$ be the label of the arrow from $n_2$ to $n_3$. Thus, when we apply $(\Tr 0 m n )^{-1}$ to $w$, each block  $n_1 n_2 n_3$ in $w$ is mapped to the word $ab$. 

Let  $a_i$ for $1\leq i \leq N-1$ be the labels of the arrows from $u_i$ to $u_{i+1}$ and $a_0$ be the label of the arrow from the unique label in $\AA mn$ which preceeds $n_1$ 
 to $u_1$. Thus, by Definition \ref{def:pseudosubstitutions}, we have that $\PSub i  m n (a) = a_0 a_1 a_2 \dots a_{N}$. Now, let $b_i$ for $1\leq i \leq M-1$ be the labels of the arrow from $v_i$ to $v_{i+1}$. Let $b_0$ be the arrow from  $u_{N}$ to $v_1$ and remark that $u_N$ is (by uniqueness) the unique label in $\AA mn$ which preceeds $n_2$. Thus, again by  Definition \ref{def:pseudosubstitutions},  $\PSub i  m n (b) = b_0 b_1 b_2 \dots b_{M-1}$.


 
Thus, we have that $\PSub i  m n (ab) = a_0 a_1 a_2 \dots a_{N} b_0 b_1 \dots b_M$.  Finally, by definition of the operator $\Tr 0 mn$ (recall Definition \ref{def:Tr}) and of the arrows $a_i$ and $b_i$ given above, $\Tr 0 m n \circ \PSub i  m n \circ (\Tr 0 m n )^{-1} (w)$ contains the word $u_1 u_2 \dots u_N v_1 \dots v_M$.  This shows the equality between the two sides of \eqref{covariance}.
\end{proof}

\begin{proof}[Proof of Theorem \ref{thm:substitutionscharacterization}]
By Theorem \ref{thm:generation_characterization}, $w \in {\LL mn}^\mathbb{Z}$ belongs to the closure of cutting sequences on $\M mn$ if and only if  there exists $(a_k)_k, (b_k)_k$ such that it belongs to the intersection \eqref{intersection_substitutions}, i.e.  for every $k$ there exists a word $u^k$  in $\LL mn ^\mathbb{Z} $ which is admissible in  $\T m n 0$ such that
\begin{align*}
w&= (\perm {b_0} m n)^{-1}(\gen {a_1} n m  \gen {b_1} mn) ( \gen {a_2} n m  \gen {b_2}  mn )  \ldots ( \gen {a_k} n m  \gen {b_k} m n) u^k \\
&=(\perm {b_0} m n)^{-1} \Tr 0 m n (\Tr 0 m n)^{-1}  (\gen {a_1} n m  \gen {b_1} mn) \Tr 0 m n (\Tr 0 m n)^{-1} ( \gen {a_2} n m  \gen {b_2}  mn ) \Tr 0 m n \ldots (\Tr 0 m n)^{-1}( \gen {a_k} n m  \gen {b_k} m n) \Tr 0 m n u^k \\
&=  \Tr  {b_0} m n \Sub {a_1} {b_1}  m n  \Sub {a_2} {b_2}  m n \ldots \Sub {a_k} {b_k}  m n \Tr 0 m n u^k ,  
\end{align*}
where in the last line we applied Lemma \ref{lemma:conjugationgensub} and recalled the definition  $\Tr  i m n := (\perm i mn)^{-1} \circ \Tr 0 m n$  of the operators $\Tr i mn$ (see Definition \ref{def:Tr}).  Remarking that $ \Tr 0 m n u^k $ is a sequence in the alphabet $\AA mn$ which is  admissible by definition of $\Tr 0 m n$, this shows that $w$ is in the closure of cutting sequences if and only if it belongs to the intersection \eqref{intersection_substitutions}.    
\end{proof}

\appendix
\section{Renormalization on the Teichm\"uller disk}  \label{teich}
In this section we describe how the renormalization algorithm for cutting sequences and linear trajectories defined in this paper for \bm surfaces can be visualized on the Teichm\"uller disk of $\M mn$. This is analogous to what was described in \cite{SU2} by Smillie and the third author for the analogous renormalization algorithm for the regular octagon and other regular $2n$-gons introduced in \cite{SU}, so we will only give a brief overview and refer to \cite{SU2} for details.

\subsection{The Teichm\"uller disk of a translation surface}\label{Teichdisksec}
The Teichm\"uller disk of a translation surface $S$ can be identified with  a space of marked translation surfaces as follows. 
Let $S$ be a translation surface.   
Using the convention that a map determines its range and domain we can identify a triple with a map and denote it by $[f]$. 
We say two triples  $f:S\to S'$ and $g:S\to S''$ are equivalent if there is a translation equivalence $h:S'\to S''$ such that $g=fh$. 
Let $\tilde{\mathcal{ M}}_A(S)$ be the set of equivalence classes of triples. We call this the set of \emph{marked translation surfaces affinely equivalent to $S$}. There is a canonical basepoint corresponding to the identity map $id:S\to S$. 
We can also consider marked translation surfaces up to \emph{isometry}. We say that two triples $f:S\to S'$ and $g:S\to S''$ are equivalent up to isometry if there is an isometry $h:S'\to S''$ such that $g=fh$.  Let ${\mathcal{\tilde M}}_{I}(S)$ be the collection of isometry classes of triples. 

Let us denote by  $\mathbb{H}$ the upper half plane,
and by $\mathbb{D}$ the unit disk. 
In what follows, we will identify  them by the conformal map $\phi: \mathbb{H} \rightarrow \mathbb{D}$ given by $\phi(z) = \frac{z-i}{z+i}$. One can show that the set ${\mathcal{\tilde M}}_A(S)$ can be canonically identified with $SL_\pm(2,\mathbb{R})$. More precisely, one can map the matrix  $\nu \in SL_\pm(2,\R)$ to the marked triple $\Psi_\nu: S \to \nu S$, where $\Psi_\nu$ is the standard affine deformation of $S$ given by  $\nu $ and show that this map is injective and surjective (see the the proof of Proposition 2.2 in \cite{SU2}).   
The space ${\mathcal{\tilde M}}_{I}(S)$ of marked translation surfaces up to isometry is hence isomorphic to $\mathbb{H}$ (and hence to $\mathbb{D}$) (see Proposition 2.3 in \cite{SU2}).   The hyperbolic plane has a natural \emph{boundary}, which 
can be naturally identified with  the projective space $\RP$.  
The point  $\begin{pmatrix} x_1&x_2\end{pmatrix}$  in $\RP$ is sent 
to the point $e^{i \theta_x} \in \partial \mathbb{D}$ where $\sin  \theta_x = -2x_1x_2 /(x_1^2+x_2^2)$ and $\cos \theta_x= (x_1^2- x_2^2) /(x_1^2+x_2^2)$. 



\subsubsection*{Actions on the Teichm\"uller disk and Teichm\"uller orbifolds.}
The subgroup $SL_\pm(2,\R)\subset GL(2,\R) $ acts naturally on ${\mathcal{\tilde M}}_A(S)$ by the following \emph{left action}. { Given a triple $f:S\to S'$, an element $\eta \in SL_\pm(2,\R)$ maps $[f]$ to $[\eta f]$ where $\eta f :S\to S''$ is obtained by post-composing $f$ with the map $\eta : S' \to S'':= \eta S$ given by the linear action of $\eta$ on the translation surface defined by post-composing the charts of the translation surface with $\eta$}.  Using the identification of ${\mathcal{\tilde M}}_A(S)$ with $SL_\pm(2,\R)$, this action corresponds to left multiplication by $\eta$. One can see that 
this action is simply transitively on ${\mathcal{\tilde M}}_A(S)$. 
There is also a natural \emph{right action} of $\Aff(S)$ on the set of triples. Given an affine automorphism $\Psi:S\to S$ we send $f:S\to S'$ to $f \Psi:S\to S'$. This action induces a right action of $V(S)$ on ${\mathcal{\tilde M}}_A(S)$. Using the identification of ${\mathcal{\tilde M}}_A(S)$ with $SL_\pm(2,\R)$, this action corresponds to right multiplication by $D\Psi$. It follows from the associativity of composition of functions that \emph{these two actions commute}.  

\smallskip
The Veech group acts via isometries with respect to the hyperbolic metric of constant curvature on $\mathbb{H}$. 
This action induces an action of the Veech group on $\RP$ seen as boundary of $\mathbb{H}$ (or $\mathbb{D}$),  
which correspond to the  projective action of $GL(2,\R)$ on row vectors coming from multiplication on the right, namely 
$\begin{pmatrix} z_1 & z_2\end{pmatrix}\mapsto \begin{pmatrix} z_1 & z_2 \end{pmatrix} \left(\begin{smallmatrix} a & b \\ c & d\end{smallmatrix}\right)$. 
When the matrix $\nu = \left( \begin{smallmatrix} a & b \\ c & d\end{smallmatrix}\right)$ has positive determinant it takes the upper and lower half-planes to themselves and the formula is 
$z \mapsto \frac{az+c}{bz+ d}$. 
When the matrix $\nu$ has negative determinant the formula is $z \mapsto \frac{a \overline{z}+c}{b \overline{z}+ d}$.
 



\smallskip
The \emph{Teichm\"uller flow} is given by the action  of the $1$-parameter subgroup $g_t$ of $SL(2,\mathbb{R})$ given by the diagonal matrices $$g_t : = \begin{pmatrix} e^{t/2} & 0 \\  0 & e^{-t/2}  \end{pmatrix}, \qquad t \in \mathbb{R},$$ on ${\mathcal{\tilde M}}_{A}(S)$. 
If we project ${\mathcal{\tilde M}}_{A}(S)$ to ${\mathcal{\tilde M}}_{I}(S)$ by sending a triple to its isometry class and  using the identification  ${\mathcal{\tilde M}}_{I}(S)$ with $ \mathbb{H}$ described in \S~\ref{Teichdisksec}, then the Teichm\"uller flow corresponds to the hyperbolic geodesic flow on $T_1 \mathbb{H}$, i.e.~orbits of the $g_t$-action on ${\mathcal{\tilde M}}_{A}(S)$ project to geodesics in $\mathbb{H}$ parametrized at unit speed. We call a $g_t$-orbit in ${\mathcal{\tilde M}}_{A}(S)$ (or, under the identifications, in $T_1 \mathbb{D}$) a \emph{Teichm\"uller geodesic}. 

\smallskip
The quotient of ${\mathcal{\tilde M}}_{I}(S)$ by the natural right action of the Veech group $V(S)$ is the moduli space of unmarked translation surfaces, which we call ${\mathcal{M}}_{I}(S) = {\mathcal{\tilde M}}_{I}(S) / V(S)$. This space is usually called the \emph{Teichm\"uller curve associated to $S$}, but since since we allow orientation-reversing automorphisms, this quotient might be a surface with boundary, so the term  \emph{Teichm\"uller orbifold associated to $S$} is  more appropriate. 
We denote by ${\mathcal{ M}}_{A}(S)$ the quotient ${\mathcal{\tilde M}}_{A}(S)/ V(S)$ of ${\mathcal{\tilde M}}_{A}(S)$ by the right action of the Veech group (this space is a four-fold cover of the tangent bundle to ${\mathcal{M}}_{I}(S)$ in the sense of orbifolds, see Lemma~2.5 in \cite{SU2}).  
 The Teichm\"uller flow on ${\mathcal{ M}}_{A}(S)$ can be identified with the geodesic flow on the Teichm\"uller orbifold, which, in the particular case where the space ${\mathcal{M}}_{I}(S)$ is a geodesic polygon in the hyperbolic plane, is just the \emph{hyperbolic billiard flow} on the polygon. 

\subsection{Veech group action on a tessellation of the Teichm\"uller disk of a \bm surface} \label{sec:tessellation}
Let $\M mn$ be the $(m,n)$ \bm translation surface. We recall from \S~\ref{veechofbm} that the Veech group of $\M mn$ (as well as the Veech group of the dual surface $\M nm$) is isomorphic to the $(m,n,\infty)$ triangle group or it has index $2$ in it (when $n,m $ are both even, see \cite{Hooper}). Thus, the fundamental domain for the action described above of the Veech group on the Teichm\"uller disk is a hyperbolic triangle whose angles are $\pi/n,\pi/m$, and $0$. The action of the Veech group can be easily visualized by considering a tessellation of the hyperbolic plane by $(m,n,\infty)$ triangles, as shown in Figure \ref{hypdisk1} for the $(3,4)$ \bm surface. In this example, the tessellation  consists of triangles whose angles are $\pi/3,\pi/4$, and $0$. The rotational symmetries of order $3$ and $4$ appear clearly at alternating interior vertices. Triangles in the tessellation can be grouped to get a tessellation into hyperbolic polygons which are either $2m$-gons or $2n$-gons. The $2m$-gons (respectively  the $2n$-gons) have as a center an elliptic point of order $m$ (respectively $n$) and have exactly $m$ (respectively $n$) ideal vertices. For example, the tessellation  in Figure \ref{hypdisk1} contains a supertessellation by octagons with four ideal vertices and hexagons with three ideal vertices.

\begin{figure}[!h] 
\centering
\includegraphics[width=.4\textwidth]{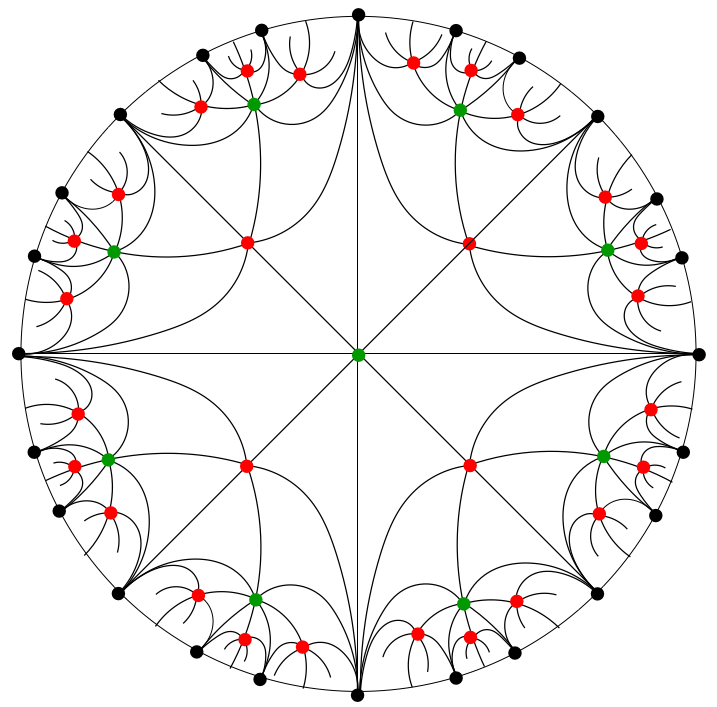}
\hspace{1cm}
\includegraphics[width=.4\textwidth]{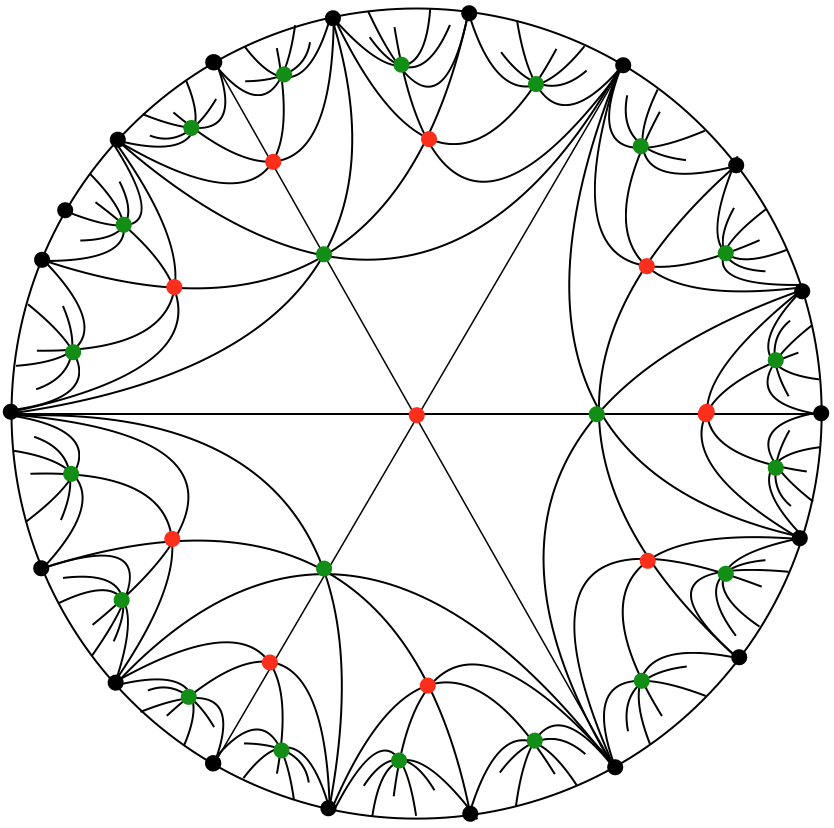}
\begin{quote}\caption{The first four steps of the tessellation of the hyperbolic disk by $(3,4,\infty)$ triangles, with $\M 34$ or $\M 43$ in the center respectively. Angles of $\pi/3, \pi/4$ and $0$ are indicated by red, green and black dots, respectively. \label{hypdisk1}} \end{quote}
\end{figure}

If we consider the Teichm\"uller disk of $\M mn$ pointed at $\M mn$, i.e. we choose the center of the disk $\Disk$ to represent the base triple $id: \M mn \to \M mn$, 
the rotation of order $\pi/n$ on the plane acts as a rotation by angle $2\pi/n$ of the Teichm\"uller disk. On the other hand, if we center the Teichm\"uller disk at  $\M nm$, i.e. we choose the center of the disk $\Disk$ to represent the base triple $id: \M nm \to \M nm$ and mark triples by $\M mn$,  the rotation of order $\pi/m$ acts a a rotation by an angle $2\pi/m$ of the Teichm\"uller disk.  The derivative $\derAD m n$  of the affine diffeomorphism $\AD m n$ (described in \S~\ref{sec:affine}) acts on the right on $\Disk$ by mapping the center of the disk, which in this case is a center of an ideal $2n$-gon, into the center of an ideal $2m$-gon.  Thus, the elliptic element of order $2m$ in the Veech group of $\M mn$ can be obtained by conjugating the rotation $\rho_m$ by an angle $\pi/m$ acting on $\M nm$ by  the derivative $\derAD m n$ of the affine diffeomorphism  $\AD m n$ sending $\M mn$ to the dual surface $\M nm $ (described in \S~\ref{sec:affine}), i.e. it has the form ${\derAD mn} ^{-1}\rho_m \derAD mn$. Finally, the parabolic element which generates parabolic points in the tessellation is the shear automorphism from $\M mn$ to itself given by the composition $\shear nm  \shear mn $ of the shearing matrices defined in \S~\ref{flipandshears}, see \eqref{def:generalmatrices}. 
We remark also that all reflections  $\refl i m n$ for $0\leq i \leq n$ defined in \S~\ref{sec:normalization} (see Definition \ref{def:reflections}) belong to the Veech group of $\M mn$. Each of them acts on the Teichm\"uller disk as a reflection at one of the hyperbolic diameters which are diagonals of the central $2n$-gon. 

\subsubsection*{The tree of renormalization moves.}
We now define a bipartite tree associated to the  tessellations of the disk described above. Paths in this tree  will prove helpful in visualizing and describing the possible sequences of renormalization moves. 
Consider the graph in the hyperbolic plane which has a bipartite set of vertices $V=V_m \cup V_n$ where vertices in $V_m$, which we will call $m$-vertices, are in one-to-one correspondence with centers of ideal $2m$-gons of the tessellation, while  vertices in $V_n$, called $n$-vertices, are in one-to-one correspondence with centers of ideal $2n$-gons. Edges connect vertices in $V_m$ with vertices in $V_n$, and there is a vertex connecting an $m$-vertex to an $n$-vertex if and only if the corresponding $2m$-gon and $2n$-gon share a side.  The graph can be naturally
embedded in $\Disk$, so that vertices in $V_m$ (respectively $V_n$) are  centers of $2m$-gons (respectively $2n$ gons) in the tessellation and each edge is realized by a hyperbolic geodesic segment, i.e. by the side of a triangle in the tessellation which connects the center of an $2m$-gon with the center of an adjacent $2n$-gon. We will call $\Tree{m}{n}$ the embedding of the graph in the tessellation associated to $\M mn$, i.e. the embedding such that the center of the disk is a vertex of order $n$ (which will be the root of the tree). Examples of the embedded graph $\Tree{m}{n}$ are given in Figure~\ref{hypdisk} for $m=3$, $n=4$ and for $m=4$, $n=3$.

\begin{figure}[!h] 
\centering
\includegraphics[width=.4\textwidth]{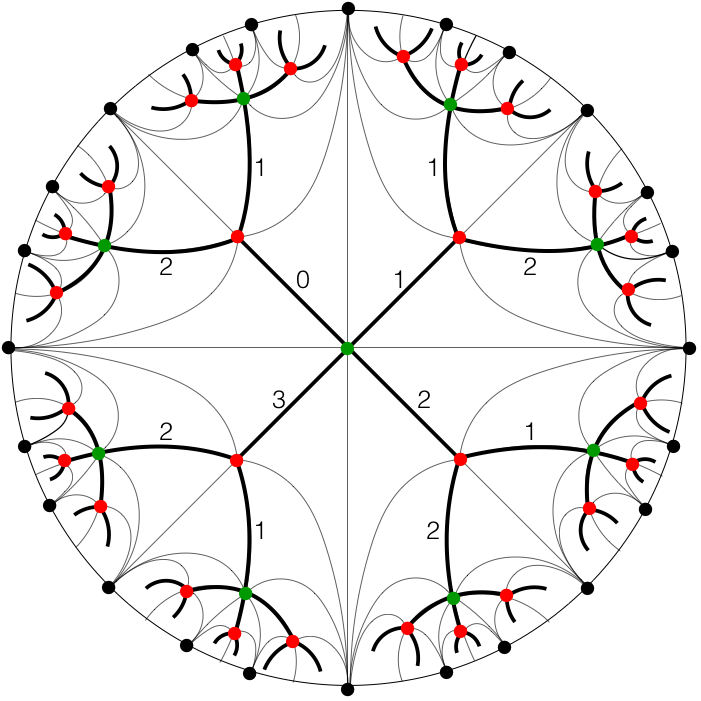}
\hspace{1cm}
\includegraphics[width=.4\textwidth]{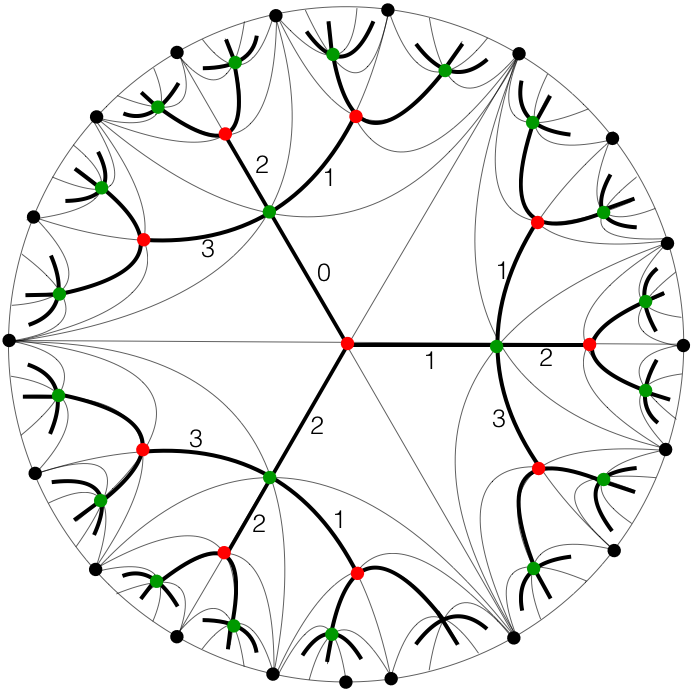}
\begin{quote}\caption{The tree $\Tree{m}{n}$ for the tessellation associated to $\M34$ and to $\M 43$ with the labels of the first two generations of edges. \label{hypdisk}} \end{quote}
\end{figure}

One can  see that the graph $\Tree{m}{n}$ is a bipartite \emph{tree}, with a root in the center of the disk.   We define the \emph{level $k$} of the tree to be composed of all vertices which have distance $k$ from the root, where the distance here is the natural  distance on a graph which gives distance $1$ to vertices which are connected by an edge. For $k\ge 1$ we call \emph{edges of level $k$} the edges which connect a vertex of level $k-1$ with a vertex of level $k$. 

\subsubsection*{Labeling of the tree} 
Let us now describe how to {label  the edges of the tree} $\Tree{m}{n}$ so that the labels will code renormalization moves. 
{ We first remark  that edges of level $1$ are in one-to-one correspondence with the $n$ \emph{sectors} $\Sec i m n$ for $0 \leq i \leq n-1$ defined in \S~\ref{sec:transition_diagrams} (see Definition \ref{sectordef}) as follows. Consider all the points on $\partial \mathbb{D}$ that are endpoints of paths on the tree that start with a given edge $e$ of level $1$. These give an arc on $\partial \mathbb{D}$, which,  via the identification of  $\partial \Disk$ with $\RP$ described in \S~\ref{Teichdisksec}, maps to one of the sectors $\Sec i m n$ for $0 \leq i \leq n-1$. Thus, we label by $i$ the edge $e$ of level 1 that corresponds to the sector $\Sec i m n$.  }

We remark now that the right action of the derivative $\derAD mn $ of the affine diffeomorphism $\AD mn $ (which, we recall, was described in \S~\ref{sec:affine}) maps the level $1$ edge labeled by $0$ in $\Tree{m}{n}$ to the level $1$ edge labeled by $0$ in $\Tree{n}{m}$ flipping its orientation, in particular by mapping 
 the center of the disk (i.e. the root of $\Tree{m}{n}$) to the endpoint $v_0$ of the edge of level $1$ labeled by $0$ in $\Tree{n}{m}$. Thus, the inverse $({\derAD nm })^{-1}$ sends the endpoint $v_0$ of the edge of level $1$ labeled by $0$ in $\Tree{m}{n}$ to the root of $\Tree{n}{m}$ in the center of the disk (and maps  the  $2m$-gon which has $v_0$ as a center in the tessellation for $\M mn $ to the  central $2m$-gon in the tessellation for the dual surface $\M nm$). For example,  $(\derAD 34)^{-1}$ maps the hexagon which has as center the red endpoint of the $0$-edge of level $1$ in the left disk tessellation in Figure \ref{hypdisk}  to the central hexagon in the right disk tessellation in the same Figure \ref{hypdisk}.   Since the edges of level $1$ of $\Tree{n}{m}$ are labeled by $0 \leq i \leq m-1$  and $\derAD mn $ maps the $0$-edges of level $1$ of $\Tree{m}{n}$ and $\Tree{n}{m}$ to each other, it follows that $(\derAD mn)^{-1} $ induces a labeling of $m-1$ edges of level $2$ which start from the endpoint of the $0$-edge of level $1$ as follows. One of such edges $e$ is labeled by $1\leq i\leq m$ if $({\derAD mn})^{-1} $ maps $e$ to the edge of level $1$ of $\Tree{n}{m}$ labeled by $i$.  


To label the edges of level $2$ which branch out of the other level $1$ edges, just recall that the reflection $\refl i m n$ (see Definition \ref{def:reflections}) maps the $i$-edge to the $0$ edge,  
 and hence can be used in the same way to induce a labeling of all the edges of level $1$ branching out of the $i$ edge (by labeling an edge by $1\leq j\leq m$ if it is mapped to an edge of level $1$ already labeled by $j$).
The same definitions for the tree embedded in the tessellation for the dual surface $\M nm$ also produce a labeling of the edges of level $1$ and $2$ of  $\Tree{n}{m}$.    We refer to  Figure \ref{hypdisk} for an example of these labelings of edges of level $1$ and $2$ for $\Tree{3}{4}$ and $\Tree{4}{3}$.

\begin{figure}[!h] 
\centering
\includegraphics[width=.4\textwidth]{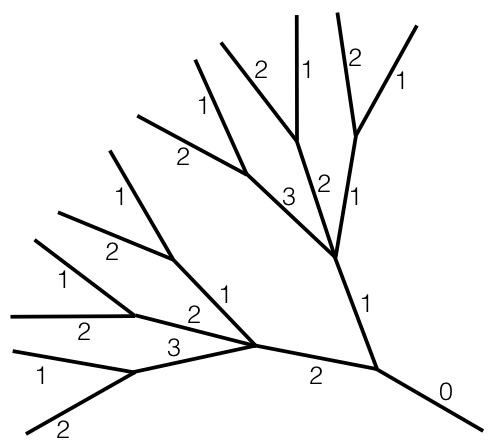}
\begin{quote}\caption{The labeling on a schematic representation of a portion of tree $\Tree{m}{n}$ for  $\M34$, consisting of paths starting with the $0$-edge. \label{treelabeling}} \end{quote}
\end{figure}

We will  now describe how to label all paths which start with the $0$-edge in $\Tree{m}{n}$, since these are the ones needed to describe renormalization on $\M mn$. 
Since we already labeled edges of levels $1$ and $2$ both in $\Tree{m}{n}$ and $\Tree{n}{m}$, to label the edges of level $3$ which belong to paths in $\Tree{m}{n}$ which start with the $0$-edge, we can use that $({\derAD nm })^{-1}$  maps them to edges of level $2$ in $\Tree{n}{m}$ and hence this induces a labeling for them.   For example, see Figure \ref{treelabeling} to see this labeling for $\Tree{3}{4}$.  

We can then transport this labeling of paths made by $3$ edges starting with the $0$-edge to all paths starting with the $0$ edge in $\Tree{m}{n}$ via the action of elements of the Veech group as follows. 
Consider the elements $  (\refl i n m \derAD mn)^{-1 } (\refl j m n \derAD nm)^{-1 } $, $i= 1,\dots, m$, $j= 1,\dots, n$ and consider their right action on the embedded copy of $\Tree{m}{n}$. One can check the following. 

\smallskip
For $1 \leq i\leq m$ and $1\leq j\leq n$, let us denote by $v_{i,j}$ the vertex of level $3$ which is the endpoint of the path starting with the $0$ edge at level $1$, the $i$ edge at level $2$ and the $j$ edge at level $3$.  

\begin{lemma}\label{actionontree}
For every $k\geq 1$, $1\leq i \leq n$ and $1\leq j \leq m$, the right action of the element $   (\refl i n m \derAD mn)^{-1 } (\refl j m n \derAD nm)^{-1 } $ gives a tree automorphism of $\Tree{m}{n}$, 
 which maps 
 $v_{i,j}$ (defined just above)   to the endpoint of the $0$-edge of level $1$ in $\Tree{m}{n}$. 
  The edge ending in $v_{i,j}$ is mapped to the $0$-edge of level $1$. Furthermore, the   edges of level $2k$, $k\geq 2$, which branch out of $v_{i,j}$ are mapped to edges of level $2k-2$ and the edges of level $2k+1$ branching from those to edges of level $2k-1$. 
\end{lemma}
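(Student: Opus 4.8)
The plan is to first rewrite the element
\[
g_{i,j} := (\refl i n m \derAD mn)^{-1}(\refl j m n \derAD nm)^{-1}
\]
so as to recognize it simultaneously as a Veech group element and as a single branch of the \bm Farey map. Using $(PQ)^{-1}=Q^{-1}P^{-1}$ and the fact that the reflections are involutions, $g_{i,j}$ equals the inverse of the product $\refl j m n \derAD nm \refl i n m \derAD mn$, which by \eqref{explicitdefFareyFF} is exactly the $(i,j)$-branch $\FFF m n i j$ of $\FF mn$; thus $g_{i,j}=(\FFF m n i j)^{-1}$ as a projective matrix. Moreover $\refl i n m\circ\AD mn$ is an affine diffeomorphism $\M mn\to\M nm$ and $\refl j m n\circ\AD nm$ is one $\M nm\to\M mn$, so the composition of their inverses in the order dictated by the matrix product is an affine automorphism of $\M mn$. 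Hence $g_{i,j}\in V(\M mn)$ and its right action on the Teichm\"uller disk is a hyperbolic isometry preserving the $(m,n,\infty)$-tessellation. Since such an isometry permutes the centers of the ideal $2m$- and $2n$-gons and preserves the bipartition of $\Tree mn$, it induces a tree automorphism, which establishes the first assertion.

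Second, and this is the core of the argument, I would identify the images of the distinguished vertex and edge directly from the way the labeling of $\Tree mn$ was built. The labels on the level-$2$ edges below the $0$-edge were defined (Remark~\ref{firstsubsectors}) through $\derAD mn$ and the reflections $\refl i n m$, and the labels on the level-$3$ edges along paths beginning with the $0$-edge were defined (Remark~\ref{rk:subsubsectors}) through $\derAD nm$ and the reflections $\refl j m n$; in each case the label is assigned precisely so that the associated normalizing map returns the labelled edge to a $0$-edge one level closer to the root. The two factors of $g_{i,j}$ carry out these two normalizing moves in succession, so that their composite sends the level-$3$ vertex $v_{i,j}$, which is dual to the arc $\Subsubsec i j m n$, to the endpoint $w_0$ of the $0$-edge of level $1$, and sends the edge terminating in $v_{i,j}$ to the $0$-edge itself. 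As a cross-check I would use the boundary description: under the identification $\partial\Disk\cong\RP$ of \S\ref{Teichdisksec} the arc dual to the path $(0,i,j)$ is $\Subsubsec i j m n$, and (up to the left/right duality discussed below) $g_{i,j}$ opens this arc onto the arc dual to the $0$-edge, confirming that $v_{i,j}$ is moved toward, not away from, the root.

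The statement about all deeper levels is then a formal consequence of the rigidity of tree automorphisms, requiring no further computation. A tree automorphism sending $v_{i,j}$ to $w_0$ and the edge joining $v_{i,j}$ to its parent onto the $0$-edge joining $w_0$ to the root must map the parent-direction at $v_{i,j}$ to the parent-direction at $w_0$, and therefore map children to children and preserve distances inside the subtree hanging below $v_{i,j}$. Since $v_{i,j}$ lies at distance $3$ from the root and $w_0$ at distance $1$, a descendant of $v_{i,j}$ at level $\ell$ is carried to a descendant of $w_0$ at level $\ell-2$; taking $\ell=2k$ and $\ell=2k+1$ yields precisely the claimed images of the edges of level $2k$ and $2k+1$ branching out of $v_{i,j}$.

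I expect the main obstacle to be the careful bookkeeping of orientations and of the left- versus right-action conventions. The map $\FF mn$ is defined by the projective left action on inverse-slope coordinates, whereas the Veech group acts on $\partial\Disk$ on the right, and these differ by a transpose, as recorded in \S\ref{Teichdisksec}; it is exactly this duality that reconciles the matrix identity $g_{i,j}=(\FFF m n i j)^{-1}$ with the geometric fact that $g_{i,j}$ moves $v_{i,j}$ \emph{toward} the root by two levels. Because the labelings of levels $1$, $2$ and $3$ were themselves defined through these same maps and reflections, the verification in the second step is ultimately a compatibility check, but it is the point at which a sign or a direction error would most readily slip in, so it deserves the most care.
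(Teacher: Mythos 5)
The paper never actually proves Lemma \ref{actionontree}: it is introduced by ``One can check the following'' and followed by ``This Lemma, whose proof we leave to the reader'', so there is no authors' argument to compare yours against; what follows is an assessment of your proof on its own terms. It is correct, and it assembles exactly the tools the paper provides. Step 1 is sound: $(\refl i n m \AD m n)^{-1}\circ(\refl j m n \AD n m)^{-1}$ is an affine automorphism of $\M m n$ whose derivative is $g_{i,j}:=(\refl i n m \derAD m n)^{-1}(\refl j m n \derAD n m)^{-1}$, so $g_{i,j}\in V(\M m n)$; its right action permutes the tiles of the $(m,n,\infty)$ tessellation (the tiles form one orbit of the triangle group, of which the Veech group is a subgroup), hence induces an automorphism of the embedded tree. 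For the second step, be aware that what you call the ``primary'' argument is essentially true by definition but needs explicit two-tree bookkeeping to be a proof: the first factor $(\derAD m n)^{-1}\refl i n m$ carries the part of $\Tree m n$ below the path $(0,i)$ onto the part of $\Tree n m$ below its $0$-edge, and the second factor $(\derAD n m)^{-1}\refl j m n$ carries the part of $\Tree n m$ below $(0,j)$ onto the part of $\Tree m n$ below its $0$-edge; the level-$3$ labels of $\Tree m n$ are \emph{defined} so that the first factor sends the $j$-labelled level-$3$ edge to the $j$-labelled level-$2$ edge of $\Tree n m$, which is what makes the composition land on the $0$-edge. Since the paper's own description of this labelling contains $m\leftrightarrow n$ index slips, your convention-independent ``cross-check'' is in fact the better main argument, and it is rigorous: the computation inside the proof of Proposition \ref{CFandcuttseq} (which does not use Lemma \ref{actionontree}, so there is no circularity) shows that the right action of $g_{i,j}$ on $\partial\Disk$, read in direction coordinates, is the left projective action of $\refl j m n \derAD n m \refl i n m \derAD m n$, i.e. of the branch $\FFF m n i j$, which maps $\Subsubsec i j m n$ onto $\Sec 0 m n$; by Remark \ref{rk:subsubsectors} this means $g_{i,j}$ carries the ends of the subtree below $v_{i,j}$ onto the ends of the subtree below the endpoint of the $0$-edge, which forces $v_{i,j}$ and its incoming edge to map as claimed, and then your rigidity argument (children to children, so level-$\ell$ descendants to level-$(\ell-2)$ descendants) finishes the last assertion.

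One inaccuracy should be fixed: the left/right duality is not ``a transpose''. As in the proof of Proposition \ref{CFandcuttseq}, for $\gamma=\left(\begin{smallmatrix} a & b \\ c & d\end{smallmatrix}\right)$ the right action on the boundary coordinate is $\bar x\mapsto (a\bar x+c)/(b\bar x+d)$, and after the change of variable relating $\bar x$ to $\cot\theta$ this is the left fractional-linear action of (a scalar multiple of) $\gamma^{-1}$, not of $\gamma^{T}$: the duality between row vectors (annihilators of directions) and column vectors (directions) converts the right action of $\gamma$ into the left action of $\gamma^{-1}$. Your conclusion survives precisely because of this: the right action of the \emph{inverse} $g_{i,j}=\left(\refl j m n \derAD n m \refl i n m \derAD m n\right)^{-1}$ on $\partial\Disk$ is the left action of the un-inverted product, i.e. the Farey branch, which is why $g_{i,j}$ opens the arc dual to $(0,i,j)$ onto the arc dual to the $0$-edge and moves $v_{i,j}$ toward the root rather than away from it.
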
 
This Lemma, whose proof we leave to the reader, can be used to define  a labeling of the  edges of paths starting with the $0$-edge by induction on the \emph{level} of the edges. 
One can check by induction by repeatedly applying Lemma \ref{actionontree} that the labeling is defined so that the following holds.

\begin{lemma}\label{actionontreepath}
Consider a  finite path on the tree $\Tree{m}{n}$ starting from the root and ending in an $n$-vertex, whose edge labels are in order $b_0, a_1, b_1, \dots, a_k, b_k$ where $0\leq b_0 \leq n$ and, for any $k\geq 1$, $1\leq a_k \leq m $ and $1\leq b_k \leq n $. Then the  element 
$$ \refl {b_0} m n (\refl {a_1} n m \derAD mn)^{-1 } (\refl {b_1} m n \derAD nm)^{-1 }  \cdots (\refl {a_k} n m \derAD mn)^{-1 } (\refl {b_k} m n \derAD nm)^{-1 } $$ 
 acts on the right by giving a tree automorphism of  $\Tree{m}{n}$ which maps the last edge, i.e. the one labeled by $b_k$, to the $0$-edge of level $1$ and the final vertex of the path to the ending vertex of the $0$-edge of level $1$. 
\end{lemma}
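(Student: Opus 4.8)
The plan is to prove the statement by induction on $k$, the number of pairs $(a_i,b_i)$ occurring in the path, by peeling off the group factors of the displayed product one at a time (i.e. applying them in order under the right action). The whole argument rests on the fact that the edge labels of $\Tree{m}{n}$ were \emph{defined} so as to be equivariant under exactly the elements appearing in the product: by the recursive labeling prescription together with Lemma \ref{actionontree}, each elementary factor restricts to a label-preserving isomorphism from a subtree onto the subtree hanging below the $0$-edge of level $1$, shifted down by the appropriate number of levels. Since every factor lies in the Veech group of $\M mn$, which acts on the Teichm\"uller disk by isometries preserving the $(m,n,\infty)$ tessellation, each factor genuinely induces a tree automorphism of $\Tree{m}{n}$; hence the composite is a tree automorphism and it suffices to track the image of the final edge and final vertex of the path.

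First I would record the two building blocks precisely. For the leading reflection, Remark \ref{actionrefl} gives that $\refl {b_0} m n$ sends the $b_0$-edge of level $1$ to the $0$-edge of level $1$, and by the final clause of the labeling construction (the one that transports labels from the $0$-subtree to the $j$-subtree using $\refl j m n$) the map $\refl {b_0} m n$ carries the entire subtree below the $b_0$-edge onto the subtree below the $0$-edge, preserving all edge labels. Thus acting by $\refl {b_0} m n$ sends the path $b_0,a_1,b_1,\dots,a_k,b_k$ to the path $0,a_1,b_1,\dots,a_k,b_k$, sending final edge to final edge and final vertex to final vertex. For the typical pair, set $h_i := (\refl {a_i} n m \derAD m n)^{-1}(\refl {b_i} m n \derAD n m)^{-1}$. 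By Lemma \ref{actionontree} (with its $i,j$ taken to be $a_i,b_i$), $h_i$ maps the level-$3$ vertex $v_{a_i,b_i}$ to the endpoint of the $0$-edge of level $1$, maps the edge entering $v_{a_i,b_i}$ (the $b_i$-edge) to the $0$-edge, and maps the edges of levels $2\kappa,2\kappa+1$ branching from $v_{a_i,b_i}$ to edges of levels $2\kappa-2,2\kappa-1$. Because the labels below $v_{a_i,b_i}$ were \emph{defined} to equal the labels of their $h_i$-images two levels lower, $h_i$ restricts to a label-preserving isomorphism of the subtree below $v_{a_i,b_i}$ onto the subtree below the $0$-edge endpoint.

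With these two facts the iteration is immediate. After applying $\refl {b_0} m n$ we are reduced to a path $P^{(1)}=0,a_1,b_1,\dots,a_k,b_k$ beginning with the $0$-edge. Applying $h_1$ then sends the tail of $P^{(1)}$ issuing from $v_{a_1,b_1}$ — which carries labels $a_2,b_2,\dots,a_k,b_k$ — onto a path below the $0$-edge endpoint carrying the same labels, namely $P^{(2)}=0,a_2,b_2,\dots,a_k,b_k$, and carries the final edge and vertex of $P^{(1)}$ to those of $P^{(2)}$. Repeating, the partial product $\refl {b_0} m n\, h_1 h_2\cdots h_{i}$ sends the final edge and vertex of the original path to those of $P^{(i+1)}=0,a_{i+1},b_{i+1},\dots,a_k,b_k$. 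After $k-1$ such steps we reach $P^{(k)}=0,a_k,b_k$, and the last factor $h_k$ maps its terminal $b_k$-edge to the $0$-edge of level $1$ and its terminal vertex $v_{a_k,b_k}$ to the endpoint of the $0$-edge. Reading off the composition shows that the displayed element maps the last edge of the path to the $0$-edge of level $1$ and the final vertex to its endpoint, as claimed. (Here the terminal vertex sits at level $2k+1$, of the same type as the endpoint of the level-$1$ $0$-edge, so the vertex types match at every stage.)

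The step I expect to require the most care is the second one: verifying that each $h_i$ acts not merely on the single vertex $v_{a_i,b_i}$ and its incoming edge, as literally stated in Lemma \ref{actionontree}, but on the \emph{entire} subtree below $v_{a_i,b_i}$ as a label-preserving isomorphism onto the subtree below the $0$-edge endpoint. This is ultimately tautological — it is exactly how the higher-level labels were built, namely by pulling labels back two levels under the prescription defining the labels on $E^{\kappa}_{a_i,b_i}$ — but the bookkeeping must confirm that the level-shift statement of Lemma \ref{actionontree} is compatible with the label definition at \emph{every} level $\kappa$, so that the whole tail of $P^{(i)}$ (not just its first edge beyond $v_{a_i,b_i}$) is transported with its labels intact. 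Once this subtree-equivariance is established, the induction and the final composition are routine, and the fact that all factors are tree automorphisms follows from their membership in the Veech group acting on the tessellation.
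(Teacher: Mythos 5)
Your proof is correct and is essentially the paper's own argument: the paper offers no written proof beyond the sentence that the lemma "can be checked by induction by repeatedly applying Lemma \ref{actionontree}," together with the fact that the labeling was defined by pulling labels back under exactly these group elements, and your induction (peeling off $\refl{b_0}{m}{n}$ and then each pair $(\refl {a_i} n m \derAD mn)^{-1 } (\refl {b_i} m n \derAD nm)^{-1 }$, using the tautological label-equivariance built into the labeling's recursive definition) is precisely that argument written out in full.
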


{ Notice that the the labeling satisfies the following description. For $k=2i$ even, finite paths of length $k$ starting from the root are labeled by sequences of the form   $(b_0, a_1, b_1, \dots, b_{i-1}, a_i)$ where $0\leq b_0 \leq n-1$, and for $1\leq j \leq i $, $1\leq a_j \leq m-1$ and  $1\leq b_j \leq n-1$. Sharing the endpoint of such a path as an initial vertex,  there are $n-1$ edges of level ${k+1}$ which  are  labeled by an index $j$ that increases from $1$ to $n-1$  as one moves counterclockwise (see Figure~\ref{hypdisk} and Figure~\ref{treelabeling}). Similarly, for $k=2i+1$ odd, finite paths of length $k$ starting from the root are labeled by sequences of the form $(b_0, a_1, b_1, \dots, b_{i}, a_i)$  where $0\leq b_0 \leq n-1$, and for $1\leq j \leq i $, $1\leq a_j \leq m-1$ and  $1\leq b_j \leq n-1$.  The final arc of any such path is the initial endpoint of $m-1$ vertices of level $k+1$, labeled by an index $i$ that increases from $1$ to $m-1$, also  counterclockwise  (see Figure~\ref{hypdisk} and Figure~\ref{treelabeling}).}



\subsection{Renormalization on the Teichm{\"u}ller disk.}\label{renormcutseq}
In this section we link the sequences of labels of paths on the tree to itineraries of the \bm Farey map and to sequences of admissible diagrams for derivatives of cutting sequences. 

Let $\theta$ be a fixed direction, that we think of as the direction of a trajectory $\tau $ on  $\M mn$. Denote by $\rho_\theta$ the matrix corresponding to counterclockwise rotation by $\theta$ and by $g_t ^{\theta}:=  \rho_{\frac{\pi}{2}-\theta}^{-1} \, g_{t}\ \rho_{\frac{\pi}{2}-\theta} $ a $1$-parameter subgroup  conjugate to the geodesic flow. 
Let us hence consider the \emph{Teichm{\"u}ller geodesic ray}
\be\label{raydef}
\tilde{r}_\theta : = \{ g_t ^{\theta} \cdot M_{m,n} \}_{t\geq 0}, 
\ee\noindent
which, using the identification of ${\mathcal{\tilde M}}_A(S)$ with $T_1 \mathbb{D}$ explained in \S~\ref{Teichdisksec}, corresponds to a geodesic ray in $T_1 \mathbb{D}$.  The projection $r_\theta$ of the Teichm{\"u}ller ray $\tilde{r}_\theta$ to   $\Disk$ is a half ray, starting at the center $0 \in \mathbb{D}$ and converging to the point $e^{(\pi+ 2 \theta)i} \in \partial \mathbb{D}$.  
  In particular, $r_0$ is the ray in $\mathbb{D}$ obtained by intersecting   the negative real axes in $\mathbb{C}$ with $\mathbb{D}$ and  $r_\theta$ is the ray that makes  an angle $2\theta$ (measured clockwise) with the ray $r_0$. 
 
\subsubsection*{Combinatorial geodesics.}
Let us explain how to associate to the geodesic path $r_\theta$  a path $p_\theta$ in the tree $\Tree{m}{n}$,  which we call the \emph{combinatorial geodesic} approximating $r_\theta$.  We say that $\theta$ is a \emph{cuspidal direction} if the ray $r_{\theta}$ converges to a vertex of an ideal polygon of the tessellation. One can show that this is equivalent to saying that the corresponding flow on $\M mn$ consists of periodic trajectories. 
{ Assume first that $\theta$ is \emph{not} a cuspidal direction. In this case, there exists a  
unique continuous semi-infinite path on $\Tree{m}{n}$, which starts at $\underline{0}$ and converges to the endpoint of $r_\theta$ on $\partial \Disk$.   We will call this infinite path on $\Tree{m}{n}$  
 the \emph{combinatorial geodesic} associated to $r_\theta$ and denote it by $p_\theta$. We can think of this path $p_\theta$ as the image of $r_\theta$  under the  retraction that sends the whole disk $\Disk$ onto the deformation retract $\Tree{m}{n}$.   
If $\theta$ is a  cuspidal direction, there exist exactly  two such paths, 
  sharing the cuspidal point as a common limit point, 
  and thus two combinatorial geodesics that approximate $r_\theta$. }

\subsubsection*{Interpretations of the labeling sequences}
Given a direction $\theta$, let $p_\theta$ be a combinatorial geodesic associated to $r_{\theta}$. Let us denote by 
$$ l(p_\theta) = (b_0, a_1, b_1, \dots, a_i, b_i, \dots), \quad \text{where} \ 0\leq b_0 \leq n, \ 1\leq a_k \leq m ,\ 1\leq b_k \leq n, $$ 
the sequence of labels of the edges of $p_\theta$ in increasing order 
. 
This sequence coincides both with the itinerary of $\theta$ under the \bm Farey map (as defined in \S~\ref{sec:itineraries_vs_sectors}), see Proposition~\ref{CFandcuttseq} below, and with the pair of sequences of admissible sectors of any (bi-infinite, non periodic) cutting sequence of a linear trajectory on $\M mn $ in direction $\theta$ (see  Definition~\ref{def:seq_sectors} in \S~\ref{sec:sectors_sequences}), see Corollary \ref{l_vs_diagrams} below.

Let us recall that in \S~\ref{sec:direction_recognition} we have defined a \bm continued fraction expansion, see Definition~\ref{bmCFdef}. Definitions of the labeling of the tree are given so that the following holds:
\begin{prop}\label{CFandcuttseq}
If  a non-cuspidal direction $\theta$ has  \bm continued fraction expansion
\be\label{CFexp}
\theta = [b_0; a_1, b_1, a_2, b_2 , \dots ]_{m,n} ,
\ee
then the labeling sequence $l(p_\theta)$ of the unique combinatorial geodesics associated to the the Teich\-m\"uller geodesics  ray $r_{\theta}$ is given by the entries, i.e. 
\bes\label{labelexp} l(p_\theta) = (b_0, a_1, b_1, a_2, b_2 ,  \dots ).
\ees
If $\theta$ is a cuspidal direction,  $\theta$ admits two \bm continued fraction expansions of the form \eqref{CFexp}, which give the labellings of the two combinatorial geodesics approximating $r_\theta$.
\end{prop}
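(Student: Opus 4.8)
The plan is to prove the matching by induction on the level of the tree, showing at each stage that the arc of the partition $\xi_k$ containing the endpoint of $r_\theta$ is exactly the arc whose label is the corresponding initial segment of the \bm continued fraction expansion. First I would fix the boundary identification: by the computation recalled just above the statement, the ray $r_\theta$ converges to the boundary point with coordinate $x(\theta)=-1/\cot\theta$ under the chart $\phi_1$, so the correspondence between $\partial\Disk$ and $\RP$ of \S\ref{Teichdisksec} sends the endpoint of $r_\theta$ to the line of direction $\theta$. Since arcs of $\xi_1$ are by definition in one-to-one correspondence with the sectors $\Sec i m n$ via this identification, the endpoint of $r_\theta$ lies in the arc dual to the $b_0$-edge if and only if $\theta\in\Sec {b_0} m n$; this is precisely the index $b_0$ occurring in \eqref{CFexp} (recall the convention \eqref{bmCFdef}), so the first label of $l(p_\theta)$ equals $b_0$.

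Next I would treat the first \bm Farey step, namely the edges of level $2$ and $3$ along $p_\theta$. Using Remark \ref{actionrefl}, applying $\refl {b_0} m n$ carries the $b_0$-arc of $\xi_1$ onto the standard $0$-arc and the endpoint of $r_\theta$ onto that of $r_{\theta_0}$, where $\theta_0:=\refl {b_0} m n[\theta]\in\Sec 0 m n$. By Remark \ref{firstsubsectors} the arcs of $\xi_2$ inside the $0$-arc correspond under the identification with $\RP$ to the subsectors $\Subsec i m n$, and by Remark \ref{rk:subsubsectors} the arcs of $\xi_3$ correspond to the subsectors $\Subsubsec i j m n$, which are exactly the domains of the branches of $\FF m n$ (see \eqref{def:subsubsec}). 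Hence the endpoint of $r_{\theta_0}$ lies in the arc labeled $(b_0,a_1,b_1)$ if and only if $\theta_0\in\Subsubsec {a_1} {b_1} m n$, that is, if and only if $(a_1,b_1)$ is the first entry of the itinerary of $\theta_0$ under $\FF m n$. By the definition of the \bm continued fraction expansion (see \eqref{bmCFdef} and the discussion in \S\ref{sec:direction_recognition}), this first itinerary entry is exactly $(a_1,b_1)$, so the next two labels of $l(p_\theta)$ are $a_1$ and $b_1$.

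For the inductive step I would invoke the tree automorphism of Lemma \ref{actionontreepath} in the case $k=1$, given by the Veech group element $\refl {b_0} m n (\refl {a_1} n m \derAD m n)^{-1}(\refl {b_1} m n \derAD n m)^{-1}$, which sends the vertex $v_{a_1,b_1}$ at the end of the path $(b_0,a_1,b_1)$ to the endpoint of the $0$-edge of level $1$ and, by Lemma \ref{actionontree}, shifts the subtree hanging from $v_{a_1,b_1}$ down by two levels while preserving all labels. The crucial point to verify is that the right projective action of this element on $\partial\Disk$ agrees, under the identification with $\RP$, with the first branch of the \bm Farey map, i.e. carries the endpoint of $r_\theta$ to the endpoint of $r_{\theta''}$ with $\theta''=\FF m n(\theta_0)$; this is precisely the content of the expression \eqref{explicitdefFareyFF} for $\FF m n$ on $\Subsubsec {a_1} {b_1} m n$. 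Granting this, the labels of $p_\theta$ from level $4$ onward coincide with the labels of the combinatorial geodesic $p_{\theta''}$, whose expansion is the shift $[a_2,b_2,a_3,b_3,\dots]_{m,n}$ (since $\FF m n$ shifts the itinerary); applying the previous paragraph to $\theta''$ and iterating yields $l(p_\theta)=(b_0,a_1,b_1,a_2,b_2,\dots)$. For a cuspidal direction $\theta$, the ray $r_\theta$ converges to an ideal vertex of the tessellation, which is the common endpoint of exactly two nested sequences of arcs of $(\xi_k)_k$ approaching it from the two sides, giving the two combinatorial geodesics; running the same induction on each sequence produces two labelling sequences, and these are exactly the two \bm continued fraction expansions of $\theta$ of the form \eqref{CFexp}, in analogy with the two expansions of a rational in the classical Farey setting.

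The main obstacle I anticipate is the bookkeeping in the inductive step: carefully matching the right projective action of the Veech group on the boundary (the row-vector convention of \S\ref{Teichdisksec}, with its orientation-reversing behaviour for negative-determinant matrices) against the left linear-fractional action on inverse-slope coordinates by which $\FF m n$, and hence the continued fraction, were defined in \S\ref{farey} and \S\ref{sec:itineraries_vs_sectors}. Since all labels on the tree were defined precisely through these Veech elements, the combinatorial content reduces to the single compatibility statement that the renormalization element realizes one branch of $\FF m n$ on $\partial\Disk$; once this identification of conventions is pinned down, the matching of the labels $l(p_\theta)$ with the continued fraction entries is forced by construction.
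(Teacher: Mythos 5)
Your proposal is correct and follows essentially the same route as the paper's own proof: the base case via Remark \ref{rk:subsubsectors}, identifying the $\xi_3$-arc containing the endpoint of $r_\theta$ with the Farey branch domain $\Subsubsec {a_1}{b_1} m n$, and then iteration using the renormalization element $(\refl {a_1} n m \derAD mn)^{-1}(\refl {b_1} m n \derAD nm)^{-1}$ of Lemmas \ref{actionontree}--\ref{actionontreepath} as a two-level shift on $\Tree mn$, with the same treatment of the cuspidal case. The single step you flag but defer---that the right boundary action of this element realizes the corresponding branch of $\FF mn$ under the row-vector/inverse-slope conventions---is exactly the short cotangent computation the paper carries out explicitly, and it goes through as you predict.
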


To prove the proposition, 
one defines a renormalization scheme on paths on the tree $\Tree{m}{n}$ (or combinatorial geodesics)  acting by the elements $(\refl i n m \derAD mn)^{-1 } (\refl j m n \derAD nm)^{-1 }$, $1\leq i \leq m$, $1\leq j \leq n$, and show that this renormalization extends to an action on $\partial \Disk$ that can be identified with the action of the \bm Farey map.

As a consequence of Proposition \ref{CFandcuttseq} and the correspondence between itineraries and sequences of admissible sectors given by Proposition \ref{prop:itineraries_vs_sectors}, we hence also have the following.

\begin{corollary}\label{l_vs_diagrams}
Let $w$ be a non-periodic cutting sequence of a bi-infinite linear trajectory on $\M mn$ in a direction $\theta$ in $\Sec 0 mn$. Let 
 $(a_k)_k \in \{1, \dots , m-1\}^\mathbb{N}$ and $(b_k)_k \in \{1, \dots , n-1\}^\mathbb{N}$ be the pair of sequences of admissible sectors associated to $w$ (see Definiton \ref{def:seq_sectors}).   
Then  the labeling $l(p_\theta)$ of the combinatorial geodesic $p_\theta $ approximating $r_\theta$ is  $l(p_\theta)= (0, a_0, b_0, a_1, b_1, \dots )$.
\end{corollary}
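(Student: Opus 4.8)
The plan is to obtain Corollary~\ref{l_vs_diagrams} by directly chaining the two results that immediately precede it, without re-entering any of the geometric constructions. Proposition~\ref{CFandcuttseq} identifies the labeling sequence $l(p_\theta)$ of the combinatorial geodesic approximating $r_\theta$ with the entries of the \bm continued fraction expansion of $\theta$, i.e.\ with the itinerary of $\theta$ under the \bm Farey map $\FF mn$; and Proposition~\ref{prop:itineraries_vs_sectors} identifies that same itinerary $\left((a_k,b_k)\right)_k$ with the pair of sequences of admissible sectors associated to the cutting sequence $w$. So the corollary is essentially a transitivity statement, and the work is almost entirely bookkeeping of indices and of the $b_0$ entry.

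Concretely, I would first reduce to the case $\theta \in \Sec 0 mn$, which is exactly the hypothesis of the corollary, so that $b_0=0$. With this normalization, Proposition~\ref{CFandcuttseq} gives $l(p_\theta) = (0, a_1', b_1', a_2', b_2', \dots)$ where $\left((a_k',b_k')\right)_k$ is the itinerary of $\theta$ under $\FF mn$ (in the notation of Definition~\ref{itinerarydef}). Next, since $w$ is non-periodic, Proposition~\ref{prop:itineraries_vs_sectors} applies and tells us that the pair $(a_k)_k, (b_k)_k$ of sequences of admissible sectors associated to $w$ \emph{is} the itinerary of $\theta$ under $\FF mn$. Hence $a_k'=a_k$ and $b_k'=b_k$ for every $k\geq 1$, and substituting back gives $l(p_\theta)=(0,a_1,b_1,a_2,b_2,\dots)$, which is the asserted form (after noting that the indexing conventions match those of Definition~\ref{def:seq_sectors}).

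The only genuine point requiring care is the labeling of the leading entry and the indexing shift in the statement, which as written reads $l(p_\theta)=(0,a_0,b_0,a_1,b_1,\dots)$. I would check this against the convention fixed just before the statement, where $l(p_\theta)=(b_0,a_1,b_1,\dots)$ with $0\le b_0\le n$; under the standing hypothesis $\theta\in\Sec 0mn$ the initial edge label is the $0$-edge, so the leading symbol is indeed $0$, and the remaining entries are precisely the itinerary entries $(a_1,b_1,a_2,b_2,\dots)$. If the displayed shift $(0,a_0,b_0,\dots)$ reflects an alternative indexing of the sequences of admissible sectors (with a convention $a_0,b_0$ for the sector in which $w$ itself and its first normalized derivative are admissible), I would reconcile it by appealing to the Convention following Definition~\ref{def:seq_sectors} and to Proposition~\ref{prop:itineraries_vs_sectors}, matching index $k$ of the itinerary with index $k$ of the admissible-sector sequences.

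I expect the main (and only) obstacle to be this reconciliation of indexing conventions rather than any substantive argument: once Propositions~\ref{CFandcuttseq} and~\ref{prop:itineraries_vs_sectors} are invoked, the identity of the two sequences is immediate, and the corollary follows. I would therefore keep the proof short, writing it as a one-paragraph deduction:

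\begin{proof}
Since $\theta\in\Sec 0mn$, we have $b_0=0$, so by Proposition~\ref{CFandcuttseq} the labeling sequence of the unique combinatorial geodesic $p_\theta$ approximating $r_\theta$ is $l(p_\theta)=(0,a_1',b_1',a_2',b_2',\dots)$, where $\left((a_k',b_k')\right)_k$ is the itinerary of $\theta$ under the \bm Farey map $\FF mn$ (recall Definition~\ref{itinerarydef}). On the other hand, since $w$ is a non-periodic cutting sequence of a trajectory in direction $\theta$, Proposition~\ref{prop:itineraries_vs_sectors} identifies the pair $(a_k)_k,(b_k)_k$ of sequences of admissible sectors associated to $w$ with precisely this itinerary, so $a_k'=a_k$ and $b_k'=b_k$ for every $k\ge 1$. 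Substituting these equalities into the expression for $l(p_\theta)$ and matching the indexing conventions of Definition~\ref{def:seq_sectors} yields $l(p_\theta)=(0,a_0,b_0,a_1,b_1,\dots)$, as claimed.
\end{proof}
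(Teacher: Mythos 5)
Your proposal is correct and takes exactly the same route as the paper: the paper's entire ``proof'' of Corollary~\ref{l_vs_diagrams} is the single sentence preceding it, which derives it as a consequence of Proposition~\ref{CFandcuttseq} combined with Proposition~\ref{prop:itineraries_vs_sectors}, precisely the chaining you carry out (including your correct diagnosis that the displayed indexing $(0,a_0,b_0,a_1,b_1,\dots)$ is an off-by-one slip for $(0,a_1,b_1,a_2,b_2,\dots)$, consistent with Definition~\ref{def:seq_sectors}). The only micro-point you leave implicit is that non-periodicity of $w$ forces $\theta$ to be non-cuspidal (via the Veech dichotomy), which is what licenses the ``unique'' combinatorial geodesic in Proposition~\ref{CFandcuttseq}; the paper glosses over this as well.
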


\subsubsection*{Derived cutting sequences  and vertices on the combinatorial geodesic.}
The sequence of vertices of the combinatorial geodesic $p_\theta$ has a geometric interpretation which helps to understand derivation on cutting sequences. More precisely, 
if $w$ is a cutting sequence of a trajectory $\tau$ in direction $\theta$, let $r_{\theta}$ be the geodesic ray which contracts the direction $\theta$ given in (\ref{raydef}) and let $p_\theta$ be the associated combinatorial geodesic, i.e. the path on $\Tree{m}{n}$ that we defined above. Recall that given a cutting sequence $w$ on $\M mn $ of a trajectory in direction $\theta$, in \S~\ref{sec:sectors_sequences} we  recursively defined its {sequence of derivatives} $(w^k)_k$ obtained by alternatively deriving it and normalizing it, see Definition~\ref{def:derivatives}. 
These derived sequences can be seen as cutting sequences of the same trajectory with respect to a sequence of polygonal decompositions of $\M mn$ dertermined by the vertices of the combinatorial path $p_\theta$ as explained below.
 
If the label sequence $l(p_{\theta})$ starts with $b_0, a_1, b_1, \dots ,a_l, b_l, \dots$, then for each $k\geq 1$, define the affine diffeomorphisms 
$$
\Psi^k:= \begin{cases} \refl{b_0} mn (\AD n m)^{-1 } \refl {a_1} n m  ( \AD mn)^{-1 } \refl {b_1} m n (\AD n m)^{-1 } \dots  \refl {a_l} n m ( \AD mn)^{-1 }& \text{if }\ k=2l, \\  \refl{b_0}mn ( \AD n m)^{-1 }\refl {a_1} n m ( \AD mn)^{-1 }\refl {b_1} m n ( \AD n m)^{-1 } \dots  \refl {a_l} n m ( \AD mn)^{-1 } \refl {b_l} m n ( \AD n m)^{-1 } & \text{if }\ k=2l+1. \end{cases}
$$
 
We will denote by $\gamma^k$ the derivative of $\Psi^k$.  We claim that $\gamma^k$ 
acts on the right on  $\Disk$ by mapping the $k^{th}$ vertex of $p_{\theta}$ back to the origin. This can be deduced from Lemma \ref{actionontreepath} for  even indices, by remarking that
$\gamma^{2k} \refl {b_k} m n = \left( \refl {b_0} m n (\refl {a_1} n m \derAD mn)^{-1 } (\refl {b_1} m n \derAD nm)^{-1 }  \cdots (\refl {a_k} n m \derAD mn)^{-1 } (\refl {b_k} m n \derAD nm)^{-1 } \right), $
which are the elements considered in Lemma \ref{actionontreepath} and by noticing that the additional reflection $\refl {b_k} m n$ does not change the isometry class of the final vertex. For odd indices, this can be obtained by combining Lemma \ref{actionontreepath} with the description of the action of $\derAD n m$ on the disk. We omit the details. 


 We now remark that  $\Psi^k(\M mn)=\M mn$ when $k$ is even while    $\Psi^k(\M nm)=\M mn$ when $k$ is odd. Let us consider the marked triple $(\Psi^k)^{-1}: \M mn \to \M mn$ for $k$ even or  $(\Psi^k)^{-1}: \M mn \to \M nm$ for $k$ odd. As explained at the beginning of this Appendix (see \S~\ref{Teichdisksec}), this is an affine deformation of $\M mn$ and considering its isometry equivalence class in  ${\mathcal{\tilde M}}_{I}(S)$  we can identify it with a point in the Teichm\"uller disk $\Disk$ centered at $id: \M mn \to \M mn $. The corresponding point  is a vertex level $k$ of $\Tree{m}{n}$, or more precisely, it is the $k^{th}$ vertex in the combinatorial geodesic $p_\theta$. Thus, under the identification of  $\Disk$ with ${\mathcal{\tilde M}}_{I}(S)$, the vertices of the path $p_\theta$ are, in order, the isometry classes of the marked triples $[\Psi^k]$, for $k=1, 2, \dots$. 

One can visualize these affine deformations by a corresponding sequence of polygonal presentations as follows. 
Recall that both $\M mn$ and $\M nm$ are equipped for us with a semi-regular polygonal presentation, whose sides are labeled by the alphabets $\LL mn$ and $\LL nm$ respectively as explained in \S~\ref{howtolabel}. For every $k\geq 1$, let $\mathcal{P}^k$ be the image in $\M mn$ under the affine diffeomorphism $\Psi^k$   of the polygonal presentation of $\M mn$ if $k$ is even or of $\M nm $ if $k$ is odd. This polygonal decomposition $\mathcal{P}^k$ carries furthermore a labeling of its sides by $\LL mn$ or $\LL nm$ (according to the parity of $k$) induced by  $\Psi^k$: if for $k$ even (respectively $k$ odd) a side of $\M mn$ (respectively $\M nm$) is labeled by $i \in \LL mn$ (respectively by $i \in \LL nm$), let us also label by $i$  its image under $\Psi^k$. This gives  a labeling of the sides of $\mathcal{P}^k$ by $\LL mn$ for $k$ even or by $\LL nm$ for $k$ odd,  which we call the \emph{labeling induced by} $\Psi^k$.


Thus the sequence of vertices in $p_\theta$ determines a sequence of affine deformations of $\M mn$ and a sequence $(\mathcal{P}^k)_k$ of labeled polygonal decompositions. The connection between $(\mathcal{P}^k)_k$  and the sequence of derived cutting sequences (see Definition~\ref{def:derivatives}) is the following.	




\begin{prop}\label{wknormalized}
Let $w, \theta$ and $\mathcal{P}^k$ be as above. The $k^{th}$ derived sequence $w^k$ of the cutting sequence $w$ of a trajectory on $\M mn$ is the cutting sequence of the same trajectory  with respect to the labels of the sides of the polygonal decompositions $\mathcal{P}^k$ with the labeling induced by $\Psi^k$.
\end{prop}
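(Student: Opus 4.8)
The proof rests on unwinding the definitions of the combinatorial derivation/normalization operators and the geometric affine diffeomorphisms so that they match step by step. The plan is to proceed by induction on $k$, proving simultaneously that (i) $w^k$ is the cutting sequence of the fixed trajectory $\tau$ in direction $\theta$ with respect to the labeled decomposition $\mathcal{P}^k$, and (ii) the polygonal decomposition $\mathcal{P}^k = \Psi^k(\M mn)$ (for $k$ even) or $\Psi^k(\M nm)$ (for $k$ odd) is exactly the pullback by $(\Psi^k)^{-1}$ of the standard semi-regular presentation. The base case $k=0$ is immediate: $\Psi^0 = \refl{b_0}mn$ (or the identity when $\theta \in \Sec 0 mn$), and $\mathcal{P}^0$ is the standard presentation of $\M mn$, with respect to which $w^0 = w$ is by definition the cutting sequence of $\tau$.

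First I would isolate the single renormalization step relating $\mathcal{P}^k$ to $\mathcal{P}^{k+1}$. By the recursive definition of $\Psi^k$, one has $\Psi^{k+1} = \Psi^k \circ \refl{\ast}{\ast}{\ast} \circ (\AD{\ast}{\ast})^{-1}$ for the appropriate indices and surfaces dictated by the labels $a_j, b_j$ of $p_\theta$. The key geometric input is Lemma~\ref{lem:derivation_interpretation} (the geometric interpretation of derivation): when a trajectory on $\M mn$ in the standard sector is coded by $w$, the derived sequence $\Der mn w$ is precisely the sequence of labels of crossed sides of the flip-sheared preimage of $\M nm$ under $\AD mn$. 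Applying $\Psi^k$ to this statement, the labels of $w^{k+1}$ read off from $\mathcal{P}^{k+1}$ coincide with the derived-and-normalized sequence. I would carefully track how the normalization reflection $\refl{a_j}{n}{m}$ (or $\refl{b_j}{m}{n}$) acts on both the trajectory's sector and the edge labels, invoking Proposition~\ref{thm:derivable} to guarantee that the derived sequence lands in the correct sector $\Sec{a_j}nm$ (or $\Sec{b_j}mn$), as recorded by the labeling of $p_\theta$ via Corollary~\ref{l_vs_diagrams}.

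The inductive step then combines these: assuming $w^k$ is the cutting sequence of $\tau$ with respect to $\mathcal{P}^k$, I apply $\AD{\ast}{\ast}$ and normalize. Geometrically, $\AD{\ast}{\ast}$ maps the flip-sheared preimage of the dual surface inside $\mathcal{P}^k$ onto the semi-regular presentation of the dual surface; pulling everything back by $(\Psi^{k+1})^{-1}$ shows the crossed sides are exactly the sides of $\mathcal{P}^{k+1}$ with the induced labeling. Since cutting sequences are preserved under affine diffeomorphisms and under the relabeling induced by the reflections (which are isometries of the surface), the sequence of labels crossed by $\tau$ with respect to $\mathcal{P}^{k+1}$ is precisely $w^{k+1} = \Der{\ast}{\ast}(\Norm{\ast}{\ast} w^k)$. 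This is the assertion for $k+1$.

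The main obstacle I anticipate is bookkeeping the parity-dependent alternation between $\M mn$ and $\M nm$ together with the interleaving of reflections and affine maps, so that the labeling \emph{induced by} $\Psi^k$ genuinely agrees with the alphabet ($\LL mn$ versus $\LL nm$) in which $w^k$ is expressed. In particular I must verify that the reflection factors $\refl{\ast}{\ast}{\ast}$ appearing in $\Psi^k$ implement exactly the permutations $\perm{\ast}{\ast}{\ast}$ used in the definition of $\Norm{\ast}{\ast}$ (Convention~\ref{convention:refl} identifies the matrix and the induced permutation on edge labels), so that normalization of cutting sequences corresponds to relabeling sides of $\mathcal{P}^k$. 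This identification, already implicit in the earlier sections, is what makes the two descriptions coincide; once it is established, the induction closes and the proposition follows.
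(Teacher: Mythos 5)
Your proposal is correct and follows essentially the same route as the paper's proof: an induction on $k$ whose step combines the geometric interpretation of derivation (Lemma \ref{lem:derivation_interpretation}), the identification of the tree labels of $p_\theta$ with the sequences of admissible sectors (Corollary \ref{l_vs_diagrams}), and the fact that the normalization permutations $\perm{\ast}{\ast}{\ast}$ are implemented by the reflections $\refl{\ast}{\ast}{\ast}$. The only cosmetic difference is that you keep the trajectory fixed and move the polygonal decompositions $\mathcal{P}^k$, whereas the paper applies $(\Psi^k)^{-1}$ and tracks the transformed trajectories $\tau^k$ against the fixed semi-regular presentations — an equivalence the paper itself invokes as its first step.
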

\noindent 
{ The proof of Proposition~\ref{wknormalized} (as well as some of the other results stated in this Appendix) can be found in the arXiv preprint version of this paper.}  
Let us remark that  if we think of $\mathcal{P}^k$  as a collection of polygons in $\mathbb{R}^2$  obtained by  linearly deforming the semi-regular polygonal presentation of $\M mn$ if $k$ is even or of $\M nm $ if $k$ is odd by the linear action of 
$\gamma^k$,   as $k$ increases the polygons in these decompositions  become more and more  stretched in the direction $\theta$,  meaning that the directions of the sides of polygons  tend to $\theta$. 
This can be checked by first reflecting by $\refl{b_0}mn$ to reduce to the $b_0=0$ case and then by  verifying that the sector of directions which is the image of $ \Sec 0 mn$ under the projective action of $\gamma^k $  is shrinking to the point corresponding to the line in direction $\theta$. This distortion of the polygons corresponds to the fact that as $k$ increases a fixed trajectory hits the sides of  $\mathcal{P}^k$ less often which is reflected by the fact that in deriving a sequence labels are erased.  

Finally, let us remark that, as was done in \cite{SU2} for the octagon Teichm\"uller disk and octagon Farey map, it is possible to use the hyperbolic picture introduced in this Appendix to define a cross section of the geodesic flow on the Teichm\"uller orbifold of a \bm surface. More precisely, one can consider a section corresponding to geodesics which have as forward endpoint { a point on the arc of $\partial \mathbb{D}$ given by endpoints of paths on the tree starting with the edge labeled by $0$, and a backward endpoint in the complementary arc of $\partial \mathbb{D}$. } The Poincar{\'e} map of the geodesic flow on this section provides a geometric realization of the  natural extension of the \bm Farey map $\FF m n$. More precisely, one can define a \emph{backward} \bm Farey map which can be used to define the natural extension and describes the behavior of the backward endpoint under the Poincar{\'e} map. The natural extension can be then used to explicitly compute an invariant measure  for $\FF mn$ which is absolutely continuous with respect to the Lebesgue measure but infinite. In order to have a finite absolutely continuous invariant measure, one can accelerate branches of $\FF mn$ which correspond to the parabolic fixed points of $\FF mn$ at $0$ and $\theta= \pi/n$. We leave the computations to the interested reader, following the model given by \cite{SU2}.

\subsubsection*{Towards a characterization of cutting sequences on Veech surfaces}
{ We conclude by explaining why we believe that the techniques introduced in this paper might be helpful in trying to characterize cutting sequences on \emph{any} Veech translation surface. 
We recall that in the characterization of Sturmian sequences or of cutting sequences on regular $2n$-gons, derivation and its inverse, namely generation, are combinatorial operations that correspond to the action on cutting sequences induced by (good and carefully chosen) generators of the group of affine diffeomorphisms of the corresponding Veech surface (i.e. the surface obtained from the torus, or the regular $2n$-gon respectively). In our treatment of \bm surfaces, though, the basic operations that we use in order to characterize cutting sequences (e.g. the derivation operators or the substitutions $\sigma_{i,j}^{m,n}$) correspond to \emph{intermediate} affine diffeomorphisms, which are not {auto}morphisms of the Veech surface $\mathscr{M}_{m,n}$ to itself, but map $\mathscr{M}_{m,n}$ to $\mathscr{M}_{n,m}$ and vice-versa. On the tree embedded in the Teichmueller disk of $\M mn $ that we described in this Appendix, these operations are associated  to \emph{edges} of the tree, which connect a vertex with index $m$ (or resp. $n$) to a vertex with valency $n$ (or resp. $m$). Given any Veech surface $M$, one can similarly associate a \emph{tree} of elementary moves, but this tree will not necessarily be \emph{bipartite} as in the case of \bm surfaces. The basic steps one wants to describe combinatorially will hence be operations that send cutting sequences on the (marked) surface corresponding to any vertex of the tree, to cutting sequences of the (marked) surface which correspond to a vertex of the next tree level. We believe that it should be possible to describe these moves using techniques similar to the ones in this paper, and in particular by exploiting cylinder intersection diagrams similar to the Hooper diagrams for \bm surfaces. We hope  to pursue this approach in future work.    
}

\end{document}